\renewcommand*{\nompreamble}{\begin{multicols}{2}}
\renewcommand*{\nompostamble}{\end{multicols}}
 \newtheorem{thm}{Theorem}[section]
 \newtheorem{cor}[thm]{Corollary}
 \newtheorem{lem}[thm]{Lemma}
 \newtheorem{prop}[thm]{Proposition}
 \newtheorem{thmdef}[thm]{Theorem/Definition}
 \theoremstyle{definition}
 \newtheorem{defn}[thm]{Definition}
 \newtheorem{assumption}{Assumption}
 \theoremstyle{remark}
 \newtheorem{rem}[thm]{Remark}
 \newtheorem{ex}{Example}
 \newtheorem*{exs}{Examples}
 \numberwithin{equation}{section}
\newcommand{\vertiii}[1]{{\left\vert\kern-0.25ex\left\vert\kern-0.25ex\left\vert #1
  \right\vert\kern-0.25ex\right\vert\kern-0.25ex\right\vert}}
\newcommand{\tr}{\operatorname{Tr}}
\newcommand{\BUC}{\operatorname{BUC}(\Xi)}
\title{Wiener's Tauberian theorem in classical and quantum harmonic analysis}
\author{Robert Fulsche, Franz Luef and Reinhard F.\ Werner}
\begin{document}

\maketitle
\abstract{We investigate Wiener's Tauberian theorem from the perspective of \emph{limit functions}, which results in several new versions of the Tauberian theorem. Based on this, we formulate and prove analogous Tauberian theorems for operators in the sense of \emph{quantum harmonic analysis}. Using these results, we characterize the class of \emph{slowly oscillating operators} and show that this class is strictly larger than the class of uniformly continuous operators. Finally, we discuss uniform versions of Wiener's Tauberian theorem and its operator analogue and provide an application of this in operator theory.

\vspace{0.25cm}
\noindent \textbf{Keywords:} Wiener Tauberian theorem, Quantum harmonic analysis}\\
\noindent \textbf{MSC Classification:} 40E05, 47B90, 43A25

\section{Introduction}

The framework of \emph{quantum harmonic analysis}, as introduced in the paper \cite{werner84}, has received significant attention in the last few years, leading to numerous important results and applications in time-frequency analysis, operator theory and theoretical physics, see, e.g., \cite{Berge_Berge_Luef_Skrettingland2022, Dammeier_Werner2023, Fulsche_Hagger2023, Fulsche_Rodriguez2023, Halvdansson2022, keyl_kiukas_werner16, Luef_Skrettingland2018a, Luef_Skrettingland2019a}. In analogy to classical harmonic analysis, at the heart of quantum harmonic analysis lies a notion of convolution between two operators and/or functions, extending the classical notion of the convolution of two functions. Building on this, several results analogous to statements from classical harmonic analysis are obtained, for example, Wiener's approximation theorems \cite{werner84} or results on spectral synthesis \cite{Fulsche_Rodriguez2023}.

In the recent work \cite{Luef_Skrettingland2021}, a result in the realm of quantum harmonic analysis was provided, which is analogous to Wiener's Tauberian theorem in classical harmonic analysis (cf.\ Theorem 5.1 there). We recall that Wiener's classical Tauberian theorem states that, for $f \in L^\infty(\mathbb R)$, we have that
\begin{align*}
    g \ast f \in C_0(\mathbb R)
\end{align*}
for all $g \in L^1(\mathbb R)$ if and only if this holds for just one regular $g \in L^1(\mathbb R)$ (meaning that this $g$ has nowhere vanishing Fourier transform). While this equivalence is clearly related to certain properties of $f$, it is seemingly not possible to deduce any properties of $f$ if the above equivalent statements hold. Nevertheless, deducing properties of $f$ from properties of $g \ast f$ is possible if one assumes that $f$ is a slowly oscillating function, and this is the content of Pitt's refinement (which may be regarded as a converse to  Wiener's Tauberian theorem): If $f \in L^\infty(\mathbb R)$ is slowly oscillating, and $g \ast f \in C_0(\mathbb R)$ for all $g \in L^1(\mathbb R)$ (equivalently, for one regular $g$), then $f$ vanishes at infinity. Hence, under the assumption of slow oscillation, properties of the convolutions $g \ast f$ enforce properties of $f$.

Returning now to the setting of quantum harmonic analysis, F.~Luef and E.~Skrettingland observed in \cite[Theorem 5.2]{Luef_Skrettingland2021} that the following statement from \emph{correspondence theory} may be interpreted as some kind of Pitt's refinement for the Wiener Tauberian theorem for operators: If $B \in \mathcal L(L^2(\mathbb R))$ satisfies a certain regularity (it is contained in the algebra $\mathcal C_1$) and $A \ast B \in C_0(\mathbb R^2)$ for some regular trace class operator $A \in \mathcal T^1(L^2(\mathbb R))$, then $B$ is compact. For a precise definition of the convolution $A \ast B$ and regular operators, we refer to Section \ref{sec:quantum}.

Indeed, the algebra $\mathcal C_1$ mentioned above takes the same role as the algebra of bounded, uniformly continuous functions does in classical harmonic analysis. Since the class of bounded, slowly oscillating functions strictly contains the class of bounded, uniformly continuous functions, the following question is quite natural: Is the above-mentioned version of Pitt's refinement for operators in some sense ``optimal", or in other words is there a class of operators larger than $\mathcal C_1$ which permits a Pitt-type statement? This question can be rephrased as ``What is the correct class of \emph{slowly oscillating operators}?'' and was indeed one of the main motivations for our investigations.

In order to gain insight into how to approach this set of ideas, we first revisit Wiener's classical Tauberian theorem from a rather general point of view, using the theory of \emph{limit functions}: For $f \in L^\infty(\mathbb R)$, limit functions are, roughly speaking, those functions on $\mathbb R$ that one can obtain as limits of $\alpha_x(f) = f(\cdot - x)$ for $x \to \pm \infty$ (in a certain topology). By doing so, we obtain two new results (Theorems \ref{mainthm:1} and \ref{mainthm:2} in this paper) that extend the classical theorem. While Wiener's Tauberian theorem is by now almost 100 years old (it seems to have first appeared in \cite{Wiener1932}) and has been the subject of intense and detailed investigations, we were unable to find any results similar to ours in the literature. For example, the impressive monograph \cite{Korevaar2004} on Tauberian theory does not mention the theory of limit functions at all.

These generalizations of the classical results give the correct perspective on the operator theory, which first of all provides analogous versions of the before-mentioned generalizations to the setting of quantum harmonic analysis. Having obtained these results, we are then able to describe the right class of slowly oscillating operators in terms of their behaviour at infinity, utilizing the language of \emph{limit operators}. Similarly to limit functions, limit operators are defined, in the simplest case, for $A \in \mathcal L(L^2(\mathbb R))$, as limits of $\alpha_{(x, \xi)}(A)$ for $(x, \xi) \to \infty$, where $\alpha_{(x, \xi)}(A) = U_{(x, \xi)} A U_{(x, \xi)}^\ast$ is the action of $\mathbb R^2$ on linear operators by adjoining with the time-frequency shifts $U_{(x, \xi)}$.   

At the end of the paper, we discuss uniform versions of Wiener's Tauberian theorem, which are not closely related to the results in the previous sections, but are part of the problems concerning Wiener's Tauberian theorem and quantum harmonic analysis. More precisely, in \cite[Prop.\ 4.7]{Luef_Skrettingland2021}, it is shown that for $f \in L^\infty(\mathbb R^{2d})$, it holds true that $g \ast f \in C_0(\mathbb R^{2d})$ for some regular $g \in L^1(\mathbb R^{2n})$ if and only if there exists some $0\neq \Phi \in \mathcal S(\mathbb R^{2d})$ such that
\begin{align*}
    \lim_{|x| \to \infty} \sup_{|\xi| \leq R} |V_\Phi f (x, \xi)| = 0
\end{align*}
for every $R > 0$. Here, $V_\Phi f$ is the \emph{short time Fourier transform} of $f$ with window $\Phi$. Even though the statement looks like a result of classical harmonic analysis, the proof relied on methods of quantum harmonic analysis. As a result, the statement could only be proven for functions on $\mathbb R^n$ for \emph{even} $n \in \mathbb N$. In this section, we will provide a proof of this statement on arbitrary locally compact abelian groups which works directly within the framework of classical harmonic analysis, i.e., through a uniform form of Wiener's Tauberian theorem. We also provide analogous uniform theorems for quantum harmonic analysis. As an application, we give a novel compactness characterization for bounded linear operators.

The paper is organized in three parts: In Section \ref{sec:classical}, we discuss the classical part of the problem. We give a somewhat more detailed introduction to Wiener's classical Tauberian theorem and then discuss and prove our results on the classical side. For appropriate generality and later reference, we decided to present the discussion on arbitrary locally compact abelian groups which are non-compact (as the whole theory of Wiener's Tauberian theorem is rather trivial on compact groups). In Section \ref{sec:quantum}, we then extend our results from the classical level to the setting of quantum harmonic analysis. Again, we strive for some generality, presenting the results for quantum harmonic analysis on general abelian phase spaces as discussed in \cite{Fulsche_Galke2023}.  Finally, in Section \ref{sec:uniform}, we present the results on uniform Wiener Tauberian theorems both in classical and quantum harmonic analysis. Since our work contains extensive notation, we provide a short list of notation used at the end.

\section{Wiener's classical Tauberian theorem and limit functions}\label{sec:classical}
Standard references for the following facts on harmonic analysis of locally compact abelian groups are \cite{hewitt_ross_2, reiter20}. Let $G$ be a locally compact abelian group (in the following abbreviated as \emph{lca group}). 
\begin{rem}
If the reader is not familiar with the theory of locally compact abelian groups, it might still be worth reading through the paper, replacing each occurrence of the locally compact abelian group $G$ by $\mathbb R^n$, which is probably the most important instance of an lca group. Even in this particular setting, our results seem to be new and of interest. In this particular case, the dual group $\widehat{G}$ should be replaced with the set of all functions $f_\xi(x) = e^{ix\cdot \xi}$, where $\xi \in \mathbb R^n$ and $x \cdot \xi$ denotes the usual inner product. The Haar measure is in this case simply the Lebesgue measure.
\end{rem}

We will always denote the group operation of $G$ additively. Note that our definition of an lca group includes the Hausdorff property. By $\widehat{G}$ we denote the dual group of $G$ (i.e.\ the group of continuous homomorphisms $G \to \mathbb T$ endowed with the compact-open topology), which is again an lca group. By $e$ we will usually denote the neutral element of $G$. The measure $dx$ will denote any choice of a Haar measure on $G$. We denote the Fourier transform of $f \in L^1(G)$ by
\begin{align*}
\nomenclature{$\mathcal F f$}{Fourier transform of $f$, p.\  \pageref{def:fouriertrafo}}\mathcal Ff (\chi) = \widehat{f}(\chi) = \int_G \overline{\chi(x)} f(x)~dx.
\end{align*}
\label{def:fouriertrafo}
Then, by Plancherel's theorem, there is a normalization $d\chi$ of the Haar measure on the dual group $\widehat{G}$ such that the Fourier transform extends to a unitary operator from $L^2(G)$ to $L^2(\widehat{G})$, and we fix this normalization of $d\chi$ in the following. In that case, it holds true that $\mathcal F\mathcal F f = f(-\cdot)$ (where $\mathcal F \mathcal F$ denotes the composition of the Fourier transform from $G$ to $\widehat{G}$ and from $\widehat{G}$ to $G$). Recall the classical approximation theorem of Wiener:\setcounter{thm}{0}
\begin{thm}[Wiener's approximation theorem]
Let $S \subset L^1(G)$. Then, the translates of functions from $S$ span a dense subset of $L^1(G)$ if and only if
\begin{align*}
\bigcap_{g \in S} \{ \chi \in \widehat{G}: \widehat{g}(\chi) = 0\} = \emptyset.
\end{align*}
\end{thm}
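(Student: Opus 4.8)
The plan is to handle the two implications separately: one is soft functional analysis, the other is the classical Wienerian core.

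For the ``only if'' direction I would argue by contraposition. If $\chi_0$ is a common zero of $\{\widehat g : g \in S\}$, then the translation formula $\widehat{g(\cdot - x)}(\chi) = \overline{\chi(x)}\,\widehat g(\chi)$ shows that every translate of every $g \in S$ lies in the kernel of the bounded linear functional $f \mapsto \widehat f(\chi_0)$ on $L^1(G)$. This kernel is a \emph{proper} closed subspace --- proper because, $\chi_0$ being a nonzero continuous function, some $f \in C_c(G)$ has $\int_G \overline{\chi_0}\,f \neq 0$ --- so the closed span of the translates cannot be all of $L^1(G)$.

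For the ``if'' direction, the first step is to recognise the closed span $V$ of the translates of $S$ as the closed ideal of the convolution algebra $L^1(G)$ generated by $S$: a closed subspace of $L^1(G)$ is translation invariant if and only if it is a convolution ideal, since $g \ast f$ is a norm limit of finite combinations of translates of $f$, while convolving $f$ with an approximate identity concentrated near $y$ recovers $f(\cdot-y)$ in the limit. Under this reformulation the hypothesis says exactly that the hull of $V$ is empty, and it remains to prove that \emph{a closed ideal of $L^1(G)$ with empty hull equals $L^1(G)$}. Here I would localise on the Fourier side: by density of $\{f : \widehat f \in C_c(\widehat G)\}$ in $L^1(G)$ it suffices to put into $V$ each $f$ with $K := \operatorname{supp}\widehat f$ compact; cover $K$ by finitely many open sets $U_1,\dots,U_n$ on whose closures respective $\widehat{g_j}$ (with $g_j \in S$) are zero-free, which is possible since $K$ is compact and the $\widehat g$ have no common zero; take a partition of unity $1 = \sum_j u_j$ near $K$ with $u_j \in A(\widehat G)$ and $\operatorname{supp}u_j \subset U_j$, and functions $w_j \in A(\widehat G)$ with $w_j = 1/\widehat{g_j}$ near $\operatorname{supp}u_j$; then $u_j\widehat f\, w_j = \widehat{h_j}$ for some $h_j \in L^1(G)$, so $\widehat{h_j \ast g_j} = u_j \widehat f$, and $\sum_j h_j \ast g_j$ has Fourier transform $\sum_j u_j \widehat f = \widehat f$, whence $f = \sum_j h_j \ast g_j \in V$.

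The hard part is precisely the two analytic inputs in that last step: the \emph{regularity} of the Fourier algebra $A(\widehat G) = \mathcal F L^1(G)$ (existence of partitions of unity in $A(\widehat G)$ with prescribed compact supports) and \emph{Wiener's division lemma} (that $1/\widehat g$ is locally in $A(\widehat G)$ wherever $\widehat g \neq 0$), together with the background fact that $L^1(G)$ satisfies Ditkin's condition; these encode the genuinely Tauberian content. On $\mathbb R^n$ they are classical, and on a general lca group they are supplied by the structure theory. Everything else --- the easy direction, the ideal reformulation, and the density of functions with compactly supported Fourier transform --- should be routine.
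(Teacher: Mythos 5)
The paper does not prove this statement at all: it is quoted as the classical approximation theorem of Wiener, with the standard references (Hewitt--Ross, Reiter) supplying the proof. Your sketch is the standard textbook argument and is essentially correct. The easy direction via the functional $f \mapsto \widehat f(\chi_0)$ is fine (the translation identity $\widehat{\alpha_x g}(\chi) = \overline{\chi(x)}\,\widehat g(\chi)$ matches the paper's Fourier convention), the identification of the closed translation-invariant span with the closed ideal generated by $S$ is correct (one direction uses norm-continuity of $y \mapsto \alpha_y(f)$ to approximate $g \ast f$ by Riemann-type sums of translates, the other uses a translated approximate identity), and the localisation argument --- compactly supported Fourier transforms are dense, finite subcover on which the $\widehat{g_j}$ are zero-free, partition of unity in $A(\widehat G)$, local inverses $w_j$, and the identity $\widehat{h_j \ast g_j} = u_j \widehat f$ --- is exactly the classical route, valid on any lca group once one has regularity of $A(\widehat G)$ and the local-inversion (Wiener division) lemma, both of which hold by general Gelfand theory for the regular, semisimple algebra $L^1(G)$. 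Two small remarks: you should take the $u_j$ with \emph{compact} support inside $U_j$ so that the local inverse $w_j$ of $\widehat{g_j}$ on a neighbourhood of $\operatorname{supp} u_j$ is available from the compact version of the local-membership theorem; and Ditkin's condition is not actually needed here --- it enters only for spectral synthesis at nonempty hulls, whereas the empty-hull case you are proving requires only regularity and local invertibility. Neither point affects the correctness of the argument.
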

We will call a subset $S \subset L^1(G)$ satisfying the above property a \emph{regular subset of $L^1(G)$}. A function $g \in L^1(G)$ will be called regular if $\{ g\}$ is a regular subset of $L^1(G)$, i.e.\ if $\widehat{g}$ vanishes nowhere. Wiener's approximation theorem is usually used to derive Wiener's Tauberian theorem (and is indeed equivalent to this result):
\begin{thm}[Wiener's Tauberian theorem]
Let $S \subset L^1(G)$ be a regular subset and $f \in L^\infty(G)$. If there is some $a \in \mathbb C$ with
\begin{align*}
\lim_{x \to \infty} g \ast f(x) = a \int_G g(x)~dx
\end{align*}
for every $g \in S$, then the same equation holds for every $g \in L^1(G)$.
\end{thm}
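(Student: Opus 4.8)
The plan is to deduce the statement from Wiener's approximation theorem by packaging the conclusion as membership in a suitable closed, translation-invariant subspace of $L^1(G)$. First I would reduce to the case $a = 0$. Writing $\mathbf 1$ for the constant function with value $1$ on $G$, one has $g \ast (a\mathbf 1) = a \int_G g(x)\,dx$ (the constant function) for every $g \in L^1(G)$, so replacing $f$ by $f - a\mathbf 1 \in L^\infty(G)$ turns the hypothesis into $\lim_{x\to\infty} g\ast f(x) = 0$ for all $g \in S$, and turns the desired conclusion into the same statement for all $g \in L^1(G)$. Since $G$ is non-compact, a bounded continuous function $\varphi$ on $G$ satisfies $\lim_{x\to\infty}\varphi(x) = 0$ exactly when $\varphi \in C_0(G)$; and for $g \in L^1(G)$, $f \in L^\infty(G)$ the convolution $g\ast f$ is bounded and uniformly continuous (a standard fact). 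So, after the reduction, the hypothesis reads $g\ast f \in C_0(G)$ for all $g \in S$, and the goal is to upgrade this to all $g \in L^1(G)$.

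Next I would set
\begin{align*}
I := \{\, g \in L^1(G) : g\ast f \in C_0(G) \,\}
\end{align*}
and verify that $I$ is a closed, translation-invariant linear subspace of $L^1(G)$. Linearity is immediate. Closedness follows from the estimate $\|g\ast f - g_n\ast f\|_\infty \le \|g - g_n\|_1 \|f\|_\infty$: if $g_n \to g$ in $L^1(G)$ with $g_n \in I$, then $g_n\ast f \to g\ast f$ uniformly, and $C_0(G)$ is closed in the sup-norm, hence $g \in I$. Translation-invariance follows from the identity $(\tau_y g)\ast f = \tau_y(g\ast f)$ together with the translation-invariance of $C_0(G)$. (In fact $I$ is a closed ideal of $L^1(G)$, since $C_0(G)$ is an $L^1(G)$-module, but only the weaker properties are needed here.)

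Finally, by hypothesis $S \subseteq I$, so $I$ contains every translate of every element of $S$, and hence contains the closed linear span of all such translates. Since $S$ is a regular subset, i.e.\ $\bigcap_{g\in S}\{\chi \in \widehat G : \widehat g(\chi) = 0\} = \emptyset$, Wiener's approximation theorem identifies this closed span with all of $L^1(G)$. Therefore $I = L^1(G)$, which, upon translating back to arbitrary $a$, is precisely the assertion.

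I do not expect a genuine obstacle once Wiener's approximation theorem is granted; the whole point is the passage to a closed translation-invariant subspace. The only steps requiring (entirely routine) care are the reduction to $a = 0$ via the constant function --- which lies in $L^\infty(G)$ but not $L^1(G)$ --- the checking that $I$ is closed and translation-invariant, and the observation that ``$\lim_{x\to\infty}\varphi(x) = 0$'' coincides with $\varphi \in C_0(G)$, which is where non-compactness of $G$ is used.
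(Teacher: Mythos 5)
Your proof is correct and is exactly the standard derivation the paper has in mind: the theorem is stated without proof, with the remark that it is ``usually'' deduced from Wiener's approximation theorem, and your reduction to $a=0$, passage to the closed translation-invariant subspace $I=\{g\in L^1(G): g\ast f\in C_0(G)\}$, and appeal to regularity of $S$ is precisely that argument (and is the same mechanism behind the paper's ``version 2'' with a general closed translation-invariant $\mathcal D\subset \operatorname{BUC}(G)$). No gaps.
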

Clearly, the equality in Wiener's Tauberian theorem can be rephrased as
\begin{align*}
g \ast f - a \int_G g(x)~dx \in C_0(G).
\end{align*}
Here, \nomenclature{$C_0(G)$}{Continuous functions on $g$ vanishing at infinity, p.\ \pageref{def:c0}}\label{def:c0}$C_0(G)$ is the $C^\ast$-algebra of continuous functions vanishing at infinity. Recall that a continuous function $f: G \to \mathbb C$ vanishes at infinity if for each $\varepsilon > 0$ there exists a compact set $K \subset G$ such that $|f(x)| < \varepsilon$ for every $x \in G \setminus K$. We will denote by $C_b(G)$\nomenclature{$C_b(G)$}{bounded, continuous functions on $G$, p.\ \pageref{def:cb}}\label{def:cb} the bounded continuous functions on $G$. Further, $\operatorname{BUC}(G)$\nomenclature{$\operatorname{BUC}(G)$}{bounded, uniformly continuous functions on $G$, p.\ \pageref{def:buc}}\label{def:buc} denotes the bounded and uniformly continuous functions on $G$. More concretely, this can be defined for any lca group $G$ as
\begin{align*}
\operatorname{BUC}(G) = \{ f \in C_b(G): ~&\forall \varepsilon > 0~~ \exists \text{ a neighborhood }\, O \text{ of } e: \\
&\forall\, x \in G,\, y \in O:\, |f(x) - f(x-y)| < \varepsilon\}.
\end{align*}

For a function $f: G \to \mathbb C$ and some point $x \in G$ we will usually write
\begin{align*}
\nomenclature{$\alpha_x(f)$}{Shift of function, p.\ \pageref{def:shiftoffunction}, or limit function, p\ \pageref{thm:deflimitfcts}} \alpha_x(f)(y) = f(y-x), \quad \nomenclature{$\beta_-(f)$}{Parity of function, p.\ \pageref{def:shiftoffunction}}\beta_-(f)(y) = f(-y).\nonumber
\end{align*}
\label{def:shiftoffunction} A space $X$, consisting of functions on $G$, will be called \emph{translation-invariant} if we have $\alpha_x(f) \in X$ for every $x \in G$ and $f \in X$. 

In the above version of Wiener's Tauberian theorem, $C_0(G)$ can be replaced by any closed, translation-invariant subspace $\mathcal D$ of $\operatorname{BUC}(G)$. By essentially the same proof as for the classical result, we obtain:
\begin{thm}[Wiener's Tauberian theorem, version 2]
Let $\mathcal D \subset \operatorname{BUC}(G)$ be a closed, translation-invariant subspace. Further, let $S \subset L^1(G)$ be a regular subset and $f \in L^\infty(G)$. If there is some $a \in \mathbb C$ with
\begin{align*}
g \ast f - a \int_G g(x)~dx \in \mathcal D
\end{align*}
for every $g \in S$, then this membership in $\mathcal D$ is true for every $g \in L^1(G)$.
\end{thm}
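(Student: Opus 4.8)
The plan is to reduce to the case $a = 0$ and then identify the set of ``good'' functions $g$ as a closed, translation-invariant subspace of $L^1(G)$ containing $S$, after which Wiener's approximation theorem forces it to be all of $L^1(G)$.

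First I would replace $f$ by $f - a$. Since convolving with the constant function $1$ gives $g \ast 1 = \int_G g(x)\,dx$, we have $g \ast (f - a) = g \ast f - a\int_G g(x)\,dx$, so the hypothesis reads $g \ast (f - a) \in \mathcal D$ for all $g \in S$, and the desired conclusion is $g \ast (f - a) \in \mathcal D$ for all $g \in L^1(G)$. Hence we may assume $a = 0$: it suffices to show that if $g \ast f \in \mathcal D$ for every $g \in S$, then $g \ast f \in \mathcal D$ for every $g \in L^1(G)$.

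Next, set $I := \{ g \in L^1(G) : g \ast f \in \mathcal D \}$. Recall the standard fact that for $g \in L^1(G)$ and $f \in L^\infty(G)$ the convolution $g \ast f$ lies in $\operatorname{BUC}(G)$: boundedness is $\|g \ast f\|_\infty \le \|g\|_1\,\|f\|_\infty$, and uniform continuity follows from the continuity of $x \mapsto \alpha_x(g)$ from $G$ into $L^1(G)$. Thus the definition of $I$ makes sense. Clearly $I$ is a linear subspace. It is translation-invariant, because $\alpha_x(g) \ast f = \alpha_x(g \ast f)$ for every $x \in G$ and $\mathcal D$ is translation-invariant. It is norm-closed in $L^1(G)$: if $g_n \to g$ in $L^1(G)$ with $g_n \in I$, then $g_n \ast f \to g \ast f$ uniformly by the estimate above, and since $\mathcal D$ is closed in $\operatorname{BUC}(G)$ (hence closed under uniform limits), $g \ast f \in \mathcal D$.

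Finally, $I$ contains $S$ by hypothesis. Since $S$ is regular, Wiener's approximation theorem tells us that $\closedspan\{ \alpha_x(g) : g \in S,\ x \in G \} = L^1(G)$. As $I$ is a closed, translation-invariant subspace containing $S$, it contains this closed span, so $I = L^1(G)$, which is precisely the claim. I do not expect any genuine obstacle here — the argument is, as indicated in the text, essentially the classical one — and the only points warranting a word of justification are the reduction to $a = 0$, the fact that $g \ast f \in \operatorname{BUC}(G)$ for $g \in L^1(G)$, $f \in L^\infty(G)$, and that the closedness of $\mathcal D$ inside $\operatorname{BUC}(G)$ is exactly what makes $I$ closed in $L^1(G)$.
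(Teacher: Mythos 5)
Your proof is correct and is exactly the classical argument the paper has in mind (the paper omits the proof, remarking only that it is ``essentially the same proof as for the classical result''): reduce to $a=0$, observe that $I=\{g\in L^1(G):g\ast f\in\mathcal D\}$ is a closed, translation-invariant subspace containing $S$, and invoke Wiener's approximation theorem. All the supporting steps you flag (the identity $g\ast(f-a)=g\ast f-a\int_G g$, the estimate $\|g\ast f\|_\infty\le\|g\|_{L^1}\|f\|_\infty$ giving closedness of $I$, and $\alpha_x(g)\ast f=\alpha_x(g\ast f)$) are justified as stated.
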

Stated like this, Wiener's Tauberian theorem is a statement about the membership of many convolutions in a translation-invariant closed subspace of $\operatorname{BUC}(G)$. Note that when $1 \in \mathcal D$, there is no loss of generality in setting $a = 0$ in the theorem, i.e.\ $g \ast f \in \mathcal D$ if and only if $g \ast f - a\int_G g(x)~dx \in \mathcal D$, hence we do not need to care about the part $a\int_G g(x)~dx$.

Frequently, one will also encounter Pitt's extension of Wiener's Tauberian theorem:
\begin{thm}[Pitt's extension to Wiener's Tauberian theorem]
Let $f \in \operatorname{SO}(G)$ and $g \in L^1(G)$ be regular. Then, $f \in B_0(G)$ if and only if $g \ast f \in C_0(G)$. 
\end{thm}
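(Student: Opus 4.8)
The plan is to split the equivalence into the routine implication and Pitt's genuine converse, the latter being where the hypotheses really enter.

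For \emph{$f \in B_0(G) \Rightarrow g \ast f \in C_0(G)$} I would not even use regularity of $g$. Writing $g = g_1 + g_2$ with $g_1 \in L^1(G)$ compactly supported and $\|g_2\|_1$ as small as desired: continuity of translation in $L^1(G)$ makes $g \ast f$ continuous for every $g \in L^1(G)$ and $f \in L^\infty(G)$; one has $|g_1 \ast f(x)| \le \|g_1\|_1\,\sup_{y \in x - \operatorname{supp} g_1}|f(y)| \to 0$ as $x \to \infty$ since $f$ vanishes at infinity, while $\|g_2 \ast f\|_\infty \le \|g_2\|_1\|f\|_\infty$. Hence $g \ast f$ is arbitrarily sup-norm close to $C_0(G)$, which is closed, so $g \ast f \in C_0(G)$.

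For the converse, \emph{$g \ast f \in C_0(G) \Rightarrow f \in B_0(G)$}, I would argue as follows. Since $\{g\}$ is a regular subset of $L^1(G)$ and the hypothesis says $\lim_{x \to \infty} g \ast f(x) = 0 = 0\cdot\int_G g\,dx$, Wiener's Tauberian theorem (applied with $a = 0$) upgrades this to $h \ast f \in C_0(G)$ for \emph{every} $h \in L^1(G)$. Now fix an approximate identity $(h_i)$ on $G$: non-negative functions with $\int_G h_i\,dx = 1$ whose supports run through a neighborhood basis of $e$. For each index and each $x \in G$,
\begin{align*}
|f(x) - h_i \ast f(x)| = \Bigl|\int_G h_i(y)\bigl(f(x) - f(x - y)\bigr)\,dy\Bigr| \le \sup_{y \in \operatorname{supp} h_i}|f(x) - f(x - y)|.
\end{align*}
Given $\varepsilon > 0$, slow oscillation of $f$ supplies a neighborhood $O$ of $e$ and a compact $K \subset G$ so that this right-hand side is $< \varepsilon$ whenever $x \notin K$ and $\operatorname{supp} h_i \subset O$; fixing such an $i$ exhibits $f = (f - h_i \ast f) + h_i \ast f$ as a function bounded by $\varepsilon$ off $K$ plus an element of $C_0(G)$, so $\limsup_{x \to \infty}|f(x)| \le \varepsilon$. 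As $\varepsilon$ was arbitrary, $f$ vanishes at infinity, i.e.\ $f \in B_0(G)$.

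The main obstacle, apart from invoking Wiener's Tauberian theorem (which we may assume), is to make sure the definition of $\operatorname{SO}(G)$ in force is strong enough to furnish the uniform estimate above — smallness of $f(x) - f(x - y)$ for $x$ near infinity, \emph{uniformly} over $y$ in a fixed small neighborhood of $e$ — or else to derive that form from whatever definition is used. If $\operatorname{SO}(G)$ is defined via limit functions (every limit function of $f$ constant), I would instead argue by contraposition: if $f \notin B_0(G)$, choose a net $x_\lambda \to \infty$ along which $|f(x_\lambda)|$ stays away from $0$ and $\alpha_{x_\lambda}(f)$ converges to a limit function, necessarily a constant $c \neq 0$; then $\alpha_{x_\lambda}(g \ast f) = g \ast \alpha_{x_\lambda}(f) \to c\int_G g\,dx$, whereas $\alpha_{x_\lambda}(g \ast f) \to 0$ since $g \ast f \in C_0(G)$ and $x_\lambda \to \infty$, forcing $c\int_G g\,dx = 0$; but regularity of $g$ gives $\int_G g\,dx \neq 0$ (it is the value of $\widehat g$ at the trivial character), so $c = 0$, a contradiction. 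Either way, the substantive content beyond Wiener's theorem is precisely the rigidity that slow oscillation forces on $f$ at infinity.
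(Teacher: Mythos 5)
Your proof is correct in substance but follows a genuinely different route from the paper. The paper obtains Pitt's extension as a corollary of its limit-function machinery: it first establishes $\operatorname{SO}(G) = \operatorname{BUC}(G) + B_0(G) = \operatorname{BUC}(G)_1(\tau_{u.c.})$ and $B_0(G) = C_0(G)_1(\tau_{u.c.})$, and then applies Theorem \ref{mainthm:2} with $\mathcal A = \operatorname{BUC}(G)$, $\mathcal I = C_0(G)$, $j = 1$ and $\tau = \tau_{u.c.}$. Your argument is the classical, self-contained one: Wiener's Tauberian theorem upgrades $g \ast f \in C_0(G)$ to $h \ast f \in C_0(G)$ for all $h \in L^1(G)$, and the approximate-identity estimate $|f(x) - h_i \ast f(x)| \le \int_G h_i(y)|f(x) - f(x-y)|\,dy$ together with slow oscillation exhibits $f$, off a compact set, as uniformly $\varepsilon$-close to an element of $C_0(G)$. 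The paper's route buys the general framework that later transfers verbatim to the operator setting; yours buys brevity and independence from that machinery. Two caveats. First, with the paper's definition of $\operatorname{SO}(G)$ the bound $\|(f - \alpha_y(f))\chi_{K^c}\|_\infty < \varepsilon$ is an essential supremum in $x$ for each fixed $y$, so your pointwise bound $\sup_{y \in \operatorname{supp} h_i}|f(x) - f(x-y)| < \varepsilon$ for all $x \notin K$ is not literally available; keep the integral form instead and use Fubini to get $|f(x) - h_i \ast f(x)| \le \varepsilon$ for (locally) almost every $x \notin K$, which suffices since $B_0(G)$ is also defined via essential suprema. Second, your fallback contrapositive argument presumes that every limit function of a slowly oscillating function is a constant; with the paper's definition the limit functions are merely a uniformly equicontinuous family in $\operatorname{BUC}(G)$ (indeed all of $\operatorname{BUC}(G)$ lies in $\operatorname{SO}(G)$), so that branch does not apply here --- but it is not needed, as your first argument matches the definition actually in force.
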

We will define the spaces $\operatorname{SO}(G)$ and $B_0(G)$ later in detail, they stand for \emph{slowly oscillating functions} and \emph{functions vanishing at infinity}. Finally, Wiener's approximation theorem has another consequence, which does not bear a particular name, but is not hard to prove (using the existence of a bounded approximate unit in $L^1(G)$):
\begin{thm}\label{thm:bucinv}
Let $\mathcal D$ be a closed and translation-invariant subspace of $\operatorname{BUC}(G)$. Further, let $S \subset L^1(G)$ be a regular subset. If $f \in \operatorname{BUC}(G)$, then:
\begin{align*}
f \in \mathcal D \Leftrightarrow g \ast f \in \mathcal D \text{ for every } g \in S \Leftrightarrow g \ast f \in \mathcal D \text{ for every } g \in L^1(G).
\end{align*}
\end{thm}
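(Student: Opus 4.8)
The plan is to prove the chain of equivalences by the cycle ``$f\in\mathcal D$ $\Rightarrow$ $g\ast f\in\mathcal D$ for all $g\in L^1(G)$ $\Rightarrow$ $g\ast f\in\mathcal D$ for all $g\in S$ $\Rightarrow$ $f\in\mathcal D$''. The middle implication is immediate because $S\subseteq L^1(G)$, so everything reduces to two claims: first, that $\mathcal D$, being a closed translation-invariant linear subspace, is automatically stable under convolution by $L^1(G)$; second, the genuine Tauberian-type statement, which is where Wiener's approximation theorem and a bounded approximate identity enter.

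For $f\in\mathcal D\Rightarrow g\ast f\in\mathcal D$ for all $g\in L^1(G)$: the key fact is that for $f\in\operatorname{BUC}(G)$ the orbit map $x\mapsto\alpha_x(f)$ is continuous and bounded from $G$ into the Banach space $\operatorname{BUC}(G)$, since $\|\alpha_x f-\alpha_{x'}f\|_\infty=\|\alpha_{x-x'}f-f\|_\infty\to 0$ as $x\to x'$ by uniform continuity. It suffices to treat $g$ of compact support, as these are dense in $L^1(G)$, $\|(g-g')\ast f\|_\infty\le\|g-g'\|_1\|f\|_\infty$ by Young's inequality, and $\mathcal D$ is closed. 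For such $g$ the formula $g\ast f=\int_G g(x)\,\alpha_x(f)\,dx$ realizes $g\ast f$ as a $\operatorname{BUC}(G)$-valued Bochner integral (the integrand is essentially separably valued, its values lying in the linear span of the compact set $\{\alpha_x(f):x\in\operatorname{supp}g\}$, and $\int_G|g(x)|\,\|\alpha_x f\|_\infty\,dx=\|g\|_1\|f\|_\infty<\infty$), hence $g\ast f$ lies in the closed linear span of $\{\alpha_x(f):x\in G\}$, which is contained in $\mathcal D$.

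For $g\ast f\in\mathcal D$ for all $g\in S$ $\Rightarrow f\in\mathcal D$: fix a bounded approximate identity $(u_\lambda)$ in $L^1(G)$ given by $u_\lambda=|O_\lambda|^{-1}\mathbf 1_{O_\lambda}$, where $(O_\lambda)$ is a neighborhood basis of $e$ by relatively compact open sets; then $\|u_\lambda\|_1=1$ and, exactly as above via uniform continuity, $u_\lambda\ast f\to f$ in $\operatorname{BUC}(G)$. By Wiener's approximation theorem the span of $\{\alpha_x(g):g\in S,\ x\in G\}$ is dense in $L^1(G)$, so each $u_\lambda$ is an $L^1$-limit of finite sums $h=\sum_k c_k\,\alpha_{x_k}(g_k)$ with $g_k\in S$. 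Using $\alpha_x(g)\ast f=\alpha_x(g\ast f)$, the hypothesis $g_k\ast f\in\mathcal D$, and that $\mathcal D$ is a translation-invariant subspace, such $h\ast f=\sum_k c_k\,\alpha_{x_k}(g_k\ast f)$ lies in $\mathcal D$; since $h\ast f\to u_\lambda\ast f$ uniformly by Young and $\mathcal D$ is closed, $u_\lambda\ast f\in\mathcal D$. Finally $u_\lambda\ast f\to f$ uniformly and $\mathcal D$ is closed, so $f\in\mathcal D$.

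The main obstacle is this last implication: it is the only place where the three assumptions on $\mathcal D$ (linear, closed, translation-invariant) and the regularity of $S$ all genuinely cooperate, through a two-stage approximation — first replacing the approximate identity by finite combinations of translates of functions from $S$ (this is Wiener's approximation theorem), then recovering $f$ from the approximate identity. The point that makes the argument cleaner than Wiener's Tauberian theorem itself is that $f$ is already assumed to lie in $\operatorname{BUC}(G)$, so convolution against $(u_\lambda)$ converges in sup-norm and no extra hypothesis on $\mathcal D$ (such as containing the constants) is required.
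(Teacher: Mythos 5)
Your proof is correct and follows exactly the route the paper indicates (the paper omits the proof, remarking only that the statement follows from Wiener's approximation theorem together with a bounded approximate identity in $L^1(G)$): you close $\mathcal D$ under $L^1$-convolution via the vector-valued integral $g\ast f=\int_G g(x)\alpha_x(f)\,dx$, and you recover $f$ from $g\ast f$, $g\in S$, by approximating an approximate identity in $L^1$-norm by finite linear combinations of translates of elements of $S$ and using that $u_\lambda\ast f\to f$ uniformly for $f\in\operatorname{BUC}(G)$. All the individual steps (the identity $\alpha_x(g)\ast f=\alpha_x(g\ast f)$, the Young-type estimate, and the Bochner-integrability of the integrand for compactly supported $g$) check out.
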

All these results have some common features: They are statements relating regular subsets of $L^1(G)$ and membership of convolutions in closed, translation-invariant subspaces of $\operatorname{BUC}(G)$. There are nevertheless some imminent open questions related to this problem, for example: Can one characterize
\begin{align*}
\{ f \in L^\infty(G): ~g \ast f \in \mathcal D \text{ for } g \in L^1(G)\}
\end{align*}
for some given closed, translation-invariant subspace of $\operatorname{BUC}(G)$ directly, i.e.\ without using convolutions by a regular subset of $L^1(G)$? And what is the precise connection between the three spaces $SO(G)$, $B_0(G)$ and $C_0(G)$ occurring in Pitt's extension? In this part of the paper, we will present an answer to these questions for classical harmonic analysis by the theory of limit functions.

\subsection{$L^\infty(G)$, $\operatorname{BUC}(G)$ and shifts to infinity}
If $G$ is a second-countable locally compact abelian group, then it is well-known that the space $L^\infty(G)$ consisting of (equivalence classes of) essentially bounded Borel measurable functions can be identified with the dual space of $L^1(G)$ (with respect to some choice of Haar measure) by:
\begin{align*}
\Psi_f(g) = \int_G g(x) f(x)~dx, \quad g \in L^1(G), ~f \in L^\infty(G).
\end{align*}
If $G$ is not second countable and hence the Haar measure is no longer $\sigma$-finite, this identification of the dual of $L^1(G)$ with $L^\infty(G)$ breaks down. Nevertheless, it turns out that the failure of this duality is simply due to the fact that the space $L^\infty(G)$ was defined in the wrong way: In general, $L^\infty(G)$ has to be interpreted as equivalence classes of functions which are locally-Borel-measurable and which are bounded outside of locally-null sets. For details, we refer to \cite[Section 2.3]{Folland2016}. For the reader's convenience, we illustrate these concepts in an example:
\begin{ex}
    We consider the lca group $G = \mathbb R \times \mathbb R_d$, where $\mathbb R_d$ denotes the group of real numbers with the discrete topology. Then, the subset $\{0\} \times \mathbb R_d \subset \mathbb R \times \mathbb R_d$ is a Borel set with infinite Haar measure. Nevertheless, whenever this set has a non-trivial intersection with a compact subset of $\mathbb R \times \mathbb R_d$, the result is a set of Haar measure zero (this is a simple consequence of the fact that for each compact $K \subset \mathbb R \times \mathbb R_d$ we necessarily have that $\{ y: ~(x,y) \in K \text{ for some } x \in \mathbb R\}$ needs to be finite). Hence, $\{ 0\} \times \mathbb R_d$ is a Borel set with non-zero Haar measure but satisfies the locally-null condition that we did not precisely introduce here. The indicator function $\mathbf 1_{\{ 0 \} \times \mathbb R_d}$ is clearly Borel measurable and not zero with respect to the equivalence relation of equality outside null sets, but it is zero with respect to the equivalence relation of equality outside locally-null sets. It is not hard to verify that $\int_{\{ 0\} \times \mathbb R_d} g(x)~dx = 0$ for every $g \in L^1(G)$, so this function corresponds to the zero functional on $L^1(G)$.
\end{ex}

With these conventions, $L^\infty(G)$ indeed turns out to be the dual space of $L^1(G)$ in a natural way and isometrically, and we will always understand $L^\infty(G)$ in this way. Further, we will usually write $\langle f, g\rangle = \int_G f(x) g(x)~dx$ for the dual pairing between $g \in L^1(G)$ and $f \in L^\infty(G)$.

From now on, we tacitly assume that $G$ is non-compact. While essentially everything is still true in the compact case, the statements are rather trivial and not particularly interesting.

The following fact is well-known and important to our presentation:
\begin{prop}
Let $G$ a lca group. Then, 
\begin{align*}
\operatorname{BUC}(G) = \{ f \in L^\infty(G): x \mapsto \alpha_x(f) \text{ is continuous w.r.t. } \| \cdot\|_\infty\}.
\end{align*}
\end{prop}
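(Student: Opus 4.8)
The plan is to prove both inclusions, the forward one being essentially a reformulation of the definition and the backward one relying on approximation by convolutions with an approximate unit in $L^1(G)$.

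For the inclusion ``$\subset$'', suppose $f \in \operatorname{BUC}(G)$ and fix $\varepsilon > 0$. By definition there is a neighborhood $O$ of $e$ with $|f(x) - f(x-y)| < \varepsilon$ for all $x \in G$ and $y \in O$; taking the supremum over $x$, this is exactly $\|\alpha_y(f) - f\|_\infty \le \varepsilon$ for $y \in O$, i.e.\ continuity of $x \mapsto \alpha_x(f)$ at $e$. Since $\alpha_x \alpha_{x'} = \alpha_{x+x'}$ and each $\alpha_x$ is an isometry of $L^\infty(G)$ (translation preserves Haar measure and hence the locally-null sets), we have $\|\alpha_x(f) - \alpha_{x'}(f)\|_\infty = \|\alpha_{x-x'}(f) - f\|_\infty$, so continuity at $e$ upgrades to continuity at every point.

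For the inclusion ``$\supset$'', let $f \in L^\infty(G)$ with $x \mapsto \alpha_x(f)$ norm-continuous. First recall the standard fact that $u \ast f \in \operatorname{BUC}(G)$ for every $u \in L^1(G)$: boundedness is $\|u \ast f\|_\infty \le \|u\|_1 \|f\|_\infty$, and uniform continuity follows from $|u \ast f(x) - u \ast f(x')| \le \|f\|_\infty\, \|\alpha_{x-x'}(u) - u\|_1$ together with the continuity of translation on $L^1(G)$, whose modulus depends only on $x - x'$. Now choose a bounded approximate unit $(u_i)_{i \in I}$ in $L^1(G)$ consisting of nonnegative functions with $\int_G u_i \, dx = 1$ and $\operatorname{supp} u_i$ contained in arbitrarily small neighborhoods of $e$. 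Since $\int_G u_i\,dx = 1$, for each $i$ one can write $u_i \ast f - f = \int_G u_i(y)\,(\alpha_y(f) - f)\,dy$, read most conveniently as a vector-valued integral in $L^\infty(G)$ (the integrand $y \mapsto \alpha_y(f) - f$ being continuous and, against $u_i$, compactly supported; this avoids $\sigma$-finiteness issues when $G$ is not second countable). Hence
\begin{align*}
\|u_i \ast f - f\|_\infty \le \int_G u_i(y)\, \|\alpha_y(f) - f\|_\infty \, dy \le \sup_{y \in \operatorname{supp} u_i} \|\alpha_y(f) - f\|_\infty,
\end{align*}
which tends to $0$ along $I$ by continuity of $y \mapsto \alpha_y(f)$ at $e$. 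Thus $f$ lies in the $\|\cdot\|_\infty$-closure of $\operatorname{BUC}(G)$. Finally, $\operatorname{BUC}(G)$ is closed in $L^\infty(G)$: if $(h_n) \subset \operatorname{BUC}(G)$ is $L^\infty$-Cauchy, then since each $h_n - h_m$ is continuous and the Haar measure has full support, $\|h_n - h_m\|_\infty$ coincides with the genuine supremum $\sup_{x \in G}|h_n(x) - h_m(x)|$; hence $(h_n)$ converges uniformly to a bounded uniformly continuous function representing the $L^\infty$-limit. Combining, $f \in \operatorname{BUC}(G)$.

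I expect the main subtlety to be the middle step, establishing $u_i \ast f \to f$ in $L^\infty(G)$, and in particular making the integral manipulation rigorous on groups that are not second countable, where $L^\infty(G)$ is the dual of $L^1(G)$ rather than an $L^p$-space over a $\sigma$-finite measure space; the vector-valued integral formulation sidesteps the Fubini and measurability problems that a naive pointwise estimate would run into. The two ``closedness''-type facts (closedness of $\operatorname{BUC}(G)$ in $L^\infty(G)$, via full support of Haar measure, and the identity $u_i \ast f - f = \int u_i(y)(\alpha_y(f)-f)\,dy$) are routine once set up correctly.
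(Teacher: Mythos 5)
Your proof is correct and follows essentially the same route as the paper's: the forward inclusion is the definition plus translation invariance, and the backward inclusion convolves $f$ with a compactly supported approximate identity, notes that the convolutions are genuinely bounded and uniformly continuous, and passes to the uniform limit (the paper phrases this as $\varphi_\gamma \ast f$ being a Cauchy net in supremum norm whose limit represents $f$, which is the same argument as your norm convergence plus closedness of $\operatorname{BUC}(G)$). Your explicit treatment of the vector-valued integral and the identification of essential supremum with supremum for continuous functions matches the paper's handling of the non-$\sigma$-finite case.
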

\begin{proof}[Comment and proof]
Note that there is some ambiguity here: Above, elements from $\operatorname{BUC}(G)$ were, by definition, continuous functions. In the proposition, we wrote them as elements in $L^\infty(G)$ on which the shifts act strongly continuous. It is a priori not clear if such elements even have continuous representatives. Indeed, such a representative always exists, and its construction also proves the proposition: We recall that an \emph{approximate identity of $L^1(G)$} is a net of functions $(\varphi_\gamma)_{\gamma \in \Gamma} \subset L^1(G)$ such that $f \ast \varphi_\gamma \overset{\gamma \in \Gamma}{\longrightarrow} f$ in $L^1$-norm for every $f \in L^1(G)$. Such an approximate identify always exists and can be chosen normalized (i.e., with $\| \varphi_\gamma\|_{L^1} = 1$ for each $\gamma)$ and positive; for example, given a neighborhood base $\mathcal O$ (ordered decreasingly by inclusion) of $e \in G$, the functions $\psi_{O} = \frac{\chi_{O}}{|O|}$, $O \in \mathcal O$, form an approximate identity.

We first show that each element of the set on the right-hand side has a continuous representative, which is contained in $\BUC(G)$. If we let $\varphi_\gamma$ be any non-negative approximate identity in $L^1(G)$ with $\beta_-(\varphi_\gamma) = \varphi_\gamma$ and $f \in L^\infty(G)$ such that the shifts act continuously on $f$, then $\varphi_\gamma \ast f$ is defined pointwise as
\begin{align*}
\varphi_\gamma \ast f(x) &= \int_{G} \varphi_\gamma(x-y) f(y)~dy = \int_G \varphi_\gamma(y-x)f(y)~dy \\
&= \langle \alpha_x(\varphi_\gamma), f\rangle = \langle \varphi_\gamma, \alpha_{-x}(f)\rangle.
\end{align*}
Using this, it is not hard to verify that $\varphi_\gamma \ast f$ is a bounded uniformly continuous function in the sense of a function and not as an equivalence class. For continuous functions, the essential supremum equals the supremum, hence $\varphi_\gamma \ast f$ is a Cauchy net in the supremum norm and thus it converges to some uniformly continuous function. Further, $\varphi_\gamma \ast f \to f$ in the sense of equivalence classes. Hence, we obtained a bounded uniformly continuous representative of the equivalence class $f$. In particular, we also have shown the inclusion ``$\supseteq$'' of the statement. 

The inclusion ``$\subseteq$'' is a straightforward consequence of the definition of the space $\BUC(G)$, together with the triangle inequality.
\end{proof}

Clearly, $\operatorname{BUC}(G)$ is a unital $C^\ast$-subalgebra of $C_b(G)$. In particular, $\mathcal M(\operatorname{BUC}(G))$, the maximal ideal space, can be considered as a compactification of $G$, identifying points $x \in G$ with the point evaluation functionals $\delta_x$. This compactification is sometimes also called the \emph{Samuel compactification of $G$}, which we will usually abbreviate as \nomenclature{$\sigma G$}{Samuel compactification of $G$, p.\ \pageref{def:Scomp}}\label{def:Scomp}$\sigma G := \mathcal M(\operatorname{BUC}(G))$. We will also write \nomenclature{$\partial G$}{Boundary of $G$ in its Samuel compactification, p.\ \pageref{def:boundG}}\label{def:boundG}$\partial G:= \sigma G \setminus G$ for the boundary of $G$ in its Samuel compactification.

We denote by \nomenclature{$C_c(G)$}{Continuous, compactly supported functions on $G$, p.\ \pageref{def:Cc}}\label{def:Cc}$C_c(G)$ the set of continuous functions with compact support. For $g \in C_c(G)$, $x \mapsto \alpha_x(g)$ is of course continuous in the uniform topology. This easily implies that $x \mapsto \alpha_x(g)$ is also continuous in $L^1(G)$-norm with $\| \alpha_x(g)\|_{L^1(G)} = \| g\|_{L^1(G))}$. Since $C_c(G)$ is dense in $L^1(G)$, $L^1$-isometry and $L^1$-continuity of the shift holds true for all $g \in L^1(G)$:
\begin{align*}
\text{For } g \in L^1(G),~x \mapsto \alpha_x(g) \text{ is uniformly continuous w.r.t. } \| \cdot\|_{L^1(G)}.
\end{align*}
By duality, it is clear that for each $f \in L^\infty(G)$ the map $G \ni x \mapsto \alpha_x(f)$ is continuous with respect to the weak$^\ast$ topology. This fact has the following important consequence:
\begin{thmdef}\label{thm:deflimitfcts}
Let $f \in L^\infty(G)$. Then, the continuous map
\begin{align*}
    G \ni x \mapsto \alpha_x(f) \in L^\infty(G),
\end{align*}
considered with respect to the weak$^\ast$ topology, extends to a continuous map from $\sigma G$ to $L^\infty(G)$ (again, with the weak$^\ast$ topology). For $x \in \partial G$, we still denote the value of this map by $\alpha_x(f)$.
\end{thmdef}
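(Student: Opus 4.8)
The plan is to reduce the statement to a pointwise (in $g\in L^1(G)$) extension problem and then feed in the fact, recorded just before the statement, that translation acts \emph{isometrically and uniformly continuously} on $L^1(G)$. Fix $f\in L^\infty(G)$. For $g\in L^1(G)$ define $h_g\colon G\to\mathbb C$ by $h_g(x)=\langle \alpha_x(f),g\rangle$. Since the weak$^\ast$ topology on $L^\infty(G)$ is the initial topology of the family of functionals $\mu\mapsto\langle\mu,g\rangle$, $g\in L^1(G)$, continuity of $x\mapsto\alpha_x(f)$ into $(L^\infty(G),\mathrm{w}^\ast)$ just means continuity of each $h_g$, and extending the map to $\sigma G$ amounts to extending each $h_g$ to $\sigma G$ in a way that, for each fixed $x\in\sigma G$, assembles to an element of $L^\infty(G)=L^1(G)^\ast$. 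So it suffices to produce continuous extensions $\widehat h_g\in C(\sigma G)$ and check that $g\mapsto\widehat h_g(x)$ is a bounded linear functional.

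The one analytic step is to show $h_g\in\operatorname{BUC}(G)$. Using translation invariance of the Haar measure, $h_g(x)=\int_G f(t-x)g(t)\,dt=\int_G f(s)g(s+x)\,ds=\langle f,\alpha_{-x}(g)\rangle$, whence for $x,y\in G$, $|h_g(x)-h_g(y)|\le\|f\|_\infty\,\|\alpha_{-x}(g)-\alpha_{-y}(g)\|_{L^1(G)}=\|f\|_\infty\,\|\alpha_{y-x}(g)-g\|_{L^1(G)}$, the last equality by the $L^1$-isometry of the shift and the cocycle identity $\alpha_a\alpha_b=\alpha_{a+b}$. By the uniform continuity of $z\mapsto\alpha_z(g)$ in $L^1$-norm, the right-hand side is $<\varepsilon$ once $y-x$ lies in a suitable neighbourhood of $e$; together with $|h_g(x)|\le\|f\|_\infty\|g\|_{L^1(G)}$ this gives $h_g\in\operatorname{BUC}(G)$. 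Since $\operatorname{BUC}(G)\cong C(\sigma G)$ via the Gelfand transform and $G\hookrightarrow\sigma G$ densely, $h_g$ extends uniquely to $\widehat h_g\in C(\sigma G)$; for $x\in\sigma G$ we \emph{define} $\langle\alpha_x(f),g\rangle:=\widehat h_g(x)$.

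It remains to see that, for fixed $x\in\sigma G$, the assignment $g\mapsto\langle\alpha_x(f),g\rangle$ is a well-defined element of $L^\infty(G)$, and that the resulting map is weak$^\ast$-continuous. Linearity in $g$ holds on the dense subset $G\subset\sigma G$ because $h_{ag_1+bg_2}=ah_{g_1}+bh_{g_2}$ pointwise on $G$, and two continuous functions on $\sigma G$ that agree on the dense set $G$ coincide; hence linearity passes to the extensions. Boundedness follows from $|\widehat h_g(x)|\le\sup_{y\in G}|h_g(y)|\le\|f\|_\infty\|g\|_{L^1(G)}$ (again using density of $G$ and continuity of $\widehat h_g$), so $\alpha_x(f)\in L^\infty(G)$ with $\|\alpha_x(f)\|\le\|f\|_\infty$, and for $x\in G$ this recovers the original $\alpha_x(f)$. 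Finally, continuity of $\sigma G\ni x\mapsto\alpha_x(f)$ in the weak$^\ast$ topology is, by the description of that topology above, exactly the continuity of each $x\mapsto\langle\alpha_x(f),g\rangle=\widehat h_g(x)$, which holds by construction.

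I expect the only genuine obstacle to be the first step, $h_g\in\operatorname{BUC}(G)$: this is precisely where the argument would break for a bounded function acted on by a merely topological rather than uniformly continuous translation, and it is exactly the $L^1$-uniform continuity of the shift that saves it; the remainder is the universal property of the Samuel compactification plus density and Banach--Alaoglu-type bookkeeping. (Equivalently, one may phrase the whole proof as: $x\mapsto\alpha_x(f)$ is a uniformly continuous map from $G$ into the weak$^\ast$-compact ball $\{\mu\in L^\infty(G):\|\mu\|\le\|f\|_\infty\}$, and hence extends to $\sigma G=\mathcal M(\operatorname{BUC}(G))$ by that universal property — with the displayed estimate being the verification of the uniform continuity.)
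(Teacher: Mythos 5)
Your argument is correct and is essentially the paper's own proof: both reduce the statement to showing that $x\mapsto\langle g,\alpha_x(f)\rangle=\langle\alpha_{-x}(g),f\rangle$ lies in $\operatorname{BUC}(G)$ (via the $L^1$-isometry and $L^1$-uniform continuity of translation), extend these scalar functions to $C(\sigma G)$, and then recover $\alpha_x(f)\in L^\infty(G)=L^1(G)^\ast$ from the resulting bounded linear functional at each fixed $x\in\sigma G$. Your write-up merely spells out the density/linearity bookkeeping that the paper leaves implicit.
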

\begin{proof}
Since $L^\infty(G)$ is the dual space of $L^1(G)$, the function
\begin{align*}
G \ni x \mapsto \langle g, \alpha_x(f)\rangle = \langle \alpha_{-x}(g), f\rangle
\end{align*}
is bounded and uniformly continuous for every pair $(g, f) \in L^1(G) \times L^\infty(G)$. In particular, this can be extended to a continuous function
\begin{align*}
\sigma G \ni x \mapsto \varphi_{(g, f)}(x)
\end{align*}
with $\varphi_{(g,f)}(x) = \langle g, \alpha_x(f)\rangle$ for $x \in G$. For $x \in \sigma G$ and $f \in L^\infty(G)$ fixed, $g \mapsto \varphi_{(g, f)}(x)$ is a bounded linear functional of $L^1(G)$, so there is a unique function $\alpha_x(f) \in L^\infty(G)$ such that
\begin{align*}
\varphi_{(g,f)}(x) = \langle g, \alpha_x(f)\rangle.
\end{align*}
This finishes the proof.
\end{proof}
The functions $\alpha_x(f)$, $x \in \partial G$, are called the \emph{limit functions of $f$}. We list some of their elementary properties:
\begin{lem}\label{lem:prop_limit_fcts}
    Let $f \in L^\infty(G)$ and $x \in \sigma G$.
    \begin{enumerate}
        \item The map $f \mapsto \alpha_x(f)$ is linear.
        \item  The norm of the limit functions can be estimated as $\| \alpha_x(f)\|_\infty \leq \| f\|_\infty$.
        \item Let $y \in G$. Then, it holds true that $\alpha_x(\alpha_y(f)) = \alpha_y(\alpha_x(f))$.
    \end{enumerate}
\end{lem}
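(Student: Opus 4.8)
The plan is to verify each of the three properties directly from the characterization of $\alpha_x(f)$ established in Theorem/Definition \ref{thm:deflimitfcts}, namely that $\langle g, \alpha_x(f)\rangle$ is, for fixed $g \in L^1(G)$, the continuous extension to $\sigma G$ of the function $G \ni y \mapsto \langle g, \alpha_y(f)\rangle = \langle \alpha_{-y}(g), f\rangle$. The recurring technique is the same in all three cases: the claimed identity is an equality of elements of $L^\infty(G)$, so it suffices to test it against every $g \in L^1(G)$; both sides are then continuous functions of $x$ on $\sigma G$ (being, in each case, built from the extensions $\varphi_{(g,\cdot)}$), and they already agree on the dense subset $G \subset \sigma G$ by elementary properties of the ordinary shift, so they agree everywhere.

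For (1), linearity: fix $g \in L^1(G)$ and $x \in \sigma G$. For $x \in G$ the map $f \mapsto \langle g, \alpha_x(f)\rangle = \langle \alpha_{-x}(g), f\rangle$ is linear in $f$. For general $x \in \sigma G$, pick a net $x_\gamma \to x$ in $\sigma G$ with $x_\gamma \in G$; then $\langle g, \alpha_x(\lambda f_1 + \mu f_2)\rangle = \lim_\gamma \langle g, \alpha_{x_\gamma}(\lambda f_1 + \mu f_2)\rangle = \lim_\gamma \bigl(\lambda \langle g, \alpha_{x_\gamma}(f_1)\rangle + \mu \langle g, \alpha_{x_\gamma}(f_2)\rangle\bigr) = \lambda \langle g, \alpha_x(f_1)\rangle + \mu \langle g, \alpha_x(f_2)\rangle$, using continuity of the extensions $\varphi_{(g, f_i)}$ and $\varphi_{(g, \lambda f_1 + \mu f_2)}$. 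Since $g$ was arbitrary, $\alpha_x(\lambda f_1 + \mu f_2) = \lambda \alpha_x(f_1) + \mu \alpha_x(f_2)$. For (2), the norm bound: for $x \in G$ one has $\|\alpha_x(f)\|_\infty = \|f\|_\infty$; for general $x$, write $|\langle g, \alpha_x(f)\rangle| = \lim_\gamma |\langle g, \alpha_{x_\gamma}(f)\rangle| \leq \|g\|_{L^1} \|f\|_\infty$ for all $g \in L^1(G)$, and taking the supremum over $\|g\|_{L^1} \leq 1$ gives $\|\alpha_x(f)\|_\infty \leq \|f\|_\infty$ by the isometric duality $L^\infty(G) = L^1(G)^\ast$.

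For (3), commutation with an ordinary shift $\alpha_y$, $y \in G$: fix $g \in L^1(G)$ and $x \in \sigma G$. On the one hand, $\langle g, \alpha_x(\alpha_y(f))\rangle$ is the value at $x$ of the continuous extension of $G \ni z \mapsto \langle g, \alpha_z(\alpha_y(f))\rangle = \langle g, \alpha_{z+y}(f)\rangle = \langle \alpha_{-y}(g), \alpha_z(f)\rangle$. On the other hand, $\langle g, \alpha_y(\alpha_x(f))\rangle = \langle \alpha_{-y}(g), \alpha_x(f)\rangle$ is the value at $x$ of the continuous extension of $G \ni z \mapsto \langle \alpha_{-y}(g), \alpha_z(f)\rangle$, where I use that $\alpha_y$ acts on $L^\infty(G)$ as the Banach-space adjoint of $\alpha_{-y}$ on $L^1(G)$. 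Thus both functions on $\sigma G$ are the continuous extension of the one and the same function $z \mapsto \langle \alpha_{-y}(g), \alpha_z(f)\rangle$ on $G$, hence coincide at $x$; since $g$ was arbitrary, $\alpha_x(\alpha_y(f)) = \alpha_y(\alpha_x(f))$.

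There is no real obstacle here; the only point requiring a little care is bookkeeping the identity $\langle g, \alpha_y(h)\rangle = \langle \alpha_{-y}(g), h\rangle$ (equivalently, that $\alpha_y$ on $L^\infty$ is weak$^\ast$ continuous, being a dual map), together with the associativity $\alpha_{z}\alpha_y = \alpha_{z+y}$ of the honest shift, so that in (3) the two a priori different continuous extensions are recognized as extensions of the same function on the dense set $G$. Once that is in place, uniqueness of continuous extensions from $G$ to $\sigma G$ (which holds because $G$ is dense in its Samuel compactification and the target $\mathbb C$ is Hausdorff) closes every case.
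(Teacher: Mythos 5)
Your proof is correct and follows essentially the same route as the paper's: all three parts are verified by pairing against arbitrary $g \in L^1(G)$, passing to a net $x_\gamma \to x$ (equivalently, invoking uniqueness of the continuous extension to $\sigma G$), and using the isometry $\|\alpha_{-x_\gamma}(g)\|_{L^1} = \|g\|_{L^1}$ together with the duality $L^\infty(G) = L^1(G)^\ast$ for the norm bound. The commutation identity in (3) is exactly the paper's computation $\langle \alpha_{-x_\gamma}(\alpha_{-y}(g)), f\rangle = \langle \alpha_{-y}(\alpha_{-x_\gamma}(g)), f\rangle$, merely rephrased as two continuous extensions agreeing on the dense subset $G$.
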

\begin{proof}
\begin{enumerate}
    \item This follows immediately from the definition.
    \item Since $L^\infty(G)$ can be isometrically identified with the dual space of $L^1(G)$, it holds true that \begin{align*} 
    \| f\|_\infty = \sup_{g\in L^1; ~\| g\| = 1} |\langle g, f\rangle|.
    \end{align*}
    Hence, for a net $(x_\gamma)_{\gamma \in \Gamma} \subset G$ converging to $x \in \sigma G$, we see that (using that $\| \alpha_{-x_\gamma}(g)\|_{L^1} = \| g\|_{L^1}$):
    \begin{align*}
        \sup_{g \in L^1; ~\| g\|_{L^1} = 1} |\langle g, \alpha_{x}(f)\rangle| &= \sup_{g \in L^1; ~\| g\|_{L^1} = 1} \lim_{\gamma \in \Gamma} |\langle \alpha_{-x_\gamma}(g), f\rangle|\\
        &\leq \sup_{g \in L^1; ~\| g\|_{L^1} = 1} |\langle g, f\rangle| = \| f\|_\infty.
    \end{align*}
    \item For $x \in G$ this is clear. For $x \in \partial G$, this follows right away from the definition of $\alpha_x(f)$: If $f \in L^\infty(G)$ and $g \in L^1(G)$, then
    \begin{align*}
        \langle g, \alpha_x(\alpha_y(f))\rangle &= \lim_{\gamma \in \Gamma} \langle g, \alpha_{x_\gamma} (\alpha_y(f))\rangle = \lim_{\gamma \in \Gamma} \langle \alpha_{-x_\gamma}(\alpha_{-y}(g)), f\rangle\\
        &= \lim_{\gamma\in \Gamma} \langle \alpha_{-y}(\alpha_{-x_\gamma}(g)), f\rangle = \langle g, \alpha_y(\alpha_x(f))\rangle,
    \end{align*}
    which finishes the proof.
\end{enumerate}
\end{proof}

We will now show that for functions in $\mathrm{BUC}(G)$, the functions $\alpha_x(f)$, where $x\in \partial G$, can also be expressed in a different way. To this end, we will make use of a map $\rho: \sigma G \to \sigma G$. Before properly defining this map, we note that we will often write $\rho x$ instead of $\rho(x)$, to spare the reader an additional pair of brackets in equations that already contain many.

Recall that elements of $\sigma G$ are multiplicative linear functionals on $\mathrm{BUC}(G)$. The multiplicative linear functional $\rho x$ is defined to act as $\rho x(f) = x(\beta_-(f))$\nomenclature{$\rho(x)$}{Dual action of $\beta_-$ on $\sigma G$, p.\ \pageref{def:rho}}\label{def:rho}. When $x \in G$, then this simply boils down to $\rho x = -x$. 

Suppose $f \in \operatorname{BUC}(G)$ and $x \in \sigma G$ and let $(x_\gamma)_{\gamma \in \Gamma} \subset G$ be a net converging to $x$. Then, one has
\begin{align}\label{limfkt:pointwise}
\alpha_{x_\gamma}(f)(y) = f(y-x_\gamma) = \delta_{x_\gamma}\beta_-(\alpha_y(f))\overset{\gamma \in \Gamma}{\longrightarrow} x(\beta_- \alpha_y(f)).
\end{align}
 This shows that $\alpha_{x_\gamma}(f)$ converges pointwise to the function on the right-hand side of \eqref{limfkt:pointwise} (as a function of $y$). Note that, upon identifying a function $f \in \operatorname{BUC}(G)$ with a continuous function on $\sigma G$, we have $x(\beta_- \alpha_y(f)) = f(\rho x)$. As an easy consequence of the Arzel\`{a}-Ascoli theorem (as the shifts $\alpha_{x_\gamma}(f)$ are of course uniformly bounded and uniformly equicontinuous), this convergence holds even uniformly on compact subsets of $G$. From this, it is easy to show that the function written on the right-hand side of Eq.\ \eqref{limfkt:pointwise} agrees with the weak$^\ast$ limit $\alpha_x(f)$ defined in Theorem \ref{thm:deflimitfcts}. Hence, for $f \in \mathrm{BUC}(G)$ it is justified to write
 \begin{align*}
     \alpha_x(f)(y) = x(\beta_- \alpha_y(f)) = \rho x(\alpha_y(f)), \quad x \in \sigma G, ~y \in G,
 \end{align*}
 and the convergence $\alpha_{x_\gamma}(f) \to \alpha_x(f)$ holds uniformly on compact subsets of $G$.
 
 Thus, we have shown the first assertion of the following statement:
\begin{prop}
Let $f \in \operatorname{BUC}(G)$. Then, for any net $(x_\gamma)\subset G$ converging to $x \in \sigma G$, the convergence $\alpha_{x_\gamma}(f) \to \alpha_x(f)$ holds uniformly on compact subsets of $G$; further, $\alpha_x(f) \in \operatorname{BUC}(G)$.
\end{prop}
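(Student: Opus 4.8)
The first assertion of the proposition is exactly what was established in the paragraph preceding it, so the plan is to concentrate on the second claim, that $\alpha_x(f) \in \operatorname{BUC}(G)$. For $x \in G$ there is nothing to do, since $\operatorname{BUC}(G)$ is translation-invariant; so I would fix $x \in \partial G$ and a net $(x_\gamma)_{\gamma \in \Gamma} \subset G$ with $x_\gamma \to x$ in $\sigma G$. Boundedness is immediate from Lemma \ref{lem:prop_limit_fcts}(2), which gives $\| \alpha_x(f)\|_\infty \leq \| f\|_\infty$, and continuity of $\alpha_x(f)$ follows from the first assertion, since $\alpha_x(f)$ is then a locally uniform limit of the continuous functions $\alpha_{x_\gamma}(f)$. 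Thus the only substantive point is uniform continuity.

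For that step, the idea is simply that the whole net $(\alpha_{x_\gamma}(f))_{\gamma \in \Gamma}$ shares the single modulus of continuity of $f$, and that this property passes to the limit. Concretely: given $\varepsilon > 0$, use $f \in \operatorname{BUC}(G)$ to choose a neighborhood $O$ of $e$ with $|f(z) - f(z-y)| < \varepsilon$ for all $z \in G$ and $y \in O$. Since $\alpha_{x_\gamma}(f)(z) - \alpha_{x_\gamma}(f)(z-y) = f(z-x_\gamma) - f(z-y-x_\gamma)$, the same inequality holds for every $\alpha_{x_\gamma}(f)$, uniformly in $\gamma$. Now evaluate the pointwise convergence $\alpha_{x_\gamma}(f) \to \alpha_x(f)$ — which is part of the first assertion, and can also be read off directly from \eqref{limfkt:pointwise} — at the two points $z$ and $z-y$; this yields $|\alpha_x(f)(z) - \alpha_x(f)(z-y)| \leq \varepsilon$ for all $z \in G$ and $y \in O$, which is precisely membership of $\alpha_x(f)$ in $\operatorname{BUC}(G)$.

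I do not expect a genuine obstacle here; the only things to be careful about are that the equicontinuity of the family $\{\alpha_{x_\gamma}(f)\}$ is uniform in $\gamma$ (this is what makes the estimate survive the passage to the limit), and that the argument uses only pointwise convergence of the net at pairs of points, so the restriction to compact subsets in the first assertion is not actually needed for this part. One could also try to argue more abstractly, using that $\alpha_x$ commutes with translations by Lemma \ref{lem:prop_limit_fcts}(3), but deducing $\alpha_x(f) \in \operatorname{BUC}(G)$ from the characterization of $\operatorname{BUC}(G)$ in terms of strongly continuous orbits would require norm-continuity of $y \mapsto \alpha_y(\alpha_x(f))$ rather than just weak$^\ast$ continuity, so the direct equicontinuity estimate above is the cleaner route.
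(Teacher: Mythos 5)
Your proof is correct: the uniform-in-$\gamma$ equicontinuity of the shifted net, passed through the pointwise limit (which, for $f\in\operatorname{BUC}(G)$, is legitimately identified with the weak$^\ast$ limit $\alpha_x(f)$ by the discussion around \eqref{limfkt:pointwise}), does give uniform continuity of $\alpha_x(f)$, and boundedness and continuity are handled as you say. The paper's own proof is the ``abstract'' one-liner you dismiss in your last paragraph: using Lemma \ref{lem:prop_limit_fcts} it writes
\begin{align*}
\| \alpha_y(\alpha_x(f)) - \alpha_x(f)\|_\infty = \| \alpha_x(\alpha_y(f) - f)\|_\infty \leq \| \alpha_y(f) - f\|_\infty ,
\end{align*}
and the right-hand side tends to $0$ as $y \to e$ precisely because $f \in \operatorname{BUC}(G)$; so the norm-continuity of $y \mapsto \alpha_y(\alpha_x(f))$ that you worried would be unavailable is delivered by the very same estimate, and the characterization of $\operatorname{BUC}(G)$ via norm-continuous orbits then applies immediately. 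In substance the two arguments are the same --- both rest on $\alpha_x$ commuting with translations and being an $\|\cdot\|_\infty$-contraction --- but the abstract version is shorter and avoids any pointwise-evaluation bookkeeping, whereas yours has the small merit of making explicit that only pointwise convergence of the net is needed. Your objection to the abstract route is the only thing I would strike; it is unfounded.
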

\begin{proof}
Using the properties listed in Lemma \ref{lem:prop_limit_fcts} we have
\begin{align*}
    \| \alpha_y(\alpha_x(f)) - \alpha_x(f)\|_\infty = \| \alpha_x(\alpha_y(f) - f)\|_\infty \leq \| \alpha_y(f) - f\|_\infty,
\end{align*}
which completes the proof.
\end{proof}

Note that the limit functions behave well with respect to convolutions:
\begin{lem}
Let $g \in L^1(G)$, $f \in L^\infty(G)$ and $x \in \sigma G$. Then, 
\begin{align*}
\alpha_x(g \ast f) = g \ast \alpha_x(f).
\end{align*}
\end{lem}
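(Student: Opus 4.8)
The plan is to reduce the identity $\alpha_x(g \ast f) = g \ast \alpha_x(f)$ to the duality characterization of limit functions from Theorem/Definition \ref{thm:deflimitfcts}, i.e.\ to test both sides against an arbitrary $h \in L^1(G)$ and use that $\langle h, \alpha_x(\cdot)\rangle$ is, by construction, the continuous extension to $\sigma G$ of the map $y \mapsto \langle h, \alpha_y(\cdot)\rangle$. So first I would fix $h \in L^1(G)$ and a net $(x_\gamma)_{\gamma \in \Gamma} \subset G$ with $x_\gamma \to x$ in $\sigma G$.

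The first key step is to record the algebraic identity $h \ast (g \ast f) = (h \ast g) \ast f$ and, more importantly, the ``adjoint'' relation $\langle h, g \ast f\rangle = \langle \tilde g \ast h, f\rangle$ for a suitable reflection $\tilde g$ of $g$ (one has $\langle h, g \ast f\rangle = \langle h, \beta_-(g) * \text{something}\rangle$; concretely $\langle h, g*f\rangle = \int h(x) \int g(x-y) f(y)\,dy\,dx = \langle (\text{reflected } g)\ast h, f\rangle$ up to the conventions used in the excerpt, where $\varphi_\gamma \ast f(x) = \langle \alpha_x(\varphi_\gamma), f\rangle$ suggests using $\beta_-$). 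The point is that $g$ commutes past a convolution with $f$ at the level of the dual pairing, turning a pairing with $g \ast f$ into a pairing of $f$ with an element of $L^1(G)$ that no longer involves $f$. Second, I would use this to write, for $y \in G$,
\begin{align*}
\langle h, \alpha_y(g \ast f)\rangle = \langle h, g \ast \alpha_y(f)\rangle = \langle \tilde g \ast h, \alpha_y(f)\rangle,
\end{align*}
where the first equality is the (elementary, already-implicit) fact that translations commute with convolutions for $y \in G$. Taking $y = x_\gamma$ and passing to the limit $\gamma \in \Gamma$, the right-hand side converges to $\langle \tilde g \ast h, \alpha_x(f)\rangle = \langle h, g \ast \alpha_x(f)\rangle$ by the defining property of $\alpha_x(f)$ (applied to the $L^1$-function $\tilde g \ast h$) and the same adjoint relation run backwards; the left-hand side converges to $\langle h, \alpha_x(g \ast f)\rangle$ by the defining property of $\alpha_x(g \ast f)$ (applied to $h$, using $g \ast f \in L^\infty(G)$). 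Since $h \in L^1(G)$ was arbitrary and $L^\infty(G) = L^1(G)^*$, this yields $\alpha_x(g \ast f) = g \ast \alpha_x(f)$.

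The main obstacle is purely bookkeeping: getting the reflection/conventions in the adjoint identity $\langle h, g \ast f\rangle = \langle \tilde g \ast h, f\rangle$ exactly right, so that $\tilde g \in L^1(G)$ and the two applications of the defining property of limit functions match up. There is no analytic subtlety — everything is continuity of a weak$^*$-continuous extension tested against $L^1$ — but one must be careful that $g$ is genuinely in $L^1(G)$ (so $\tilde g \ast h \in L^1(G)$) and that $g \ast f \in L^\infty(G)$ (so that $\alpha_x(g \ast f)$ is even defined); both hold since $g \in L^1(G)$, $f \in L^\infty(G)$. One could alternatively phrase the whole argument via the observation that $T \mapsto g \ast T$ is weak$^*$-continuous on bounded sets of $L^\infty(G)$ and commutes with each $\alpha_y$, $y \in G$, hence with the extension to $\sigma G$ by density and uniqueness — but the direct computation above is shorter and self-contained.
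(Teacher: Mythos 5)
Your proposal is correct and follows essentially the same route as the paper: test both sides against an arbitrary $h \in L^1(G)$, use the adjoint identity $\langle h, g \ast f\rangle = \langle h \ast \beta_- g, f\rangle$ (your $\tilde g$ is $\beta_- g$, and $\tilde g \ast h = h \ast \beta_- g$ by commutativity), pass to the limit along a net $x_\gamma \to x$ using the weak$^\ast$ definition of $\alpha_x$ on both $f$ and $g \ast f$, and conclude by the duality $L^\infty(G) = L^1(G)^\ast$. The only detail left open in your write-up, the precise form of the reflection, resolves exactly as you anticipated.
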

\begin{proof}
We have for $h \in L^1(G)$ that $\langle h, g \ast f\rangle = \langle h \ast \beta_- g, f\rangle$. Hence, for a net $(x_\gamma) \subset G$ converging to $x \in \sigma G$, it is:
\begin{align*}
\langle h, \alpha_x(g \ast f)\rangle &= \lim_{\gamma} \langle h, \alpha_{x_\gamma}(g \ast f)\rangle = \lim_\gamma \langle h, g \ast \alpha_{x_\gamma}(f)\rangle\\
&= \lim_\gamma \langle h \ast \beta_- g, \alpha_{x_\gamma}(f)\rangle = \langle h \ast \beta_- g, \alpha_x(f)\rangle\\
&= \langle h, g \ast \alpha_x(f)\rangle.
\end{align*}
Since $h \in L^1(G)$ was arbitrary, this shows $\alpha_x(g \ast f) = g \ast \alpha_x(f)$.
\end{proof}
\subsection{The main results for the classical case}\label{subsec:22}
In the following, we fix $\mathcal A$, a closed unital $C^\ast$-subalgebra of $\operatorname{BUC}(G)$ which is invariant under $\alpha_x$ for every $x \in G$. Further, we fix a Hausdorff vector topology $\tau$ on $L^\infty(G)$, which is finer than the weak$^\ast$ topology on $L^\infty(G)$ (not necessarily strictly finer). 
\begin{rem}
In practice, we will only be interested in the cases where $\tau = \tau_{w^\ast}$\nomenclature{$\tau_{w^\ast}$}{weak$^\ast$ topology on $L^\infty(G)$ or on $\mathcal L(\mathcal H)$, p.\ \pageref{def:wstar} and p.\ \pageref{def:wstartopoperators}}\label{def:wstar}, the weak$^\ast$ topology, and $\tau = \tau_{u.c.}$\nomenclature{$\tau_{u.c.}$}{topology of uniform convergence on compact sets on $L^\infty(G)$, p.\ \pageref{def:co}},\label{def:co} the topology of uniform convergence on compact sets. One may keep these two cases in mind when reading through the following.
\end{rem}
For each such $\mathcal A$, we can consider the maximal ideal space $\mathcal M(\mathcal A)$ as a compact space topological space in which $G$ is continuously and densely mapped through the map $G \ni x \mapsto \delta_x \in \mathcal M(\mathcal A)$. This map is injective if and only if $\mathcal A$ separates points of $G$. We will always identify $G$ with its image in $\mathcal M(\mathcal A)$, which will cause no trouble, even if the identification is not injective. Further, we define $\partial_{\mathcal A}(G) := \mathcal M(\mathcal A) \setminus G$. Note that $\partial_{\mathcal A}(G)$ is closed in $\mathcal M(\mathcal A)$ if $C_0(G) \subset \mathcal A$, but in general this is false: For example, when $\mathcal A$ consists of the almost periodic functions, then $\partial_\mathcal{A}(G)$ is dense in $\mathcal M(\mathcal A)$. Based, among others, on these facts, we will assume that $\mathcal A$ contains $C_0(G)$, see also Remark \ref{remark:C0} below on this assumption.

We will make use of the following two topological lemmas:
\begin{lem}
    Let $X$ be a topological space, $A \subset X$ a dense subset, and $Y$ a regular topological space. If $f: A \to Y$ is such that for each $x \in X \setminus A$ and each net $(x_\gamma)_{\gamma \in \Gamma} \subset A$ with $x_\gamma \overset{\gamma \in \Gamma}{\longrightarrow} x$, the limit $\lim_{\gamma \in \Gamma} f(x_\gamma)$ exists and is independent of the precise choice of the net $(x_\gamma)_{\gamma \in \Gamma}$, then $\overline{f}(x) := \lim_{\gamma \in \Gamma} f(x_\gamma)$ defines a continuous function on $X \setminus A$. 
\end{lem}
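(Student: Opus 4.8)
The plan is the familiar ``extension by continuity'' argument, adapted to the fact that $\overline f$ is only required on the complement $X \setminus A$. First I would note that $\overline f \colon X \setminus A \to Y$ is indeed well defined as a function: for $x \in X \setminus A$ density of $A$ guarantees the existence of nets $(x_\gamma) \subset A$ with $x_\gamma \to x$, and the hypothesis says that $\lim_\gamma f(x_\gamma)$ exists and is independent of the chosen net; so the actual content of the lemma is continuity of $\overline f$.

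The key preliminary step is a local uniformity claim: \emph{for every $x_0 \in X \setminus A$ and every open $V \subseteq Y$ with $\overline f(x_0) \in V$, there is an open neighbourhood $O$ of $x_0$ in $X$ with $f(O \cap A) \subseteq V$.} I would prove this by contradiction. If it fails, then for each open neighbourhood $O$ of $x_0$ one can pick (using density, so that $O \cap A \neq \emptyset$) some $a_O \in O \cap A$ with $f(a_O) \notin V$. The net $(a_O)$, indexed by the open neighbourhoods of $x_0$ ordered by reverse inclusion, converges to $x_0$; hence by hypothesis $f(a_O) \to \overline f(x_0) \in V$, so $f(a_O) \in V$ eventually, contradicting $f(a_O) \notin V$ for all $O$.

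Now I would exploit regularity of $Y$. Given $x_0 \in X \setminus A$ and an arbitrary open neighbourhood $U$ of $\overline f(x_0)$, choose an open $V$ with $\overline f(x_0) \in V \subseteq \overline V \subseteq U$, and apply the claim to obtain an open neighbourhood $O$ of $x_0$ in $X$ with $f(O \cap A) \subseteq V$. It then suffices to show $\overline f\big(O \cap (X \setminus A)\big) \subseteq \overline V \subseteq U$, since $O \cap (X \setminus A)$ is an open neighbourhood of $x_0$ in the subspace $X \setminus A$, and this is exactly continuity of $\overline f$ at $x_0$. To verify the inclusion, take $x \in O \cap (X \setminus A)$ and a net $(x_\gamma) \subset A$ with $x_\gamma \to x$; since $O$ is open and contains $x$, eventually $x_\gamma \in O \cap A$, hence eventually $f(x_\gamma) \in V$, and therefore $\overline f(x) = \lim_\gamma f(x_\gamma) \in \overline V$.

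I expect no serious obstacle here; the one point that must be handled with care is that regularity of $Y$ is genuinely used — without the intermediate set $V$ with $\overline V \subseteq U$, the final step would only yield $\overline f(x) \in \overline U$ rather than $\overline f(x) \in U$. The remaining subtleties (the compatibility of the two ``eventually'' conditions in a directed set, and the directedness of the index set of neighbourhoods ordered by reverse inclusion) are routine.
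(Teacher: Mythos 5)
Your proof is correct and is precisely the argument the paper has in mind: the paper simply refers to Bourbaki's extension theorem (\cite[p.~81, Theorem~I]{Bourbaki1987}) and notes that the statement is a straightforward modification of it, and your write-up is exactly that modification, with regularity of $Y$ used in the standard way to pass from $f(O \cap A) \subseteq V$ to $\overline{f}(O \cap (X \setminus A)) \subseteq \overline{V} \subseteq U$.
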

\begin{proof}
    Both the statement and the proof are a straightforward modification of Bourbaki's extension theorem, \cite[Page 81, Theorem I]{Bourbaki1987}.
\end{proof}
\begin{lem}\label{lemma:top2}
    Let $X, A, Y, f$ and $\overline{f}$ as in the previous lemma. Further, let $X'$ be another topological space and $\iota: X \to X'$ continuous and surjective such that $\iota|_A$ is injective and bicontinuous with range dense in $X'$. If for every $x \in X' \setminus \iota(A)$ and every $x_0, y_0 \in \iota^{-1}(x)$ we have $\overline{f}(x_0) = \overline{f}(y_0)$, then for each $x \in X' \setminus \iota(A)$ and each net $(x_\gamma)_{\gamma \in \Gamma} \subset \iota(A)$ with $x_\gamma \to x$ the limit $\lim_{\gamma \in \Gamma} g(x_\gamma)$ depends only on $x$ and not on the precise choice of the net. Further, in this case $g = f \circ (\iota|_A)^{-1}: \iota(A) \to Y$ gives rise to a continuous function $\overline{g}$ from $X' \setminus \iota(A)$ to $Y$ in the sense of the previous lemma.
\end{lem}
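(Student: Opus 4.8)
The plan is to reduce Lemma~\ref{lemma:top2} to the previous lemma by constructing $\overline g$ explicitly in terms of $\overline f$ and checking it is well-defined and continuous. First I would observe that, since $\iota|_A$ is injective and bicontinuous onto its (dense) image, the map $g = f \circ (\iota|_A)^{-1} : \iota(A) \to Y$ is well-defined, and I want to show it satisfies the hypotheses of the previous lemma with the triple $(X', \iota(A), Y)$ in place of $(X, A, Y)$. So fix $x \in X' \setminus \iota(A)$ and a net $(x_\gamma)_{\gamma} \subset \iota(A)$ with $x_\gamma \to x$.

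The key step is to lift this net through $\iota$. Write $a_\gamma := (\iota|_A)^{-1}(x_\gamma) \in A$, so that $\iota(a_\gamma) = x_\gamma$. Since $X$ is compact (this is the setting in which the lemma will be applied; if one wants to stay fully general one passes to a subnet using that $\overline f$ is already defined on all of $X \setminus A$ and $X$ is assumed to have enough compactness — in our application $X = \sigma G$ is compact), the net $(a_\gamma)$ has a subnet converging to some $x_0 \in X$; by continuity of $\iota$ we get $\iota(x_0) = \lim x_\gamma = x$, so $x_0 \in \iota^{-1}(x)$. Because $x \notin \iota(A)$ and $\iota|_A$ is injective, $x_0 \notin A$, hence $\overline f(x_0)$ is defined and, by the previous lemma's hypothesis on $f$, the chosen subnet of $(a_\gamma)$ satisfies $f(a_{\gamma}) \to \overline f(x_0)$. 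The hypothesis ``$\overline f(x_0) = \overline f(y_0)$ for all $x_0, y_0 \in \iota^{-1}(x)$'' guarantees this limit is independent of which convergent subnet (equivalently which preimage $x_0$) we picked; a standard net argument (if a net does not converge to a point $p$, some subnet stays outside a neighborhood of $p$, yet that subnet has a further subnet converging to some preimage of $x$, contradiction) then upgrades subnet convergence to convergence of the full net: $f(a_\gamma) \to \overline f(x_0)$, i.e. $g(x_\gamma) \to \overline f(x_0)$. Thus $\lim_\gamma g(x_\gamma)$ exists and depends only on $x$, so we may define $\overline g(x) := \overline f(x_0)$ for any $x_0 \in \iota^{-1}(x)$.

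Having verified the hypotheses of the previous lemma for $g$, that lemma immediately yields that $\overline g$ is a continuous function on $X' \setminus \iota(A)$, which is exactly the conclusion. I expect the main obstacle to be the net-theoretic bookkeeping in passing from ``every convergent subnet of $(a_\gamma)$ has limit whose $\overline f$-value is $\overline g(x)$'' to ``$g(x_\gamma) \to \overline g(x)$'': one needs the ambient space $X$ to be compact (or at least to know that $\iota^{-1}(x)$ together with the relevant nets stays in a compact set) so that subnets converge, and one must be careful that $Y$ being regular (Hausdorff is what is really used here for uniqueness of limits) makes the limit well-defined. The rest — well-definedness of $g$, the identity $\iota(x_0) = x$, and the reduction itself — is routine and follows the pattern of Bourbaki's extension theorem already invoked.
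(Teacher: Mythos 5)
Your proposal is correct and follows the same strategy as the paper: reduce to the previous lemma by showing that $g = f \circ (\iota|_A)^{-1}$ satisfies its hypotheses on the triple $(X', \iota(A), Y)$. The paper's own proof is a one-liner that simply asserts the limit $\lim_\gamma f(\iota^{-1}(x_\gamma))$ is independent of the net ``by assumption''; you supply the missing substance, namely the lift $a_\gamma = (\iota|_A)^{-1}(x_\gamma)$, the extraction of convergent subnets with limits in $\iota^{-1}(x) \subset X \setminus A$, the use of the fiber-constancy hypothesis to identify all subnet limits, and the standard upgrade from subnet convergence to net convergence. You are also right to flag that this step needs $X$ compact (or at least the lifted nets relatively compact), a hypothesis the lemma does not state but which holds in the only application, $X = \sigma G$; your version is therefore more careful than the paper's, not less.
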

\begin{proof}
    The statement is a consequence of the previous lemma, as for any net $(x_\gamma)_{\gamma \in \Gamma} \subset \iota(A)$ converging to $x \in X' \setminus \iota(A)$, the value of $\lim_{\gamma \in \Gamma} g(x_\gamma) = \lim_{\gamma \in \Gamma} f(\iota^{-1}(x_\gamma))$ does not depend on the choice of the net, by assumption.
\end{proof}

Recall that for $f \in \mathrm{BUC}(G)$ the limit function $\alpha_x(f)$ can be expressed as $\alpha_x(f)(y) = x(\beta_-(\alpha_y(f)))$. When $f \in \mathcal A$ and $\mathcal A$ is not $\beta_-$-invariant, then this expression makes sense only if $x \in \mathcal M(\beta_-(\mathcal A))$ instead of $x \in \mathcal M(\mathcal A)$. This is the reason for labeling limit functions in the following by points from $\mathcal M(\beta_-(\mathcal A))$.

For a given function $f \in L^\infty(G)$ and $\mathcal A$ as above, we say that the limit functions of $f$ exist on $\partial_{\beta_-(\mathcal A)}(G)$ with respect to the topology $\tau$ if for every $x \in \partial_{\beta_-(\mathcal A)}$ and any net $(x_\gamma) \subset G$ with $x_\gamma \to x$ the limit $\lim_{\gamma \in \Gamma} \alpha_{x_\gamma}(f)$ exists with respect to the topology $\tau$. If this is the case, the previous lemma shows that the limit functions $\alpha_x(f) := \lim_{\gamma \in \Gamma} \alpha_{x_\gamma}(f)$ depend continuously (with respect to the topology $\tau$) on $x \in \partial_{\beta_-(\mathcal A)}$. Note that the previous lemma implies also that the limit $\alpha_x(f)$, $x \in \mathcal M(\beta_-(\mathcal A))$, is independent of the choice of the particular net $(x_\gamma)_{\gamma \in \Gamma} \subset G$ converging to $x$.

We continue with some more definitions:\label{def:Aj}
\begin{align*}\nomenclature{$\mathcal A_j(\tau)$}{cf.\ p.\ \pageref{def:Aj}}
\mathcal A_3(\tau) &:= \{ f \in L^\infty(G): ~\text{the limit functions of $f$ exist on $\partial_{\beta_-(\mathcal A)}(G)$ with respect to $\tau$}\},\\
\mathcal A_2(\tau) &:= \{ f \in \mathcal A_3(\tau): ~\alpha_x(f) \in \operatorname{BUC}(G) \text{ for } x \in \partial_{\beta_-(\mathcal A)}(G)\},\\
\mathcal A_1(\tau) &:= \{ f \in \mathcal A_2(\tau): \{ \alpha_x(f): x \in \partial_{\beta_-(\mathcal A)}(G)\} \text{ is unif.\ equicont.}\},\\
\mathcal A_0(\tau) &:= \mathcal A.
\end{align*}
Here, we say that a family $F \subset \operatorname{BUC}(G)$ is uniformly equicontinuous if for every $\varepsilon > 0$ there exists a neighborhood $O$ of the unit element $e$ such that for every $y \in O$ and $f \in F$ it is
\begin{align*}
\| \alpha_y(f) - f\|_\infty < \varepsilon.
\end{align*}
Of course, the definition of $\mathcal A_0(\tau)$ is solely for cosmetic reasons for a more unified notation in the following results, as the space is entirely independent of the topology $\tau$. 

Note that the spaces should be understood as follows: $\mathcal A_3(\tau)$ is the space where the limit functions exist w.r.t.\ the topology $\tau$ and can be indexed by the boundary points coming from the algebra $\beta_-(\mathcal A)$. $\mathcal A_2(\tau)$ is the subspace where all the limit functions are nice (i.e., in $\operatorname{BUC}(G)$), and $\mathcal A_1(\tau)$ is the space where the limit functions form a ``nice family''.
We add the following lemma on the structure of these spaces, where $\operatorname{BUC}(G)_j(\tau)$ denotes the space $\mathcal A_j(\tau)$ with $\mathcal A = \operatorname{BUC}(G)$.
\begin{lem}\label{lem:mainthm1}
    Let $\tau$ and $\mathcal A$ as above and $j \in\{ 1, 2, 3\}$. Then, the following holds true:
    \begin{align*}
        \mathcal A_j(\tau) = \mathrm{BUC}(G)_j(\tau_{w^\ast}) \cap \mathcal A_3(\tau_{w^\ast}) \cap \mathrm{BUC}(G)_3(\tau).
    \end{align*}
\end{lem}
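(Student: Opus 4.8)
The plan is to reduce everything to the case $j=3$ and then treat $j\in\{1,2\}$ by transferring the remaining ``pointwise'' conditions. Throughout, I would write $\iota\colon\sigma G\to\mathcal M(\beta_-(\mathcal A))$ for the continuous surjection dual to the unital inclusion $\beta_-(\mathcal A)\hookrightarrow\operatorname{BUC}(G)$ (legitimate, since $\operatorname{BUC}(G)$ is $\beta_-$-invariant). Because $C_0(G)\subseteq\mathcal A$ and $C_0(G)$ is $\beta_-$-invariant, $C_0(G)\subseteq\beta_-(\mathcal A)$, and from this one gets the familiar facts that $\iota$ restricts to a homeomorphism of $G$ onto an open subset of $\mathcal M(\beta_-(\mathcal A))$ and that $\iota^{-1}(G)=G$ (a character of $\operatorname{BUC}(G)$ agreeing with $\delta_g$ on the ideal $C_0(G)$ must equal $\delta_g$); hence $\iota$ maps $\partial G$ onto $\partial_{\beta_-(\mathcal A)}(G)$. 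I would also use repeatedly that $\tau$ is finer than $\tau_{w^\ast}$, so that whenever a $\tau$-limit of shifted functions exists it equals the corresponding weak$^\ast$-limit, while the weak$^\ast$-limit always exists (Theorem/Definition~\ref{thm:deflimitfcts}); in particular $\operatorname{BUC}(G)_3(\tau_{w^\ast})=L^\infty(G)$.

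First I would prove the identity for $j=3$, which in view of $\operatorname{BUC}(G)_3(\tau_{w^\ast})=L^\infty(G)$ amounts to $\mathcal A_3(\tau)=\mathcal A_3(\tau_{w^\ast})\cap\operatorname{BUC}(G)_3(\tau)$. The inclusion ``$\subseteq$'' is immediate: $\tau$-convergence implies weak$^\ast$-convergence, giving $\mathcal A_3(\tau)\subseteq\mathcal A_3(\tau_{w^\ast})$; and if $f\in\mathcal A_3(\tau)$ and $(x_\gamma)\subset G$ converges in $\sigma G$ to $x\in\partial G$, then $x_\gamma=\iota(x_\gamma)\to\iota(x)\in\partial_{\beta_-(\mathcal A)}(G)$, so $\lim_\gamma\alpha_{x_\gamma}(f)$ exists in $\tau$ and is net-independent, i.e.\ $f\in\operatorname{BUC}(G)_3(\tau)$. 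The inclusion ``$\supseteq$'' is the heart of the matter; I would deduce it from Lemma~\ref{lemma:top2} applied with $X=\sigma G$, $A=G$, $Y=(L^\infty(G),\tau)$, $X'=\mathcal M(\beta_-(\mathcal A))$, the map being $x\mapsto\alpha_x(f)$. The hypothesis $f\in\operatorname{BUC}(G)_3(\tau)$ supplies the $\tau$-continuous extension $\overline f$ to $\sigma G$ required by that lemma; its fibre-constancy along $\iota$ (the hypothesis of Lemma~\ref{lemma:top2}) is known a priori only in the weak$^\ast$ topology, from $f\in\mathcal A_3(\tau_{w^\ast})$, but since $\tau\succeq\tau_{w^\ast}$ the $\tau$-extension and the weak$^\ast$-extension of $x\mapsto\alpha_x(f)$ coincide on $\sigma G$, so weak$^\ast$ fibre-constancy is fibre-constancy of $\overline f$. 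Lemma~\ref{lemma:top2} then yields $f\in\mathcal A_3(\tau)$.

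For $j=2$ and $j=1$ I would first record that for $f\in\mathcal A_3(\tau_{w^\ast})$ and $x\in\partial_{\beta_-(\mathcal A)}(G)$ the limit function $\alpha_x(f)$ (necessarily the weak$^\ast$ one) equals $\alpha_{x_0}(f)$ for every $x_0\in\iota^{-1}(x)\subseteq\partial G$, while $\iota(\partial G)=\partial_{\beta_-(\mathcal A)}(G)$; hence $\{\alpha_x(f):x\in\partial_{\beta_-(\mathcal A)}(G)\}=\{\alpha_{x_0}(f):x_0\in\partial G\}$ as subsets of $L^\infty(G)$. Consequently ``$\alpha_x(f)\in\operatorname{BUC}(G)$ for all $x\in\partial_{\beta_-(\mathcal A)}(G)$'' is the same condition as ``$\alpha_{x_0}(f)\in\operatorname{BUC}(G)$ for all $x_0\in\partial G$'', which together with $f\in\mathcal A_3(\tau_{w^\ast})$ is exactly $f\in\operatorname{BUC}(G)_2(\tau_{w^\ast})$; likewise uniform equicontinuity of that family of functions is exactly $f\in\operatorname{BUC}(G)_1(\tau_{w^\ast})$. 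Intersecting the $j=3$ identity with these conditions, and using $\operatorname{BUC}(G)_1(\tau_{w^\ast})\subseteq\operatorname{BUC}(G)_2(\tau_{w^\ast})\subseteq\operatorname{BUC}(G)_3(\tau_{w^\ast})$, gives the stated identities for $j=2$ and $j=1$.

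The one genuinely delicate step is the ``$\supseteq$'' inclusion in the $j=3$ case: one must push a $\tau$-continuous extension along the quotient map $\iota$, yet the available information about coincidence of limit functions over a fibre of $\iota$ is a priori only weak$^\ast$. The resolution is the simple observation that, because $\tau$ refines $\tau_{w^\ast}$ and the weak$^\ast$-extension of $x\mapsto\alpha_x(f)$ over $\sigma G$ always exists, the $\tau$- and weak$^\ast$-extensions literally coincide, so the weak$^\ast$ fibre-constancy granted by $\mathcal A_3(\tau_{w^\ast})$ is enough to invoke Lemma~\ref{lemma:top2} in the topology $\tau$. Everything else is routine unwinding of the definitions of $\mathcal A_0(\tau),\dots,\mathcal A_3(\tau)$ and of the topological lemmas preceding the statement.
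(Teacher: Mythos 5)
Your proposal is correct and follows essentially the same route as the paper's proof: reduce to the case $j=3$ via $\operatorname{BUC}(G)_3(\tau_{w^\ast})=L^\infty(G)$, obtain the nontrivial inclusion by pushing the $\tau$-extension along the surjection $\iota\colon\sigma G\to\mathcal M(\beta_-(\mathcal A))$ using Lemma~\ref{lemma:top2} together with the coincidence of $\tau$- and weak$^\ast$-limits, and then observe that the extra conditions defining $\mathcal A_2$ and $\mathcal A_1$ are captured by $\operatorname{BUC}(G)_2(\tau_{w^\ast})$ and $\operatorname{BUC}(G)_1(\tau_{w^\ast})$ respectively. Your write-up is in fact slightly more explicit than the paper's on two points it leaves implicit — that $C_0(G)\subseteq\beta_-(\mathcal A)$ forces $\iota^{-1}(G)=G$ (so $\iota(\partial G)=\partial_{\beta_-(\mathcal A)}(G)$), and that the sets of limit functions indexed by $\partial G$ and by $\partial_{\beta_-(\mathcal A)}(G)$ coincide — but these are elaborations, not a different argument.
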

\begin{proof}
    We first consider the case $j = 3$. Then, each of the spaces occurring here is contained in $\mathrm{BUC}(G)_3(\tau_{w^\ast})$ so that we only need to verify that $\mathcal A_3(\tau) = \mathcal A_3(\tau_{w^\ast}) \cap \operatorname{BUC}(G)_3(\tau)$. Since $\tau$ is finer than $\tau_{w^\ast}$, it is clear that $\mathcal A_3(\tau) \subset \mathcal A_3(\tau_{w^\ast})$. Further, since $\mathcal A$ is a subalgebra of $\mathrm{BUC}(G)$, we also have $\mathcal A_3(\tau) \subset \mathrm{BUC}(G)_3(\tau)$. Hence, we have proven that $\mathcal A_3(\tau) \subseteq \mathcal A_3(\tau_{w^\ast}) \cap \mathrm{BUC}(G)_3(\tau)$. 
    
    We now come to the other inclusion. Since $\mathcal A$ is a unital $C^\ast$-subalgebra of $\operatorname{BUC}(G)$, we can consider a continuous surjection $\iota: \sigma G \to \mathcal M(\beta_-(\mathcal A))$ which fixes $G$ (which is, in a Gelfand-theoretic sense, the dual of the embedding map $\beta_-(\mathcal A) \hookrightarrow \operatorname{BUC}(G)$). Now note that if the limit functions of $f \in L^\infty(G)$ exist with respect to $\tau$ on $\partial G$ and with respect to $\tau_{w^\ast}$ on $\partial_{\beta_-(\mathcal A)}(G)$, for $x \in \partial_{\beta_-(\mathcal A)}(G)$ and $y_1, y_2 \in \iota^{-1}(x)$ the limit functions $\alpha_{y_1}(f)$ and $\alpha_{y_2}(f)$ have to agree (by definition of what it means that the limit functions exist on $\partial_{\beta_-(\mathcal A)}(G)$). Hence, an application of Lemma \ref{lemma:top2} shows that the limit functions of $f$ exist also with respect to $\tau$ on $\partial_{\beta_-(\mathcal A)}(G)$, which provides the other inclusion. This settles the case $j = 3$. 

    Now that the case $j = 3$ is proven, the case $j = 2$ is straightforward, as the property of all limit functions being uniformly continuous is entirely reflected by membership in $\mathrm{BUC}(G)_2(\tau_{w^\ast})$, i.e., a function $f \in \mathcal A_3(\tau)$ is contained in $\mathcal A_2(\tau)$ if and only if it is contained in $\mathrm{BUC}(G)_2(\tau_{w^\ast})$.

    The last case $j = 1$ is now dealt with analogously, as the property of the family of limit functions being uniformly equicontinuous is entirely captured by membership in $\mathrm{BUC}(G)_1(\tau_{w^\ast})$.
\end{proof}

\begin{thm}\label{mainthm:1}
Let $S \subset L^1(G)$ be a regular subset, $\tau$ and $\mathcal A$ as above, $j \in \{ 0, 1, 2, 3\}$ and $f \in \operatorname{BUC}(G)_j(\tau)$. Further, assume that $C_0(G) \subset \mathcal A$. Then, the following holds:
\begin{align*}
f \in \mathcal A_j(\tau) \Leftrightarrow g \ast f \in \mathcal A \text{ for every } g \in S.
\end{align*}
\end{thm}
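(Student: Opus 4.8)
The plan is to prove the two directions separately. The direction ``$f \in \mathcal A_j(\tau) \Rightarrow g \ast f \in \mathcal A$'' should be the routine one: if $f \in \mathcal A_j(\tau)$, then in particular the limit functions $\alpha_x(f)$ exist with respect to $\tau$ (hence with respect to the weak$^\ast$ topology) for all $x \in \partial_{\beta_-(\mathcal A)}(G)$. Using Lemma \ref{lem:prop_limit_fcts} (the limit functions behave linearly and contractively) together with the convolution lemma $\alpha_x(g \ast f) = g \ast \alpha_x(f)$, one checks that $g \ast f$ again has limit functions on $\partial_{\beta_-(\mathcal A)}(G)$, and since $g \ast f \in \operatorname{BUC}(G)$ (convolution of $L^1$ with $L^\infty$ lies in $\operatorname{BUC}(G)$), the point is to show $g \ast f \in \mathcal A$. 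Here I would use that $\mathcal A$ is a closed $\alpha_x$-invariant subalgebra containing $C_0(G)$ and that $g \ast f$ ``looks like'' an element of $\mathcal A$ modulo $C_0(G)$: more precisely, the hypothesis $f \in \operatorname{BUC}(G)_j(\tau)$ with $j \geq 1$ forces the limit functions to be genuinely nice, and the fact that $f \in \mathcal A_j(\tau)$ means the limit functions are already ``indexed by $\beta_-(\mathcal A)$'', which is exactly the condition encoding membership of $g \ast f$ in the sub-$C^\ast$-algebra $\mathcal A$. I expect this direction to reduce, after Lemma \ref{lem:mainthm1}, to the known Theorem \ref{thm:bucinv} applied inside $\operatorname{BUC}(G)$ together with a Gelfand-duality description of $\mathcal A$ as $\{h \in \operatorname{BUC}(G): h$ extends continuously to $\mathcal M(\mathcal A)\}$.

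The converse ``$g \ast f \in \mathcal A$ for every $g \in S \Rightarrow f \in \mathcal A_j(\tau)$'' is where the Tauberian input enters and is the main obstacle. Since $f \in \operatorname{BUC}(G)_j(\tau)$ is assumed, the limit functions $\alpha_x(f)$ already exist with respect to $\tau$ for $x \in \partial G$ and have all the regularity properties ($\operatorname{BUC}$, uniform equicontinuity) recorded in level $j$; by Lemma \ref{lem:mainthm1} what remains to be shown is that $f \in \mathcal A_3(\tau_{w^\ast})$, i.e., that these limit functions descend to the smaller boundary $\partial_{\beta_-(\mathcal A)}(G)$, equivalently that $\alpha_{y_1}(f) = \alpha_{y_2}(f)$ whenever $y_1, y_2 \in \sigma G$ map to the same point of $\mathcal M(\beta_-(\mathcal A))$ under the canonical surjection $\iota$. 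The strategy is: from $g \ast f \in \mathcal A$ and the convolution identity, $\alpha_{y_1}(g \ast f) = g \ast \alpha_{y_1}(f)$ and $\alpha_{y_2}(g \ast f) = g \ast \alpha_{y_2}(f)$; but since $g\ast f \in \mathcal A$ its limit functions are controlled by $\mathcal M(\beta_-(\mathcal A))$ (this is the content of the already-established case $j$ in the other direction, or can be extracted directly from Gelfand theory), so $g \ast \alpha_{y_1}(f) = g \ast \alpha_{y_2}(f)$ for every $g \in S$. Setting $h := \alpha_{y_1}(f) - \alpha_{y_2}(f) \in L^\infty(G)$, we have $g \ast h = 0$ for all $g$ in the regular set $S$; by Wiener's approximation theorem (Theorem 0.1 in the numbering of the excerpt) the translates of $S$ are dense in $L^1(G)$, so $g \ast h = 0$ for \emph{every} $g \in L^1(G)$, whence $h = 0$. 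This gives $\alpha_{y_1}(f) = \alpha_{y_2}(f)$, which is exactly the consistency condition needed to conclude $f \in \mathcal A_3(\tau_{w^\ast})$, and combined with the assumed membership $f \in \operatorname{BUC}(G)_j(\tau)$ and Lemma \ref{lem:mainthm1} yields $f \in \mathcal A_j(\tau)$.

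The point demanding the most care is the precise translation between the analytic statement ``the limit functions of $g \ast f$ are indexed by $\partial_{\beta_-(\mathcal A)}(G)$'' and the algebraic statement ``$g \ast f \in \mathcal A$.'' One must argue that for $h \in \operatorname{BUC}(G)$, membership in the $C^\ast$-subalgebra $\mathcal A$ is equivalent to $\alpha_{y_1}(h) = \alpha_{y_2}(h)$ whenever $\iota(y_1) = \iota(y_2)$; the direction needed here is that $h \in \mathcal A$ forces this consistency. This follows because $h \in \mathcal A$ means $h$, viewed as a function on $\sigma G$, factors through $\iota: \sigma G \to \mathcal M(\mathcal A)$ — but one must be slightly careful since the limit functions are indexed by $\mathcal M(\beta_-(\mathcal A))$ rather than $\mathcal M(\mathcal A)$; the map $x \mapsto \rho x$ intertwines the two, and the identity $\alpha_x(h)(y) = \rho x(\alpha_y(h))$ for $h \in \operatorname{BUC}(G)$ lets one transfer the factoring. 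I would isolate this as a short lemma before the main argument. A secondary subtlety is checking that $g\ast f$ genuinely lies in $\operatorname{BUC}(G)$ and that the convolution lemma and Lemma \ref{lem:prop_limit_fcts}(3) combine to show $g\ast f$ has $\tau$-limit functions when $f$ does — but since $\tau$ is one of the two concrete topologies (weak$^\ast$ or uniform-on-compacta) and convolution with a fixed $g \in L^1(G)$ is continuous for both, this is routine.
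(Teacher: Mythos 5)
Your plan is correct and follows essentially the same route as the paper: reduce via Lemma \ref{lem:mainthm1} to $j=3$ and $\tau=\tau_{w^\ast}$, translate both membership in $\mathcal A$ and membership in $\mathcal A_3(\tau_{w^\ast})$ into consistency of the limit functions on the fibres of $\iota:\sigma G\to\mathcal M(\beta_-(\mathcal A))$, and then use the convolution identity $\alpha_x(g\ast f)=g\ast\alpha_x(f)$ in one direction and the regularity of $S$ (Wiener's approximation theorem plus a density/approximate-identity argument) in the other. The one step you leave genuinely unexecuted --- that fibre-consistency of the limit functions implies membership in $\mathcal A$ --- is precisely where the paper invokes Tietze's extension theorem together with the hypothesis $C_0(G)\subset\mathcal A$, so your proposed Gelfand-duality lemma must account for the fact that consistency is only given over the \emph{boundary} fibres.
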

\begin{proof}
Note that the case $j = 0$ was already covered by Theorem \ref{thm:bucinv} above - we include it to point out the analogy. Therefore, we may exclude the case $j = 0$ in the proof.

Note that, by assumption of the theorem, we have that $f \in \mathrm{BUC}(G)_j(\tau)$. By Lemma \ref{lem:mainthm1} this is equivalent to $f\in \mathrm{BUC}(G)_j(\tau_{w^\ast}) \cap \mathrm{BUC}(G)_3(\tau)$. Hence, by Lemma \ref{lem:mainthm1}, we see that:
\begin{align*}
    f \in \mathcal A_j(\tau) \Leftrightarrow f \in \mathcal A_3(\tau_{w^\ast}).
\end{align*}
 This reduces the proof of the theorem to the case where $j = 3$ and $\tau = \tau_{w^\ast}$. Hence, we only need to prove the following:
\begin{align*}
\text{the limit functions of $f$ exist on $\partial_{\beta_-(\mathcal A)}(G)$ with respect to $\tau_{w^\ast}$} \Leftrightarrow g \ast f \in \mathcal A \text{ for every } g \in S.
\end{align*}
Let $\iota: \sigma G \to \mathcal M(\beta_-(\mathcal A))$ again be the surjection already considered in the proof of Lemma \ref{lem:mainthm1}. As a consequence of Lemma \ref{lemma:top2}, the limit functions of $f$ exist on $\partial_{\beta_-(\mathcal A)}(G)$ with respect to $\tau_{w^\ast}$ if and only if they exist on $\partial G$ with respect to $\tau_{w^\ast}$ and for every $x \in \partial_{\beta_-(\mathcal A)}(G)$ and $y_1, y_2 \in \iota^{-1}(x)$ it is $\alpha_{y_1}(f) = \alpha_{y_2}(f)$. 

A similar argument works for checking membership of $f \in \operatorname{BUC}(G) = C(\sigma G)$ in $\mathcal A = C(\mathcal M(\mathcal A))$: $f \in \mathcal A$ if and only if $\alpha_{y_1}(f) = \alpha_{y_2}(f)$ for every $y_1, y_2 \in \iota^{-1}(x)$, where $x \in \partial_{\beta_-(\mathcal A)}(G)$. Indeed, if $f \in \mathcal A$, then the other statement is clear. If, on the other hand, $\alpha_{y_1}(f) = \alpha_{y_2}(f)$ for every $y_1, y_2 \in \varphi^{-1}(x)$, where $x \in \partial_{\beta_-(\mathcal A)}(G)$ is arbitrary, then $x \mapsto \alpha_x(f)$ is a continuous $\operatorname{BUC}(G)$-valued function on $\partial_{\beta_-(\mathcal A)}(G)$. Hence, the function $\tilde{g}(x) = \alpha_x(f)(e)$ is a continuous function on $\partial_{\beta_-(\mathcal A)}(G)$, hence $g(x) = \tilde{g}(\rho x)$ is in $C(\partial_{\mathcal A} G)$. Since we assumed $C_0(G) \subset \mathcal A$, $\partial_{\mathcal A} G$ is a closed subset of $\mathcal M(\mathcal A)$ such that we can extend $g$ to a continuous function $g_0$ on $\mathcal M(\mathcal A)$ (by Tietze's extension theorem), i.e., $g_0 \in \mathcal A$. Since $g_0$ has limit functions equal to those of $f$, we see that $g_0 - f \in C_0(G)$. Using again that $C_0(G) \subset \mathcal A$, we obtain that $f \in \mathcal A$.

These equivalences will be our main tools to check the implications. Hence, in the following we let $x \in \partial_{\beta_-(\mathcal A)}(G)$ and fix $y_1, y_2 \in \varphi^{-1}(x)$.

``$\Rightarrow$'': We need to show that $\alpha_{y_1}(g \ast f) = \alpha_{y_2}(g \ast f)$ for all $y_1, y_2 \in \iota^{-1}(x), ~x \in \partial_{\beta_-(\mathcal A)}(G)$. This is a simple consequence of the following:
\begin{align*}
\alpha_{y_1}(g \ast f) = g \ast \alpha_{y_1}(f) = g \ast \alpha_x(f) = g \ast \alpha_{y_2}(f) = \alpha_{y_2}(g \ast f).
\end{align*}

``$\Leftarrow$'': We need to show that $\alpha_{y_1}(f) = \alpha_{y_2}(f)$ for all $y_1, y_2 \in \iota^{-1}(x), ~x \in \partial_{\beta_-(\mathcal A)}(G)$. For $g \in S$, we know that $g \ast f \in \operatorname{BUC}(G)$, hence $g \ast f$ extends to a continuous function on $\sigma G$. Therefore, we can evaluate $g \ast f$ at $\rho y_1$ and get (for $(y_\gamma)$ a net in $G$ converging to $y_1$):
\begin{align*}
g \ast f(\rho y_1) &= \lim_\gamma g \ast f(\rho y_\gamma) =\lim_\gamma g \ast \alpha_{y_\gamma}(f)(e)\\
&= \lim_\gamma \int_G g(x) \alpha_x (\alpha_{y_\gamma}(f))(e)~dx\\
&= \lim_\gamma \int_G g(x) \alpha_{y_\gamma}(f)(\rho x)~dx\\
&= \lim_\gamma \int_G \beta_- g(x) \alpha_{y_\gamma}(f)(x)~dx\\
&= \lim_\gamma \langle \beta_- g, \alpha_{y_\gamma}(f)\rangle\\
&= \langle \beta_- g, \alpha_{y_1} (f)\rangle
\end{align*}
Since we even assumed $g \ast f \in \mathcal A$, we have $g \ast f(\rho y_1) = g\ast f(\rho y_2)$, hence:
\begin{align*}
\langle \beta_- g, \alpha_{y_1}(f)\rangle = g \ast f(\rho y_1) = g\ast f(\rho y_2) = \langle \beta_- g, \alpha_{y_2}(f)\rangle.
\end{align*}
From this, we immediately get
\begin{align*}
\langle h, \alpha_{y_1}(f)\rangle = \langle h, \alpha_{y_2}(f)\rangle
\end{align*}
for every $h$ from the space spanned by the translates of functions from $\beta_- S$. But $S$ is regular if and only if $\beta_- S$ is, so $h$ is taken from a dense subspace. By a standard density argument, we obtain that
\begin{align*}
\langle h, \alpha_{y_1}(f)\rangle = \langle h, \alpha_{y_2}(f)\rangle
\end{align*}
for every $h \in L^1(G)$. This shows that the limit functions of $f$ exist on $\partial_{\beta_-(\mathcal A)}(G)$ with respect to $\tau_{w^\ast}$ as desired.
\end{proof}
\begin{rem}\label{remark:C0}
    We want to add a brief word of explanation on the assumption that $C_0(G) \subset \mathcal A$. If one drops the assumption that $\mathcal A$ separates the points of $G$ (and hence $C_0(G)$ can no longer be contained in $\mathcal A$), then $H := \{ h \in G: ~\alpha_h(f) = f$ for every $f \in \mathcal A\}$ is a closed subgroup of $G$. Replacing the role of $\operatorname{BUC}(G)$ in the above result by
    \begin{align*}
        \operatorname{BUC}(G)_H := \{ f \in \operatorname{BUC}(G): ~\alpha_h(f) = f \text{ for all } h \in H\}
    \end{align*}
    and the role of $C_0(G)$ by
    \begin{align*}
        \{ f \in \operatorname{BUC}(G)_H: ~f(x) = 0 \text{ for every } x \in \mathcal M(\operatorname{BUC}(G)_H) \setminus H\},
    \end{align*}
    one can prove a more general result than the above without substantial difference in the method of proof. Further, all the results we discuss below can be extended to this more general situation. Since the results and proofs we present here are, even without discussing the role of the subgroup $H$ in detail, already filled with enough technical details, we omit this generalization throughout the paper. Nevertheless, the reader should keep in mind that this obstacle can also be overcome without introducing significant new ideas.
\end{rem}
For the second main result, we will also need a translation-invariant closed ideal $\mathcal I$ of $\mathcal A$. Note that the action of $G$ on itself, which we denote by $\varsigma_y(x)$, \nomenclature{$\varsigma_y(x)$}{Action of $G$ on itself, resp.\ its extension to $\mathcal M(\mathcal A)$, p.\ \pageref{def:varsigma}}\label{def:varsigma}$\varsigma_y(x) = x-y$ for $x, y \in G$, can be extended to $\mathcal M(\mathcal A)$: $\varsigma_y(x)(f) = x(\alpha_y(f))$ for $x \in \mathcal M(\mathcal A)$, $y \in G$ and $f \in \mathcal A$. As is well-known, closed ideals of $\mathcal A \cong C(\mathcal M(\mathcal A))$ correspond to closed subsets $\tilde{I} \subset \mathcal M(\mathcal A))$ via
\begin{align*}
\mathcal I = \{ f \in C(\mathcal M(\mathcal A)): ~f(x) = 0 \text{ for every } x \in \tilde{I}\}.
\end{align*}
Since $\mathcal I$ is assumed $\alpha$-invariant, we obtain for every $y \in G$:
\begin{align*}
\alpha_y(f) \in \mathcal I \Rightarrow \alpha_y(f)(x) = f(\varsigma_y(x)) = 0 \text{ for every } x \in \tilde{I}.
\end{align*}
Hence, $\tilde{I}$ is necessarily $\varsigma$-invariant (indeed, every closed, $\varsigma$-invariant subset induces a closed and translation-invariant ideal in this way). In particular, we get (since $G$ is dense in $\mathcal M(\mathcal A)$):
\begin{align*}
\mathcal I &= \{ f \in \mathcal A: \forall x \in \tilde{I}, y \in G: \alpha_y(f)(x) = \alpha_{\rho x}f(-y) = 0\}\\
&= \{ f \in \mathcal A: \alpha_{\rho x}(f) = 0 \text{ for every } x \in \tilde{I}\}\\
&= \{ f \in \mathcal A: \alpha_x(f) = 0 \text{ for every } x \in I\},
\end{align*}
where we set $I := \rho \tilde{I}$, which is clearly again a $\varsigma$-invariant and closed subset of $\mathcal M(\beta_-(\mathcal A))$.

We now set for $j \in \{0, 1, 2, 3\}$:\label{def:Ij}
\begin{align*}
\nomenclature{$\mathcal I_j(\tau)$}{cf.\ p.\ \pageref{def:Ij}}\mathcal I_j(\tau) := \{ f \in \mathcal A_j(\tau): \alpha_x(f) = 0 \text{ whenever } x \in I\}.
\end{align*}
Our second main result is now the following:
\begin{thm}\label{mainthm:2}
Let $\mathcal A, \mathcal I$ and $\tau$ as above, $j \in \{0, 1, 2, 3\}$. Further, let $S \subset L^1(G)$ be a regular subset and $a \in \mathcal A_j(\tau)$. Then, for $f \in \mathcal  A_j(\tau)$ the following are equivalent:
\begin{enumerate}[(i)]
\item $f - a \in \mathcal I_j(\tau)$;
\item $\alpha_x(f) = \alpha_x(a)$ for every $x \in I$;
\item $g \ast f - g \ast a \in \mathcal I$ for every $g \in S$;
\item $g \ast f - g \ast a \in \mathcal I$ for every $g \in L^1(G)$.
\end{enumerate}
\end{thm}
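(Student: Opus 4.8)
The plan is to establish the chain (i)$\Leftrightarrow$(ii), (ii)$\Rightarrow$(iii), (iii)$\Leftrightarrow$(iv), (iv)$\Rightarrow$(ii). The equivalence (iii)$\Leftrightarrow$(iv) is nothing but classical Wiener--Tauberian theory, while the substantive part is the passage between the ``boundary-pointwise'' condition (ii) and the convolution conditions; this will rest on Theorem~\ref{mainthm:1} and on the fact that limit functions commute with convolution.

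\emph{Setup and (i)$\Leftrightarrow$(ii).} First I would note that $\mathcal{A}_j(\tau)$ is a linear subspace of $L^\infty(G)$: limit functions depend linearly on the function (Lemma~\ref{lem:prop_limit_fcts}), and both ``all limit functions lie in $\operatorname{BUC}(G)$'' and ``the family of limit functions is uniformly equicontinuous'' pass to differences. Hence $f-a\in\mathcal{A}_j(\tau)$, and by definition $f-a\in\mathcal{I}_j(\tau)$ means precisely $\alpha_x(f-a)=0$ for all $x\in I$, which by linearity of $f\mapsto\alpha_x(f)$ is exactly (ii). I would also record, via Lemma~\ref{lem:mainthm1} applied to $\mathcal{A}$ and to $\operatorname{BUC}(G)$, that $\mathcal{A}_j(\tau)=\operatorname{BUC}(G)_j(\tau)\cap\mathcal{A}_3(\tau_{w^\ast})$; in particular every $f\in\mathcal{A}_j(\tau)$ satisfies the hypothesis of Theorem~\ref{mainthm:1}, and $f,a$ have well-defined limit functions $\alpha_x(\cdot)$ for $x\in\mathcal{M}(\beta_-(\mathcal{A}))$.

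\emph{(iii)$\Leftrightarrow$(iv).} As a closed, translation-invariant ideal of $\mathcal{A}$, the space $\mathcal{I}$ is in particular a closed, translation-invariant subspace of $\operatorname{BUC}(G)$. Applying ``Wiener's Tauberian theorem, version 2'' with $\mathcal{D}=\mathcal{I}$, scalar $0$, and $f-a\in L^\infty(G)$ shows that $g\ast(f-a)\in\mathcal{I}$ for all $g$ in the regular set $S$ iff it holds for all $g\in L^1(G)$; rewriting $g\ast(f-a)=g\ast f-g\ast a$ gives (iii)$\Leftrightarrow$(iv) (the direction (iv)$\Rightarrow$(iii) being trivial anyway).

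\emph{(ii)$\Rightarrow$(iii).} Fix $g\in S$. Since $f,a\in\operatorname{BUC}(G)_j(\tau)$, Theorem~\ref{mainthm:1} applied with the given $\mathcal{A}$ yields $g\ast f,g\ast a\in\mathcal{A}$, hence $g\ast(f-a)\in\mathcal{A}$. To conclude $g\ast(f-a)\in\mathcal{I}$ I must check $\alpha_x(g\ast(f-a))=0$ for $x\in I$. Using that limit functions commute with convolution --- an identity that descends from $\sigma G$ to $\mathcal{M}(\beta_-(\mathcal{A}))$ because $f-a$ has well-defined limit functions there --- one gets $\alpha_x(g\ast(f-a))=g\ast\alpha_x(f-a)=g\ast(\alpha_x(f)-\alpha_x(a))=0$ by (ii).

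\emph{(iv)$\Rightarrow$(ii).} Fix $x\in I$ and $g\in S$. By (iv), $g\ast(f-a)\in\mathcal{I}$, so $\alpha_x(g\ast(f-a))=0$, i.e.\ $g\ast\alpha_x(f-a)=0$, again by commuting limit functions past convolution. This holds for every $g\in S$, and $S$ is regular iff $\beta_-S$ is; so the density argument from the ``$\Leftarrow$'' part of Theorem~\ref{mainthm:1} --- writing $g\ast\phi(y)=\langle\alpha_y(\beta_-g),\phi\rangle$ and using that the translates of $\beta_-S$ are dense in $L^1(G)$ --- forces $\alpha_x(f-a)=0$, i.e.\ $\alpha_x(f)=\alpha_x(a)$. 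Thus (ii) holds.

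\emph{Main obstacle.} The only genuinely delicate point is keeping track of the two boundaries $\partial G=\sigma G\setminus G$ and $\partial_{\beta_-(\mathcal{A})}(G)$: the limit-function identities (linearity, commutation with convolution) and the description of $\mathcal{I}$ via vanishing of limit functions are first available only with $x\in\sigma G$, and one must check --- through the canonical surjection $\iota:\sigma G\to\mathcal{M}(\beta_-(\mathcal{A}))$ and Lemma~\ref{lemma:top2}, exactly as in the proof of Theorem~\ref{mainthm:1} --- that they descend to well-defined statements indexed by $x\in\mathcal{M}(\beta_-(\mathcal{A}))$ once we restrict to $f,a\in\mathcal{A}_3(\tau_{w^\ast})$. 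After that is in place, everything reduces to Theorem~\ref{mainthm:1} or to the classical theorem. The degenerate case where $I$ meets $G$ (so $\mathcal{I}=\{0\}$) and the case $j=0$ require no separate argument, since the steps above specialize correctly.
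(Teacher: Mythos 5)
Your proposal is correct and follows essentially the same route as the paper: (i)$\Leftrightarrow$(ii) from the definition of $\mathcal I_j(\tau)$, (iii)$\Leftrightarrow$(iv) from Wiener's approximation theorem, the forward convolution direction via $\alpha_x(g\ast\phi)=g\ast\alpha_x(\phi)$, and the converse via the pairing $\langle\beta_-g,\alpha_x(f-a)\rangle=0$ and density of the translates of a regular set. You are in fact slightly more careful than the paper in explicitly invoking Theorem~\ref{mainthm:1} to get $g\ast(f-a)\in\mathcal A$ and in tracking the descent from $\sigma G$ to $\mathcal M(\beta_-(\mathcal A))$, but these are refinements of, not departures from, the paper's argument.
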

\begin{proof}
By Wiener's approximation theorem, the equivalence of $(iii)$ and $(iv)$ is straightforward to verify. Also, the equivalence of $(i)$ and $(ii)$ is clear by the definition of $\mathcal I_j(\tau)$. By linearity we may now assume $a = 0$.

$(i) \Rightarrow (iii)$: For $x \in I$ we clearly have
\begin{align*}
\alpha_x(g \ast f) = g \ast \alpha_x(f) = g\ast 0 = 0.
\end{align*}
$(iii) \Rightarrow (i)$: For each $x \in I$ and $g \in S$ it is
\begin{align*}
0 = \alpha_x(g \ast f)(e) = \langle \beta_- g, \alpha_x(f)\rangle.
\end{align*}
By regularity of $S$ and $\varsigma$-invariance of $I$, this can easily be extended to
\begin{align*}
\langle h, \alpha_x(f)\rangle = 0, \quad \text{any } h \in L^1(G).
\end{align*}
But this clearly means $\alpha_x(f) = 0$ for any $x \in I$, hence $f \in \mathcal I_j(\tau)$.
\end{proof}
Note that for the choice $\mathcal A = \operatorname{BUC}(G)$, $a = const$, $j = 3$ and $\mathcal I = C_0(G)$ we are in the situation of Wiener's Tauberian theorem.

\subsection{Some consequences}
Now we state some corollaries of our results in the preceding section. Letting in Theorem \ref{mainthm:1} $\tau = \tau_{w^\ast}$, we obtain $\operatorname{BUC}(G)_3(\tau_{w^\ast}) = L^\infty(G)$ and hence:
\begin{cor}
Let $\mathcal A$ be a unital $C^\ast$-subalgebra of $\operatorname{BUC}(G)$ containing $C_0(G)$. Then,
\begin{align*}
\{ f \in L^\infty(G): ~g \ast f \in \mathcal A \text{ for every } g \in L^1(G)\} = \mathcal A_3(\tau_{w^\ast}).
\end{align*}
\end{cor}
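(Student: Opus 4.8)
The plan is to read the statement off as the special case $\tau = \tau_{w^\ast}$, $j = 3$ of Theorem~\ref{mainthm:1}. The only genuine point that needs checking is that the hypothesis ``$f \in \operatorname{BUC}(G)_3(\tau_{w^\ast})$'' appearing in that theorem imposes no restriction at all, i.e.\ that $\operatorname{BUC}(G)_3(\tau_{w^\ast}) = L^\infty(G)$. To see this, recall that $\operatorname{BUC}(G)_3(\tau_{w^\ast})$ is by definition $\mathcal A_3(\tau_{w^\ast})$ with $\mathcal A = \operatorname{BUC}(G)$; since $\operatorname{BUC}(G)$ is $\beta_-$-invariant we have $\beta_-(\operatorname{BUC}(G)) = \operatorname{BUC}(G)$ and therefore $\partial_{\beta_-(\operatorname{BUC}(G))}(G) = \partial G$. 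Thus membership in $\operatorname{BUC}(G)_3(\tau_{w^\ast})$ only asks that, for every $x \in \partial G$ and every net $(x_\gamma) \subset G$ with $x_\gamma \to x$, the weak$^\ast$-limit $\lim_\gamma \alpha_{x_\gamma}(f)$ exists, which is exactly the content of Theorem/Definition~\ref{thm:deflimitfcts} (the extension of $x \mapsto \alpha_x(f)$ to a continuous map $\sigma G \to L^\infty(G)$ in the weak$^\ast$ topology; net-independence is then automatic since $\tau_{w^\ast}$ is Hausdorff). Hence every $f \in L^\infty(G)$ lies in $\operatorname{BUC}(G)_3(\tau_{w^\ast})$.

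With this in hand I would pick a regular subset $S \subset L^1(G)$; the cheapest choice is $S = L^1(G)$ itself, which is regular because for any $\chi_0 \in \widehat{G}$ the function $g(x) = \chi_0(x)\varphi(x)$ with $\varphi \in C_c(G)$, $\varphi \ge 0$, $\varphi \not\equiv 0$, satisfies $\widehat g(\chi_0) = \int_G \varphi > 0$, so that $\bigcap_{g \in L^1(G)} \{\chi : \widehat g(\chi) = 0\} = \emptyset$. Since $C_0(G) \subset \mathcal A$ holds by assumption, Theorem~\ref{mainthm:1} now applies to every $f \in L^\infty(G) = \operatorname{BUC}(G)_3(\tau_{w^\ast})$ with this $S$ and $j = 3$, $\tau = \tau_{w^\ast}$, yielding
\begin{align*}
f \in \mathcal A_3(\tau_{w^\ast}) \ \Leftrightarrow \ g \ast f \in \mathcal A \text{ for every } g \in L^1(G),
\end{align*}
and this biconditional is precisely the asserted set equality.

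There is essentially no obstacle here beyond the bookkeeping just described; all the substance sits in Theorem~\ref{mainthm:1} and the extension Theorem/Definition~\ref{thm:deflimitfcts}. If one prefers not to take $S = L^1(G)$, one can instead fix a single regular $g_0 \in L^1(G)$: Theorem~\ref{mainthm:1} gives $\{ f : g_0 \ast f \in \mathcal A\} \subseteq \mathcal A_3(\tau_{w^\ast})$, and, running the same equivalence in the opposite direction, $\mathcal A_3(\tau_{w^\ast}) \subseteq \{ f : g \ast f \in \mathcal A \text{ for all } g \in L^1(G)\}$; combining this with the trivial inclusion $\{ f : g \ast f \in \mathcal A \ \forall g \in L^1(G)\} \subseteq \{ f : g_0 \ast f \in \mathcal A\}$ again produces the equality.
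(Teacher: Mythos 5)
Your proposal is correct and is exactly the paper's argument: the paper derives the corollary by observing that $\operatorname{BUC}(G)_3(\tau_{w^\ast}) = L^\infty(G)$ (which, as you note, is the content of Theorem/Definition \ref{thm:deflimitfcts}) and then specializing Theorem \ref{mainthm:1} to $j=3$, $\tau = \tau_{w^\ast}$. Your verification that $L^1(G)$ itself is a regular subset, and the alternative via a single regular $g_0$, are both fine and just make explicit what the paper leaves implicit.
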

In particular, the condition that $g \ast f \in \mathcal A$ for every $g \in L^1(G)$ is a topological condition at infinity. This corollary is, in a sense, a topological version of Wiener's Tauberian theorem.

As the next application, we want to obtain Pitt's extension as a Corollary to Theorem \ref{mainthm:2}. The natural definition of $\operatorname{SO}(G)$ is given in terms of a metric. Since this space is supposed to consist of functions oscillating slowly at infinity, describing such a property needs a proper metric. $G$ can be endowed with a proper translation-invariant compatible metric if it is second countable, which in turn is equivalent to $G$ being first countable and $\sigma$-compact (this is Struble's theorem, cf.\ Theorem 2.B.4 in \cite{Cornulier_Harpe2016}).
In the presence of such a metric $d$, we let:
\begin{align*}
\operatorname{SO}(G) := \{ f &\in L^\infty(G): ~~\forall \varepsilon > 0~~ \exists\, R > 0,~~ \delta > 0: \\
&d(x, e) > R~~ \text{and}~~ d(y,e) < \delta~ \Rightarrow |f(x) - f(x-y)| < \varepsilon\}.
\end{align*}
More precisely, the condition above should hold in $x$ up to a set of measure zero (but for every such $y$ with no exceptional set). This motivates the following general definition for an arbitrary lca group $G$:\label{def:SO}
\begin{align*}
\nomenclature{$\operatorname{SO}(G)$}{Slowly oscillating functions on $G$, p.\ \pageref{def:SO}}\operatorname{SO}(G) := \{ f \in L^\infty(G): &\forall \varepsilon > 0 \ \exists K \subset G \text{ compact, } O \subset G \text{ nbhd.\ of } e:\\
&\forall y \in O: \|(f - \alpha_y(f))\chi_{K^c}\|_\infty < \varepsilon\}.
\end{align*}
Here, we wrote $K^c := G \setminus K$ for the complement of $K$ in $G$.

In principle, this definition asks for uniform continuity at infinity. We will see that this is exactly what $\operatorname{SO}(G)$ is about. We say that $f \in L^\infty(G)$ \emph{vanishes at infinity} (\nomenclature{$B_0(G)$}{Bounded functions on $G$ vanishing at infinity, p.\ \pageref{def:B0}}\label{def:B0}write $f \in B_0(G)$) if for every $\varepsilon > 0$ there is some compact $K \subset G$ such that $\| f\chi_{K^c}\|_\infty < \varepsilon$. Indeed, $\operatorname{SO}(G)$ can be described as follows:
\begin{prop}\label{decomp:SO}
Let $G$ be a lca group. Then, the following holds true:
\begin{align*}
    \mathrm{SO}(G) = \mathrm{BUC}(G) + B_0(G).
\end{align*}
\end{prop}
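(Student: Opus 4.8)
The inclusion $\mathrm{BUC}(G) + B_0(G) \subseteq \mathrm{SO}(G)$ is the easy direction. If $f = u + v$ with $u \in \mathrm{BUC}(G)$ and $v \in B_0(G)$, then given $\varepsilon > 0$ I pick a neighborhood $O$ of $e$ so that $\|u - \alpha_y(u)\|_\infty < \varepsilon/2$ for all $y \in O$ (uniform continuity of $u$), and a compact $K$ with $\|v\chi_{K^c}\|_\infty < \varepsilon/4$. Then on $K^c \cap (K+O)^c$ (which is again the complement of a compact set, after shrinking $O$ to be relatively compact) I get $\|(f - \alpha_y(f))\chi_{\cdot}\|_\infty < \varepsilon$, using $f - \alpha_y(f) = (u - \alpha_y u) + (v - \alpha_y v)$ and estimating the $v$-part by $\varepsilon/4 + \varepsilon/4$. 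This is routine.

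For the reverse inclusion $\mathrm{SO}(G) \subseteq \mathrm{BUC}(G) + B_0(G)$, the natural strategy is to use the limit-function machinery developed in the excerpt. Given $f \in \mathrm{SO}(G)$, the slow-oscillation condition says exactly that the family of shifts $\{\alpha_{x_\gamma}(f)\}$ becomes ``uniformly equicontinuous near infinity'': for a net $x_\gamma \to \infty$, the translates $\alpha_{x_\gamma}(f)$ are asymptotically uniformly equicontinuous on compact sets. First I would argue that $f$ has limit functions $\alpha_x(f)$ for $x \in \partial G$ (with respect to the topology of uniform convergence on compact sets, say), and that each such $\alpha_x(f)$ lies in $\mathrm{BUC}(G)$; moreover the slow-oscillation estimate passes to the limit, so $\|\alpha_y(\alpha_x(f)) - \alpha_x(f)\|_\infty < \varepsilon$ for $y \in O$ uniformly in $x \in \partial G$ — i.e. $f \in \operatorname{BUC}(G)_1(\tau_{u.c.})$ in the notation of the paper. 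The plan is then: regularize $f$ by convolving with a normalized approximate identity $\varphi$ supported in a small neighborhood of $e$, so that $\varphi \ast f \in \mathrm{BUC}(G)$ (it is a genuine bounded uniformly continuous function by the Proposition on $\mathrm{BUC}(G)$), and show that $f - \varphi \ast f \in B_0(G)$. The latter is where slow oscillation is used directly: $(f - \varphi \ast f)(x) = \int_G \varphi(y)\,(f(x) - f(x-y))\,dy$, and for $x$ outside a large compact set $K$ and $\operatorname{supp}\varphi \subset O$, the integrand is pointwise $< \varepsilon$ by the defining inequality of $\mathrm{SO}(G)$, hence $\|(f - \varphi \ast f)\chi_{K^c}\|_\infty \le \varepsilon$.

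A cleaner variant, which avoids worrying about whether the regularization lands exactly in $\mathrm{BUC}(G)$ versus only approximately, is to iterate: choose a sequence $\varepsilon_n \downarrow 0$ with associated compacta $K_n$ and neighborhoods $O_n$, pick approximate-identity functions $\varphi_n$ with $\operatorname{supp}\varphi_n \subset O_n$, and use a telescoping/summation argument to produce a single $u \in \mathrm{BUC}(G)$ with $f - u \in B_0(G)$; but I expect the one-step argument above to suffice since $B_0(G)$ is already a closed subspace and $\varphi \ast f$ is genuinely in $\mathrm{BUC}(G)$. The main obstacle is the bookkeeping in the reverse inclusion: one must be careful that $\varphi \ast f$ is defined pointwise and is honestly a $\mathrm{BUC}(G)$-function (not just an $L^\infty$ class) — this is exactly the content of the comment-and-proof of the $\mathrm{BUC}(G)$ characterization Proposition, so I would invoke that — and that the set where the slow-oscillation estimate fails, being of measure zero for each fixed $y$ but with $y$ ranging over $O$, does not spoil the $L^\infty$-bound on $f - \varphi \ast f$; this is handled by Fubini, since for a.e.\ $x$ the inequality $|f(x) - f(x-y)| < \varepsilon$ holds for a.e.\ $y \in O$, which is enough to bound the integral. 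Everything else is a matter of combining the triangle inequality with the definitions of $\mathrm{BUC}(G)$, $B_0(G)$ and $\mathrm{SO}(G)$.
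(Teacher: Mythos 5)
Your easy direction is fine, and your preliminary analysis of the hard direction (that an $f \in \operatorname{SO}(G)$ has limit functions in $\operatorname{BUC}(G)$ forming a uniformly equicontinuous family, i.e.\ $f \in \operatorname{BUC}(G)_1(\tau_{u.c.})$) is exactly the route the paper takes: it verifies those three properties and then invokes the immediately preceding proposition, which identifies $\operatorname{BUC}(G)_1(\tau_{u.c.})$ with $\operatorname{BUC}(G)+B_0(G)$ by Tietze-extending $x \mapsto \alpha_x(f)(e)$ from $\partial G$ to $\sigma G$ (Lemma \ref{lemma:reconstr_boundary1}) and applying Lemma \ref{lemma:b0} to the remainder. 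Had you stopped there and cited that proposition, your proof would essentially coincide with the paper's.

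However, the decomposition you actually commit to --- $f = \varphi\ast f + (f-\varphi\ast f)$ with a \emph{single} approximate-identity function $\varphi$ --- has a genuine gap: $f - \varphi\ast f$ need not lie in $B_0(G)$. The slow-oscillation hypothesis with parameter $\varepsilon$ only gives $\|(f-\varphi\ast f)\chi_{K_\varepsilon^c}\|_\infty \le \varepsilon$ for the one value of $\varepsilon$ for which $\operatorname{supp}\varphi \subset O_\varepsilon$; membership in $B_0(G)$ requires such a bound for every $\varepsilon'>0$, and for $\varepsilon'<\varepsilon$ the admissible neighborhood $O_{\varepsilon'}$ is in general strictly smaller than $\operatorname{supp}\varphi$, so the estimate does not improve. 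Concretely, take $G=\mathbb{R}$, $f(x)=\sin x \in \operatorname{BUC}(\mathbb{R})\subset\operatorname{SO}(\mathbb{R})$ and $\varphi=\frac{1}{2\delta}\chi_{[-\delta,\delta]}$: then $f-\varphi\ast f=(1-\frac{\sin\delta}{\delta})\sin$, a nonzero multiple of $\sin$, which is not in $B_0(\mathbb{R})$. The closedness of $B_0(G)$ does not rescue the one-step argument, since nothing is converging to $f-\varphi\ast f$ from inside $B_0(G)$. Your telescoping fallback can be made to work (take $\varepsilon_n\downarrow 0$, nested compacta $K_n$, $\varphi_n$ supported in $O_{\varepsilon_n}$, Urysohn cutoffs $\psi_n\in C_c(G)$ equal to $1$ on $K_n\cup K_{n+1}$, and set $u=\varphi_1\ast f+\sum_n(\varphi_{n+1}\ast f-\varphi_n\ast f)(1-\psi_n)$, which converges uniformly in $\operatorname{BUC}(G)$ and satisfies $f-u\in B_0(G)$), but as written it is only named, not carried out, and it is precisely the step that carries the whole content of the proposition; the paper sidesteps this bookkeeping by building $u$ on the Samuel boundary instead.
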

Before proving the above proposition, we need some auxiliary facts.

\begin{lem}\label{lem:SOCb}
Let $G$ be a lca group. Then, $C_b(G) \cap \operatorname{SO}(G) = \operatorname{BUC}(G)$.
\end{lem}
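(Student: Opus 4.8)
The plan is to prove the two inclusions of $C_b(G) \cap \operatorname{SO}(G) = \operatorname{BUC}(G)$ separately. The inclusion $\operatorname{BUC}(G) \subseteq C_b(G) \cap \operatorname{SO}(G)$ is essentially immediate: a bounded uniformly continuous function is continuous and bounded, and the uniform continuity condition (for every $\varepsilon > 0$ there is a neighborhood $O$ of $e$ with $\|f - \alpha_y(f)\|_\infty < \varepsilon$ for all $y \in O$) trivially implies the $\operatorname{SO}(G)$ condition by taking $K = \emptyset$ (or any compact set, since the estimate holds on all of $G$, a fortiori on $K^c$). So the real content is the reverse inclusion.

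For $\operatorname{SO}(G) \cap C_b(G) \subseteq \operatorname{BUC}(G)$: suppose $f \in C_b(G)$ also lies in $\operatorname{SO}(G)$. Fix $\varepsilon > 0$. By the $\operatorname{SO}(G)$ condition, choose a compact $K \subset G$ and a neighborhood $O_1$ of $e$ such that $\|(f - \alpha_y(f))\chi_{K^c}\|_\infty < \varepsilon$ for all $y \in O_1$. This controls the oscillation outside the compact set $K$. The remaining task is to control the oscillation \emph{on} $K$, and here I would use that $f$ is continuous — and in fact uniformly continuous on compact neighborhoods. More precisely, let $V$ be a fixed compact symmetric neighborhood of $e$, and consider the compact set $K' := K + V$ (or $KV$ in multiplicative notation — using additive notation, $K + \overline{V}$). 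A continuous function restricted to a compact set is uniformly continuous in the appropriate sense; since $f$ is continuous on $G$ and $K'$ is compact, one can find a neighborhood $O_2 \subseteq V$ of $e$ such that $|f(x) - f(x - y)| < \varepsilon$ for all $x \in K$ and all $y \in O_2$. (This uses a standard compactness argument: cover $K$ by finitely many sets on which $f$ varies little, shrink the neighborhood accordingly.) Then setting $O := O_1 \cap O_2$, for any $x \in G$ and $y \in O$: either $x \in K$, in which case $|f(x) - f(x-y)| < \varepsilon$ by the $O_2$-estimate, or $x \in K^c$, in which case $|f(x) - f(x-y)| < \varepsilon$ by the $O_1$-estimate. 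Hence $\|f - \alpha_y(f)\|_\infty \le \varepsilon$ for all $y \in O$, which is exactly the statement that $f \in \operatorname{BUC}(G)$ (after noting $f$ is already bounded and continuous, and that the essential sup equals the sup for a continuous function).

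The main technical obstacle is the uniform continuity argument on the compact piece: on a general (non-metrizable) lca group one cannot literally invoke "uniform continuity on compact sets" from a single $\delta$; instead one must run the finite-subcover argument carefully to extract a \emph{single} neighborhood $O_2$ of $e$ that works uniformly for all base points $x \in K$. Concretely, for each $x \in K$ continuity gives a neighborhood $W_x$ of $e$ with $|f(x) - f(x')| < \varepsilon/2$ for $x' \in x + W_x$; pick a neighborhood $U_x$ of $e$ with $U_x + U_x \subseteq W_x$; the sets $\{x + U_x : x \in K\}$ cover $K$, so finitely many $x_1 + U_{x_1}, \dots, x_n + U_{x_n}$ suffice; take $O_2 := \bigcap_{i=1}^n U_{x_i}$. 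Then for $x \in K$, say $x \in x_i + U_{x_i}$, and $y \in O_2$ one has $x - y \in x_i + U_{x_i} + U_{x_i} \subseteq x_i + W_{x_i}$, so both $f(x)$ and $f(x-y)$ are within $\varepsilon/2$ of $f(x_i)$, giving $|f(x) - f(x-y)| < \varepsilon$. This is the one place where some care with the group topology is needed; everything else is bookkeeping.
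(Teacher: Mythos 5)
Your proof is correct and follows the same route as the paper's (which only sketches it in one line): the $\operatorname{SO}(G)$ condition controls the oscillation on $K^c$, and uniform continuity of the continuous function $f$ on a compact neighborhood of $K$, extracted via the finite-subcover argument, controls the rest. The only cosmetic point is that in the subcover step you should take each $U_x$ symmetric (or replace it by $U_x \cap (-U_x)$) so that $y \in U_{x_i}$ really gives $x - y \in x_i + U_{x_i} + U_{x_i}$.
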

\begin{proof}
The inclusion ``$\supseteq$'' is clear. For the other inclusion, given $\varepsilon > 0$, use the $\operatorname{SO}(G)$ property outside $K$ and the fact that $f$ is continuous on $K$, which is compact, hence uniformly continuous on $K$.
\end{proof}
Before attempting the next characterization of $\operatorname{SO}(G)$, we need the following:
\begin{lem}\label{lemma:reconstr_boundary1}
Let $f \in L^\infty(G)$. Assume that:
\begin{itemize}
\item $\alpha_x(f) \in \operatorname{BUC}(G)$ for every $x \in \partial G$;
\item $\partial G \ni x \mapsto \alpha_x(f)(e)$ is continuous;
\item $\{ \alpha_x(f): ~x \in \partial G\}$ is uniformly equicontinuous.
\end{itemize}
Then, there exists some $g \in \operatorname{BUC}(G)$ such that $\alpha_x(f) = \alpha_x(g)$ for all $x \in \partial G$.
\end{lem}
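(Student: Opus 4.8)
The plan is to build $g$ by first prescribing its boundary values on $\partial G$, then extending continuously to $\sigma G$, restricting back to $G$, and finally checking that the limit functions of this $g$ are the prescribed ones. Two elementary identities carry the argument. First, a \emph{reconstruction identity}: if $\phi \in L^\infty(G)$ has the property that $\alpha_w(\phi) \in \operatorname{BUC}(G)$ for all $w \in \partial G$, then for $x \in \partial G$ and $y \in G$ one has $\alpha_x(\phi)(y) = \alpha_{\varsigma_y(x)}(\phi)(e)$. Indeed, since $\alpha_x(\phi) \in \operatorname{BUC}(G)$, evaluating it at $y$ equals evaluating $\alpha_{-y}(\alpha_x(\phi))$ at the neutral element $e$; by Lemma~\ref{lem:prop_limit_fcts}(3) this equals $\alpha_x(\alpha_{-y}(\phi))(e)$; and pairing the weak$^\ast$ definition of $\alpha_x$ with $h \in L^1(G)$ along a net $z_\gamma \to x$ in $G$, using $\alpha_{z_\gamma}(\alpha_{-y}(\phi)) = \alpha_{z_\gamma - y}(\phi)$ and $z_\gamma - y = \varsigma_y(z_\gamma) \to \varsigma_y(x)$, gives $\alpha_x(\alpha_{-y}(\phi)) = \alpha_{\varsigma_y(x)}(\phi)$; since $\varsigma_y$ is a homeomorphism of $\sigma G$ fixing $G$ setwise (it is the Gelfand dual of the automorphism $\alpha_y$ of $\operatorname{BUC}(G)$), $\varsigma_y(x) \in \partial G$, so $\alpha_{\varsigma_y(x)}(\phi) \in \operatorname{BUC}(G)$ and its evaluation at $e$ is meaningful. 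Second, for any $g \in \operatorname{BUC}(G)$, identified with its continuous extension to $\sigma G$, one has $\alpha_w(g)(e) = g(\rho w)$ for $w \in \sigma G$: along a net $z_\gamma \to w$ the Proposition on limit functions of $\operatorname{BUC}(G)$-functions gives $\alpha_{z_\gamma}(g) \to \alpha_w(g)$ uniformly on compacta, hence at $e$, while $\alpha_{z_\gamma}(g)(e) = g(-z_\gamma) = g(\rho z_\gamma) \to g(\rho w)$ by continuity.

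\textbf{Construction of $g$.} The parity map $\rho$ is a homeomorphic involution of $\sigma G$ fixing $G$ setwise (it is the Gelfand dual of the $C^\ast$-automorphism $\beta_-$ of $\operatorname{BUC}(G)$), so it maps $\partial G$ onto $\partial G$; moreover $\partial G$ is closed in the compact Hausdorff space $\sigma G$ because $C_0(G) \subset \operatorname{BUC}(G)$. Define $\psi \colon \partial G \to \mathbb C$ by $\psi(x) := \alpha_{\rho x}(f)(e)$. This is well defined (as $\rho x \in \partial G$, the first hypothesis yields $\alpha_{\rho x}(f) \in \operatorname{BUC}(G)$), bounded by $\|f\|_\infty$ by Lemma~\ref{lem:prop_limit_fcts}(2), and continuous, since $x \mapsto \rho x$ is a homeomorphism of $\partial G$ and $x \mapsto \alpha_x(f)(e)$ is continuous on $\partial G$ by the second hypothesis. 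By Tietze's extension theorem, choose $\Psi \in C(\sigma G)$ with $\Psi|_{\partial G} = \psi$, and set $g := \Psi|_G \in \operatorname{BUC}(G)$, whose continuous extension to $\sigma G$ is $\Psi$.

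\textbf{Verification.} Fix $x \in \partial G$ and $y \in G$. Applying the reconstruction identity to $g$, then $\alpha_w(g)(e) = g(\rho w)$ with $w = \varsigma_y(x)$, gives $\alpha_x(g)(y) = \alpha_{\varsigma_y(x)}(g)(e) = \Psi(\rho\,\varsigma_y(x))$. Since $\varsigma_y(x) \in \partial G$ and $\rho$ preserves $\partial G$, the point $\rho\,\varsigma_y(x)$ lies in $\partial G$, where $\Psi = \psi$; using the definition of $\psi$ and $\rho\circ\rho = \mathrm{id}$ we get $\Psi(\rho\,\varsigma_y(x)) = \psi(\rho\,\varsigma_y(x)) = \alpha_{\varsigma_y(x)}(f)(e)$. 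Finally, the reconstruction identity applied to $f$ (legitimate because $\varsigma_y(x) \in \partial G$ and the first hypothesis makes all relevant limit functions of $f$ lie in $\operatorname{BUC}(G)$) gives $\alpha_{\varsigma_y(x)}(f)(e) = \alpha_x(f)(y)$. Hence $\alpha_x(g)(y) = \alpha_x(f)(y)$; as $y \in G$ and $x \in \partial G$ were arbitrary, $\alpha_x(g) = \alpha_x(f)$ on $\partial G$, as required.

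\textbf{Main obstacle.} The only delicate point is the bookkeeping of the interacting shift and parity conventions $\alpha_y$, $\varsigma_y$, $\beta_-$, $\rho$ on $\sigma G$, and the fact that the reconstruction identity must be applied to $f$ itself even though $f$ need not be uniformly continuous — which is precisely why the first hypothesis is needed, guaranteeing that every limit function appearing ($\alpha_x(f)$ and $\alpha_{\varsigma_y(x)}(f)$) is an honest continuous function on which pointwise evaluation and Lemma~\ref{lem:prop_limit_fcts}(3) are valid. Note that this argument uses only the first two hypotheses; the uniform equicontinuity in the third hypothesis is then automatic, since the limit functions of the $\operatorname{BUC}(G)$-function $g$ are always uniformly equicontinuous.
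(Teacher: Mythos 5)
Your proof is correct and follows essentially the same route as the paper's: prescribe the boundary function $x \mapsto \alpha_x(f)(e)$ (twisted by $\rho$), extend it by Tietze from the closed set $\partial G \subset \sigma G$, and verify via the identity $\alpha_x(g)(y)=\alpha_{\varsigma_y(x)}(g)(e)=\alpha_{\varsigma_y(x)}(f)(e)=\alpha_x(f)(y)$; whether one pre- or post-composes with $\rho$ before or after the Tietze extension is immaterial. Your closing observation that the uniform equicontinuity hypothesis is never invoked is accurate and consistent with the paper's own argument, which likewise uses only the first two hypotheses.
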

\begin{proof}
By assumption, the function
\begin{align*}
\tilde{g}: \partial G \to \mathbb C, ~\tilde{g}(x) = \alpha_x(f)(e)
\end{align*}
is continuous. Since $\sigma G$ is normal and $\partial G$ is a closed subset, Tietze's extension theorem yields that there is some $g_0 \in C(\sigma G) \cong \operatorname{BUC}(G)$ with $g_0(x) = \tilde{g}(x)$ for each $x$ from the boundary. Set $g(x) = g_0(\rho x)$. Now, for $x \in \partial G$ and $y \in G$ we have that
\begin{align*}
    \alpha_x(f)(y) = \alpha_x(\alpha_{-y}(f))(e) = \alpha_{\varsigma_{-y}(x)}(f)(e) = g_0(\varsigma_{-y}(x)) = \alpha_{\varsigma_{-y}(x)}(\beta_-(g_0))(e) = \alpha_x(g)(y),
\end{align*}
finishing the proof.
\end{proof}
\begin{lem}\label{lemma:b0}
Let $f \in L^\infty(G)$ such that $\alpha_{x_\gamma}(f)\to 0$ uniformly on compact subsets whenever $G \ni x_\gamma \to x \in \partial G$. Then, $f \in B_0(G)$. In particular, with the notation of Section \ref{subsec:22} we have the following equalities of sets:
\begin{align*}
    B_0(G) = C_0(G)_1(\tau_{u.c.}) = C_0(G)_2(\tau_{u.c.}) = C_0(G)_3(\tau_{u.c.}).
\end{align*}
\end{lem}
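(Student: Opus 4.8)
The plan is to obtain the displayed chain of equalities from the first assertion, whose only non-obvious content is the inclusion $C_0(G)_3(\tau_{u.c.})\subseteq B_0(G)$, and to prove that assertion by contraposition: assuming $f\notin B_0(G)$, I will manufacture a net of points tending to a boundary point of $\sigma G$ along which a fixed translate of $f$ stays essentially bounded below by a fixed constant on a fixed compact set, contradicting the hypothesis on the limit functions.

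Concretely, suppose $f\notin B_0(G)$ and fix $\varepsilon>0$ with $\esssup_{G\setminus L}|f|>\varepsilon$ for every compact $L\subseteq G$; fix also a compact neighbourhood $V$ of $e$ and an open neighbourhood $O$ of $e$ with $O\subseteq V$. For each compact $K\subseteq G$, applying the ``$f\notin B_0$'' condition with $L=K+V$ shows that $E_K:=\{\,z\in G\setminus(K+V):|f(z)|>\varepsilon\,\}$ is not locally null, so by inner regularity of Haar measure it contains a compact set $C$ with $|C|>0$. Covering $C$ by finitely many translates $z_1+O,\dots,z_m+O$ with $z_i\in C$ and using $|C|=\bigl|C\cap\bigcup_i(z_i+O)\bigr|\le\sum_i|E_K\cap(z_i+V)|$, some index $i$ satisfies $|E_K\cap(z_i+V)|>0$; set $z_K:=z_i$. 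Then $z_K\in C\subseteq E_K$, so $z_K\notin K+V$ (in particular $z_K\notin K$), while $\esssup_{z\in z_K+V}|f(z)|\ge\varepsilon$. Directing the compact subsets of $G$ by inclusion, the net $(z_K)$ leaves every compact set, hence so does $(-z_K)$; since $\sigma G$ is compact and induces the original topology on $G$ (because $C_0(G)\subseteq\operatorname{BUC}(G)$), $(-z_K)$ has a subnet converging in $\sigma G$ to some $x\in\partial G$. Along this subnet the hypothesis forces $\alpha_{-z_K}(f)\to 0$ in $\tau_{u.c.}$, so in particular $\|\alpha_{-z_K}(f)\chi_V\|_\infty\to 0$; but $\alpha_{-z_K}(f)(v)=f(v+z_K)$ gives $\|\alpha_{-z_K}(f)\chi_V\|_\infty=\esssup_{z\in z_K+V}|f(z)|\ge\varepsilon$ for every $K$, a contradiction. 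Hence $f\in B_0(G)$.

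For the chain of equalities I would recall that $C_0(G)_j(\tau_{u.c.})$ denotes $\mathcal I_j(\tau_{u.c.})$ for the translation-invariant closed ideal $\mathcal I=C_0(G)$ of $\operatorname{BUC}(G)$, whose associated closed $\varsigma$-invariant set is $\partial G$; thus $f\in C_0(G)_j(\tau_{u.c.})$ means $f\in\operatorname{BUC}(G)_j(\tau_{u.c.})$ together with $\alpha_x(f)=0$ for all $x\in\partial G$. The inclusions $C_0(G)_1(\tau_{u.c.})\subseteq C_0(G)_2(\tau_{u.c.})\subseteq C_0(G)_3(\tau_{u.c.})$ are immediate from $\mathcal A_1(\tau)\subseteq\mathcal A_2(\tau)\subseteq\mathcal A_3(\tau)$. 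The inclusion $C_0(G)_3(\tau_{u.c.})\subseteq B_0(G)$ is precisely the first assertion, because $f\in C_0(G)_3(\tau_{u.c.})$ says exactly that $\alpha_{x_\gamma}(f)\to\alpha_x(f)=0$ in $\tau_{u.c.}$ whenever $G\ni x_\gamma\to x\in\partial G$. Finally, $B_0(G)\subseteq C_0(G)_1(\tau_{u.c.})$ is an easy direct check: if $f\in B_0(G)$ then for any compact $K$ and any net $x_\gamma\to\infty$ one has $\esssup_{y\in K}|f(y-x_\gamma)|=\esssup_{z\in K-x_\gamma}|f(z)|\to 0$, so every limit function of $f$ exists and equals $0$, lies (trivially) in $\operatorname{BUC}(G)$, and the family $\{0\}$ is uniformly equicontinuous.

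The step I expect to be the main obstacle is the construction of the point $z_K$ in the second paragraph. The subtlety is that the hypothesis controls only essential suprema of the shifts $\alpha_x(f)$ over compact sets, so it does not suffice to know that $|f|$ exceeds $\varepsilon$ somewhere far out (think of a chirp); one must extract a single point near which $|f|>\varepsilon$ on a set of positive measure inside a \emph{full translate of the fixed compact neighbourhood} $V$, while simultaneously forcing that point out of $K$ — which is arranged by applying the ``$f\notin B_0$'' condition to $L=K+V$ rather than to $K$ and by taking the compact set $C$ inside $E_K$. Once $z_K$ is in hand, extracting a convergent subnet in $\sigma G$ with limit in $\partial G$, the norm identity $\|\alpha_{-z_K}(f)\chi_V\|_\infty=\esssup_{z_K+V}|f|$, and the bookkeeping of the third paragraph are all routine.
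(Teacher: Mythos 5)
Your proof is correct and takes essentially the same route as the paper: arguing by contraposition, one extracts for each compact $K$ a point $z_K\notin K$ with $\esssup_{z_K+V}|f|$ bounded below on a translate of a fixed compact neighbourhood, passes to a subnet converging to a boundary point of $\sigma G$, and contradicts the $\tau_{u.c.}$-convergence of the shifts to $0$; the chain of equalities is then obtained from the cyclic inclusions exactly as in the paper. Your treatment of the selection of $z_K$ (via inner regularity and a finite cover by translates) and of the sign conventions is in fact slightly more careful than the paper's, which merely asserts the existence of such a point.
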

\begin{proof}
We may without loss of generality assume that $G$ is not compact, otherwise there is nothing to prove.

Assume that $f \not \in B_0(G)$, i.e.\ there exists $\varepsilon > 0$ such that $\| f\chi_{K^c}\|_\infty > \varepsilon$ for every $K \subset G$ compact. Fix some compact neighborhood $\tilde{K}$ of $e \in G$. 

Denote by $\mathcal K$ the net of all compact neighborhoods of $e$, ordered increasingly by inclusion. For each $K \in \mathcal K$ we have $\| f\chi_{K^c}\|_\infty > \varepsilon$. Hence, there exists for each $K \in \mathcal K$ some $x_K \in K^c$ such that 
\begin{align*}
    \| f\chi_{K^c \cap (\tilde{K} + x_K)}\|_\infty \geq \varepsilon/2.
\end{align*}
Since the net $(x_K)_{K \in \mathcal K}$ eventually leaves every compact subset of $G$, there exists a subnet $(y_\gamma)_{\gamma \in \Gamma}$ (i.e, a net $(y_\gamma)_{\gamma\in \Gamma}$ together with a monotone final function $h: \Gamma \to \mathcal K$ such that $y_\gamma = x_{h(\gamma)}$ for all $\gamma \in \Gamma$) such that $y_\gamma \to y$ for some $y \in \partial G$. We now have for each $\gamma \in \Gamma$ that
\begin{align*}
    \| f \chi_{y_\gamma + \tilde{K}}\|_\infty \geq \| f\chi_{h(\gamma)^c \cap (\tilde{K} + y_\gamma)}\|_\infty \geq \varepsilon/2.
\end{align*}
Therefore, $\alpha_{y_\gamma}(f)$ does not converge to $0$ uniformly on compact subsets.

Now, regarding the equalities of sets, it follows immediately from the definition that
\begin{align*}
    B_0(G) \subseteq C_0(G)_1(\tau_{u.c.}) = C_0(G)_2(\tau_{u.c.}) = C_0(G)_3(\tau_{u.c.}).
\end{align*}
The inclusion $B_0(G) \supseteq C_0(G)_3(\tau_{u.c.})$ follows now from the first part of the proof.
\end{proof}
\begin{prop}
Let $f \in L^\infty(G)$. Then, $f \in \operatorname{BUC}(G) + B_0(G)$ if and only if $f$ satisfies the following properties:
\begin{itemize}
\item $\alpha_{x_\gamma}(f) \to \alpha_x(f)$ uniformly on compact sets whenever $x \in \partial G$ and $(x_\gamma) \subset G$ converging to $x$;
\item $\alpha_x(f) \in \operatorname{BUC}(G)$ whenever $x \in \partial G$;
\item The family $\{ \alpha_x(f): x \in \partial G\}$ is uniformly equicontinuous.
\end{itemize}
In particular,
\begin{align*}
    \operatorname{BUC}(G)_1(\tau_{u.c.}) = \mathrm{BUC}(G) + B_0(G).
\end{align*}
\end{prop}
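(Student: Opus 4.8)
The plan is to recognise the three displayed properties as exactly the defining conditions of $\operatorname{BUC}(G)_1(\tau_{u.c.})$: unravelling the definitions of $\operatorname{BUC}(G)_j(\tau_{u.c.})$ from Section~\ref{subsec:22} (and using that $\operatorname{BUC}(G)$ is $\beta_-$-invariant, so that $\partial_{\beta_-(\operatorname{BUC}(G))}(G)=\partial G$), the set of $f\in L^\infty(G)$ meeting all three bullets is precisely $\operatorname{BUC}(G)_1(\tau_{u.c.})$. Hence the ``in particular'' clause is just a restatement of the equivalence, and it suffices to prove that $\operatorname{BUC}(G)+B_0(G)$ equals that set.

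For the implication ``$\Leftarrow$'' I would feed the three properties into Lemma~\ref{lemma:reconstr_boundary1}. Two of its hypotheses are immediate: $\alpha_x(f)\in\operatorname{BUC}(G)$ for $x\in\partial G$ is the second bullet, and uniform equicontinuity of $\{\alpha_x(f):x\in\partial G\}$ is the third. The remaining hypothesis, continuity of $\partial G\ni x\mapsto\alpha_x(f)(e)$, I would extract from the first bullet: it says the $\tau_{u.c.}$-limits $\lim_\gamma\alpha_{x_\gamma}(f)$ exist, and since $\tau_{u.c.}$ is finer than the weak$^\ast$ topology these limits coincide with the net-independent weak$^\ast$ limits $\alpha_x(f)$ of Theorem/Definition~\ref{thm:deflimitfcts}; the Bourbaki-type extension lemma then shows $x\mapsto\alpha_x(f)$ is $\tau_{u.c.}$-continuous on $\partial G$, and composing with evaluation at $e$ (continuous once one restricts to $\operatorname{BUC}(G)$, where $\tau_{u.c.}$ induces the compact-open topology) yields the claim. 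Lemma~\ref{lemma:reconstr_boundary1} now produces $g\in\operatorname{BUC}(G)$ with $\alpha_x(g)=\alpha_x(f)$ for all $x\in\partial G$. Setting $h:=f-g\in L^\infty(G)$, for any net $x_\gamma\to x\in\partial G$ in $G$ I get $\alpha_{x_\gamma}(h)=\alpha_{x_\gamma}(f)-\alpha_{x_\gamma}(g)\to\alpha_x(f)-\alpha_x(g)=0$ uniformly on compacta (first bullet for $f$, and the Proposition on limit functions of $\operatorname{BUC}(G)$-functions for $g$), so $h\in B_0(G)$ by Lemma~\ref{lemma:b0} and $f=g+h\in\operatorname{BUC}(G)+B_0(G)$.

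For ``$\Rightarrow$'', write $f=g+h$ with $g\in\operatorname{BUC}(G)$ and $h\in B_0(G)$. The only real point is that every limit function of $h$ vanishes: a short estimate, using that a net in $G$ converging to a point of $\partial G$ eventually leaves every compact subset and that $h$ is uniformly small off a compact set, gives $\alpha_{x_\gamma}(h)\to0$ uniformly on compacta, hence also $\alpha_x(h)=0$ for $x\in\partial G$. Combining this with the Proposition on limit functions of $\operatorname{BUC}(G)$-functions applied to $g$ yields the first bullet for $f$ and $\alpha_x(f)=\alpha_x(g)\in\operatorname{BUC}(G)$ (second bullet); for the third, since $\alpha_x(f)=\alpha_x(g)$, Lemma~\ref{lem:prop_limit_fcts} gives $\|\alpha_y(\alpha_x(f))-\alpha_x(f)\|_\infty=\|\alpha_x(\alpha_y(g)-g)\|_\infty\le\|\alpha_y(g)-g\|_\infty$ for $x\in\partial G$ and $y\in G$, and uniform continuity of $g$ makes the right-hand side uniformly small for $y$ near $e$, uniformly in $x$.

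I expect the main obstacle to be the topological bookkeeping in the ``$\Leftarrow$'' direction — justifying the use of the Bourbaki-type extension lemma (net-independence of the limit and regularity of the target, both of which follow because $\tau_{u.c.}$ is a Hausdorff vector topology finer than weak$^\ast$), and in particular checking that abstract $\tau_{u.c.}$-continuity of $x\mapsto\alpha_x(f)$ on $\partial G$ descends to continuity of the scalar map $x\mapsto\alpha_x(f)(e)$ that Lemma~\ref{lemma:reconstr_boundary1} requires. Everything else is a direct application of the lemmas already in place.
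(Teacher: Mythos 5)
Your proposal is correct and follows the paper's proof in all essentials: the same decomposition $f=g+h$, with Lemma \ref{lemma:reconstr_boundary1} producing $g\in\operatorname{BUC}(G)$ and Lemma \ref{lemma:b0} identifying $h=f-g$ as an element of $B_0(G)$, and the same computation (via Lemma \ref{lem:prop_limit_fcts}) for the converse direction. The one step you handle differently is the continuity of $\partial G\ni x\mapsto\alpha_x(f)(e)$: the paper proves it by a $3\varepsilon$-argument, approximating $\alpha_y(f)(e)$ by $g\ast\alpha_y(f)(e)$ uniformly in $y\in\partial G$ via an approximate identity (using uniform equicontinuity) and then invoking weak$^\ast$ continuity of $y\mapsto\alpha_y(f)$, whereas you obtain $\tau_{u.c.}$-continuity of $x\mapsto\alpha_x(f)$ on $\partial G$ directly from the Bourbaki-type extension lemma (the limits being net-independent because they agree with the weak$^\ast$ limits) and then compose with evaluation at $e$, which is indeed $\tau_{u.c.}$-continuous on $\operatorname{BUC}(G)$. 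Both routes are valid; yours uses the first bullet where the paper uses the third, and is slightly more economical since the extension lemma is already in place for Theorem \ref{mainthm:1}.
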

\begin{proof}
Let $f = f_1 + f_2 \in \operatorname{BUC}(G) + B_0(G)$. Since $\alpha_{x_\gamma}(f) = \alpha_{x_\gamma}(f_1) + \alpha_{x_\gamma}(f_2)$ and $B_0(G) = C_0(G)_3(\tau_{u.c.})$ (cf.~Lemma \ref{lemma:b0}) we see that $\alpha_{x_\gamma}(f) \to \alpha_x(f) = \alpha_x(f_1)$ uniformly on compact subsets whenever $G \ni x_\gamma \to x \in \partial G$. In particular, $\alpha_x(f) \in \operatorname{BUC}(G)$. Further, for any $y \in G$ it is:
\begin{align*}
|\alpha_y(\alpha_x(f))(z) - \alpha_x(f)(z)| &= |\alpha_x(\alpha_y(f))(z) - \alpha_x(f)(z)|\\
&=  |x(\beta_- \alpha_z(\alpha_y(f) - f))|\\
&\leq \| x\| \| \alpha_z(\alpha_y(f) - f)\|\\
&= \| x\| \| \alpha_y(f) - f\|\\
&\leq \varepsilon
\end{align*}
for $y$ in an appropriate neighborhood of $e$. Since this estimate is uniform in $z$, we get that the family of limit functions is uniformly equicontinuous. Similarly, one shows that the family is uniformly bounded. This shows one direction of the statement.

Let $f \in L^\infty(G)$ satisfy the assumptions in the theorem. Once we show that the map $\partial G \ni x \mapsto \alpha_x(f)(e)$ is continuous, Lemma \ref{lemma:reconstr_boundary1} gives us a function $g \in \operatorname{BUC}(G)$ such that $\alpha_x(f-g) = 0$ for $x \in \partial G$. The previous lemma then shows $h := f-g \in B_0(G)$. Thus, $f = g+h \in \operatorname{BUC}(G) + B_0(G)$.

Therefore, it remains to show the continuity of $\partial G \ni x \mapsto \alpha_x(f)(e)$. Fix $x \in \partial G$ and let $(x_\gamma) \subset \partial G$ be a net converging to $x$. Fix $\varepsilon > 0$. Since the family of limit functions $\alpha_y(f)$, $y \in \partial G$ is uniformly bounded and uniformly equicontinuous, there exists some $g \in L^1(G)$ (e.g., taken from some appropriate approximate unit $(\psi_\theta) \subset L^1(G)$) such that for all such $y$:
\begin{align*}
|\langle \beta_- g, \alpha_y(f)\rangle - \alpha_y(f)(e)| =  |g \ast \alpha_y(f)(e) - \alpha_y(f)(e)| < \varepsilon/3. 
\end{align*}
Further, since $y \mapsto \alpha_y(f)$ is continuous in weak$^\ast$ topology, there is a $\gamma_0$ such that for all  $\gamma \geq \gamma_0$:
\begin{align*}
|\langle \beta_- g, \alpha_{x_\gamma}(f) - \alpha_x(f)\rangle| < \varepsilon/3.
\end{align*}
Together, we obtain for all $\gamma \geq \gamma_0$:
\begin{align*}
|\alpha_{x_\gamma}(f)(e) - \alpha_{x}(f)(e)| < \varepsilon.
\end{align*}
Since $\varepsilon > 0$ was fixed but arbitrary, this shows that $\alpha_{x_\gamma}(f)(e) \to \alpha_x(f)(e)$, which finishes the proof.
\end{proof}
\begin{proof}[Proof of Proposition \ref{decomp:SO}]
By the previous proposition, the inclusion ``$\supseteq$'' is clear. Hence, assume $f \in \operatorname{SO}(G)$. and let $\psi_{\theta}, \theta \in \Theta$, be an approximate identity of $L^1(G)$ coming from a neighborhood base $O_\theta$, $\theta \in \Theta$ of $e$, i.e. $\psi_{\theta} = 1/|O_{\theta}| \chi_{O_{\theta}}$.

For proving that $f \in \mathrm{BUC}(G)_1(\tau_{u.c.})$, we can verify the properties from the previous proposition.
\begin{itemize}
\item First we show that for $x \in \partial G$ and $(x_\gamma)_\gamma \subset G$ with $x_\gamma \to x$ it holds true that $\alpha_{x_\gamma}(f) \to \alpha_x(f)$ pointwise almost everywhere: Fix $z \in G$ and $\varepsilon > 0$. Let $K$, $O$ as in the definition of $f \in \operatorname{SO}(G)$. Then, there is a $\gamma_0$ such that for all $\gamma > \gamma_0$ one has $z-x_\gamma \not \in K$. Then, for $\theta_0$ such that for $\theta > \theta_0$ it is $O_\theta \subset O$, we have
\begin{align*}
|f(z-x_\gamma) - \psi_\theta &\ast \alpha_{x_\gamma}f (z)|\\
&\leq \int_G |f(z-x_\gamma) - f(z - x_\gamma - y)| \psi_{\theta}(y)~dy\\
&\leq \varepsilon.
\end{align*}
Since 
\begin{align*}
\psi_\theta  \ast \alpha_{x_\gamma}f(z) = \langle \alpha_{z}\beta_- \psi_\theta, \alpha_{x_\gamma}(f)\rangle \to \langle \alpha_z \beta_- \psi_\theta, \alpha_x(f)\rangle,
\end{align*}
there exists a $\gamma_0' > \gamma_0$ such that
\begin{align*}
|\psi_\theta \ast \alpha_{x_\gamma}f(z) - \langle \alpha_z \beta_- \psi_\theta, \alpha_x(f)\rangle| < \varepsilon.
\end{align*}
In particular, for $\gamma, \gamma' > \gamma_0'$, it is
\begin{align*}
|f(z - x_\gamma) - f(z-x_{\gamma'})| \leq 2\varepsilon + |\langle \alpha_z \beta_- \psi_\theta, \alpha_{x_\gamma}(f) - \alpha_{x_{\gamma'}}(f)\rangle|.
\end{align*}
Since $\alpha_{x_\gamma}(f)$ and $\alpha_{x_{\gamma'}}(f)$ converge to the same limit in the weak$^\ast$ topology, we obtain that $(f(z-x_\gamma))_\gamma$ is a Cauchy net. Hence, the limit $\alpha_x(f) = \lim_{\gamma\in \Gamma} \alpha_{x_\gamma}(f)$ does not only exist in weak$^\ast$ topology but also withrespect to pointwise almost everywhere convergence.
\item Let $\varepsilon > 0$ and $K \subset G$ compact. Let $K_\varepsilon$ and $O_\varepsilon$ as in the definition of $f \in \operatorname{SO}(G)$. Then, for $\gamma_0$ large enough, we have for $z \in G$ that $z - x_\gamma \in K_\varepsilon^c$ whenever $\gamma > \gamma_0$, hence
\begin{align*}
|\alpha_{x_\gamma}f(z) - \alpha_{x_\gamma}f(z-y)| \leq \varepsilon, \quad \gamma > \gamma_0.
\end{align*}
In particular, since the left-hand side converges to $|\alpha_{x}f(z) - \alpha_xf(z-y)|$, we get
\begin{align*}
|\alpha_xf(z) - \alpha_xf(z-y)| \leq \varepsilon
\end{align*}
for every $z \in G$ and $y \in O_\varepsilon$. We get that $\alpha_x(f)$ is uniformly continuous. Further, we have chosen $O_\varepsilon$ independently of the precise point $x \in \partial G$, therefore the family $\{ \alpha_x(f): x \in \partial G\}$ is uniformly equicontinuous. It clearly is also bounded by $\| f\|_\infty$, as
\begin{align*}
|\alpha_x(f)(z)| = \lim_\gamma |f(z-x_\gamma)| \leq \| f\|_\infty.
\end{align*}
\item Finally, we have to prove that $\alpha_{x_\gamma}(f) \to \alpha_x(f)$ uniformly on compact subsets. This follows from a standard $3\varepsilon$ argument: Fix $\varepsilon > 0$ and $K \subset G$ compact. Let $e \in O \subset G$ open such that it serves as a neighborhood of $e$ in the uniform continuity of $\alpha_x(f)$, but also as the open neighborhood of the definition of $f \in \operatorname{SO}(G)$. Further, let $K_\varepsilon$ the compact set in the definition of $f \in \operatorname{SO}(G)$. For $\gamma_0$ sufficiently large, we have $K - x_\gamma \cap K_\varepsilon = \emptyset$ for every $\gamma > \gamma_0$. For such $\gamma$ fixed, cover $K$ (compact) by finitely many sets $O + y_j$, $j = 1, \dots, m$. By possibly enlarging $\gamma_0$, we can enfore $|\alpha_{x_\gamma}f(y_j) - \alpha_x f(y_j)| < \varepsilon$ for all $j$. Now, for $z \in K$, there is some $j$ with $z \in O + y_j$. Hence, 
\begin{align*}
|\alpha_{x_\gamma}&f(z) - \alpha_x(f)(z)| \\
&\leq |\alpha_{x_\gamma}f(z) - \alpha_{x_\gamma}f(y_j)| + |\alpha_{x_\gamma}f(y_j) - \alpha_xf(y_j)| \\
&\quad + |\alpha_xf(y_j) - \alpha_xf(z)| \\
&\leq 3\varepsilon,
\end{align*}
which establishes uniform convergence on compact sets.\qedhere
\end{itemize}
\end{proof}

Clearly, we have $\operatorname{BUC}(G) \cap B_0(G) = C_0(G)$. Hence, using Proposition \ref{mainthm:2} with $\tau$ the compact-open topology, $\mathcal A = \operatorname{BUC}(G)$ and $\mathcal I = C_0(G)$, we obtain Pitt's extension for arbitrary lca groups:
\begin{prop}[Pitt's extension theorem]
Let $f \in \operatorname{SO}(G)$ and $S \subset L^1(G)$ a regular subset. Then, $f \in B_0(G)$ if and only if $g \ast f \in C_0(G)$ for every $g \in S$.
\end{prop}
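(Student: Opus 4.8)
The plan is to obtain Pitt's extension as the special case of Theorem \ref{mainthm:2} indicated just before its statement: take $\mathcal A = \operatorname{BUC}(G)$, the topology $\tau = \tau_{u.c.}$ of uniform convergence on compact sets, the ideal $\mathcal I = C_0(G)$, the index $j = 1$, and $a = 0$. First one checks the standing hypotheses: $\operatorname{BUC}(G)$ is a unital, $\alpha$-invariant $C^\ast$-subalgebra containing $C_0(G)$, $C_0(G)$ is a translation-invariant closed ideal of it, $0 \in \operatorname{SO}(G)$, and $\tau_{u.c.}$ is (on bounded sets, which is all that is used) finer than the weak$^\ast$ topology. Crucially, by Proposition \ref{decomp:SO} together with the proposition preceding it, $\operatorname{SO}(G) = \operatorname{BUC}(G) + B_0(G) = \operatorname{BUC}(G)_1(\tau_{u.c.}) = \mathcal A_1(\tau_{u.c.})$, so every $f \in \operatorname{SO}(G)$ is an admissible input for Theorem \ref{mainthm:2}.

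Next I would identify the closed set $I \subseteq \mathcal M(\beta_-(\mathcal A)) = \sigma G$ attached to the ideal $\mathcal I = C_0(G)$. Under $\operatorname{BUC}(G) \cong C(\sigma G)$, the continuous extension to $\sigma G$ of a function in $C_0(G)$ vanishes on all of $\partial G$ — for each $\varepsilon > 0$ the set $\{|f| \geq \varepsilon\}$ sits inside a compact $K \subseteq G$, and $G \setminus K$ is dense in the open set $\sigma G \setminus K \supseteq \partial G$ — while no $x \in G$ is a common zero of $C_0(G)$; hence the hull of $C_0(G)$ is $\widetilde I = \partial G$, and since $\rho$ restricts to the homeomorphism $x \mapsto -x$ of $G$ it maps $\partial G$ onto $\partial G$, giving $I = \rho\widetilde I = \partial G$. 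Thus $\mathcal I_1(\tau_{u.c.}) = \{ f \in \operatorname{SO}(G) : \alpha_x(f) = 0 \text{ for all } x \in \partial G\}$, and equivalently $C_0(G) = \{ g \in \operatorname{BUC}(G) : \alpha_x(g) = 0 \text{ for all } x \in \partial G\}$.

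The one substantive step is to check $\mathcal I_1(\tau_{u.c.}) = B_0(G)$. For ``$\subseteq$'': write $f \in \operatorname{SO}(G)$ as $f = f_1 + f_2$ with $f_1 \in \operatorname{BUC}(G)$ and $f_2 \in B_0(G)$; since $\alpha_x(f_2) = 0$ on $\partial G$ by Lemma \ref{lemma:b0}, the hypothesis $\alpha_x(f) = 0$ on $\partial G$ forces $\alpha_x(f_1) = 0$ on $\partial G$, hence $f_1 \in C_0(G) \subseteq B_0(G)$ and therefore $f \in B_0(G)$. For ``$\supseteq$'': $B_0(G) \subseteq \operatorname{SO}(G)$ by Proposition \ref{decomp:SO}, and by Lemma \ref{lemma:b0} every $f \in B_0(G)$ has $\alpha_x(f) = 0$ for all $x \in \partial G$. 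With $\mathcal I_1(\tau_{u.c.}) = B_0(G)$ in hand, Theorem \ref{mainthm:2} (with $a = 0$) yields, for $f \in \operatorname{SO}(G)$, the equivalence of ``$f \in B_0(G)$'', ``$\alpha_x(f) = 0$ for all $x \in \partial G$'', ``$g \ast f \in C_0(G)$ for every $g \in S$'', and ``$g \ast f \in C_0(G)$ for every $g \in L^1(G)$''; the equivalence of the first and third is precisely the assertion. The main obstacle is really just the careful identification of $\mathcal I_1(\tau_{u.c.})$ with $B_0(G)$, where the nontrivial inputs $\operatorname{SO}(G) = \operatorname{BUC}(G)_1(\tau_{u.c.})$ and Lemma \ref{lemma:b0} are used; everything else is bookkeeping with Gelfand duality and the definitions of Section \ref{subsec:22}.
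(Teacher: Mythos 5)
Your proposal is correct and follows exactly the route the paper takes: the paper's proof is precisely the one-line invocation of Theorem \ref{mainthm:2} with $\mathcal A = \operatorname{BUC}(G)$, $\mathcal I = C_0(G)$ and $\tau = \tau_{u.c.}$, relying on $\operatorname{SO}(G) = \operatorname{BUC}(G)_1(\tau_{u.c.})$ and $\operatorname{BUC}(G)\cap B_0(G) = C_0(G)$, and your identification of $I = \partial G$ and of $\mathcal I_1(\tau_{u.c.})$ with $B_0(G)$ (which is also Lemma \ref{lemma:b0}) just makes explicit the bookkeeping the paper leaves implicit.
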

To end this discussion, we want to note the following characterization of $B_0(G)$, showing that it is in some sense dual to $\operatorname{BUC}(G)$:
\begin{lem} We set
\begin{align*}
    \mathcal E_0(G) := \{ f \in L^\infty(G): ~\hat{G} \ni \xi \mapsto \gamma_\xi(f) \text{ is continuous w.r.t. } \| \cdot\|_\infty\}.
\end{align*}
Then, one has $B_0(G) = \mathcal E_0(G)$. Here, the action of the dual group $\hat{G}$ is given by modulation:
\begin{align*}
\gamma_\xi(f)(x) = \xi(x)f(x), \quad \xi \in \hat{G}.
\end{align*}
\end{lem}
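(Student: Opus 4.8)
The plan is to establish the two inclusions separately, after two quick preliminary observations. First, each modulation $\gamma_\xi$ acts \emph{isometrically} on $L^\infty(G)$ (since $|\xi(x)|=1$), so that $\|\gamma_\xi(f)-\gamma_{\xi_0}(f)\|_\infty=\|\gamma_{\xi\xi_0^{-1}}(f)-f\|_\infty$; hence $\xi\mapsto\gamma_\xi(f)$ is $\|\cdot\|_\infty$-continuous on $\widehat{G}$ if and only if it is continuous at the trivial character $\mathbf 1$, and I will only ever check continuity there. Second, I will use that $C_0(G)\cdot L^\infty(G)\subseteq B_0(G)$ (because $|gh|\le\|h\|_\infty|g|$ with $|g|\to 0$) and that $B_0(G)$ is norm-closed.

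For the inclusion $B_0(G)\subseteq\mathcal E_0(G)$ I would argue directly: given $f\in B_0(G)$ and $\varepsilon>0$, pick a compact $K\subseteq G$ with $\|f\chi_{K^c}\|_\infty<\varepsilon$; since $\widehat{G}$ carries the compact-open topology, the set $N:=\{\xi\in\widehat{G}:\sup_{x\in K}|\xi(x)-1|<\varepsilon/(1+\|f\|_\infty)\}$ is a neighborhood of $\mathbf 1$, and for $\xi\in N$ one splits the essential supremum $\esssup_{x}|\xi(x)-1|\,|f(x)|$ into the region $K$ (where $|\xi(x)-1|$ is small) and its complement (where $|f(x)|$ is small and $|\xi(x)-1|\le 2$), obtaining $\|\gamma_\xi(f)-f\|_\infty\le 2\varepsilon$. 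This gives continuity at $\mathbf 1$, hence $f\in\mathcal E_0(G)$.

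The inclusion $\mathcal E_0(G)\subseteq B_0(G)$ is the substantive one, and I would prove it as the ``modulation dual'' of the argument given above for $\operatorname{BUC}(G)$: there, strong continuity of the translations combined with convolution against an approximate identity of $L^1(G)$ produces an element of the target space; here I replace translations by modulations and $L^1(G)$ by $L^1(\widehat{G})$. Concretely, fix $f\in\mathcal E_0(G)$ and choose $\psi_\theta\in L^1(\widehat{G})$ with $\psi_\theta\ge 0$, $\int_{\widehat{G}}\psi_\theta=1$ and $\operatorname{supp}\psi_\theta$ running through a neighborhood base of $\mathbf 1$ (normalized indicators of a shrinking family of relatively compact neighborhoods will do). Since $\chi\mapsto\gamma_\chi(f)$ is norm-continuous with $\|\gamma_\chi(f)\|_\infty=\|f\|_\infty$, the Bochner integral $\int_{\widehat{G}}\psi_\theta(\chi)\gamma_\chi(f)\,d\chi$ exists in $L^\infty(G)$; pairing it with $h\in L^1(G)$ and using Fubini, I would identify it with the pointwise product $f\cdot w_\theta$, where $w_\theta(x)=\int_{\widehat{G}}\chi(x)\psi_\theta(\chi)\,d\chi$. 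By Riemann--Lebesgue (and Pontryagin duality $\widehat{\widehat{G}}=G$), $w_\theta\in C_0(G)$, so $f\cdot w_\theta\in C_0(G)\cdot L^\infty(G)\subseteq B_0(G)$. Finally, using $\int\psi_\theta=1$,
\[
\|f-f\cdot w_\theta\|_\infty=\Bigl\|\int_{\widehat{G}}\psi_\theta(\chi)\,(f-\gamma_\chi(f))\,d\chi\Bigr\|_\infty\le\sup_{\chi\in\operatorname{supp}\psi_\theta}\|\gamma_\chi(f)-f\|_\infty,
\]
and the right-hand side tends to $0$ as $\operatorname{supp}\psi_\theta$ shrinks to $\{\mathbf 1\}$ — this is exactly the hypothesis that $\chi\mapsto\gamma_\chi(f)$ is continuous at $\mathbf 1$. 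Hence $f$ lies in the norm-closure of $B_0(G)$, which equals $B_0(G)$.

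The one genuinely non-formal step, and the place I expect to have to be careful, is the identification of the weak$^\ast$/Bochner integral $\int_{\widehat{G}}\psi_\theta(\chi)\gamma_\chi(f)\,d\chi$ with the pointwise product $f\cdot w_\theta$: this needs a clean Fubini argument together with bookkeeping of the Fourier normalization fixed earlier. It is also worth emphasizing (and this is the conceptual heart of the argument) that it is the hypothesis $f\in\mathcal E_0(G)$, and not any pointwise consideration, that upgrades the convergence $f\cdot w_\theta\to f$ to convergence in $\|\cdot\|_\infty$ — pointwise one only obtains uniform convergence on compacta, which is not enough. Everything else is routine $\varepsilon$-bookkeeping.
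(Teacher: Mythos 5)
Your proposal is correct and follows essentially the same route as the paper: the same two-region split ($K$ versus $K^c$) for $B_0(G)\subseteq\mathcal E_0(G)$, and for the converse the same device of averaging the modulations against an approximate identity of $L^1(\widehat G)$, identifying the resulting integral with $f$ times a $C_0(G)$ Fourier transform, and concluding by norm-closedness of $B_0(G)$. The extra observations you add (isometry of $\gamma_\xi$ reducing everything to continuity at the trivial character, and the care needed in identifying the vector-valued integral with the pointwise product) are sound and consistent with what the paper does implicitly.
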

\begin{proof}
Let $f \in B_0(G)$ and $\varepsilon > 0$. Let $K \subset G$ compact according to $f$ being in $B_0(G)$. Then, there is a neighborhood $\hat{O}$ of $\hat{e}$ in $\hat{G}$ such that $|\xi(x) - 1| < \varepsilon$ for every $x \in K$ and $\xi \in \hat{O}$. For such $\xi$ and $x \in K$, it is:
\begin{align*}
|\xi(x)f(x) - f(x)| \leq \| f\|_\infty |\xi(x) - 1| \leq \varepsilon \| f\|_\infty.
\end{align*}
Further, for $x \in K^c$ it is:
\begin{align*}
|\xi(x) f(x) - f(x)| \leq 2 \varepsilon.
\end{align*}
This shows the inclusion``$\subseteq$''. Now, assume that $f \in L^\infty(G)$ is such that $\widehat{G} \ni \xi \mapsto \gamma_\xi(f)$ is continuous with respect to the supremum norm. For $g \in L^1(\widehat{G})$, we set
\begin{align*}
    g \# f := \int_{\widehat{G}} g(\xi) \gamma_\xi(f)~d\xi.
\end{align*}
Since $\xi \mapsto \gamma_\xi(f)$ is continuous, this integral exists as an element of $\mathcal E_0(G)$. Further, we have (where the pointwise evaluation should of course be understood in the almost everywhere-sense):
\begin{align*}
    g \# f(y) = \int_{\widehat{G}} g(\xi) \xi(y) f(y)~d\xi = \mathcal F(g)(-y) f(y).
\end{align*}
Since $g \in L^1(\widehat{G})$, we know that $\mathcal F(g) \in C_0(G)$, and this clearly implies $g \# f \in B_0(G)$ (since $f$ is bounded). Now, let $(\widehat{O})_{\widehat{O} \in \mathcal O}$ be a neighborhood base of relatively compact neighborhoods of $e_{\widehat{G}}$ (where the index set is ordered by inclusion) and set $g_{\widehat{O}} = \frac{1}{|\widehat{O}|}\chi_{\widehat{O}}$, which is an approximate unit of $L^1(\widehat{G})$. Then, we have $g_{\widehat{O}} \# f \in B_0(G)$ for each $\widehat{O}$ and further:
\begin{align*}
    \| g_{\widehat{O}} \# f - f\|_\infty &= \| \int_{\widehat{G}} g_{\widehat{O}}(\xi) [\gamma_\xi(f) - f]~d\xi\|_\infty\\
    &\leq \int_{\widehat{G}} |g_{\widehat{O}}(\xi)|\| \gamma_\xi(f) - f\|_\infty~d\xi.
\end{align*}
It is now standard to conclude that this converges to $0$ as $\widehat{O} \in \mathcal O$. This shows $g_{\widehat{O}} \# f \to f$ in supremum norm. Since $B_0(G)$ is easily seen to be closed with respect to the supremum norm, this concludes $f \in B_0(G)$. 
\end{proof}

We have introduced the spaces $\mathcal A_j(\tau)$ and $\mathcal I_j(\tau)$. We want to give a quick discussion on how these spaces are connected to different choices of $j$, $\tau$ and $\mathcal A$.
\begin{exs}
\begin{enumerate}
\item We have seen that $\operatorname{BUC}(G)_3(\tau_{w^\ast}) = L^\infty(G)$. For $G = \mathbb R^2$, $A = \{ (x, y) \in \mathbb R^2: y \leq 0\}$ and $f = \chi_A$, let $n_\gamma$ be a convergent subnet of the sequence $((n, 0))_{n \in \mathbb N}$, converging to $x \in \partial_{\operatorname{BUC}(\mathbb R^2)}(\mathbb R^2)$. Then, $\alpha_{n_\gamma}(f) \to \alpha_x(f)$, but since $\alpha_{n_\gamma}(f) = f$ for every $\gamma$, it is $\alpha_x(f) = f$. Since $f \not \in \operatorname{BUC}(\mathbb R^2)$, this shows $\operatorname{BUC}(\mathbb R^2)_3(\tau_{w^\ast}) \supsetneq \operatorname{BUC}(\mathbb R^2)_2(\tau_{w^\ast})$.
\item Let $A \subset \mathbb R^2$ be the set $A = \{ (x, y) \in \mathbb R^2: ~0 \leq y \leq \frac{1}{|x|}\}$ and let $f = \chi_A$. Let $n_\gamma$ be a convergent subnet of the sequence $((0, -n))_{n \in \mathbb N}$ converging to $x \in \partial_{\operatorname{BUC}(\mathbb R^2)}(\mathbb R^2)$. Then, for every compact subset $K$ containing $(0, 0)$, one has  
\begin{align*}
\sup_{y \in K} |\alpha_{n_\gamma}f(y)|  \geq |\alpha_{n_\gamma}f(0, 0)| = |f(0,n_\gamma)| = 1.
\end{align*}
Moreover, it is not difficult to see that $\alpha_z(f) = 0$ for every $z \in \partial_{\operatorname{BUC}(\mathbb R^2)}(\mathbb R^2)$. This shows that $\operatorname{BUC}(\mathbb R^2)_j(\tau_{w^\ast}) \supsetneq \operatorname{BUC}(\mathbb R^2)_j(\tau_{u.c.})$ for $j = 1, 2, 3$.
\item It is an open problem if there are examples of $\mathcal A_2(\tau) \supsetneq \mathcal A_1(\tau)$.
\item We clearly always have $\mathcal A_3(\tau) \supseteq \mathcal A_2(\tau) \supseteq \mathcal A_1(\tau) \supseteq \mathcal A_0(\tau)$. Further, for $\tau_1 \subseteq \tau_2$, it is $\mathcal A_j(\tau_2) \supseteq \mathcal A_j(\tau_1)$. Analogous statements hold for ideals. Again, there are examples (essentially as in 1.\ and 2.\ above) where $\mathcal I_j(\tau_2) \supsetneq \mathcal I_j(\tau_1)$ for $\tau_1 \subsetneq \tau_2$.
\end{enumerate}
\end{exs}

\section{Wiener's Tauberian theorem in Quantum Harmonic Analysis}\label{sec:quantum}
In this section, we will always let $\Xi$ be a locally compact abelian group and $m: \Xi \times \Xi \to \mathbb T$ be a separately continuous Heisenberg multiplier on $\Xi$, i.e., it satisfies the cocycle relation
\begin{align*}
m(x+y, z)m(x,y) = m(x, y+z)m(y,z)
\end{align*}
and the map $\Xi \ni x \mapsto m(x,(\cdot))$ is a topological isomorphism from $\Xi$ to $\widehat{\Xi}$, cf.\ \cite{Fulsche_Galke2023} for details. Under these assumptions, there exists (up to unitary equivalence) a unique irreducible projective representation $(U_x)_{x \in \Xi}$ of $\Xi$ on a Hilbert space $\mathcal H$  with $m$ as the cocycle. We further assume that $m$ satisfies $m(x,y) = m(-x,-y)$ for all $x, y \in \Xi$ such that there exists a self-adjoint unitary operator $R$ on $\mathcal H$, which is unique up to a factor $\pm 1$, such that $RU_x = U_{-x}R$. We will use the conventions of convolutions and Fourier transforms from quantum harmonic analysis, as described in \cite{Fulsche_Galke2023}. We recall some of these notions for convenience.

For any function $f: \Xi \to \mathbb C$ we define for $x \in \Xi$ (compatible with the conventions from the previous section):
\begin{align*}
\alpha_x f = f(\cdot - x), \quad \beta_-(f) = f(-\cdot).
\end{align*}
Analogously, we set for $A \in \mathcal L(\mathcal H)$: \label{def:shiftofoperator}
\begin{align*}
\nomenclature{$\alpha_x(A)$}{Shift of operator, p.\ \pageref{def:shiftofoperator}, or limit operator, p.\ \pageref{def:limitops}}\alpha_x(A) = U_x A U_x^\ast, \quad \nomenclature{$\beta_-(A)$}{cf. p.\ \pageref{def:shiftofoperator}}\beta_-(A) = RAR.
\end{align*}
Now, for $f, g \in L^1(\Xi)$ and $A, B \in \mathcal T^1(\mathcal H)$, the trace class operators on $\mathcal H$, we define the following convolutions:
\begin{align*}
f \ast g(x) &:= \int_\Xi f(y) g(x-y)~dy, \\
f \ast A &:= A \ast f := \int_\Xi f(y) \alpha_y(A)~dy, \\
A \ast B(x) &:= \tr(A \alpha_x(\beta_-(B))).
\end{align*}
We have $f \ast g \in L^1(\Xi)$, with $f \ast A \in \mathcal T^1(\mathcal H)$ and $A \ast B \in L^1(\Xi)$. Endowed with these operations as the product, $L^1(\Xi) \oplus \mathcal T^1(\mathcal H)$ turns into a commutative Banach algebra, cf.\ \cite[Section 4]{Fulsche_Galke2023} for details and further properties of these convolutions. 

The above convolutions may be extended, by the same formulas, to the case where one of the factors is no longer integrable/trace class, but only bounded. That is: When $f \in L^1(\Xi), ~g \in L^\infty(\Xi), ~A \in \mathcal T^1(\mathcal H)$ and $B \in \mathcal L(\mathcal H)$, then the convolutions are defined by the same formulas and satisfy
\begin{align*}
f \ast g \in L^\infty(\Xi), &\quad \| f \ast g\|_{L^\infty} \leq \| f \|_{L^1} \| g\|_{L^\infty}\\
f \ast B \in \mathcal L(\mathcal H), &\quad \| f \ast B\|_{op} \leq \| f\|_{L^1} \| B\|_{op}\\
A \ast g \in \mathcal L(\mathcal H), &\quad \| A \ast g\|_{op} \leq \| A\|_{\mathcal T^1} \| g\|_\infty\\
A \ast B \in L^\infty(\Xi), &\quad \| A \ast B\|_{\infty} \leq \| A\|_{\mathcal T^1} \| B\|_{op}.
\end{align*}
We want to emphasize that the third of these estimates hinges on the correct normalization of the Haar measure; a ``wrong'' normalization leads to a constant factor on the right-hand side of the estimate, cf.\ again \cite{Fulsche_Galke2023} for details.

Another object that we will make use of in the following, is the Fourier transform of an operator. For $A \in \mathcal T^1(\mathcal H)$ we will write $\mathcal F_U(A)(\xi) = \tr(AU_\xi)$, where $\xi \in \Xi$. This Fourier transform is frequently referred to as the \emph{Fourier-Weyl} or \emph{Fourier-Wigner transform} of the operator. To some people, it might also be better known as the inverse of the group Fourier transform with respect to the projective representation $(U_x)_{x\in \Xi}$. For properties of this Fourier transform, we refer to \cite{werner84, Fulsche_Galke2023}.

One of the key results regarding these operator convolutions is the operator version of Wiener's approximation theorem, cf.\ \cite[Proposition 3.5]{werner84} for the initial version on a symplectic vector space and \cite[Theorem 6.30]{Fulsche_Galke2023} on locally compact abelian phase spaces. We repeat the statement:
\begin{thm}[Wiener's approximation theorem for operators]
Let $\mathcal R \subset \mathcal T^1(\mathcal H)$. Then, the following statements are equivalent. 
\begin{enumerate}
\item $\operatorname{span}\{ \alpha_x(A): ~A \in \mathcal R, ~x \in \Xi \}$ is dense in $\mathcal T^1(\mathcal H)$.
\item $L^1(\Xi) \ast \mathcal R$ is dense in $\mathcal T^1(\mathcal H)$.
\item $\mathcal T^1(\mathcal H) \ast \mathcal R$ is dense in $L^1(\Xi)$.
\item $\{ A \ast A: ~A \in \mathcal R\}$ is a regular family in $L^1(\Xi)$ in the sense of Wiener's classical approximation theorem.
\item $\cap_{A \in \mathcal R} \{ \xi \in \Xi: ~\mathcal F_U(A)(\xi) = 0\} = \emptyset$.
\item For $B \in \mathcal L(\mathcal H)$, $A \ast B = 0$ for every $A \in \mathcal R$ implies $B = 0$.
\item For $f \in L^\infty(\Xi)$, $A \ast f = 0$ for every $A \in \mathcal R$ implies $f = 0$.
\end{enumerate}
\end{thm}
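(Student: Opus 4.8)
The plan is to read this as a single Wiener-type theorem for the commutative Banach algebra $L^1(\Xi)\oplus\mathcal T^1(\mathcal H)$: I expect all seven items to be reformulations of the Fourier-side condition (5), with exactly one genuinely analytic step, the passage from a Fourier non-vanishing condition to a density statement, for which I would invoke the classical Wiener approximation theorem. First the soft part. Fixing a bounded approximate identity $(\varphi_\gamma)$ in $L^1(\Xi)$, one has $\varphi_\gamma\ast A\to A$ in trace norm with $\varphi_\gamma\ast A=\int_\Xi\varphi_\gamma(y)\alpha_y(A)\,dy$ a norm-limit of finite linear combinations of translates of $A$, and conversely $\alpha_x(A)$ is approximated by $(\alpha_x\varphi_\gamma)\ast A$; this gives $(1)\Leftrightarrow(2)$. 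For $(2)\Leftrightarrow(3)$ I would combine associativity of the three convolutions with the two elementary density facts $\overline{L^1(\Xi)\ast\mathcal T^1(\mathcal H)}=\mathcal T^1(\mathcal H)$ (again via $(\varphi_\gamma)$) and $\overline{\operatorname{span}}\big(\mathcal T^1(\mathcal H)\ast\mathcal T^1(\mathcal H)\big)=L^1(\Xi)$ (the relevant products of matrix coefficients of the square-integrable representation $(U_x)$ are total, cf.\ \cite{werner84,Fulsche_Galke2023}): if $L^1\ast\mathcal R$ is dense in $\mathcal T^1$ then, using $C\ast(f\ast A)=(C\ast f)\ast A\in\mathcal T^1\ast\mathcal R$, one gets $\overline{\operatorname{span}}(\mathcal T^1\ast\mathcal R)\supseteq\overline{\mathcal T^1\ast\mathcal T^1}=L^1(\Xi)$, and the other direction is symmetric.

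Next the duality equivalences, using that $\mathcal L(\mathcal H)$ is the dual of $\mathcal T^1(\mathcal H)$ (trace pairing) and $L^\infty(\Xi)$ is the dual of $L^1(\Xi)$ (integration pairing). Density of $L^1\ast\mathcal R$ in $\mathcal T^1$ is equivalent to triviality of its annihilator in $\mathcal L(\mathcal H)$; since $\tr\big((f\ast A)B\big)=\int_\Xi f(y)\,\tr(\alpha_y(A)B)\,dy$ and $\tr(\alpha_y(A)B)=\big(A\ast\beta_-(B)\big)(-y)$, a bounded operator $B$ lies in that annihilator iff $A\ast\beta_-(B)=0$ for all $A\in\mathcal R$, and as $\beta_-$ is a linear bijection of $\mathcal L(\mathcal H)$ this is exactly $(6)$. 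Similarly $\int_\Xi(C\ast A)(x)f(x)\,dx=\tr\big(C\,(f\ast\beta_-(A))\big)$ identifies $(3)$ with the statement $(7)$ for the family $\beta_-(\mathcal R)$; since one checks directly that each of the seven conditions holds for $\mathcal R$ if and only if it holds for $\beta_-(\mathcal R)$, this yields $(3)\Leftrightarrow(7)$.

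It remains to bring in the Fourier--Weyl transform, which intertwines the operator convolutions with pointwise multiplication of functions on $\Xi$. A routine computation (cf.\ \cite{Fulsche_Galke2023}) shows that $\bigcap_{A\in\mathcal R}\{\xi:\widehat{A\ast A}(\xi)=0\}=\emptyset$ if and only if $\bigcap_{A\in\mathcal R}\{\xi:\mathcal F_U(A)(\xi)=0\}=\emptyset$, and since ``regular family'' means, by the classical Wiener approximation theorem, emptiness of the common zero set, we obtain $(4)\Leftrightarrow(5)$. The crux is then $(5)\Rightarrow(3)$: assuming $(5)$, hence $(4)$, the family $\{A\ast A:A\in\mathcal R\}\subset L^1(\Xi)$ is regular, so the classical Wiener approximation theorem gives $\overline{\operatorname{span}}\{\alpha_x(A\ast A):A\in\mathcal R,\ x\in\Xi\}=L^1(\Xi)$; the identity $\alpha_x(A\ast B)=\alpha_x(A)\ast B$ — valid because $x\mapsto\alpha_x$ is a genuine (not merely projective) action on operators and $\tr\circ\alpha_x=\tr$ — shows each $\alpha_x(A\ast A)$ already lies in $\mathcal T^1(\mathcal H)\ast\mathcal R$, whence $(3)$. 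To close the circle, if $(5)$ fails, pick $\xi_0$ in the common zero set; conjugation by $U_y$ scales $U_{\xi_0}$ by a unimodular factor, so $\tr(\alpha_y(A)U_{\xi_0})=\lambda(y)\,\mathcal F_U(A)(\xi_0)=0$ for every $y$ and every $A\in\mathcal R$, hence $0\neq U_{\xi_0}$ annihilates $L^1\ast\mathcal R$ and $(2)$ fails; by the equivalences already proven, all of $(1)$--$(7)$ then fail.

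The main obstacle — and the only step where something substantive happens — is $(5)\Rightarrow(3)$, where the classical Wiener approximation theorem is used; what makes it go through is the observation that translates of the single self-correlation $A\ast A$ are of the form $\alpha_x(A)\ast A$ and hence lie in $\mathcal T^1(\mathcal H)\ast\mathcal R$. The remaining non-formal inputs — the density of $\mathcal T^1(\mathcal H)\ast\mathcal T^1(\mathcal H)$ in $L^1(\Xi)$ and the Fourier--Weyl identity pinning down the zero set of $\widehat{A\ast A}$ — are standard facts of quantum harmonic analysis and can be quoted from \cite{werner84,Fulsche_Galke2023}; alternatively, the whole statement can be phrased as the Gelfand-theoretic assertion that $L^1(\Xi)\oplus\mathcal T^1(\mathcal H)$ is a symmetric ``Wiener algebra'' with maximal ideal space $\Xi$.
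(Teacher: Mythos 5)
The paper itself gives no proof of this theorem: it is quoted from \cite[Proposition 3.5]{werner84} and \cite[Theorem 6.30]{Fulsche_Galke2023}, so the only meaningful comparison is with those sources, and your argument is essentially their proof. It is correct: the duality identifications of (6) and (7) as annihilator conditions, the self-correlation trick $\alpha_x(A\ast A)=\alpha_x(A)\ast A$ feeding the classical Wiener approximation theorem for $(5)\Rightarrow(3)$, and the functional $\tr(\,\cdot\,U_{\xi_0})$ killing density when (5) fails are exactly the standard steps, and the two facts you quote (existence of a regular trace-class operator giving $\overline{\operatorname{span}}(\mathcal T^1\ast\mathcal T^1)=L^1(\Xi)$, and the multiplicativity of $\mathcal F_U$ under convolution) are precisely what the references supply. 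Two small points: conditions (2) and (3) must be read as density of the \emph{linear spans} of the indicated sets, which is what your argument actually establishes; and your appeal to ``each of the seven conditions is $\beta_-$-invariant'' should, to avoid circularity, be replaced by the direct check that (3) (or (7)) is invariant under $\mathcal R\mapsto\beta_-(\mathcal R)$, which follows from $\beta_-(C\ast A)=\beta_-(C)\ast\beta_-(A)$.
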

A family $\mathcal R \subset \mathcal T^1(\mathcal H)$ satisfying the properties of the above theorem will be called a \emph{regular family}.
 
Note, that $x \mapsto \alpha_x(A)$ acts strongly continuous on the trace class $\mathcal T^1(\mathcal H)$. Observe that this continuity does not hold for the operator norm for every $A \in \mathcal L(\mathcal H)$. The class of such operators is the operator analogue of $\operatorname{BUC}(\Xi)$, which we denote by:\label{def:C1H}
\begin{align*}\nomenclature{$\mathcal C_1(\mathcal H)$}{Uniformly continuous operators, p.\ \pageref{def:C1H}}
\mathcal C_1(\mathcal H) = \{ A \in \mathcal L(\mathcal H): ~x \mapsto \alpha_x(A) \text{ is } \| \cdot\|_{op}\text{-cont.}\}.
\end{align*}
In \cite{werner84}, the concept of corresponding spaces was introduced, which is straightforward to adapt to the case of locally compact abelian phase space. Given two $\alpha$-invariant subspaces $\mathcal D_0\subset L^\infty(\Xi)$ and $\mathcal D_1\subset \mathcal L(\mathcal H)$, we say that these are \emph{corresponding spaces} if $\mathcal T^1(\mathcal H) \ast \mathcal D_0 \subset \mathcal D_1$ and $\mathcal T^1(\mathcal H) \ast \mathcal D_1 \subset \mathcal D_0$. Here is the main result regarding corresponding spaces:
\begin{thm}[Correspondence theorem]\label{theorem:correspondence}
Let $\mathcal D_0 \subset L^\infty(\Xi)$ and $\mathcal D_1 \subset \mathcal L(\mathcal H)$ be $\alpha$-invariant. Further, let $\mathcal R \subset \mathcal T^1(\mathcal H)$ be any regular family.
\begin{enumerate}
\item If $\mathcal D_0, \mathcal D_1$ are corresponding spaces, then so are $\overline{\mathcal D_0}, \overline{\mathcal D_1}$, their uniform closures.
\item Let $\mathcal D_0, \mathcal D_1$ be corresponding spaces. Then, $\mathcal R \ast \mathcal D_0$ is uniformly dense in $\mathcal D_1 \cap \mathcal C_1(\mathcal H)$ and $\mathcal R \ast \mathcal D_1$ is uniformly dense in $\mathcal D_0 \cap \operatorname{BUC}(\Xi)$.
\item Let $\mathcal D_0, \mathcal D_1$ be corresponding spaces. If $f \in \BUC$ such that $A \ast f \in \mathcal D_1$ for every $A \in \mathcal R$, then $f \in \overline{\mathcal D_0}$. If $B \in \mathcal C_1(\mathcal H)$ such that $A \ast B \in \mathcal D_0$ for every $A \in \mathcal R$, then $B \in \overline{\mathcal D_1}$.
\item Let $\mathcal D_0 \subset \BUC$ be closed in uniform topology. Then, there exists a unique $\alpha$-invariant and closed subspace of $\mathcal C_1(\mathcal H)$ corresponding to $\mathcal D_0$. This space is given by $\overline{\mathcal T^1(\mathcal H) \ast \mathcal D_0}$.
\item Let $\mathcal D_1 \subset \mathcal C_1(\mathcal H)$ be closed in operator norm topology. Then, there exists a unique $\alpha$-invariant and closed subspace of $\BUC$ corresponding to $\mathcal D_1$. This space is given by $\overline{\mathcal T^1(\mathcal H) \ast \mathcal D_1}$.
\end{enumerate}
\end{thm}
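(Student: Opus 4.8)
The plan is to adapt Werner's correspondence theory to the present phase-space setting, following the line of \cite{Fulsche_Galke2023}. I would first record two elementary facts, used throughout. \emph{(a) Smoothing.} From $\alpha_x(C\ast D)-C\ast D=(\alpha_xC-C)\ast D$ together with the norm estimates for the mixed convolutions and the strong continuity of $x\mapsto\alpha_x(C)$ on $\mathcal T^1(\mathcal H)$ and on $L^1(\Xi)$, one gets $\mathcal T^1(\mathcal H)\ast\mathcal L(\mathcal H)\subset\mathcal C_1(\mathcal H)$, $\mathcal T^1(\mathcal H)\ast L^\infty(\Xi)\subset\mathcal C_1(\mathcal H)$, $L^1(\Xi)\ast\mathcal L(\mathcal H)\subset\mathcal C_1(\mathcal H)$, and symmetrically $L^1(\Xi)\ast L^\infty(\Xi)\subset\BUC$ and $\mathcal T^1(\mathcal H)\ast\mathcal L(\mathcal H)\subset\BUC$. \emph{(b) Unit replacement.} Since $\mathcal R$ is regular, $\{A\ast A:A\in\mathcal R\}$ is a regular family in $L^1(\Xi)$ by Wiener's approximation theorem for operators, so Wiener's classical approximation theorem gives $\overline{\operatorname{span}}\{L^1(\Xi)\ast(A\ast A):A\in\mathcal R\}=L^1(\Xi)$; together with the existence of a bounded approximate identity in $L^1(\Xi)$ this lets us replace any element $\varphi$ of an approximate identity by a finite sum $\sum_i h_i\ast(A_i\ast A_i)$ with $h_i\in L^1(\Xi)$, $A_i\in\mathcal R$, up to arbitrarily small $L^1$-error. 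Part (1) is then immediate: if $g_n\to g$ uniformly with $g_n\in\mathcal D_0$, then $\|A\ast g_n-A\ast g\|_{op}\le\|A\|_{\mathcal T^1}\|g_n-g\|_\infty$ forces $A\ast g\in\overline{\mathcal D_1}$ for every $A\in\mathcal T^1(\mathcal H)$, and symmetrically, so $\overline{\mathcal D_0},\overline{\mathcal D_1}$ correspond.

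Parts (2) and (3) form the core and run on a single rewriting. For (2), given $B\in\mathcal D_1\cap\mathcal C_1(\mathcal H)$ and $\varepsilon>0$, choose an approximate-identity element $\varphi\in L^1(\Xi)$ with $\|\varphi\ast B-B\|_{op}<\varepsilon/2$ (legitimate as $B\in\mathcal C_1(\mathcal H)$), then pick $\sum_ih_i\ast(A_i\ast A_i)$ as in (b) with $\|\varphi-\sum_ih_i\ast(A_i\ast A_i)\|_{L^1}\,\|B\|_{op}<\varepsilon/2$. Using commutativity and associativity of the convolution algebra $L^1(\Xi)\oplus\mathcal T^1(\mathcal H)$ and of its module actions, each term becomes $h_i\ast(A_i\ast A_i)\ast B=A_i\ast\big[(h_i\ast A_i)\ast B\big]$ with $h_i\ast A_i\in\mathcal T^1(\mathcal H)$, hence $(h_i\ast A_i)\ast B\in\mathcal T^1(\mathcal H)\ast\mathcal D_1\subset\mathcal D_0$ by the corresponding-space hypothesis; thus $\sum_iA_i\ast[(h_i\ast A_i)\ast B]\in\operatorname{span}(\mathcal R\ast\mathcal D_0)$ is within $\varepsilon$ of $B$, while $\mathcal R\ast\mathcal D_0\subset\mathcal D_1\cap\mathcal C_1(\mathcal H)$ follows from (a) and the hypothesis. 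Part (3): for $f\in\BUC$ with $A\ast f\in\mathcal D_1$ for all $A\in\mathcal R$, approximate $f$ uniformly by $\varphi\ast f$ (legitimate as $f\in\BUC$), then $\varphi$ by $\sum_ih_i\ast(A_i\ast A_i)$, and use $h_i\ast(A_i\ast A_i)\ast f=(h_i\ast A_i)\ast(A_i\ast f)\in\mathcal T^1(\mathcal H)\ast\mathcal D_1\subset\mathcal D_0$ to land $f\in\overline{\mathcal D_0}$. The second density statement in (2) and the operator half of (3) follow by exchanging the roles of $L^\infty(\Xi)$ and $\mathcal L(\mathcal H)$.

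For (4), put $\mathcal D_1:=\overline{\mathcal T^1(\mathcal H)\ast\mathcal D_0}$ (operator-norm closure): by (a) it lies in $\mathcal C_1(\mathcal H)$, it is visibly $\alpha$-invariant and closed, and it corresponds to $\mathcal D_0$ because $\mathcal T^1(\mathcal H)\ast\mathcal D_1\subset\overline{\mathcal T^1(\mathcal H)\ast\mathcal T^1(\mathcal H)\ast\mathcal D_0}\subset\overline{L^1(\Xi)\ast\mathcal D_0}=\mathcal D_0$, the last equality since $\mathcal D_0$ is a uniformly closed $\alpha$-invariant subspace of $\BUC$ and, for $g\in\BUC$, $f\ast g=\int_\Xi f(y)\alpha_y(g)\,dy$ is a norm-convergent Bochner integral of translates of $g$. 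Uniqueness: if $\mathcal D_1'$ is another $\alpha$-invariant, norm-closed subspace of $\mathcal C_1(\mathcal H)$ corresponding to $\mathcal D_0$, then part (2) gives $\mathcal D_1'=\mathcal D_1'\cap\mathcal C_1(\mathcal H)=\overline{\operatorname{span}(\mathcal R\ast\mathcal D_0)}\subset\overline{\mathcal T^1(\mathcal H)\ast\mathcal D_0}=\mathcal D_1$, whereas $\mathcal T^1(\mathcal H)\ast\mathcal D_0\subset\mathcal D_1'$ together with closedness of $\mathcal D_1'$ gives $\mathcal D_1\subset\mathcal D_1'$. Part (5) is the mirror image, with $\mathcal D_0:=\overline{\mathcal T^1(\mathcal H)\ast\mathcal D_1}$ (uniform closure, inside $\BUC$ by (a)).

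The main obstacle I expect is making Part (2) airtight: beyond the bookkeeping, one must check that the regroupings $f\ast(A\ast B)=(f\ast A)\ast B$, $A\ast(B\ast f)=(A\ast B)\ast f$, and so on, remain valid when one factor is merely bounded (they do, via the module structure established in \cite{Fulsche_Galke2023}), and that the approximate-identity element and the $L^1$-approximation by $\sum_ih_i\ast(A_i\ast A_i)$ can be chosen one after the other with the total error controlled purely through the convolution norm estimates. Everything else is either routine functional analysis or a faithful transcription of the classical correspondence-theory argument.
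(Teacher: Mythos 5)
Your proposal is correct and is exactly the argument the paper intends: the paper omits the proof of Theorem \ref{theorem:correspondence}, stating that it is a straightforward adaptation of Werner's original correspondence theorem once the operator Wiener approximation theorem is available on abelian phase spaces, and your smoothing facts, unit-replacement via $\varphi\approx\sum_i h_i\ast(A_i\ast A_i)$, and the regrouping $h_i\ast(A_i\ast A_i)\ast B=A_i\ast\bigl[(h_i\ast A_i)\ast B\bigr]$ reproduce that adaptation faithfully. The only (self-corrected) slip is the first inclusion in your item (a), which should read $\mathcal T^1(\mathcal H)\ast\mathcal L(\mathcal H)\subset\operatorname{BUC}(\Xi)$ rather than $\subset\mathcal C_1(\mathcal H)$, as you indeed state at the end of that list.
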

Indeed, having Wiener's approximation theorem available for locally compact abelian phase spaces, the proof of the correspondence theorem is a straightforward adaptation of the proof from \cite[Proposition 3.5]{werner84}, and hence we omit it. We also want to emphasize that the $L^p-\mathcal T^p$ version of the correspondence theorem, as well as the weak$^\ast$ version of the correspondence theorem, together with its applications on spectral synthesis (cf.\ \cite[Corollary 4.4]{werner84} or \cite{Fulsche_Rodriguez2023}) carries over to locally compact abelian phase spaces in the same way without any problems. We leave the details of this to the interested reader, as we will only make use of the result we explicitly spelled out above.

The two most important examples of these results, as well as their applications, are the following. The proof is, again, the same as for $\Xi = \mathbb R^{2n}$:
\begin{thm}\label{thm:standardcorrespondences}
 Let $\mathcal R \subset \mathcal T^1(\mathcal H)$ be a regular family.
\begin{enumerate}
\item $\BUC$ and $\mathcal C_1(\mathcal H)$ are corresponding spaces. In particular, $\mathcal C_1(\mathcal H) = \overline{\mathcal R \ast \BUC}$.
\item $C_0(\Xi)$ and $\mathcal K(\mathcal H)$ are corresponding spaces. In particular, an operator $B \in \mathcal L(\mathcal H)$ is compact if and only if $B \in \mathcal C_1(\mathcal H)$ and $A \ast B \in C_0(\Xi)$ for every $A \in \mathcal R$. 
\end{enumerate}
\end{thm}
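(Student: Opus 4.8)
The plan is to check directly that $\BUC$ and $\mathcal C_1(\mathcal H)$ form a pair of corresponding $\alpha$-invariant spaces, and likewise for $C_0(\Xi)$ and $\mathcal K(\mathcal H)$, and then to read off the ``in particular'' statements from the Correspondence Theorem \ref{theorem:correspondence}. All the mixed-convolution norm estimates quoted above, together with the elementary commutation facts $U_xU_y = \sigma(x,y)U_{x+y}$ ($|\sigma|=1$) and $RU_x = U_{-x}R$, are exactly the tools needed.

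For (1), I would first verify $\mathcal T^1(\mathcal H)\ast\BUC\subseteq\mathcal C_1(\mathcal H)$: from $U_xU_y = \sigma(x,y)U_{x+y}$ one gets the operator identity $\alpha_x(A\ast f)=(\alpha_x f)\ast A$, so $\|\alpha_x(A\ast f)-A\ast f\|_{op}\le\|A\|_{\mathcal T^1}\|\alpha_x f-f\|_\infty\to 0$ as $x\to e$ whenever $f\in\BUC$. For the other inclusion $\mathcal T^1(\mathcal H)\ast\mathcal C_1(\mathcal H)\subseteq\BUC$: $A\ast B(x)=\tr(A\,\alpha_x(\beta_-(B)))$ is bounded in modulus by $\|A\|_{\mathcal T^1}\|B\|_{op}$, and $RU_x=U_{-x}R$ gives $\alpha_x(\beta_-(B))=\beta_-(\alpha_{-x}(B))$, whence $|A\ast B(x)-A\ast B(x')|\le\|A\|_{\mathcal T^1}\|\alpha_{x'-x}(B)-B\|_{op}$, which is small for $x-x'$ near $e$ since $B\in\mathcal C_1(\mathcal H)$. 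Both $\BUC$ and $\mathcal C_1(\mathcal H)$ are $\alpha$-invariant and closed (uniformly, resp.\ in operator norm), so part (2) of Theorem \ref{theorem:correspondence} with $\mathcal D_0=\BUC$, $\mathcal D_1=\mathcal C_1(\mathcal H)$ gives that $\mathcal R\ast\BUC$ is uniformly dense in $\mathcal C_1(\mathcal H)\cap\mathcal C_1(\mathcal H)=\mathcal C_1(\mathcal H)$, i.e.\ $\mathcal C_1(\mathcal H)=\overline{\mathcal R\ast\BUC}$.

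For (2), the inclusion $\mathcal T^1(\mathcal H)\ast C_0(\Xi)\subseteq\mathcal K(\mathcal H)$ is easy: for $f\in C_c(\Xi)$ the map $y\mapsto f(y)\alpha_y(A)$ is a compactly supported continuous $\mathcal T^1(\mathcal H)$-valued function, so $A\ast f\in\mathcal T^1(\mathcal H)\subseteq\mathcal K(\mathcal H)$, and a general $f\in C_0(\Xi)$ is a uniform limit of such $f_n$ with $\|A\ast f-A\ast f_n\|_{op}\le\|A\|_{\mathcal T^1}\|f-f_n\|_\infty$, so $A\ast f\in\overline{\mathcal K(\mathcal H)}=\mathcal K(\mathcal H)$. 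For $\mathcal T^1(\mathcal H)\ast\mathcal K(\mathcal H)\subseteq C_0(\Xi)$: since $(A,B)\mapsto A\ast B$ is bilinear with $\|A\ast B\|_\infty\le\|A\|_{\mathcal T^1}\|B\|_{op}$, the finite-rank operators are norm-dense in both $\mathcal T^1(\mathcal H)$ and $\mathcal K(\mathcal H)$, and $C_0(\Xi)$ is $\|\cdot\|_\infty$-closed, it suffices to treat rank-one $A=|u\rangle\langle v|$, $B=|p\rangle\langle q|$; a direct computation gives $A\ast B(x)=\langle v,U_xRp\rangle\,\overline{\langle u,U_xRq\rangle}$, a product of matrix coefficients of $U$. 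Each coefficient $x\mapsto\langle a,U_xb\rangle$ is bounded and continuous, and by the Weyl commutation relations its modulations $\gamma_\xi$ are again matrix coefficients depending continuously on $\xi$; hence it lies in $\mathcal E_0(\Xi)=B_0(\Xi)$ by the final lemma of Section \ref{sec:classical}, and a continuous function in $B_0(\Xi)$ lies in $C_0(\Xi)$. Thus $A\ast B\in C_0(\Xi)$. (Equivalently, $x\mapsto\langle a,U_xb\rangle=\mathcal F_U(|b\rangle\langle a|)(x)$, which is in $C_0(\Xi)$ by the Riemann--Lebesgue property of $\mathcal F_U$.) Finally, the finite-rank operators lie in $\mathcal C_1(\mathcal H)$ by strong continuity of $U$, so $\mathcal K(\mathcal H)\subseteq\mathcal C_1(\mathcal H)$; hence if $B$ is compact then $B\in\mathcal C_1(\mathcal H)$ and $A\ast B\in C_0(\Xi)$ for every $A\in\mathcal R\subseteq\mathcal T^1(\mathcal H)$, while conversely, if $B\in\mathcal C_1(\mathcal H)$ and $A\ast B\in C_0(\Xi)$ for all $A\in\mathcal R$, then part (3) of Theorem \ref{theorem:correspondence} applied to the corresponding spaces $C_0(\Xi)$, $\mathcal K(\mathcal H)$ gives $B\in\overline{\mathcal K(\mathcal H)}=\mathcal K(\mathcal H)$.

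The only genuinely non-formal ingredient is the claim that matrix coefficients of $U$ vanish at infinity, which powers the inclusion $\mathcal T^1(\mathcal H)\ast\mathcal K(\mathcal H)\subseteq C_0(\Xi)$; everything else is bookkeeping with the convolution identities, the stated norm estimates, and the Correspondence Theorem. In the model case $\Xi=\mathbb R^{2n}$ this is the familiar fact that $V_gf\in C_0(\mathbb R^{2n})$; for general $\Xi$ I would argue via $\mathcal E_0(\Xi)=B_0(\Xi)$ as above, since that route avoids needing a dense supply of ``nice'' vectors, at the cost of invoking the non-degeneracy of the commutator bicharacter of $U$ — which is exactly what the Heisenberg-multiplier hypothesis on $m$ guarantees.
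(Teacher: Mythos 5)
Your proof is correct and follows essentially the route the paper intends: the theorem is stated without proof because, as the authors note, the argument is the same as for $\Xi = \mathbb R^{2n}$ in \cite{werner84}, namely a direct verification (via the commutation relations and the mixed-convolution norm estimates) that $(\BUC, \mathcal C_1(\mathcal H))$ and $(C_0(\Xi), \mathcal K(\mathcal H))$ are corresponding pairs, followed by an appeal to parts (2) and (3) of Theorem \ref{theorem:correspondence}, which is exactly what you do. The only point worth flagging is your derivation of the Riemann--Lebesgue property of the matrix coefficients $x \mapsto \langle a, U_x b\rangle$ via the identity $\mathcal E_0(\Xi) = B_0(\Xi)$ and the Heisenberg condition on $m$: this is a correct and pleasantly self-contained alternative to simply quoting $\mathcal F_U(\mathcal T^1(\mathcal H)) \subset C_0(\Xi)$ from \cite{werner84, Fulsche_Galke2023}.
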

We also want to mention the following result, the proof of which is entirely analogous to the case where $\Xi = \mathbb R^{2n}$, cf.\ \cite{Fulsche2020}.
\begin{thm}\label{thm:algebra}
Let $\mathcal D_0 \subset \BUC$ and $\mathcal D_1 \subset \mathcal C_1(\mathcal H)$ be closed, $\alpha$-invariant subspaces which correspond to each other in the sense of Theorem \ref{theorem:correspondence}. 
\begin{enumerate}
\item If $\mathcal D_0$ is a $C^\ast$-algebra, then so is $\mathcal D_1$.
\item If $\mathcal D_0$ is a $C^\ast$-algebra and $\mathcal I_0 \subset \mathcal D_0$ a closed, $\alpha$-invariant ideal, then the space $\mathcal I_1$ corresponding to $\mathcal I_0$ is a closed, two-sided ideal in $\mathcal D_1$.
\end{enumerate}
\end{thm}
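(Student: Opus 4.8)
The plan is to establish the two structural properties of $\mathcal D_1$ separately: closedness under the adjoint (easy) and under operator multiplication (the substantial point). Throughout I use that, by Theorem~\ref{theorem:correspondence}, $\mathcal D_1=\overline{\mathcal T^1(\mathcal H)\ast\mathcal D_0}$ in operator norm; that $\mathcal C_1(\mathcal H)$ is itself a $C^\ast$-algebra (closedness under products follows from $\|\alpha_x(ST)-ST\|_{op}\le\|\alpha_x(S)-S\|_{op}\|T\|_{op}+\|S\|_{op}\|\alpha_x(T)-T\|_{op}$, and closedness under the adjoint from $\alpha_x(S)^\ast=\alpha_x(S^\ast)$); and the elementary identities $\alpha_x(h\ast C)=\alpha_x(h)\ast C$, $\beta_-(h\ast C)=\beta_-(h)\ast\beta_-(C)$, $\beta_-(ST)=\beta_-(S)\beta_-(T)$, together with the associativity of the convolutions in $L^1(\Xi)\oplus\mathcal T^1(\mathcal H)$. \emph{The adjoint:} from $\alpha_x(A)^\ast=U_xA^\ast U_x^\ast=\alpha_x(A^\ast)$ one gets $(f\ast A)^\ast=\overline{f}\ast A^\ast$; since a $C^\ast$-algebra $\mathcal D_0$ is closed under complex conjugation and $\mathcal T^1(\mathcal H)$ under the adjoint, $\mathcal T^1(\mathcal H)\ast\mathcal D_0$ is $\ast$-closed, hence so is its norm closure $\mathcal D_1$.

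For the product I invoke the correspondence criterion in Theorem~\ref{theorem:correspondence} (third item): with $\mathcal R\subset\mathcal T^1(\mathcal H)$ a fixed regular family, an operator $C\in\mathcal C_1(\mathcal H)$ lies in $\mathcal D_1$ as soon as $A'\ast C\in\mathcal D_0$ for all $A'\in\mathcal R$. Given $S,T\in\mathcal D_1$ we have $ST\in\mathcal C_1(\mathcal H)$, so it suffices to show $A'\ast(ST)\in\mathcal D_0$; using continuity of $(A,B)\mapsto(f\ast A)(g\ast B)$ and of $C\mapsto A'\ast C$, together with the density of $\mathcal T^1(\mathcal H)\ast\mathcal D_0$ in $\mathcal D_1$, one may assume $S=f\ast A$, $T=g\ast B$ with $f,g\in\mathcal D_0$ and with $A,B$ ranging over a dense subalgebra of $\mathcal T^1(\mathcal H)$ whose elements have matrix coefficients integrable over $\Xi$ (e.g.\ linear combinations of rank-one operators built from a dense space of ``nice'' vectors of the representation $(U_x)$, cf.\ \cite{Fulsche_Galke2023}). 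Writing the operator product as an iterated integral, changing variables, and applying Fubini (legitimate by the decay of the matrix coefficients of $A,B$) and convolution associativity, one arrives at
\begin{align*}
A'\ast\big((f\ast A)(g\ast B)\big)=\int_\Xi\big(f\cdot\alpha_{-v}(g)\big)\ast\big(A'\ast(A\,\alpha_v(B))\big)\,dv .
\end{align*}
Now two facts close the argument: first, $f\cdot\alpha_{-v}(g)\in\mathcal D_0$ because $\mathcal D_0$ is a $C^\ast$-algebra and is $\alpha$-invariant, and $A'\ast(A\,\alpha_v(B))\in L^1(\Xi)$ since both factors are trace class, so each integrand lies in $L^1(\Xi)\ast\mathcal D_0\subseteq\mathcal D_0$ (a Bochner integral over the $\alpha$-orbit inside the closed $\alpha$-invariant subspace $\mathcal D_0\subset\BUC$); second, the decay assumption on $A,B$ makes $v\mapsto\|A'\ast(A\,\alpha_v(B))\|_{L^1}$ integrable on $\Xi$, so the integral converges in $\|\cdot\|_\infty$ as a Bochner integral and hence again lies in the closed space $\mathcal D_0$. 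Thus $A'\ast(ST)\in\mathcal D_0$, so $ST\in\mathcal D_1$; passing to linear combinations and norm limits shows $\mathcal D_1$ is a $C^\ast$-subalgebra of $\mathcal L(\mathcal H)$.

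For the ideal statement, $\mathcal I_1$ is the space corresponding to $\mathcal I_0$, so $\mathcal I_1=\overline{\mathcal T^1(\mathcal H)\ast\mathcal I_0}$, which is closed, $\alpha$-invariant, and satisfies $\mathcal I_1\subseteq\mathcal D_1$. Running the computation above with one of $f,g$ taken from $\mathcal I_0$ and the other from $\mathcal D_0$, the product $f\cdot\alpha_{-v}(g)$ now lies in $\mathcal I_0$ because $\mathcal I_0$ is an $\alpha$-invariant ideal of $\mathcal D_0$; hence every integrand lies in $L^1(\Xi)\ast\mathcal I_0\subseteq\mathcal I_0$, the Bochner integral lies in $\mathcal I_0$, and Theorem~\ref{theorem:correspondence} applied to the corresponding pair $(\mathcal I_0,\mathcal I_1)$ yields $(f\ast A)(g\ast B)\in\mathcal I_1$. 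Since we may take either factor of the product from $\mathcal I_0$, this gives both $\mathcal D_1\mathcal I_1\subseteq\mathcal I_1$ and $\mathcal I_1\mathcal D_1\subseteq\mathcal I_1$, so $\mathcal I_1$ is a closed two-sided ideal of $\mathcal D_1$.

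I expect the main obstacle to be making the displayed identity rigorous: for $f,g$ merely bounded the operator product $(f\ast A)(g\ast B)$ is only a conditionally convergent (weak-operator) integral, so the decay of the matrix coefficients of $A$ and $B$ is genuinely needed, both to justify Fubini and to obtain — after convolving with the trace-class $A'$ — a norm-convergent Bochner integral of elements of $\mathcal D_0$. Verifying that such operators form a dense subalgebra of $\mathcal T^1(\mathcal H)$ on a general locally compact abelian phase space is the technical ingredient to be imported from \cite{Fulsche_Galke2023}; everything else reduces to the correspondence theorem plus the fact that $\mathcal D_0$ being a $C^\ast$-algebra means precisely that it is stable under the pointwise products $f\cdot\alpha_{-v}(g)$ appearing in the integrand.
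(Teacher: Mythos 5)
Your proof is correct in substance, but it takes a genuinely different route from the paper. The paper does not argue directly at all: it defers to \cite{Fulsche2020} (Propositions 3.17, 3.19, Corollary 3.20), i.e.\ to the limit-operator machinery that the present paper then develops in Theorem \ref{thm:limitops} and Lemma \ref{lem:mainthm3}. In that approach one characterizes $\mathcal D_1$ inside $\mathcal C_1(\mathcal H)$ by the condition that the limit operators $\alpha_y(B)$ agree on the fibres of $\sigma\Xi \to \mathcal M(\beta_-(\mathcal D_0))$; since for $B \in \mathcal C_1(\mathcal H)$ these limits are SOT$^\ast$-limits of bounded nets, each $\alpha_y$ is multiplicative and $\ast$-preserving, and the algebra and ideal statements are inherited pointwise on the boundary. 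Your argument instead verifies the product rule by a direct computation: the identity $\alpha_y(A)\alpha_z(B)=\alpha_y\bigl(A\,\alpha_{z-y}(B)\bigr)$ (the multiplier phases cancel), the substitution $v=z-y$, and convolution associativity do yield your displayed formula, and the two facts you isolate — $f\cdot\alpha_{-v}(g)\in\mathcal D_0$ by $\alpha$-invariance and the algebra property, and $L^1(\Xi)\ast\mathcal D_0\subseteq\mathcal D_0$ as a Bochner integral in the closed $\alpha$-invariant subspace — close the argument, with the ideal case following verbatim. What each approach buys: the limit-operator route is soft and avoids all integrability bookkeeping once Theorem \ref{thm:limitops} is in place, whereas yours is self-contained modulo one technical input, namely a dense subalgebra of $\mathcal T^1(\mathcal H)$ with integrable matrix coefficients. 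That input is exactly what the paper's standing integrability Assumption provides (finite-rank operators built from vectors in $\mathcal H_1$: then $v\mapsto\|A\,\alpha_v(B)\|_{\mathcal T^1}$ is controlled by $|\langle U_v\varphi,\psi\rangle|\in L^1(\Xi)$, and the Fubini step is legitimate because matrix coefficients of integrable vectors lie in $L^1\cap L^2$, so the double integral converges absolutely against any pair of test vectors). Note only that the Assumption is formally stated in the paper \emph{after} Theorem \ref{thm:algebra}, so if you want your proof to stand where the theorem is stated you should either invoke that assumption explicitly or observe that for Heisenberg multipliers the representation is the standard one on $L^2(G)$ and $\mathcal H_1 \supseteq S_0(G)$ is automatically dense; the paper's own route has the same hidden dependence, since the SOT$^\ast$ convergence of limit operators is also proved under integrability.
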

Note that in \cite{Fulsche2020} it was additionally assumed that $\mathcal D_0$ is $\beta_-$-invariant. It is possible to remove this assumption by properly labeling the limit operators by boundary points from $\partial_{\beta_-(\mathcal D_0)}(\Xi)$ instead of $\partial_{\mathcal D_0}(\Xi)$, which was omitted in \cite{Fulsche2020}. The proof of the above result is now a straightforward adaptation of the methods in \cite{Fulsche2020}, cf.\ Proposition 3.17, Proposition 3.19 and Corollary 3.20 in that paper. Thus we shall not repeat this argument here.

\subsection{Limit operators and associated compatible families}

For $A \in \mathcal L(\mathcal H)$ and $B \in \mathcal T^1(\mathcal H)$, the map
\begin{align*}
\Xi \ni x \mapsto \langle \alpha_x(A), B\rangle = \tr( U_x A U_x^\ast B)
\end{align*}
is uniformly continuous. In particular, \label{def:limitops}$x \mapsto \alpha_x(A)$ extends to a map from $\sigma \Xi$ to $(\mathcal L(\mathcal H), \tau_{w^\ast}')$\label{def:wstartopoperators}, where $\tau_{w^\ast}'$ is the weak$^\ast$ topology arising from $\mathcal L(\mathcal H) \cong \mathcal T^1(\mathcal H)^\ast$. 

Indeed, for elements from $\mathcal C_1(\mathcal H)$ it is crucial that we obtain convergence towards the limit operators in a stronger topology, which will be SOT$^\ast$ convergence: convergence in the topology on $\mathcal L(\mathcal H)$ induced by the family of seminorms
\begin{align*}
\rho_{f, 1}(B) = \| Bf\|, \quad \rho_{f, 2}(B) = \| B^\ast f\|, \quad f \in \mathcal H.
\end{align*}
We will abbreviate this topology in the following by $\tau_{s^\ast}$\label{def:sotstar}\nomenclature{$\tau_{s^\ast}$}{Strong$^\ast$ operator topology, p.\ \pageref{def:sotstar}}. In contrast to the usual strong operator topology, the adjoint map is clearly continuous.
\begin{prop}
Assume that the phase space $(\Xi, m)$ is such that the projective representation $U_x$ is integrable. Let $B \in \mathcal C_1(\mathcal H)$ and $(x_\gamma) \subset \Xi$ a net converging to $x \in \partial \Xi$. Then, $\alpha_{x_\gamma}(B) \to \alpha_x(B)$ in SOT$^\ast$.
\end{prop}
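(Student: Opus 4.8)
The plan is to establish the norm‑convergence $\alpha_{x_\gamma}(B)\psi\to\alpha_x(B)\psi$ for $\psi$ ranging over a suitable dense subspace, to apply the result also to $B^\ast$, and then to pass to SOT$^\ast$‑convergence by uniform boundedness; the hypothesis of integrability of $(U_x)$ enters exactly once, to produce a dense supply of vectors whose matrix coefficients $y\mapsto\langle\psi,U_y\zeta\rangle$ are \emph{integrable} on $\Xi$.

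\emph{Reductions.} Since $\alpha_{x_\gamma}$ is conjugation by a unitary and the limit‑operator map $C\mapsto\alpha_x(C)$ is norm non‑expansive (the operator analogue of part (2) of Lemma~\ref{lem:prop_limit_fcts}, from weak$^\ast$ lower semicontinuity of the norm), we have $\|\alpha_{x_\gamma}(C)\|=\|C\|$ and $\|\alpha_x(C)\|\le\|C\|$ for all $C$. Hence, by a standard $3\varepsilon$‑argument, it suffices to prove $\alpha_{x_\gamma}(C)\psi\to\alpha_x(C)\psi$ for $\psi$ in some dense subspace; and since $\alpha_{x_\gamma}(B)^\ast=\alpha_{x_\gamma}(B^\ast)$ with $B^\ast\in\mathcal C_1(\mathcal H)$, while $\alpha_x(B^\ast)=\alpha_x(B)^\ast$ (check on rank‑one test operators via the defining weak$^\ast$ limit), once this is known for every $C\in\mathcal C_1(\mathcal H)$ it yields the SOT$^\ast$ statement by applying it to $C=B$ and to $C=B^\ast$. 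Finally, by part (1) of Theorem~\ref{thm:standardcorrespondences} (with $\mathcal R$ any regular family inside $\mathcal T^1(\mathcal H)$), linear combinations $\sum_i A_i\ast f_i$ with $A_i\in\mathcal T^1(\mathcal H)$, $f_i\in\BUC$ are norm dense in $\mathcal C_1(\mathcal H)$, and we may further take each $A_i$ of finite rank; using $\|\alpha_{x_\gamma}(C-C_0)\psi\|\le\|C-C_0\|\,\|\psi\|$ and the same for $\alpha_x$, and linearity of the limit‑operator map, it is enough to treat a single $B_0=A\ast f$ with $A$ of finite rank and $f\in\BUC$.

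\emph{The key identity and estimate.} For $z\in\Xi$ one has $\alpha_z(A\ast f)=A\ast\alpha_z(f)$, and the same identity persists for $z\in\partial\Xi$, with $\alpha_z(f)$ the limit function of Theorem~\ref{thm:deflimitfcts} and $\alpha_z(A\ast f)$ the limit operator: testing against $T\in\mathcal T^1(\mathcal H)$ reduces this to $\langle\alpha_z(f),h_{A,T}\rangle=\langle A\ast\alpha_z(f),T\rangle$ with $h_{A,T}(y)=\tr(\alpha_y(A)T^\ast)\in L^1(\Xi)$ (a value of an operator–operator convolution, since $A,T$ are trace class), which follows from the weak$^\ast$‑convergence $\alpha_{x_\gamma}(f)\to\alpha_x(f)$ in $L^\infty(\Xi)$; this is the operator–function analogue of the Lemma ``$\alpha_x(g\ast f)=g\ast\alpha_x(f)$''. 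Writing $A=\sum_{k=1}^m\xi_k\otimes\zeta_k$, with $(\xi\otimes\zeta)\psi:=\langle\psi,\zeta\rangle\xi$ and $\alpha_y(\xi\otimes\zeta)=(U_y\xi)\otimes(U_y\zeta)$, we get for appropriate $\psi$
\begin{align*}
\big(\alpha_{x_\gamma}(A\ast f)-\alpha_x(A\ast f)\big)\psi=\sum_{k=1}^m\int_\Xi\big(\alpha_{x_\gamma}(f)(y)-\alpha_x(f)(y)\big)\,\langle\psi,U_y\zeta_k\rangle\,U_y\xi_k\,dy,
\end{align*}
hence
\begin{align*}
\big\|\big(\alpha_{x_\gamma}(A\ast f)-\alpha_x(A\ast f)\big)\psi\big\|\le\sum_{k=1}^m\|\xi_k\|\int_\Xi\big|\alpha_{x_\gamma}(f)(y)-\alpha_x(f)(y)\big|\,|\langle\psi,U_y\zeta_k\rangle|\,dy.
\end{align*}
The factors $|\alpha_{x_\gamma}(f)(y)-\alpha_x(f)(y)|$ are bounded by $2\|f\|_\infty$ and, by the proposition on limit functions of $\BUC$‑functions, tend to $0$ uniformly on compact subsets of $\Xi$. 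Thus, provided $y\mapsto|\langle\psi,U_y\zeta_k\rangle|$ is integrable on $\Xi$ for each $k$, dominated convergence yields convergence to $0$.

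\emph{Where integrability enters, and the main obstacle.} Integrability of $(U_x)$ provides a nonzero admissible vector $\eta$, i.e.\ $y\mapsto\langle U_y\eta,\eta\rangle\in L^1(\Xi)$; expanding and using the cocycle relation for $m$ together with the translation‑ and reflection‑invariance of the Haar measure, one checks that $y\mapsto\langle\psi,U_y\zeta\rangle\in L^1(\Xi)$ for all $\psi,\zeta$ in the dense, $U$‑invariant subspace $M:=\operatorname{span}\{U_z\eta:z\in\Xi\}$ (dense by irreducibility of $(U_x)$). Choosing the finite‑rank $A=\sum_k\xi_k\otimes\zeta_k$ above with all vectors in $M$ — still dense in $\mathcal T^1(\mathcal H)$, so the resulting $B_0$'s stay dense in $\mathcal C_1(\mathcal H)$ — and restricting to $\psi\in M$ makes all dominating functions integrable, which through the reductions completes the proof. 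I expect the main (conceptual) obstacle to be precisely this last point: for a \emph{generic} $A\in\mathcal T^1(\mathcal H)$ the map $y\mapsto\alpha_y(A)\psi$ is only $L^2$ in $y$, which is insufficient for dominated convergence against a limit function that merely converges locally uniformly without any decay at infinity, and it is exactly integrability of the representation that upgrades this to $L^1$ on a dense set of vectors. The remaining ingredients — the two density reductions, the boundary identity $\alpha_x(A\ast f)=A\ast\alpha_x(f)$, and keeping the Bochner integrals honest (the integral defining $A\ast g$ for merely bounded $g$ converges only weak$^\ast$, but becomes a genuine Bochner integral once tested on $M$‑vectors) — are routine given the results already established.
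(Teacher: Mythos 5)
Your proposal is correct and follows essentially the same route as the paper: reduce by uniform boundedness and the correspondence theorem to operators of the form $A \ast f$ with $A$ of finite rank built from integrable vectors and $f \in \operatorname{BUC}(\Xi)$, then estimate $\|(\alpha_{x_\gamma}(A\ast f)-\alpha_x(A\ast f))\psi\|$ by an integral of $|\alpha_{x_\gamma}(f)(y)-\alpha_x(f)(y)|\,|\langle\psi,U_y\zeta\rangle|$ and conclude from local uniform convergence of the limit functions together with integrability of the matrix coefficient (your appeal to ``dominated convergence'' for a net should, strictly speaking, be phrased as the compact-set-plus-small-tail splitting, which is exactly what the paper does). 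The only cosmetic differences are that the paper works with rank-one $\varphi\otimes\psi$ and the full space $\mathcal H_1$ of integrable vectors rather than your span $M$ of translates of a single admissible vector.
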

In the following proof, we denote by $\mathcal H_1 \subset \mathcal H$ the space of integrable vectors, i.e., the space of all vectors $\varphi \in \mathcal H$ such that $x \mapsto \langle U_x\varphi, \varphi\rangle \in L^1(\Xi)$. Since we assume $\mathcal H_1$ to be non-trivial (this is the integrability condition of the representation), it is well-known that $\mathcal H_1$ is automatically dense in $\mathcal H$ and, indeed, also a subspace of $\mathcal H$. We want to note that in the case of the standard projective representation, i.e. $\Xi = G \times \widehat{G}$ for some lca group $G$ and $m((g, \xi), (h, \eta)) = \frac{a((g,\xi))a((h,\eta))}{a((g+h),(\xi \eta))} \overline{\langle g, \eta\rangle}$ with $a: G\times \widehat{G} \to \mathbb T$ continuous, $\mathcal H$ can be chosen as $L^2(G)$, $U_{(g, \xi)}f(t) = a((g,\xi)) \langle t, \xi\rangle f(t-g)$ and the space of integrable vectors $\mathcal H_1$ agrees with $S_0(G)$, the Feichtinger algebra (or, which is the same, the modulation space $M^1(G)$). See \cite{Feichtinger81, jakobsen18} for details on $S_0(G)$.

\begin{proof}
Clearly, it suffices to prove the statement for a subspace of $\mathcal C_1(\mathcal H)$ which is dense in uniform topology. Since $\mathcal H_1$ is dense in $\mathcal H$, and also by utilizing Theorem \ref{thm:standardcorrespondences}, it suffices to prove this convergence for $B = (\varphi \otimes \psi) \ast f$ when $\varphi, \psi \in \mathcal H_1$ and $f \in \BUC$. Note that we are using, by convention, the sesquilinear tensor product, which is linear in the first and antilinear in the second entry: $(\varphi \otimes \psi)(g) = \langle g, \psi\rangle \varphi$ for $g \in \mathcal H$. 

Since, for such $B$, the family of operators $\alpha_{x_\gamma}(B)$ is bounded in operator norm by $\| \alpha_{x_\gamma}(B)\| \leq \| \varphi\| \| \psi\| \|f\|_\infty$, it suffices to verify convergence of $\alpha_{x_\gamma}(B)g \to \alpha_x(B)g$ for $g$ from a dense subset of $\mathcal H$, e.g.\ for $g \in \mathcal H_1$. 

Let $\varepsilon > 0$. Since $x \mapsto \langle g, U_x \psi\rangle \in L^1(\Xi)$, there exists some compact subset $K \subset \Xi$ such that $\int_{K^c} |\langle g, U_x \psi\rangle|~dx < \varepsilon$. Then, we have:
\begin{align*}
\| (\varphi \otimes \psi) &\ast \alpha_{x_\gamma}(f)g  - (\varphi \otimes \psi)\ast \alpha_x(f) g\| \\
&\leq \int_\Xi |\alpha_{x_\gamma}(f)(y) - \alpha_x(f)(y)| |\langle g, U_y \psi\rangle| \| U_y \varphi\|~dy\\
&\leq 2\| f\|_\infty \| \varphi\| \varepsilon + \| \varphi\| \int_K |\alpha_{x_\gamma}(f)(y) - \alpha_x(f)(y)| |\langle g, U_y \psi\rangle|~dy
\intertext{Since $f \in \BUC$, we have that $\alpha_{x_\gamma}(f) \to \alpha_x(f)$ uniformly on compact subsets. Hence, there exists $\gamma_0$ such that for all $\gamma \geq \gamma_0$ we have $|\alpha_{x_\gamma}(f)(y) - \alpha_x(f)(y)| < \varepsilon$ for all $y \in K$. For such $\gamma$, it follows:}
&\leq \varepsilon ( 2\| f\|_\infty \| \varphi\| + \int_\Xi |\langle g, U_y\psi\rangle|~dy).
\end{align*}
Since $\varepsilon > 0$ was arbitrary, this shows that $\alpha_{x_\gamma}(B) \to \alpha_x(B)$ in SOT.

Since $\overline{f}$ is also in $\BUC$, the same argument works for the adjoint of the operator $B$, hence we see that $\alpha_{x_\gamma}(B) \to \alpha_x(B)$ in SOT$^\ast$.
\end{proof}

Based on the previous result, we make the following assumption for the remainder of this work:
\begin{assumption}
The projective representation $(U_x)_{x \in \Xi}$ of $\Xi$ on $\mathcal H$ is integrable.
\end{assumption}

We say that a set $S \subset \mathcal C_1(\mathcal H)$ is \emph{uniformly equicontinuous} if for every $\varepsilon > 0$ there exists a neighborhood $O$ of the unit $e \in \Xi$ such that for $x \in O$ it is
\begin{align*}
\| A  - \alpha_x(A)\|_{op} < \varepsilon \quad \text{ for every } A \in S.
\end{align*}
As in the case of functions, this notion is important for studying the limit operators of $A \in \mathcal L(\mathcal H)$, i.e., the operators $\alpha_x(A)$ for $x \in \partial \Xi$: For each $A \in \mathcal C_1(\mathcal H)$, the family $\{ \alpha_x(A): ~x \in \partial \Xi\}$ is uniformly equicontinuous. They form an important (and, as we will explain below, the only) example of the following class. Note that the following definition is taken from {\cite[Definition 6.5]{Fulsche2024}}, where it was set up for the special case $\Xi = \mathbb R^{2n}$.
\begin{defn}
A \emph{compatible family of limit operators} is a map $\omega: \partial \Xi \to \mathcal C_1(\mathcal H)$, satisfying the following properties:
\begin{enumerate}[(1)]
\item $\omega$ is continuous in the weak$^\ast$ topology.
\item For every $x \in \partial \Xi$ and $z \in \Xi$ one has:
\begin{align*}
\alpha_z(\omega(x)) = \omega(\varsigma_{-z}(x)).
\end{align*}
\item $\sup_{x \in \partial \Xi} \| \omega(x)\|_{op} < \infty$.
\item The family $\{ \omega(x): ~x \in \partial \Xi\}$ is uniformly equicontinuous.
\end{enumerate}
\end{defn}
We will denote the set of all compatible families of limit operators by $\mathfrak{lim}~\mathcal C_1(\mathcal H)$. 

Indeed, the properties of a compatible family of limit operators are the operator analogues of those in Lemma \ref{lemma:reconstr_boundary1} for functions. Then, the results of \cite{Fulsche2024} yield the following results for these compatible families of limit operators in the special case $\Xi = \mathbb R^{2n}$:
\begin{thm}\label{thm:limitops}
\begin{enumerate}[(1)]
\item If $B \in \mathcal C_1(\mathcal H)$, then $\omega(x) = \alpha_x(B), ~x \in \partial \Xi$ is a compatible family of limit operators.
\item If $\omega$ is a compatible family of limit operators, then there exists $B \in \mathcal C_1(\mathcal H)$ such that $\omega(x) = \alpha_x(B)$. $B$ is unique modulo $\mathcal K(\mathcal H)$.
\item Upon endowing the space $\mathfrak{lim}~ \mathcal C_1(\mathcal H)$ with pointwise addition, product, adjoint and the norm $\| \omega\| = \sup_{x \in \partial \Xi} \| \omega(x)\|_{op}$, it turns into a unital $C^\ast$-algebra.
\item The map $\mathcal C_1(\mathcal H) \ni B \mapsto [\omega(x) = \alpha_x(B)] \in \mathfrak{lim} ~\mathcal C_1(\mathcal H)$ is a surjective homomorphism of unital $C^\ast$-algebras. Its kernel is $\mathcal K(\mathcal H)$, and the quotient map $\mathcal C_1(\mathcal H)/\mathcal K(\mathcal H) \to \mathfrak{lim}~\mathcal C_1(\mathcal H)$ is an isomorphism of $C^\ast$-algebras.
\end{enumerate}
\end{thm}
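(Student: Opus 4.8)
The plan is to treat the four assertions in order, with (1), (3) and (4) being a routine transcription of arguments already available and the substance concentrated in the reconstruction (2). For (1), write $\omega(x) = \alpha_x(B)$ with $B \in \mathcal C_1(\mathcal H)$ and verify the four conditions of a compatible family. Weak$^\ast$ continuity on $\partial\Xi$ is the restriction of the continuous extension $\sigma\Xi \ni x \mapsto \alpha_x(B) \in (\mathcal L(\mathcal H),\tau_{w^\ast}')$ recalled at the start of this subsection. The covariance relation $\alpha_z(\omega(x)) = \omega(\varsigma_{-z}(x))$ and the uniform bound hold on $\Xi$ (there $\alpha_z\alpha_y = \alpha_{y+z}$, the cocycle cancelling under conjugation, and $\|\alpha_y(B)\|_{op} = \|B\|_{op}$) and descend to $x \in \partial\Xi$ using weak$^\ast$ continuity of $\omega$ and of $\alpha_z$, together with the weak$^\ast$ lower semicontinuity of the operator norm. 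Uniform equicontinuity follows from $\|\alpha_z(\alpha_y(B)) - \alpha_y(B)\|_{op} = \|\alpha_z(B) - B\|_{op}$, which is $<\varepsilon$ for $z$ in a neighbourhood of $e$ because $B \in \mathcal C_1(\mathcal H)$, the estimate again descending to $\partial\Xi$ by lower semicontinuity.

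The heart of the matter is (2). Following \cite{Fulsche2024}, I would reduce the reconstruction to the classical one (Lemma \ref{lemma:reconstr_boundary1}) via the correspondence theorem. Fix a regular rank-one operator $A_0 = \varphi \otimes \varphi$ with $\varphi \in \mathcal H_1$; for $\Xi = \mathbb R^{2n}$ take $\varphi$ a Gaussian, so that $\mathcal F_U(A_0)$ is nowhere vanishing and $\{A_0\}$ is a regular family. Given $\omega \in \mathfrak{lim}\,\mathcal C_1(\mathcal H)$, the assignment $\partial\Xi \ni x \mapsto A_0 \ast \omega(x) \in \operatorname{BUC}(\Xi)$ inherits from $\omega$ exactly the data feeding the classical reconstruction: pointwise continuity in $x$ (from weak$^\ast$ continuity of $\omega$ and $\|A_0\ast C - \alpha_z(A_0\ast C)\|_\infty \le \|A_0\|_{\mathcal T^1}\|C - \alpha_z(C)\|_{op}$), the $\varsigma$-covariance (from $\alpha_z(A_0 \ast \omega(x)) = A_0 \ast \omega(\varsigma_{-z}(x))$), and uniform equicontinuity and boundedness. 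An argument parallel to Lemma \ref{lemma:reconstr_boundary1} then produces $f_0 \in \operatorname{BUC}(\Xi)$ with $\alpha_x(f_0) = A_0 \ast \omega(x)$; reassembling these functions — equivalently, the functions obtained by letting the test operator range over a regular family — into a single $B \in \mathcal C_1(\mathcal H)$ with $\alpha_x(B) = \omega(x)$ (so that $A_0 \ast (\alpha_x(B) - \omega(x)) = 0$ for all $x$, whence $\alpha_x(B) = \omega(x)$ by regularity of $A_0$ and Wiener's approximation theorem for operators) is the technical core carried out in \cite{Fulsche2024}. Uniqueness modulo $\mathcal K(\mathcal H)$ is easy: if $B \in \mathcal C_1(\mathcal H)$ has $\alpha_x(B) = 0$ for all $x \in \partial\Xi$, then for a regular family $\mathcal R$ and $A \in \mathcal R$ the function $A \ast B \in \operatorname{BUC}(\Xi)$ has all limit functions equal to $A \ast 0 = 0$, hence $A \ast B \in B_0(\Xi) \cap \operatorname{BUC}(\Xi) = C_0(\Xi)$ by Lemma \ref{lemma:b0}, so $B \in \mathcal K(\mathcal H)$ by Theorem \ref{thm:standardcorrespondences}; conversely every compact operator has vanishing limit operators.

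Statements (3) and (4) are then formal. For (3), $\mathfrak{lim}\,\mathcal C_1(\mathcal H)$ is closed under pointwise sum, adjoint and product (the one nonobvious point, weak$^\ast$ continuity of a product, follows from (2) together with the SOT$^\ast$-convergence of limit operators established in the earlier Proposition), the constant family $I$ is a unit, $\|\omega^\ast\omega\| = \sup_x\|\omega(x)\|_{op}^2 = \|\omega\|^2$ is the C$^\ast$-identity, and completeness holds because a Cauchy sequence converges uniformly in $x$ to a map still satisfying the four defining properties (weak$^\ast$ continuity and uniform equicontinuity survive uniform limits by an $\varepsilon/3$ estimate, the other two trivially). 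For (4), $B \mapsto \omega_B$ is linear, and it is a $\ast$-homomorphism because for $B, C \in \mathcal C_1(\mathcal H)$ and $x_\gamma \to x$ in $\Xi$ one has $\alpha_{x_\gamma}(B) \to \alpha_x(B)$ and $\alpha_{x_\gamma}(C) \to \alpha_x(C)$ in SOT$^\ast$ (here the integrability Assumption enters, via the earlier Proposition) while multiplication and involution are continuous for SOT$^\ast$ on norm-bounded sets, whence $\alpha_x(BC) = \alpha_x(B)\alpha_x(C)$ and $\alpha_x(B^\ast) = \alpha_x(B)^\ast$. It is surjective by (2), its kernel is $\mathcal K(\mathcal H)$ by the uniqueness argument above, and since $\mathcal C_1(\mathcal H)$ is a C$^\ast$-algebra and $\mathcal K(\mathcal H)$ a closed ideal (Theorem \ref{thm:algebra}) the first isomorphism theorem for C$^\ast$-algebras gives the isometric $\ast$-isomorphism $\mathcal C_1(\mathcal H)/\mathcal K(\mathcal H) \cong \mathfrak{lim}\,\mathcal C_1(\mathcal H)$.

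The only genuinely nontrivial step is the surjectivity in (2): because $\mathcal C_1(\mathcal H)$ is not a commutative algebra of functions on a compact space, the classical device of evaluating at $\rho x$ and Tietze-extending has no direct operator analogue, so the construction of $B$ from $\omega$ must be routed through the correspondence with $\operatorname{BUC}(\Xi)$ — this is precisely the content inherited from \cite{Fulsche2024}.
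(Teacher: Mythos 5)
Your treatment of (1), of the kernel computation, and of (3)--(4) matches the paper's: the paper likewise reduces everything to (2) by observing that $B \mapsto (\alpha_x(B))_{x \in \partial\Xi}$ is a $^\ast$-homomorphism into $\bigoplus_{x \in \partial\Xi} \mathcal C_1(\mathcal H)$ with kernel $\mathcal K(\mathcal H)$, so that the induced quotient map is automatically isometric and (3), (4) follow once surjectivity is known. Your identification of the kernel via $A \ast B \in C_0(\Xi)$ and Theorem \ref{thm:standardcorrespondences} is exactly the paper's argument.

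The gap is in the surjectivity part of (2), which you yourself flag as the only nontrivial step and then discharge by citing \cite{Fulsche2024}. That reference treats only $\Xi = \mathbb R^{2n}$, and the entire purpose of this subsection of the paper is to actually carry out the generalization to arbitrary abelian phase spaces, so the citation cannot stand in for the argument. Moreover, the mechanism you sketch is not quite viable as stated: producing $f_0 \in \operatorname{BUC}(\Xi)$ with $\alpha_x(f_0) = A_0 \ast \omega(x)$ and then seeking $B$ with $A_0 \ast (\alpha_x(B) - \omega(x)) = 0$ requires solving a deconvolution problem (finding $B$ whose convolution against the single operator $A_0$ has prescribed limit functions), and a single regular $A_0$ does not admit an inverse. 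The paper's route around this is the content you are missing: one fixes an approximate identity $(\varphi_\gamma)$ of $L^1(\Xi)$, approximates each $\varphi_\gamma$ in $L^1$-norm by finite sums $\sum_j c_j^\gamma \alpha_{x_j^\gamma}(A_j^\gamma \ast A_j^\gamma)$ with $A_j^\gamma$ from the regular family (possible by Wiener's approximation theorem), and proves a uniform approximation lemma (Lemma \ref{lemma:uniform_approximable}): for any norm-bounded, uniformly equicontinuous $T \subset \mathcal C_1(\mathcal H)$ one has $\sup_{C \in T} \| C - \sum_j c_j^\gamma \alpha_{x_j^\gamma}(A_j^\gamma \ast A_j^\gamma) \ast C\|_{op} \to 0$. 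Applying this to $T = \{\omega(x)\}$, rewriting $A_j^\gamma \ast A_j^\gamma \ast \omega(x)$ as $A_j^\gamma \ast \alpha_x(\beta_-(f_{A_j^\gamma,0}))$ with $f_{A_j^\gamma,0} \in \operatorname{BUC}(\Xi)$ obtained by Tietze extension of $x \mapsto (\omega(x) \ast A_j^\gamma)(e)$, one obtains concrete operators $B_\gamma = \sum_j c_j^\gamma \alpha_{x_j^\gamma}(A_j^\gamma \ast \beta_-(f_{A_j^\gamma,0})) \in \mathcal C_1(\mathcal H)$ whose limit operators converge to $\omega$ uniformly on $\partial\Xi$; by the isometry $\|B + \mathcal K(\mathcal H)\| = \sup_x \|\alpha_x(B)\|$ (Lemma \ref{lem:quotientnorm}) the classes $B_\gamma + \mathcal K(\mathcal H)$ form a Cauchy net in the complete quotient $\mathcal C_1(\mathcal H)/\mathcal K(\mathcal H)$, and its limit $B$ satisfies $\alpha_x(B) = \omega(x)$. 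Without this approximate-identity-plus-quotient-completeness argument (or an equivalent substitute), your proof of (2) — and hence of (3) and (4) — is not complete for general $\Xi$.
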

We will spend the remaining part of this subsection on proving this result, which will be a central technical cornerstone for our later discussions, as well as future work. We first note that (1) of the theorem is straightforward to verify, so we are left with proving (2)-(4). The proofs of these facts are of course generalizations of the arguments from \cite[Section 6.3]{Fulsche2024}, and therefore bear rather obvious similarities.

As a first auxiliary fact, we need:
\begin{lem}
Let $B \in \mathcal C_1(\mathcal H)$. Then, $B \in \mathcal K(\mathcal H)$ if and only if $\alpha_x(B) = 0$ for every $x \in \partial \Xi$. 
\end{lem}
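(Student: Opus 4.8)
The plan is to prove both implications using the correspondence theorem (Theorem \ref{thm:standardcorrespondences}(2)), which states that $C_0(\Xi)$ and $\mathcal K(\mathcal H)$ are corresponding spaces, together with the operator version of Wiener's approximation theorem. Fix a regular family $\mathcal R \subset \mathcal T^1(\mathcal H)$; since $B \in \mathcal C_1(\mathcal H)$ is already assumed, by Theorem \ref{thm:standardcorrespondences}(2) we have that $B \in \mathcal K(\mathcal H)$ if and only if $A \ast B \in C_0(\Xi)$ for every $A \in \mathcal R$. So it suffices to show that $A \ast B \in C_0(\Xi)$ for all $A \in \mathcal R$ if and only if $\alpha_x(B) = 0$ for every $x \in \partial \Xi$.

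For the ``$\Leftarrow$'' direction, suppose $\alpha_x(B) = 0$ for all $x \in \partial \Xi$. Fix $A \in \mathcal R$ and consider $A \ast B(z) = \tr(A \alpha_z(\beta_-(B)))$. Since $A \ast B \in \operatorname{BUC}(\Xi)$ (as $B \in \mathcal C_1(\mathcal H)$ and the convolution of a $\mathcal C_1$-operator with a trace-class operator lands in $\operatorname{BUC}$), it extends continuously to $\sigma\Xi$, and I would show that for a net $z_\gamma \to x \in \partial \Xi$ one has $A \ast B(z_\gamma) = \tr(A\,\alpha_{z_\gamma}(\beta_-(B))) \to \tr(A\,\alpha_x(\beta_-(B)))$ using weak$^\ast$ convergence of $\alpha_{z_\gamma}(\beta_-(B))$ against the trace-class operator $A$. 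Now $\alpha_x(\beta_-(B)) = \beta_-(\alpha_{\rho x}(B))$ or a similar identity relating it to a limit operator of $B$, which vanishes by hypothesis (one should be slightly careful with the exact bookkeeping of $\rho$ and $\beta_-$, but all limit operators of $\beta_-(B)$ are of the form $\beta_-(\alpha_y(B))$ for appropriate boundary points $y$, and these are zero). Hence $A \ast B$ vanishes on $\partial \Xi = \sigma\Xi \setminus \Xi$, i.e.\ $A \ast B \in C_0(\Xi)$.

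For the ``$\Rightarrow$'' direction, suppose $A \ast B \in C_0(\Xi)$ for every $A \in \mathcal R$. Fix $x \in \partial \Xi$; I want $\alpha_x(B) = 0$. Since $\alpha_x(B) \in \mathcal L(\mathcal H) \cong \mathcal T^1(\mathcal H)^\ast$, it suffices to show $\tr(A' \alpha_x(B)) = 0$ for all $A'$ in a dense subset of $\mathcal T^1(\mathcal H)$, in particular for $A' = \alpha_z(\beta_-(A))$ with $A \in \mathcal R$, $z \in \Xi$ — these span a dense subspace by Wiener's approximation theorem for operators, since $\mathcal R$ is regular. For a net $z_\gamma \to x$ we compute $\tr(\alpha_z(\beta_-(A))\,\alpha_{z_\gamma}(B)) = \tr(\beta_-(A)\,\alpha_{z_\gamma - z}(B)) = \beta_-(A) \ast \beta_-(B)(z - z_\gamma)$ up to sign/ordering conventions, which equals the value at $z-z_\gamma$ of a function of the form $\tilde A \ast B$ with $\tilde A \in \mathcal R$ (using $\beta_-$-invariance of regularity of $\mathcal R$); since $\tilde A \ast B \in C_0(\Xi)$ and $z - z_\gamma \to \infty$ (as $z_\gamma \to x \in \partial \Xi$), this tends to $0$. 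Passing $z_\gamma \to x$ gives $\tr(\alpha_z(\beta_-(A))\,\alpha_x(B)) = 0$; by density and linearity, $\alpha_x(B) = 0$.

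The main obstacle I expect is purely bookkeeping: getting the interaction of the maps $\alpha$, $\beta_-$, $\rho$ and $\varsigma$ exactly right when passing limit operators through the convolution $A \ast B(x) = \tr(A\,\alpha_x(\beta_-(B)))$, and in particular verifying that the limit operators of $\beta_-(B)$ are precisely the $\beta_-$-images of limit operators of $B$ indexed by $\rho$ of the corresponding boundary point. The analytic content — that trace against a fixed trace-class operator is weak$^\ast$-continuous, and that $C_0$ membership is exactly vanishing on the boundary $\partial\Xi$ — is standard; one also needs the (already-cited) facts that $\mathcal R$ regular implies $\beta_-(\mathcal R)$ regular and that $A \ast B \in \operatorname{BUC}(\Xi)$ when $B \in \mathcal C_1(\mathcal H)$.
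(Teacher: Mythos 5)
Your proposal is correct and follows essentially the same route as the paper: reduce via the correspondence theorem to ``$A \ast B \in C_0(\Xi)$ for all $A$'', then use the intertwining of shifts with the convolution (equivalently, the identity $\alpha_x(A \ast B) = A \ast \alpha_x(B)$ for boundary points $x$) together with nondegeneracy of the trace pairing and, in your variant, Wiener's approximation theorem to pass from a regular family to all of $\mathcal T^1(\mathcal H)$. The paper simply states this in compressed form, absorbing all the $\beta_-$/$\rho$ bookkeeping you flag into the already-established intertwining relation.
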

\begin{proof}
By Theorem \ref{thm:standardcorrespondences}, $B \in \mathcal K(\mathcal H)$ if and only if $A \ast B \in C_0(\Xi)$ for every $A \in \mathcal T^1(\mathcal H)$. Since $\alpha_x(A \ast B) = A \ast \alpha_x(B)$, and $A \ast \alpha_x(B) = 0$ for every $A \in \mathcal T^1(\mathcal H)$ if and only if $\alpha_x(B) = 0$, the statement follows.
\end{proof}
Hence, the map 
\begin{align*}
\mathcal C_1(\mathcal H) \ni B \mapsto (\alpha_x(B))_{x \in \partial\Xi}
\end{align*}
is a $^\ast$-homomorphism from the $C^\ast$-algebra $\mathcal C_1(\mathcal H)$ to the $C^\ast$-algebra 
\begin{align*}
\bigoplus_{x \in \partial\Xi}\mathcal C_1(\mathcal H),
\end{align*}
the kernel of which is $\mathcal K(\mathcal H)$. Here, the infinite direct sum on the right-hand side has to be understood as the $\ell^\infty$ sum. By general facts about $C^\ast$-algebras, the quotient map has to be an isometric isomorphism. In particular, we have:
\begin{lem}\label{lem:quotientnorm}
Let $B \in \mathcal C_1(\mathcal H)$. Then, the following equality holds: 
\begin{align*}
\| B + \mathcal K(\mathcal H)\| = \sup_{x \in \partial\Xi} \| \alpha_x(B)\|.
\end{align*}
\end{lem}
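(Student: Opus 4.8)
The plan is to extract the claimed identity directly from the $C^\ast$-algebraic structure set up immediately before the statement. Write $\Phi\colon \mathcal C_1(\mathcal H) \to \bigoplus_{x\in\partial\Xi}\mathcal C_1(\mathcal H)$ for the map $\Phi(B) = (\alpha_x(B))_{x\in\partial\Xi}$, where the direct sum is the $\ell^\infty$-sum carrying the norm $\|(B_x)_{x\in\partial\Xi}\| = \sup_{x\in\partial\Xi}\|B_x\|$, which is again a $C^\ast$-algebra. The preceding lemma shows $\ker\Phi = \mathcal K(\mathcal H)$, and $\Phi$ is a $\ast$-homomorphism: multiplicativity and $\ast$-compatibility are clear for $\alpha_x$ with $x\in\Xi$, and they pass to $x\in\partial\Xi$ because, for $B\in\mathcal C_1(\mathcal H)$ and a net $x_\gamma\to x$ in $\partial\Xi$, one has $\alpha_{x_\gamma}(B)\to\alpha_x(B)$ in SOT$^\ast$ (by the Proposition preceding the standing Assumption) while multiplication is jointly SOT-continuous on norm-bounded sets and the adjoint is SOT$^\ast$-continuous.

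Now I would invoke the standard facts that a $\ast$-homomorphism between $C^\ast$-algebras is contractive and that an injective $\ast$-homomorphism between $C^\ast$-algebras is isometric. The first gives that $\Phi$ factors through a $\ast$-homomorphism $\overline\Phi\colon \mathcal C_1(\mathcal H)/\mathcal K(\mathcal H) \to \bigoplus_{x\in\partial\Xi}\mathcal C_1(\mathcal H)$; the kernel computation makes $\overline\Phi$ injective; hence $\overline\Phi$ is isometric. Consequently
\begin{align*}
\|B + \mathcal K(\mathcal H)\| = \bigl\|\overline\Phi(B + \mathcal K(\mathcal H))\bigr\| = \|\Phi(B)\| = \sup_{x\in\partial\Xi}\|\alpha_x(B)\|,
\end{align*}
which is exactly the assertion. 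Here one uses that $\mathcal C_1(\mathcal H)$ is itself a $C^\ast$-algebra: it is norm-closed and $\ast$-closed in $\mathcal L(\mathcal H)$ by definition, and closed under products, e.g.\ by Theorem~\ref{thm:algebra} applied with $\mathcal D_0 = \operatorname{BUC}(\Xi)$, or directly from the SOT$^\ast$-approximation above.

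I do not expect a genuine obstacle here: all the real work has been done in the lemma computing $\ker\Phi = \mathcal K(\mathcal H)$, and what remains is the elementary principle that a quotient $\ast$-homomorphism of $C^\ast$-algebras is isometric onto its image. The only point worth spelling out with care is the multiplicativity of $\Phi$ at boundary points, which is precisely where the SOT$^\ast$-convergence of shifts of operators in $\mathcal C_1(\mathcal H)$ — rather than mere weak$^\ast$-convergence, which would not suffice for joint continuity of multiplication — enters.
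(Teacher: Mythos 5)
Your proposal is correct and follows essentially the same route as the paper: the text preceding the lemma establishes exactly that $B\mapsto(\alpha_x(B))_{x\in\partial\Xi}$ is a $\ast$-homomorphism into the $\ell^\infty$-direct sum with kernel $\mathcal K(\mathcal H)$, and then invokes the general fact that the induced injective quotient $\ast$-homomorphism is isometric. Your added justification of multiplicativity at boundary points via SOT$^\ast$-convergence and joint SOT-continuity of multiplication on bounded sets is a correct filling-in of a detail the paper leaves implicit.
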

Note that $\mathfrak{lim}~\mathcal C_1(\mathcal H)$ can naturally be seen as a subset of $\bigoplus_{x \in \partial\Xi} \mathcal C_1(\mathcal H)$.

We will make use of the following standard construction: For any open neighborhood $O \subset \Xi$ of the identity $e \in \Xi$, we write $\varphi_O = \frac{1}{|O|}\chi_O$, where $|O|$ denotes the Haar measure of the set. When ordering the open neighborhoods of $e$ by inclusion, it is well-known that this turns the functions $\varphi_O$ into a bounded approximate identity of $L^1(\Xi)$. We will, for readability, write $(\varphi_\gamma)_{\gamma \in \Gamma}$ for this approximate identity. Fix any net $(b_\gamma)_{\gamma \in \Gamma} \subset (0, \infty)$, converging to $0$ and indexed by the same directed set (e.g., let $0 \neq f \in L^1(\Xi)$ and set $b_\gamma = \| f - f \ast \varphi_\gamma\|_{L^1}$). Fix a regular set $S \subset \mathcal T^1(\mathcal H)$. Then, since $\{ A \ast A: ~A \in S\}$ is a regular subset of $L^1(\Xi)$, we can find for every $\gamma \in \Gamma$ a finite collection of $c_j^\gamma \in \mathbb C$, $x_j^\gamma \in \Xi$ and $A_j^\gamma \in S$ such that:
\begin{align*}
\| \varphi_\gamma - \sum_j c_j^\gamma \alpha_{x_j^\gamma}(A_j^\gamma \ast A_j^\gamma)\|_{L^1} < b_\gamma.
\end{align*}
All this data will be considered fixed for the remainder of this section.
\begin{lem}\label{lemma:uniform_approximable}
Let $T \subset \mathcal C_1(\mathcal H)$ be a norm-bounded subset which is uniformly equicontinuous, i.e., the functions $\Xi \ni x \mapsto \alpha_x(B), ~B \in T$, are uniformly equicontinuous. Then, 
\begin{align*}
\sup_{B \in T} \| B - \sum_j \left( c_j^\gamma \alpha_{x_j^\gamma}(A_j^\gamma \ast A_j^\gamma) \ast B \right) \|_{op} \overset{\gamma \in \Gamma}{\longrightarrow} 0.
\end{align*}
\end{lem}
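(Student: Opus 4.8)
The plan is to reduce the claimed uniform convergence to two ingredients: first, the fact that $\varphi_\gamma \ast B \to B$ in operator norm, uniformly over $B$ in a norm-bounded, uniformly equicontinuous family $T$; and second, the approximation $\|\varphi_\gamma - \sum_j c_j^\gamma \alpha_{x_j^\gamma}(A_j^\gamma \ast A_j^\gamma)\|_{L^1} < b_\gamma \to 0$, which was arranged by construction. Combining these, for $B \in T$ one estimates
\begin{align*}
\Bigl\| B - \sum_j c_j^\gamma \alpha_{x_j^\gamma}(A_j^\gamma \ast A_j^\gamma) \ast B \Bigr\|_{op}
&\leq \| B - \varphi_\gamma \ast B\|_{op} + \Bigl\| \varphi_\gamma \ast B - \sum_j c_j^\gamma \alpha_{x_j^\gamma}(A_j^\gamma \ast A_j^\gamma) \ast B \Bigr\|_{op}\\
&\leq \| B - \varphi_\gamma \ast B\|_{op} + b_\gamma \| B\|_{op},
\end{align*}
using the convolution estimate $\| f \ast B\|_{op} \leq \| f\|_{L^1}\| B\|_{op}$ for the second summand. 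Since $T$ is norm-bounded, the second term is dominated by $b_\gamma \sup_{B \in T}\| B\|_{op} \to 0$ uniformly in $B$, so everything hinges on the first summand.

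So the heart of the argument is to show $\sup_{B \in T}\| B - \varphi_\gamma \ast B\|_{op} \to 0$. Here I would write, using $\varphi_\gamma \geq 0$ with $\int \varphi_\gamma = 1$,
\begin{align*}
\| B - \varphi_\gamma \ast B\|_{op} = \Bigl\| \int_\Xi \varphi_\gamma(y)\, (B - \alpha_y(B))~dy \Bigr\|_{op} \leq \int_\Xi \varphi_\gamma(y)\, \| B - \alpha_y(B)\|_{op}~dy \leq \sup_{y \in O_\gamma} \| B - \alpha_y(B)\|_{op},
\end{align*}
where $O_\gamma = \operatorname{supp}\varphi_\gamma$ is the defining neighborhood of $e$ for the index $\gamma$. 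Given $\varepsilon > 0$, uniform equicontinuity of $T$ furnishes a neighborhood $O$ of $e$ with $\| B - \alpha_y(B)\|_{op} < \varepsilon$ for all $y \in O$ and all $B \in T$; since the $O_\gamma$ form a neighborhood base at $e$ ordered by inclusion, there is $\gamma_0$ with $O_\gamma \subset O$ for $\gamma \geq \gamma_0$, giving $\sup_{B \in T}\| B - \varphi_\gamma \ast B\|_{op} \leq \varepsilon$ for all $\gamma \geq \gamma_0$. This is exactly the place where the uniform equicontinuity hypothesis is used in an essential way — without it one only gets pointwise (in $B$) convergence. I do not anticipate a serious obstacle beyond being careful that the approximate identity in the construction is indeed built from the $\varphi_O = \frac{1}{|O|}\chi_O$ and hence is supported in the corresponding shrinking neighborhoods of $e$; this is precisely how the data was fixed just before the lemma, so the matching is immediate.

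Putting the two bounds together yields $\sup_{B \in T}\| B - \sum_j c_j^\gamma \alpha_{x_j^\gamma}(A_j^\gamma \ast A_j^\gamma) \ast B\|_{op} \leq \varepsilon + b_\gamma \sup_{B \in T}\| B\|_{op}$ for $\gamma \geq \gamma_0$, and letting $\gamma \to \infty$ and then $\varepsilon \to 0$ completes the proof.
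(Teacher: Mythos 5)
Your proposal is correct and follows essentially the same route as the paper: the same triangle-inequality decomposition, the bound $b_\gamma \sup_{B\in T}\|B\|_{op}$ on the second summand via $\|f\ast B\|_{op}\leq\|f\|_{L^1}\|B\|_{op}$, and the approximate-identity argument using uniform equicontinuity for the first summand. The only cosmetic difference is that the paper splits $\int_\Xi\varphi_\gamma(x)\|B-\alpha_x(B)\|\,dx$ into the parts over $O$ and $O^c$ and notes the latter eventually vanishes, whereas you observe directly that $\operatorname{supp}\varphi_\gamma$ is eventually contained in $O$ --- these are the same fact.
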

\begin{proof}
We estimate:
\begin{align*}
\| B - \sum_j & \left( c_j^\gamma \alpha_{x_j^\gamma}(A_j^\gamma \ast A_j^\gamma) \ast B \right) \|_{op} \\
&\leq \| B - \varphi_\gamma \ast B\|_{op} + \| \varphi_\gamma \ast B - \sum_j \left( c_j^\gamma \alpha_{x_j^\gamma}(A_j^\gamma \ast A_j^\gamma) \ast B \right) \|_{op}\\
&\leq \| B - \varphi_\gamma \ast B\|_{op} + \| \varphi_\gamma - \sum_j \left( c_j^\gamma \alpha_{x_j^\gamma}(A_j^\gamma \ast A_j^\gamma)  \right) \|_{L^1} \| B\|_{op}\\
&\leq \| B - \varphi_\gamma \ast B\|_{op} + \sup_{C \in T} \| C\| b_\gamma
\end{align*}
Let $\varepsilon > 0$. Since the $B \in T$ are uniformly equicontinuous, there exists a neighborhood $O \subset \Xi$ of $e \in \Xi$ such that for $x \in O$: $\| B - \alpha_x(B)\| < \varepsilon$. Then,
\begin{align*}
\| B - \varphi_\gamma \ast B\|_{op} &\leq \int_\Xi \varphi_\gamma(x) \| B - \alpha_x(B)\|_{op}~dx\\
&\leq \varepsilon \int_O \varphi_\gamma(x)~dx + 2\| B\|_{op} \int_{O^c} \varphi_\gamma(x)~dx.
\end{align*}
By definition of the $\varphi_\gamma$, there exists $\gamma_0 \in \Gamma$ such that for $\gamma \geq \gamma_0$: $\int_{O^c} \varphi_\gamma(x)~dx = 0$. Thus, for $\gamma \geq \gamma_0$ we have:
\begin{align*}
\| B - \sum_j & \left( c_j^\gamma \alpha_{x_j^\gamma}(A_j^\gamma \ast A_j^\gamma) \ast B \right) \|_{op} \leq \varepsilon + \sup_{C \in T} \| C\| b_\gamma.
\end{align*}
Since $\varepsilon > 0$ was arbitrary and $b_\gamma \to 0$, the statement follows.
\end{proof}
\begin{proof}[Proof of Theorem \ref{thm:limitops}]
As already mentioned, we omit the proof of (1), which is straightforward. We already know that the map $\Phi(B)(x) = \alpha_x(B)$ maps $\Phi: \mathcal C_1(\mathcal H)/\mathcal K(\mathcal H) \to \mathfrak{lim}\mathcal C_1(\mathcal H)$ injectively. Hence, (3) and (4) follow immediately once (2) is proven.

Therefore, fix some $\omega \in \mathfrak{lim}~\mathcal C_1(\mathcal H)$. Further, we let the approximate identity $(\varphi_\gamma)_{\gamma \in \Gamma}$, the regular set $S \subset \mathcal T^1(\mathcal H)$, the net $(b_\gamma)_{\gamma \in \Gamma} \subset (0, \infty)$ and the $c_j^\gamma, x_j^\gamma, A_j^\gamma$ as earlier. For $A \in S$ write $f_A(x) = (\omega(x) \ast A)(e) = \tr(\beta_-(\omega(x))A)$. Then, $f_A \in C(\partial \Xi)$ such that by Tietze's extension theorem there exists $f_{A, 0} \in \BUC$ such that $\alpha_x(f_{A, 0}) = f_A(x)$ for every $x \in \partial\Xi$. By Lemma \ref{lemma:uniform_approximable}, we know that 
\begin{align*}
&\sup_{x \in \partial \Xi} \| \omega(x) - \sum_j c_j^\gamma \alpha_{x_j^\gamma} (A_j^\gamma \ast \alpha_x(\beta_-(f_{A_j^\gamma,0})))\|_{op}\\
&= \sup_{x \in \partial \Xi} \| \omega(x) - \sum_j c_j^\gamma \alpha_{x_j^\gamma} (A_j^\gamma \ast A_j^\gamma \ast \omega(x))\|_{op} \overset{\gamma \in \Gamma}{\longrightarrow} 0.
\end{align*}
By Lemma \ref{lem:quotientnorm}, $\sum_{j} c_j^\gamma \alpha_{x_j^\gamma}(A_j^\gamma \ast \beta_-(f_{A_j^\gamma, 0})) + \mathcal K(\mathcal H)$ is a Cauchy net in $\mathcal C_1(\mathcal H)/\mathcal K(\mathcal H)$ such that there exists $B \in \mathcal C_1(\mathcal H)$ with:
\begin{align*}
\sum_{j} c_j^\gamma \alpha_{x_j^\gamma}(A_j^\gamma \ast \beta_-(f_{A_j^\gamma, 0})) + \mathcal K(\mathcal H) \overset{\gamma \in \Gamma}{\longrightarrow} B + \mathcal K(\mathcal H).
\end{align*}
Now, for $x \in \partial \Xi$:
\begin{align*}
\alpha_x(B) = \lim_{\gamma \in \Gamma} \sum_j c_j^\gamma \alpha_{x_j^\gamma}(A_j^\gamma \ast \alpha_x(\beta_-(f_{A_j^\gamma, 0}))) = \omega(x),
\end{align*}
which finishes the proof.
\end{proof}

\subsection{Wiener's Tauberian theorems for QHA}

Let now again $\mathcal A$ be an $\alpha$-invariant $C^\ast$-subalgebra of $\operatorname{BUC}(\Xi)$ containing $C_0(\Xi)$ and $\tau'$ a Hausdorff vector topology of $\mathcal L(\mathcal H)$, which is finer than $\tau_{w^\ast}'$. We say that the limit operators of $A$ exist on $\partial_{\beta_-(\mathcal A)}(G)$ with respect to $\tau'$ if for every $x \in \partial_{\beta_-(\mathcal A)}(G)$ and every net $(x_\gamma)\subset \Xi$ with $x_\gamma \to x$ the limit $\lim_{\gamma \in \Gamma} \alpha_{x_\gamma}(A)$ exists with respect to the topology $\tau'$. We make the following definitions:\nomenclature{$\mathcal A_{j,op}(\tau')$}{cf.\ p.\ \pageref{def:Ajop}}\label{def:Ajop}
\begin{align*}
\mathcal A_{3,op}(\tau') &= \{ B \in \mathcal L(\mathcal H): ~\text{the limit operators of $B$ exist on $\partial_{\beta_-(\mathcal A)}(G)$ with respect to $\tau'$}\}\\
\mathcal A_{2, op}(\tau') &= \{ B \in \mathcal A_{3,op}(\tau'): ~\alpha_x(B) \in \mathcal C_1(\mathcal H) \text{ for every } x \in \partial_{\beta_-(\mathcal A)}(\Xi) \}\\
\mathcal A_{1,op}(\tau') &= \{ B \in \mathcal A_{2,op}(\tau'): ~\{ \alpha_x(B): ~x\in \partial_{\beta_-(\mathcal A)}(\Xi) \} \text{ unif.\ equicont.}\}.
\end{align*}
Further, we let $\mathcal A_{0, op} = \mathcal A_{0,op}(\tau')$ denote the space corresponding to $\mathcal A$ in the sense of the Correspondence Theorem \ref{theorem:correspondence}, which is always a $C^\ast$-subalgebra of $\mathcal C_1(\mathcal H)$ by Theorem \ref{thm:algebra}. Again, this is independent of the chosen topology $\tau'$. We will write $\operatorname{BUC}_{j,op}(\tau')$ for $\mathcal A_{j,op}(\tau')$ in the case where $\mathcal A = \BUC$. Note that $\operatorname{BUC}_{3, op}(\tau_{w^\ast}') = \mathcal L(\mathcal H)$.
\begin{rem}
    We chose to add an apostrophe to the notion of the topologies here to clearly distinguish between topologies on spaces of functions and spaces of operators: Notions such as $\tau$ or $\tau_{w^\ast}$ will always refer to a topology on a space of functions, while $\tau'$ or $\tau_{w^\ast}'$ will refer to topologies on spaces of operators. In later stages of the paper, making this notational distinction explicit is useful to avoid ambiguities.
\end{rem}

We now obtain the following result, which is the operator analogue of Lemma \ref{lem:mainthm1}. The proof is essentially the same as for its classical version, making use of the same topological lemma, and is hence omitted.
\begin{lem}\label{lem:mainthm3}
    Let $\tau'$ and $\mathcal A$ as above and $j \in\{ 1, 2, 3\}$. Then, the following holds true:
    \begin{align*}
        \mathcal A_{j,op}(\tau') = \mathrm{BUC}(G)_{j,op}(\tau_{w^\ast}') \cap \mathcal A_{3,op}(\tau_{w^\ast}') \cap \mathrm{BUC}(G)_{3,op}(\tau')
    \end{align*}
\end{lem}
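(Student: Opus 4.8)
The plan is to transcribe the proof of Lemma~\ref{lem:mainthm1}, replacing functions by operators, the weak$^\ast$ topology on $L^\infty(G)$ by $\tau_{w^\ast}'$ on $\mathcal L(\mathcal H)$, and limit functions by limit operators. The only structural tool used there, the topological Lemma~\ref{lemma:top2}, is agnostic as to whether the target space $Y$ consists of functions or of operators; it only requires $Y$ to be regular, and $(\mathcal L(\mathcal H),\tau')$ is a Hausdorff topological vector space, hence regular. Throughout one uses that for $B\in\mathcal L(\mathcal H)$ the orbit map $x\mapsto\alpha_x(B)$ already extends $\tau_{w^\ast}'$-continuously to $\sigma\Xi$, which is the observation preceding Definition~\ref{def:limitops}.

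First I would treat the case $j=3$. By Gelfand duality the inclusion $\beta_-(\mathcal A)\hookrightarrow\operatorname{BUC}(\Xi)$ gives a continuous surjection $\iota\colon\sigma\Xi\to\mathcal M(\beta_-(\mathcal A))$ fixing $\Xi$; since $C_0(\Xi)\subset\mathcal A$, the restriction $\iota|_\Xi$ is injective and bicontinuous with dense range, and for $x\in\partial_{\beta_-(\mathcal A)}(\Xi)$ one has $\emptyset\neq\iota^{-1}(x)\subset\partial\Xi$. The two ``easy'' inclusions are immediate: if $B\in\mathcal A_{3,op}(\tau')$ then $B\in\mathcal A_{3,op}(\tau_{w^\ast}')$ because $\tau'$ is finer than $\tau_{w^\ast}'$, and every net in $\Xi$ converging in $\sigma\Xi$ also converges in $\mathcal M(\beta_-(\mathcal A))$, so the limit operators of $B$ exist and are net-independent over $\partial\Xi$ with respect to $\tau'$, i.e.\ $B\in\operatorname{BUC}(\Xi)_{3,op}(\tau')\subset\operatorname{BUC}(\Xi)_{3,op}(\tau_{w^\ast}')$. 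For the reverse inclusion, let $B$ lie in the right-hand intersection. From $B\in\operatorname{BUC}(\Xi)_{3,op}(\tau')$ the map $x\mapsto\alpha_x(B)$ extends $\tau'$-continuously to $\partial\Xi$; from $B\in\mathcal A_{3,op}(\tau_{w^\ast}')$ the $\tau_{w^\ast}'$-limits $\alpha_{y_1}(B)$ and $\alpha_{y_2}(B)$ coincide whenever $y_1,y_2\in\iota^{-1}(x)$ for some $x\in\partial_{\beta_-(\mathcal A)}(\Xi)$; and since both of these limits also exist in the finer topology $\tau'$, Hausdorffness forces $\alpha_{y_1}(B)=\alpha_{y_2}(B)$ in $\mathcal L(\mathcal H)$. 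Lemma~\ref{lemma:top2}, applied with $X=\sigma\Xi$, $A=\Xi$, $X'=\mathcal M(\beta_-(\mathcal A))$, the map $\iota$, and $Y=(\mathcal L(\mathcal H),\tau')$, then yields that the limit operators of $B$ exist over $\partial_{\beta_-(\mathcal A)}(\Xi)$ with respect to $\tau'$, i.e.\ $B\in\mathcal A_{3,op}(\tau')$.

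Granting the case $j=3$, I would reduce $j=2$ and $j=1$ to it. Since $\iota$ restricts to a surjection $\partial\Xi\to\partial_{\beta_-(\mathcal A)}(\Xi)$ and $\alpha_y(B)=\alpha_{\iota(y)}(B)$, for any $B\in\mathcal A_{3,op}(\tau')$ the \emph{set} $\{\alpha_x(B):x\in\partial_{\beta_-(\mathcal A)}(\Xi)\}$ equals $\{\alpha_y(B):y\in\partial\Xi\}$; hence the extra conditions ``all limit operators lie in $\mathcal C_1(\mathcal H)$'' and ``the family of limit operators is uniformly equicontinuous'' are insensitive to the indexing set, so membership in $\mathcal A_{2,op}(\tau')$, resp.\ $\mathcal A_{1,op}(\tau')$, is equivalent to membership in $\mathcal A_{3,op}(\tau')$ together with membership in $\operatorname{BUC}(\Xi)_{2,op}(\tau_{w^\ast}')$, resp.\ $\operatorname{BUC}(\Xi)_{1,op}(\tau_{w^\ast}')$. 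Substituting the $j=3$ identity and absorbing the now-redundant factor via $\operatorname{BUC}(\Xi)_{1,op}(\tau_{w^\ast}')\subset\operatorname{BUC}(\Xi)_{2,op}(\tau_{w^\ast}')\subset\operatorname{BUC}(\Xi)_{3,op}(\tau_{w^\ast}')$ gives the asserted identity for $j=1,2$ as well.

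I expect no serious obstacle: the only delicate point is the bookkeeping of which topology and which boundary each hypothesis refers to --- in particular the step where one passes from $\tau_{w^\ast}'$-limits to $\tau'$-limits, legitimate precisely because both exist and $\tau'$ is a Hausdorff topology finer than $\tau_{w^\ast}'$, and the verification that the hypotheses of Lemma~\ref{lemma:top2} hold, which rests on $\iota|_\Xi$ being injective and bicontinuous, where the standing assumption $C_0(\Xi)\subset\mathcal A$ enters. No analytic input beyond that of the classical Lemma~\ref{lem:mainthm1} is required.
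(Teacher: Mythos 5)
Your proposal is correct and is precisely the argument the paper intends: the paper omits the proof of Lemma~\ref{lem:mainthm3}, remarking only that it is ``essentially the same as for its classical version, making use of the same topological lemma'', and your transcription of the proof of Lemma~\ref{lem:mainthm1} --- including the bookkeeping of the two boundaries via $\iota$, the use of the $\tau_{w^\ast}'$-continuous extension of $x\mapsto\alpha_x(B)$ to $\sigma\Xi$, and the reduction of $j=1,2$ to $j=3$ --- carries out exactly that reduction. There is no discrepancy with the paper's approach.
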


We now obtain the following result:
\begin{thm}\label{thm:mainthm3}
Let $S \subset L^1(\Xi)$ be a regular subset, $\tau'$ and $\mathcal A$ as above, $j \in \{ 0, 1, 2, 3\}$ and $B \in \operatorname{BUC}_{j, op}(\tau')$. Then, the following holds true:
\begin{align*}
B \in \mathcal A_{j, op}(\tau') \Leftrightarrow g \ast B \in \mathcal A_{0,op} \text{ for every } g \in S.
\end{align*}
\end{thm}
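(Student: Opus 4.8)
The plan is to transport the proof of Theorem~\ref{mainthm:1} to the operator side essentially verbatim, the only genuinely new point being a ``$C^\ast$-algebraic'' description of membership in $\mathcal A_{0,op}$. First I would clear away the reduction steps. The case $j=0$ is the operator counterpart of Theorem~\ref{thm:bucinv}: for $B\in\mathcal C_1(\mathcal H)$ and a bounded approximate identity $(\varphi_\gamma)$ one has $\varphi_\gamma\ast B\to B$ in operator norm, so Wiener's approximation theorem applied to the regular set $S$ gives the equivalence directly; hence I may assume $j\in\{1,2,3\}$. Next observe that $g\ast B\in\mathcal C_1(\mathcal H)$ for every $g\in L^1(\Xi)$ and every $B\in\mathcal L(\mathcal H)$, since $\|\alpha_x(g\ast B)-g\ast B\|_{op}=\|(\alpha_x(g)-g)\ast B\|_{op}\le\|\alpha_x(g)-g\|_{L^1}\|B\|_{op}\to 0$; thus the right-hand side only asks whether certain elements of $\mathcal C_1(\mathcal H)$ lie in $\mathcal A_{0,op}$. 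Finally, exactly as in Theorem~\ref{mainthm:1}, the hypothesis $B\in\operatorname{BUC}_{j,op}(\tau')$ together with Lemma~\ref{lem:mainthm3} reduces the claim to the case $j=3$, $\tau'=\tau_{w^\ast}'$ (where $\operatorname{BUC}_{3,op}(\tau_{w^\ast}')=\mathcal L(\mathcal H)$, so there is no constraint left on $B$). It remains to prove, for arbitrary $B\in\mathcal L(\mathcal H)$, that $B\in\mathcal A_{3,op}(\tau_{w^\ast}')$ if and only if $g\ast B\in\mathcal A_{0,op}$ for every $g\in S$.

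Both sides will be recast via the Gelfand surjection $\iota\colon\sigma\Xi\to\mathcal M(\beta_-(\mathcal A))$ dual to the inclusion $\beta_-(\mathcal A)\hookrightarrow\operatorname{BUC}(\Xi)$, just as in the classical argument. Since the weak$^\ast$-limit operators of an arbitrary $B$ always exist on $\partial\Xi$, Lemma~\ref{lemma:top2} shows that $B\in\mathcal A_{3,op}(\tau_{w^\ast}')$ if and only if $\alpha_{y_1}(B)=\alpha_{y_2}(B)$ whenever $y_1,y_2\in\partial\Xi$ with $\iota(y_1)=\iota(y_2)$. The key lemma I would then prove is: for $C\in\mathcal C_1(\mathcal H)$ one has $C\in\mathcal A_{0,op}$ if and only if $\alpha_{y_1}(C)=\alpha_{y_2}(C)$ whenever $\iota(y_1)=\iota(y_2)$. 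For ``$\Rightarrow$'' I would use $\mathcal A_{0,op}=\overline{\mathcal T^1(\mathcal H)\ast\mathcal A}$ (Correspondence Theorem~\ref{theorem:correspondence}(4)): for $C=A\ast f$ with $A\in\mathcal T^1(\mathcal H)$, $f\in\mathcal A$, the operator analogue of the convolution identity gives $\alpha_y(C)=A\ast\alpha_y(f)$, and $\alpha_y(f)(z)=y(\beta_-\alpha_z(f))$ depends only on $\iota(y)$ because $\beta_-\alpha_z(f)\in\beta_-(\mathcal A)$; since $C\mapsto\alpha_{y_i}(C)$ is operator-norm contractive, the condition passes from such $C$ to finite sums and to uniform limits. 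For ``$\Leftarrow$'', if $C$ satisfies the fiber condition then for every $A\in\mathcal T^1(\mathcal H)$ the function $A\ast C\in\operatorname{BUC}(\Xi)$ satisfies $\alpha_{y_1}(A\ast C)=A\ast\alpha_{y_1}(C)=A\ast\alpha_{y_2}(C)=\alpha_{y_2}(A\ast C)$, hence $A\ast C\in\mathcal A$ by the characterization of $\mathcal A$ obtained inside the proof of Theorem~\ref{mainthm:1}; running this over a fixed regular family $\mathcal R\subset\mathcal T^1(\mathcal H)$ and invoking Correspondence Theorem~\ref{theorem:correspondence}(3) yields $C\in\mathcal A_{0,op}$.

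With both sides phrased as fiber conditions, the proof closes using $\alpha_y(g\ast B)=g\ast\alpha_y(B)$ (again the operator version of the convolution identity from Section~\ref{sec:classical}). If $\alpha_{y_1}(B)=\alpha_{y_2}(B)$ whenever $\iota(y_1)=\iota(y_2)$, then the same holds for $g\ast B$, so $g\ast B\in\mathcal A_{0,op}$ for every $g\in L^1(\Xi)$, in particular for $g\in S$. Conversely, assume $g\ast B\in\mathcal A_{0,op}$ for every $g\in S$; fix $y_1,y_2\in\partial\Xi$ with $\iota(y_1)=\iota(y_2)$ and set $D:=\alpha_{y_1}(B)-\alpha_{y_2}(B)\in\mathcal L(\mathcal H)$. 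The fiber description of $\mathcal A_{0,op}$ forces $g\ast D=0$ for every $g\in S$; since $\alpha_x(g)\ast D=\alpha_x(g\ast D)$ and the translates of $S$ are dense in $L^1(\Xi)$ by Wiener's approximation theorem, $g\ast D=0$ for every $g\in L^1(\Xi)$, and testing against a bounded approximate identity $(\varphi_\gamma)$, for which $\varphi_\gamma\ast D\to D$ in the weak operator topology, gives $D=0$. Hence $B\in\mathcal A_{3,op}(\tau_{w^\ast}')$, which completes the reduced statement.

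I expect the main obstacle to be the key lemma of the second paragraph — the characterization of membership in $\mathcal A_{0,op}$ purely in terms of equality of limit operators on the fibers of $\iota$ — which relies on the structural results for corresponding spaces (Correspondence Theorem~\ref{theorem:correspondence} and Theorem~\ref{thm:algebra}, with Theorem~\ref{thm:limitops} as the guiding picture) together with the classical characterization of $\mathcal A$ extracted from the proof of Theorem~\ref{mainthm:1}. A subsidiary technical point is to verify that the mixed convolution identities $\alpha_y(A\ast f)=A\ast\alpha_y(f)$ and $\alpha_y(g\ast B)=g\ast\alpha_y(B)$ persist for boundary points $y\in\partial\Xi$; this follows from the corresponding identities on $\Xi$ by weak$^\ast$-continuity and dominated convergence, using for the first one that $z\mapsto\tr(B\alpha_z(A))=(B\ast\beta_-(A))(z)$ lies in $L^1(\Xi)$ for $A,B\in\mathcal T^1(\mathcal H)$.
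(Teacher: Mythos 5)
Your proposal is correct and follows essentially the same route as the paper's proof: reduce to $j=3$, $\tau'=\tau_{w^\ast}'$ via Lemma \ref{lem:mainthm3}, recast both sides as fiber conditions over $\iota\colon\sigma\Xi\to\mathcal M(\beta_-(\mathcal A))$, and close with $\alpha_y(g\ast B)=g\ast\alpha_y(B)$, Wiener's approximation theorem, an approximate identity, and the Correspondence Theorem. The only difference is presentational: you isolate as an explicit lemma the fiber characterization of membership in $\mathcal A_{0,op}$, which the paper invokes implicitly with ``by essentially the same facts as in the proof of Theorem \ref{mainthm:1}.''
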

\begin{proof}
As in the case of functions, also justified by the previous lemma, it suffices to consider the case where $j = 3$ and $\tau' = \tau_{w^\ast}'$. By essentially the same facts as in the proof of Theorem \ref{mainthm:1}, $B \in \mathcal A_{3, op}(\tau_{w^\ast}')$ if and only if $\alpha_{y_1}(B) = \alpha_{y_2}(B)$ for every $y_1, y_2 \in \varphi^{-1}(x)$, where $x \in \partial \Xi$ and $\varphi$ is as in that proof. If $B \in \mathcal A_{3, op}(\tau_{w^\ast}')$, then for $A \in \mathcal T^1(\mathcal H)$ we easily obtain that $A \ast B \in \mathcal A$. Hence, we also have $g \ast A \ast B = A \ast (g \ast B) \in \mathcal A$. Since $g \ast B \in \mathcal C_1(\mathcal H)$, the correspondence theorem yields $g \ast B \in \mathcal A_{0, op}$.

On the other hand, let us assume $g \ast B \in \mathcal A_{0, op}(\tau')$ for every $g \in S$. Then, it is for $y_1, y_2 \in \varphi^{-1}(x)$ and $g \in S$:
\begin{align*}
g \ast \alpha_{y_1}(B) = \alpha_{y_1}(g \ast B) = \alpha_{y_2}(g \ast B) = g \ast \alpha_{y_2}(B).
\end{align*}
By Wiener's approximation theorem, this extends to every $g \in L^1(\Xi)$. In particular, letting $\varphi_\gamma$ be an approximate identity in $L^1(\Xi)$, we obtain in weak$^\ast$ topology:
\begin{align*}
\varphi_\gamma \ast \alpha_{y_1}(B) \to \alpha_{y_1}(B).
\end{align*}
But the left-hand side converges also to $\alpha_{y_2}(B)$. Hence, we obtain $B \in \mathcal A_{3, op}(\tau_{w^\ast}')$.
\end{proof}
For $\mathcal I$ a closed, $\alpha$-invariant ideal of $\mathcal A$, we now set\label{def:Ijop}
\begin{align*}\nomenclature{$\mathcal I_{j,op}(\tau')$}{cf.\ p.\ \pageref{def:Ijop}}
\mathcal I_{j,op}(\tau') = \{ B \in \mathcal A_{j, op}(\tau'): ~\alpha_x(B) = 0 \text{ whenever } x \in I\}.
\end{align*}
Note that $\mathcal I_{0,op} = \mathcal I_{0,op}(\tau')$ is again independent of the particular topology $\tau'$. Further, it is the space corresponding to $\mathcal I$ in the sense of the Correspondence Theorem \ref{theorem:correspondence}, and it is a closed two-sided ideal in $\mathcal A_{0, op}$ by Theorem \ref{thm:algebra}.
\begin{thm}\label{thm:mainthm4}
Let $\mathcal A, \mathcal I$ and $\tau'$ as above, $j \in \{ 0, 1, 2, 3\}$. Further, let $S \subset L^1(\Xi)$ be a regular subset and $C \in \mathcal A_{j, op}(\tau')$. Then, for $B \in \mathcal A_{j, op}(\tau')$ the following are equivalent:
\begin{enumerate}[(i)]
\item $B - C \in \mathcal I_{j,op}(\tau')$;
\item $\alpha_x(B) = \alpha_x(C)$ for every $x \in I$;
\item $g \ast B - g \ast C \in \mathcal I_{0,op}$ for every $g \in S$;
\item $g \ast B - g \ast C \in \mathcal I_{0,op}$ for every $g \in L^1(\Xi)$.
\end{enumerate}
\end{thm}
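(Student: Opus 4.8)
The plan is to transcribe the proof of Theorem~\ref{mainthm:2} into the operator setting, with $L^\infty(G)$ replaced by $\mathcal L(\mathcal H)$, the dual pairing $\langle g,f\rangle$ replaced by the trace pairing between $\mathcal T^1(\mathcal H)$ and $\mathcal L(\mathcal H)$ (equivalently, by the convolution $A\ast B$), limit functions replaced by limit operators, and Wiener's approximation theorem used in the same two roles as classically. As in the classical case the topology $\tau'$ plays no active role: it enters only through the hypotheses $B,C\in\mathcal A_{j,op}(\tau')$ and in the definition of $\mathcal I_{j,op}(\tau')$. Since $\mathcal A_{j,op}(\tau')$ and $\mathcal I_{j,op}(\tau')$ are linear subspaces of $\mathcal L(\mathcal H)$ for every $j$, we have $B-C\in\mathcal A_{j,op}(\tau')$, and by linearity it suffices to treat the case $C=0$.

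Two facts drive the transcription. First, the commutation relation $\alpha_x(g\ast B)=g\ast\alpha_x(B)$ holds for all $g\in L^1(\Xi)$, $B\in\mathcal L(\mathcal H)$ and $x\in\sigma\Xi$; this is the operator version of the classical convolution lemma for limit functions, with the same proof: for $A\in\mathcal T^1(\mathcal H)$ one has $\langle g\ast B,A\rangle=\langle B,\beta_-(g)\ast A\rangle$, so for any net $x_\gamma\to x$ in $\Xi$,
\begin{align*}
\langle\alpha_x(g\ast B),A\rangle&=\lim_\gamma\langle g\ast\alpha_{x_\gamma}(B),A\rangle=\lim_\gamma\langle\alpha_{x_\gamma}(B),\beta_-(g)\ast A\rangle\\
&=\langle\alpha_x(B),\beta_-(g)\ast A\rangle=\langle g\ast\alpha_x(B),A\rangle.
\end{align*}
Second, $\mathcal I_{0,op}$ is norm-closed, $\alpha$-invariant and an $L^1(\Xi)$-submodule (i.e.\ $L^1(\Xi)\ast\mathcal I_{0,op}\subseteq\mathcal I_{0,op}$, which follows from the first two properties since $g\ast D$ is a Bochner integral of $\alpha$-translates of $D$), and it coincides with $\{B\in\mathcal A_{0,op}:\alpha_x(B)=0\text{ for all }x\in I\}$; these are exactly the properties supplied by the operator correspondence theory together with Theorem~\ref{thm:algebra} ($\mathcal I_{0,op}$ being a closed two-sided ideal of the $C^\ast$-algebra $\mathcal A_{0,op}\subset\mathcal C_1(\mathcal H)$).

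Granting these, the four equivalences are obtained as in Theorem~\ref{mainthm:2}. For $(iii)\Leftrightarrow(iv)$ one notes that $\{g\in L^1(\Xi):g\ast B\in\mathcal I_{0,op}\}$ is a norm-closed ideal of $L^1(\Xi)$ (using the module property of $\mathcal I_{0,op}$), so once it contains the regular subset $S$ it equals $L^1(\Xi)$ by Wiener's approximation theorem. The equivalence $(i)\Leftrightarrow(ii)$ is immediate from the definition of $\mathcal I_{j,op}(\tau')$. For $(i)\Rightarrow(iii)$: if $B\in\mathcal I_{j,op}(\tau')\subset\mathcal A_{j,op}(\tau')$, then $g\ast B\in\mathcal A_{0,op}$ for every $g\in L^1(\Xi)$ by the argument in the proof of Theorem~\ref{thm:mainthm3}, while for $x\in I$ the commutation relation gives $\alpha_x(g\ast B)=g\ast\alpha_x(B)=0$; hence $g\ast B\in\mathcal I_{0,op}$. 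For $(iii)\Rightarrow(i)$: fix $x\in I$; by the commutation relation and the description of $\mathcal I_{0,op}$ we have $g\ast\alpha_x(B)=\alpha_x(g\ast B)=0$ for all $g\in S$; then $\{g\in L^1(\Xi):g\ast\alpha_x(B)=0\}$ is a norm-closed ideal of $L^1(\Xi)$ containing $S$, hence equals $L^1(\Xi)$ by Wiener's approximation theorem, and testing against an approximate identity of $L^1(\Xi)$ yields $\alpha_x(B)=0$; as $x\in I$ was arbitrary and $B\in\mathcal A_{j,op}(\tau')$ by hypothesis, $B\in\mathcal I_{j,op}(\tau')$.

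The only step I expect to require genuine care is the second of the two facts above — that $\mathcal I_{0,op}$ is a norm-closed, $\alpha$-invariant $L^1(\Xi)$-submodule of $\mathcal C_1(\mathcal H)$ coinciding with $\{B\in\mathcal A_{0,op}:\alpha_x(B)=0\text{ for }x\in I\}$. This is the operator analogue of the identification, in the classical section, between closed $\varsigma$-invariant subsets of $\mathcal M(\beta_-(\mathcal A))$ and closed $\alpha$-invariant ideals of $\mathcal A$, and it is handled by the correspondence theorem and Theorem~\ref{thm:algebra}; once it is in place, the rest is a line-by-line copy of the proof of Theorem~\ref{mainthm:2}, with the classical convolution lemma replaced by the commutation relation for limit operators and Wiener's approximation theorem invoked in the form ``a norm-closed ideal of $L^1(\Xi)$ containing a regular subset is all of $L^1(\Xi)$''.
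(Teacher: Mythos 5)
Your proposal is correct and follows exactly the route the paper takes: the paper's proof of this theorem is literally the single sentence ``The proof is now entirely analogous to that of Theorem \ref{mainthm:2}'', and your transcription — the commutation relation $\alpha_x(g\ast B)=g\ast\alpha_x(B)$, the identification of $\mathcal I_{0,op}$ via the Correspondence Theorem and Theorem \ref{thm:algebra}, and Wiener's approximation theorem in the two places it is used classically — is precisely the intended fleshing-out of that analogy.
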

\begin{proof}
The proof is now entirely analogous to that of Theorem \ref{mainthm:2}.
\end{proof}
We now establish versions of these theorems for regular sets of operators.
\begin{thm}\label{thm:Tauberian3}
Let $S \subset \mathcal T^1(\mathcal H)$ be a regular subset, $\tau$ and $\mathcal A$ as above, $j \in \{ 0, 1, 2, 3\}$ and $f \in \operatorname{BUC}_j(\tau)$. Then, the following holds:
\begin{align*}
f \in \mathcal A_j(\tau) \Longleftrightarrow A \ast f \in \mathcal A_{0,op} \text{ for every }  A \in S.
\end{align*}
\end{thm}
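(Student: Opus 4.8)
The plan is to reduce the statement to the classical Theorem~\ref{mainthm:1} by combining the correspondence between $\mathcal A$ and $\mathcal A_{0,op}$ with the ``squaring trick'' from Wiener's approximation theorem for operators. Two elementary facts from the quantum harmonic analysis calculus recalled at the start of this section will be used: (a) $\mathcal T^1(\mathcal H)\ast L^\infty(\Xi)\subseteq\mathcal C_1(\mathcal H)$ — indeed, for $A\in\mathcal T^1(\mathcal H)$ and $g\in L^\infty(\Xi)$ one has $\alpha_x(A\ast g)-A\ast g=(\alpha_x(A)-A)\ast g$, whose operator norm is at most $\|\alpha_x(A)-A\|_{\mathcal T^1}\|g\|_\infty\to0$ as $x\to e$, using strong $\mathcal T^1$-continuity of $x\mapsto\alpha_x(A)$; and (b) the associativity $A'\ast(A\ast f)=(A'\ast A)\ast f$ for $A,A'\in\mathcal T^1(\mathcal H)$, $f\in L^\infty(\Xi)$, with $A'\ast A\in L^1(\Xi)$, which is part of the commutative Banach algebra structure of $L^1(\Xi)\oplus\mathcal T^1(\mathcal H)$ extended to bounded factors.

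For the implication ``$\Rightarrow$'': assume $f\in\mathcal A_j(\tau)$. Since $\mathcal A_j(\tau)\subseteq\mathcal A_3(\tau)\subseteq\mathcal A_3(\tau_{w^\ast})$ ($\tau$ being finer than $\tau_{w^\ast}$), the Corollary to Theorem~\ref{mainthm:1} (the case $\tau=\tau_{w^\ast}$, where $\operatorname{BUC}_3(\tau_{w^\ast})=L^\infty(\Xi)$) gives $g\ast f\in\mathcal A$ for every $g\in L^1(\Xi)$. Fix $A\in S$. By (a), $A\ast f\in\mathcal C_1(\mathcal H)$, and by (b), $A'\ast(A\ast f)=(A'\ast A)\ast f\in\mathcal A$ for every $A'\in S$. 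As $S$ is a regular family, part~(3) of the Correspondence Theorem~\ref{theorem:correspondence}, applied to the corresponding pair $(\mathcal A,\mathcal A_{0,op})$, yields $A\ast f\in\overline{\mathcal A_{0,op}}=\mathcal A_{0,op}$, the closedness of $\mathcal A_{0,op}$ being guaranteed by Theorem~\ref{theorem:correspondence}(4) since $\mathcal A$ is a closed $C^\ast$-algebra. (This even shows $A\ast f\in\mathcal A_{0,op}$ for every $A\in\mathcal T^1(\mathcal H)$.)

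For the implication ``$\Leftarrow$'': assume $A\ast f\in\mathcal A_{0,op}$ for every $A\in S$. Fix $A\in S$; then $(A\ast A)\ast f=A\ast(A\ast f)\in\mathcal T^1(\mathcal H)\ast\mathcal A_{0,op}\subseteq\mathcal A$, using again that $\mathcal A$ and $\mathcal A_{0,op}$ correspond. By item~(4) of Wiener's approximation theorem for operators, $\{A\ast A:\ A\in S\}$ is a regular subset of $L^1(\Xi)$, so $g\ast f\in\mathcal A$ for every $g$ in this regular subset. Since $f\in\operatorname{BUC}_j(\tau)$ by hypothesis, Theorem~\ref{mainthm:1} (applied with the lca group $\Xi$ in place of $G$) gives $f\in\mathcal A_j(\tau)$.

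I do not expect a serious obstacle: the real content is the observation that convolving a function by the ``square'' $A\ast A$ of a regular operator $A$ produces a \emph{regular} element of $L^1(\Xi)$, which bridges the operator-theoretic hypothesis and the classical Tauberian theorem, while the correspondence theorem transports membership back and forth between $\mathcal A$ and $\mathcal A_{0,op}$. The only points needing mild care are that (a) and (b) hold with merely bounded (rather than trace-class or integrable) factors — both being instances of the extended convolutions recalled in Section~\ref{sec:quantum} — and that the hypothesis $f\in\operatorname{BUC}_j(\tau)$, rather than just $f\in L^\infty(\Xi)$, is precisely what is needed to invoke Theorem~\ref{mainthm:1} in the final step.
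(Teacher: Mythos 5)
Your proof is correct and follows essentially the same route as the paper's: both reduce the statement to Theorem \ref{mainthm:1} combined with the Correspondence Theorem. The paper's version is a terse chain of equivalences passing through $g\ast A\ast f\in\mathcal A_{0,op}$ for all $g\in L^1(\Xi)$ and $A\in S$; your explicit use of the regularity of $\{A\ast A:\ A\in S\}$ in $L^1(\Xi)$ for the backward implication is just a more detailed unpacking of the same mechanism.
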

\begin{proof}
Using Theorem \ref{mainthm:1} and the Correspondence Theorem, one has
\begin{align*}
f \in \mathcal A_j(\tau) &\Leftrightarrow g \ast f \in \mathcal A \text{ for every } g \in L^1(\Xi)\\
&\Leftrightarrow g \ast A \ast f \in \mathcal A_{0,op} \text{ for every } A \in \mathcal S \text{ and } g \in L^1(\Xi)\\
&\Leftrightarrow A \ast f \in \mathcal A_{0,op} \text{ for every }  A \in \mathcal S.\qedhere
\end{align*}
\end{proof}
\begin{thm}\label{thm:Tauberian4}
Let $S \subset \mathcal T^1(\mathcal H)$ be a regular subset, $\tau'$ and $\mathcal A$ as above, $j \in \{ 0, 1, 2, 3\}$ and $B \in \operatorname{BUC}_{j,op}(\tau')$. Then, the following holds:
\begin{align*}
B \in \mathcal A_{j,op}(\tau') \Leftrightarrow A \ast B \in \mathcal A \text{ for every } A \in S.
\end{align*}
\end{thm}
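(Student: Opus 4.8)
The plan is to mirror the proof of Theorem~\ref{thm:Tauberian3}, interchanging the roles of functions and operators: one uses Theorem~\ref{thm:mainthm3} where that proof used Theorem~\ref{mainthm:1}, and the Correspondence Theorem~\ref{theorem:correspondence} to pass between $\mathcal A$ and the corresponding space $\mathcal A_{0,op}$. Concretely, I would establish the chain
\begin{align*}
B \in \mathcal A_{j,op}(\tau')
&\Longleftrightarrow g \ast B \in \mathcal A_{0,op} \ \text{ for every } g \in L^1(\Xi) \\
&\Longleftrightarrow A \ast (g \ast B) \in \mathcal A \ \text{ for every } A \in S,\ g \in L^1(\Xi) \\
&\Longleftrightarrow A \ast B \in \mathcal A \ \text{ for every } A \in S,
\end{align*}
throughout invoking the associativity identity $A \ast (g \ast B) = g \ast (A \ast B)$ for $A \in \mathcal T^1(\mathcal H)$, $g \in L^1(\Xi)$, $B \in \mathcal L(\mathcal H)$, which is the bounded extension of the commutative Banach-algebra structure on $L^1(\Xi) \oplus \mathcal T^1(\mathcal H)$ from \cite{Fulsche_Galke2023}.

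The first equivalence is Theorem~\ref{thm:mainthm3} applied with the (trivially) regular subset $L^1(\Xi) \subseteq L^1(\Xi)$; here the standing hypothesis $B \in \operatorname{BUC}_{j,op}(\tau')$ is exactly what that theorem requires. For the second equivalence, in the forward direction $g \ast B \in \mathcal A_{0,op} \subseteq \mathcal C_1(\mathcal H)$, so since $\mathcal A$ and $\mathcal A_{0,op}$ are corresponding spaces one gets $A \ast (g \ast B) \in \mathcal T^1(\mathcal H) \ast \mathcal A_{0,op} \subseteq \mathcal A$; in the backward direction one first notes $g \ast B \in \mathcal C_1(\mathcal H)$ (because $z \mapsto \alpha_z(g \ast B) = (\alpha_z g) \ast B$ is $\|\cdot\|_{op}$-continuous, using $L^1$-continuity of translation and $\|f \ast C\|_{op} \leq \|f\|_{L^1}\|C\|_{op}$) and then applies part~(3) of Theorem~\ref{theorem:correspondence}, which forces $g \ast B \in \overline{\mathcal A_{0,op}} = \mathcal A_{0,op}$. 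For the third equivalence: if $A \ast B \in \mathcal A$ for all $A \in S$, then since $\mathcal A$ is a closed, $\alpha$-invariant subspace of $\operatorname{BUC}(\Xi)$ it is closed under convolution by $L^1(\Xi)$, giving $g \ast (A \ast B) \in \mathcal A$; conversely, fixing $A \in S$ we have $A \ast B \in \operatorname{BUC}(\Xi)$ (uniform continuity of $x \mapsto \tr(A\,\alpha_x(\beta_-(B)))$ for trace-class $A$, together with $\|A \ast B\|_\infty \leq \|A\|_{\mathcal T^1}\|B\|_{op}$) with $g \ast (A \ast B) \in \mathcal A$ for every $g \in L^1(\Xi)$, so Theorem~\ref{thm:bucinv}, applied with $\mathcal D = \mathcal A$ and the regular set $L^1(\Xi)$, yields $A \ast B \in \mathcal A$.

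I do not expect any serious obstacle; the argument is in spirit identical to that of Theorem~\ref{thm:Tauberian3}, only with functions and operators exchanged. The only delicate points are bookkeeping ones — checking that the three flavours of convolution compose associatively as stated, and recording the continuity/regularity facts $g \ast B \in \mathcal C_1(\mathcal H)$ and $A \ast B \in \operatorname{BUC}(\Xi)$ — and both follow immediately from the norm estimates quoted in this section and from continuity of translation in $L^1(\Xi)$.
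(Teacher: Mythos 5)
Your proposal is correct and follows essentially the same route as the paper: the paper's proof is exactly the three-step chain of equivalences you write, justified tersely by Theorem~\ref{thm:mainthm3} and the Correspondence Theorem, mirroring the proof of Theorem~\ref{thm:Tauberian3}. The additional details you supply (associativity of the mixed convolutions, $g \ast B \in \mathcal C_1(\mathcal H)$, $A \ast B \in \operatorname{BUC}(\Xi)$, and the appeal to part~(3) of Theorem~\ref{theorem:correspondence}) are precisely the facts the paper leaves implicit.
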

\begin{proof}
By Theorem \ref{thm:mainthm3} and the Correspondence Theorem it is:
\begin{align*}
A \in \mathcal A_{j,op}(\tau') &\Leftrightarrow g \ast B \in \mathcal A_{0,op} \text{ for every } g \in L^1(\Xi)\\
&\Leftrightarrow g \ast A \ast B \in \mathcal A \text{ for every } A \in S \text{ and } g \in L^1(\Xi)\\
&\Leftrightarrow A \ast B \in \mathcal A \text{ for every } A \in S.\qedhere
\end{align*}
\end{proof}
\begin{rem}
    The above two results show that, in the language of \emph{corresponding spaces} (cf.\ \cite[Section IV]{werner84}), $\mathcal A_j(\tau_{w^\ast})$ and $\mathcal A_{j,op}(\tau_{w^\ast}')$ are corresponding spaces, and in some sense they are the maximal useful extension of the corresponding pair $(\mathcal A, \mathcal A_{0, op})$.
\end{rem}
We list two more results, which are in the same spirit:
\begin{thm}\label{thm:Tauberian1}
Let $\mathcal A, \mathcal I$ and $\tau$ as above, $j \in \{ 0, 1, 2, 3\}$. Further, let $S \subset \mathcal T^1(\mathcal H)$ be a regular subset and $a \in \mathcal A_{j}(\tau)$. Then, for $f \in  \mathcal A_j(\tau)$ the following are equivalent:
\begin{enumerate}[(i)]
\item $f - a \in \mathcal I_j(\tau)$;
\item $A \ast f - A \ast a \in \mathcal I_{0,op}$ for every $A \in S$;
\item $A \ast f - A \ast a \in \mathcal I_{0,op}$ for every $A \in \mathcal T^1(\mathcal H)$.
\end{enumerate}
\end{thm}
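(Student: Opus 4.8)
The plan is to follow the template of the proof of Theorem~\ref{thm:Tauberian3}, reducing the statement to the classical Theorem~\ref{mainthm:2} by means of the Correspondence Theorem and the associativity of the quantum harmonic analysis convolutions. Since $f \mapsto A \ast f$ is linear and $\mathcal I_j(\tau)$, $\mathcal I_{0,op}$ are linear spaces, I would first replace $f$ by $f-a$, i.e.\ assume $a=0$; the assertion then reads: for $f \in \mathcal A_j(\tau)$ the three conditions $f \in \mathcal I_j(\tau)$, ``$A \ast f \in \mathcal I_{0,op}$ for every $A \in S$'' and ``$A \ast f \in \mathcal I_{0,op}$ for every $A \in \mathcal T^1(\mathcal H)$'' are equivalent.

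Next I would record the bookkeeping facts that make the passage between functions and operators work. For $g \in L^1(\Xi)$ and $f \in L^\infty(\Xi)$ one has $g \ast f \in \operatorname{BUC}(\Xi)$, since $\|\alpha_x(g)\ast f - g \ast f\|_\infty \le \|\alpha_x(g)-g\|_{L^1}\|f\|_\infty$; and for $A \in \mathcal T^1(\mathcal H)$ one has $A \ast f \in \mathcal C_1(\mathcal H)$, since $\|\alpha_x(A)\ast f - A \ast f\|_{op} \le \|\alpha_x(A)-A\|_{\mathcal T^1}\|f\|_\infty$. Moreover the convolutions on $L^1(\Xi)\oplus\mathcal T^1(\mathcal H)$ extend associatively and commutatively to $L^\infty(\Xi)\oplus\mathcal L(\mathcal H)$ (see \cite{Fulsche_Galke2023}), so that $g \ast (A \ast f) = A \ast (g \ast f)$ for $g \in L^1(\Xi)$, $A \in \mathcal T^1(\mathcal H)$, $f \in L^\infty(\Xi)$. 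Finally, $\mathcal T^1(\mathcal H)$ is itself a regular family, since for each $\xi \in \Xi$ there is a rank-one operator $A$ with $\mathcal F_U(A)(\xi)\neq 0$; hence it may be used wherever a regular family is required.

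Let now $\mathcal R \subset \mathcal T^1(\mathcal H)$ be an arbitrary regular family. I would then establish the chain
\begin{align*}
f \in \mathcal I_j(\tau) &\Longleftrightarrow g \ast f \in \mathcal I \text{ for every } g \in L^1(\Xi)\\
&\Longleftrightarrow A \ast (g \ast f) \in \mathcal I_{0,op} \text{ for every } A \in \mathcal R,\ g \in L^1(\Xi)\\
&\Longleftrightarrow A \ast f \in \mathcal I_{0,op} \text{ for every } A \in \mathcal R.
\end{align*}
The first equivalence is Theorem~\ref{mainthm:2} with $a=0$. For the second, ``$\Rightarrow$'' is the inclusion $\mathcal T^1(\mathcal H)\ast\mathcal I\subset\mathcal I_{0,op}$ from the definition of corresponding spaces, while ``$\Leftarrow$'' follows from part~(3) of the Correspondence Theorem applied to the (closed) corresponding pair $(\mathcal I,\mathcal I_{0,op})$, using that $g \ast f \in \operatorname{BUC}(\Xi)$ and that $\mathcal R$ is regular, which gives $g \ast f \in \overline{\mathcal I}=\mathcal I$ for every $g$. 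For the third, ``$\Leftarrow$'' uses $A \ast (g\ast f)=g\ast(A\ast f)=\int g(y)\alpha_y(A\ast f)\,dy$ together with the fact that $\mathcal I_{0,op}$ is a closed, $\alpha$-invariant subspace of $\mathcal C_1(\mathcal H)$ (Theorem~\ref{thm:algebra}); and ``$\Rightarrow$'' follows because $A\ast f\in\mathcal C_1(\mathcal H)$, so for an approximate identity $\varphi_\gamma$ of $L^1(\Xi)$ one has $\varphi_\gamma\ast(A\ast f)\to A\ast f$ in operator norm, whence $A\ast f\in\overline{\mathcal I_{0,op}}=\mathcal I_{0,op}$. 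Applying this chain with $\mathcal R = S$ yields the equivalence of $(i)$ and $(ii)$, and with $\mathcal R = \mathcal T^1(\mathcal H)$ the equivalence of $(i)$ and $(iii)$.

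The only point requiring genuine care is keeping straight which convolution is of function--function, function--operator or operator--operator type, and ensuring that the relevant associativity holds in the extended ``bounded'' setting and not merely on $L^1(\Xi)\oplus\mathcal T^1(\mathcal H)$; once that is set up there is no new idea beyond combining Theorem~\ref{mainthm:2}, the Correspondence Theorem and Wiener's approximation theorem for operators, and the argument is a routine diagram chase exactly parallel to the proof of Theorem~\ref{thm:Tauberian3}.
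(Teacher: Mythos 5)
Your proposal is correct and follows essentially the same route as the paper, which proves this theorem by the exact pattern of Theorem~\ref{thm:Tauberian3}: reduce to $a=0$ by linearity, apply the classical Theorem~\ref{mainthm:2}, and pass between $g\ast f\in\mathcal I$ and $A\ast f\in\mathcal I_{0,op}$ via associativity of the convolutions, the Correspondence Theorem for the pair $(\mathcal I,\mathcal I_{0,op})$, and an approximate identity. You have merely made explicit the bookkeeping (membership of $g\ast f$ in $\operatorname{BUC}(\Xi)$ and of $A\ast f$ in $\mathcal C_1(\mathcal H)$, closedness and $\alpha$-invariance of $\mathcal I_{0,op}$) that the paper leaves to the reader.
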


\begin{thm}\label{thm:Tauberian2}
Let $\mathcal A, \mathcal I$ and $\tau'$ as above, $j \in \{ 0, 1, 2, 3\}$. Further, let $S \subset \mathcal T^1(\mathcal H)$ be a regular subset and $C \in \mathcal A_{j,op}(\tau')$. Then, for $B \in  \mathcal A_{j,op}(\tau')$ the following are equivalent:
\begin{enumerate}[(i)]
\item $B - C \in \mathcal I_{j,op}(\tau')$;
\item $A \ast B - A \ast C \in \mathcal I$ for every $A \in S$;
\item $A \ast B - A \ast C \in \mathcal I$ for every $A \in \mathcal T^1(\mathcal H)$.
\end{enumerate}
\end{thm}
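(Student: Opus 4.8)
The plan is to follow the template of the proof of Theorem \ref{thm:Tauberian4}, reducing the statement to the already-established $L^1(\Xi)$-convolution version in Theorem \ref{thm:mainthm4} by means of the Correspondence Theorem \ref{theorem:correspondence}. Since all the convolutions are linear and $\mathcal A_{j,op}(\tau')$, $\mathcal I_{j,op}(\tau')$ and $\mathcal I$ are linear spaces, one first replaces $B$ by $B-C$ (which again lies in $\mathcal A_{j,op}(\tau')$) and thereby reduces to the case $C=0$: it then suffices to prove that, for $B \in \mathcal A_{j,op}(\tau')$, the three conditions $B \in \mathcal I_{j,op}(\tau')$, $A \ast B \in \mathcal I$ for all $A \in S$, and $A \ast B \in \mathcal I$ for all $A \in \mathcal T^1(\mathcal H)$ are equivalent.

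I would run the argument as a cycle. For the implication (i) $\Rightarrow$ (iii): by Theorem \ref{thm:mainthm4} (equivalence of its conditions $(i)$ and $(iv)$ with $C=0$), $B \in \mathcal I_{j,op}(\tau')$ is equivalent to $g \ast B \in \mathcal I_{0,op}$ for every $g \in L^1(\Xi)$; since $\mathcal I_{0,op}$ corresponds to $\mathcal I$ we get, for every $A \in \mathcal T^1(\mathcal H)$ and $g \in L^1(\Xi)$, that $g \ast (A \ast B) = A \ast (g \ast B) \in \mathcal T^1(\mathcal H) \ast \mathcal I_{0,op} \subset \mathcal I$, and as $A \ast B \in \operatorname{BUC}(\Xi)$ (because $A \ast B(x) = \tr(\alpha_{-x}(A)\beta_-(B))$ and $x \mapsto \alpha_{-x}(A)$ is norm-continuous in $\mathcal T^1(\mathcal H)$), Theorem \ref{thm:bucinv} applied to the closed, translation-invariant subspace $\mathcal I$ of $\operatorname{BUC}(\Xi)$ yields $A \ast B \in \mathcal I$. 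The implication (iii) $\Rightarrow$ (ii) is immediate since $S \subset \mathcal T^1(\mathcal H)$. For (ii) $\Rightarrow$ (i): fix $g \in L^1(\Xi)$; then $g \ast B \in \mathcal C_1(\mathcal H)$, and for every $A \in S$ we have $A \ast (g \ast B) = g \ast (A \ast B) \in \mathcal I$ (using translation-invariance and uniform closedness of $\mathcal I$), so part (3) of the Correspondence Theorem, applied with the corresponding pair $(\mathcal I, \mathcal I_{0,op})$ and the regular family $S$, gives $g \ast B \in \overline{\mathcal I_{0,op}} = \mathcal I_{0,op}$; since this holds for all $g \in L^1(\Xi)$, Theorem \ref{thm:mainthm4} again gives $B \in \mathcal I_{j,op}(\tau')$. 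Alternatively, one gets (ii) $\Leftrightarrow$ (iii) directly from Wiener's approximation theorem for operators, using $\alpha_x(A) \ast B = \alpha_x(A \ast B)$, the $\alpha$-invariance and uniform closedness of $\mathcal I$, and the boundedness of $T \mapsto T \ast B$ from $\mathcal T^1(\mathcal H)$ to $L^\infty(\Xi)$.

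I expect this to be essentially routine given the machinery already in place. The only points that need a little care are the identifications $A \ast B \in \operatorname{BUC}(\Xi)$ and $g \ast B \in \mathcal C_1(\mathcal H)$, which make Theorem \ref{thm:bucinv} and part (3) of the Correspondence Theorem genuinely applicable, together with the associativity/commutativity identity $A \ast (g \ast B) = g \ast (A \ast B)$ for the mixed convolutions (and its special case $\alpha_x(A) \ast B = \alpha_x(A \ast B)$), all of which follow from the conventions and basic properties recorded in \cite{Fulsche_Galke2023}. I do not anticipate a genuine obstacle.
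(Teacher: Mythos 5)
Your proof is correct and takes essentially the same route as the paper, which dispatches this theorem in one line as an immediate consequence of Theorem \ref{thm:mainthm4} and the Correspondence Theorem; your cycle of implications is exactly the fleshed-out version of that reduction, modelled on the proof of Theorem \ref{thm:Tauberian4}. The only cosmetic deviation is your appeal to Theorem \ref{thm:bucinv} in the step $(i) \Rightarrow (iii)$, where the paper's template would instead run the chain of equivalences through $g \ast A \ast (B-C) \in \mathcal I$ for all $g \in L^1(\Xi)$ and $A \in \mathcal T^1(\mathcal H)$; both are routine.
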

\begin{proof}[Proofs of Theorems \ref{thm:Tauberian1}-\ref{thm:Tauberian2}]
Similarly to Theorems \ref{thm:Tauberian3} and \ref{thm:Tauberian4}, these results are immediate consequences of Theorems \ref{mainthm:2} and \ref{thm:mainthm4} together with the Correspondence Theorem.
\end{proof}
\begin{rem}
Letting in these theorems $\mathcal A = \operatorname{BUC}(\Xi)$, $\mathcal I = C_0(\Xi)$, $j = 3$, $\tau = \tau_{w^\ast}$, $\tau' = \tau_{w^\ast}'$, $g = const$ and $B = const \cdot Id$ reproduces the Wiener Tauberian Theorems for operators from \cite[Theorem 5.1]{Luef_Skrettingland2021} in the case where $\Xi = \mathbb R^{2n}$.
\end{rem}

One might consider the topology $\tau_{s^\ast}'$ as the analog of $\tau_{u.c.}$ on operators. In analogy to the function case, we therefore let\nomenclature{$\operatorname{SO}(\mathcal H)$}{Slowly oscillating operators, p.\ \pageref{def:SOH}}\label{def:SOH}\nomenclature{$B_0(\mathcal H)$}{Bounded operators vanishing at infinity, p.\ \pageref{def:B0H}}\label{def:B0H}
\begin{align*}
\operatorname{SO}(\mathcal H) := \operatorname{BUC}(\Xi)_{1,op}(\tau_{s^\ast}'),\\
B_0(\mathcal H) := C_0(\Xi)_{1,op}(\tau_{s^\ast}').
\end{align*}
Using Theorem \ref{thm:limitops}.(2) instead of Lemma \ref{lemma:reconstr_boundary1}, it is now easy to prove that
\begin{align*}
\operatorname{SO}(\mathcal H) = \mathcal C_1(\mathcal H) + B_0(\mathcal H).
\end{align*}
The operator analog of Pitt's extension can be seen as the following statement, which is strictly more general than \cite[Theorem 5.2]{Luef_Skrettingland2021}:
\begin{cor}\label{pitt}
Let $S_0 \subset L^1(\Xi)$ and $S_1 \subset \mathcal T^1(\mathcal H)$ be regular subsets and $B \in \operatorname{SO}(\mathcal H)$. Then, the following are equivalent:
\begin{enumerate}[(i)]
\item $B \in B_0(\mathcal H)$.
\item $A \ast B \in C_0(\Xi)$ for every $A \in S_1$.
\item $g \ast B \in \mathcal K(\mathcal H)$ for every $g \in S_0$.
\end{enumerate}
If $f \in \operatorname{SO}(\Xi)$, then the following statements are equivalent:
\begin{enumerate}[(i)]
\item $f \in B_0(\Xi)$.
\item $A \ast f \in \mathcal K(\mathcal H)$ for every $A \in S_1$.
\item $g \ast f \in C_0(\Xi)$ for every $g \in S_0$.
\end{enumerate}
\end{cor}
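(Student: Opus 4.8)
The plan is to obtain both chains of equivalences as direct specializations of the Tauberian theorems of this subsection, after rewriting the spaces appearing in the statement in the $\mathcal A_{j,op}(\tau')$/$\mathcal I_{j,op}(\tau')$ and $\mathcal A_j(\tau)$/$\mathcal I_j(\tau)$ notation. Throughout I would take $\mathcal A = \operatorname{BUC}(\Xi)$ and $\mathcal I = C_0(\Xi)$; by Theorem~\ref{thm:standardcorrespondences} the associated operator spaces are then $\mathcal A_{0,op} = \mathcal C_1(\mathcal H)$ and $\mathcal I_{0,op} = \mathcal K(\mathcal H)$, and the closed $\varsigma$-invariant set attached to the ideal $C_0(\Xi)$ is $I = \partial\Xi$. (If $\Xi$ is compact then $\Xi\cong\widehat\Xi$ forces $\Xi$ finite and $\mathcal H$ finite-dimensional, so every assertion is trivial; hence I would assume $\Xi$ non-compact.)

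For the first chain I would use that, by the very definitions of these spaces, $\operatorname{SO}(\mathcal H) = \operatorname{BUC}(\Xi)_{1,op}(\tau_{s^\ast}') = \mathcal A_{1,op}(\tau_{s^\ast}')$ and $B_0(\mathcal H) = C_0(\Xi)_{1,op}(\tau_{s^\ast}') = \mathcal I_{1,op}(\tau_{s^\ast}')$, so that the standing hypothesis $B\in\operatorname{SO}(\mathcal H)$ is exactly the membership $B\in\mathcal A_{j,op}(\tau')$ required in Theorems~\ref{thm:mainthm4} and~\ref{thm:Tauberian2} for $j=1$, $\tau'=\tau_{s^\ast}'$. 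Applying Theorem~\ref{thm:mainthm4} with $C=0$ and $S=S_0$ then gives $B\in B_0(\mathcal H)\Leftrightarrow g\ast B\in\mathcal K(\mathcal H)$ for every $g\in S_0$, which is (i)$\Leftrightarrow$(iii); and applying Theorem~\ref{thm:Tauberian2} with $C=0$ and $S=S_1$ gives $B\in B_0(\mathcal H)\Leftrightarrow A\ast B\in C_0(\Xi)$ for every $A\in S_1$, which is (i)$\Leftrightarrow$(ii). Since (i) is the common pivot, the first part follows.

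For the second chain I would first note that, by Proposition~\ref{decomp:SO} and the proposition preceding it, $\operatorname{BUC}(\Xi)_1(\tau_{u.c.}) = \operatorname{BUC}(\Xi)+B_0(\Xi) = \operatorname{SO}(\Xi)$, so $f\in\operatorname{SO}(\Xi)$ is exactly the hypothesis $f\in\mathcal A_1(\tau_{u.c.})$ of Theorem~\ref{thm:Tauberian1}, and by Lemma~\ref{lemma:b0} that $\mathcal I_1(\tau_{u.c.}) = C_0(\Xi)_1(\tau_{u.c.}) = B_0(\Xi)$. Theorem~\ref{thm:Tauberian1} with $j=1$, $\tau=\tau_{u.c.}$, $a=0$, $S=S_1$ then yields $f\in B_0(\Xi)\Leftrightarrow A\ast f\in\mathcal K(\mathcal H)$ for every $A\in S_1$, i.e.\ (i)$\Leftrightarrow$(ii); and the equivalence (i)$\Leftrightarrow$(iii), i.e.\ $f\in B_0(\Xi)\Leftrightarrow g\ast f\in C_0(\Xi)$ for every $g\in S_0$, is the classical Pitt extension theorem (the Proposition proved above) applied to the non-compact lca group $\Xi$ with the regular subset $S_0$ --- alternatively, it is Theorem~\ref{mainthm:2} with the same data and $S=S_0$.

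I do not expect a genuine obstacle; the only real work is the bookkeeping of parameters. The substance is that $\operatorname{SO}(\mathcal H)$ and $\operatorname{SO}(\Xi)$ were \emph{defined} so as to coincide with $\mathcal A_{1,op}(\tau_{s^\ast}')$ and $\mathcal A_1(\tau_{u.c.})$ for $\mathcal A=\operatorname{BUC}(\Xi)$, and $B_0(\mathcal H)$, $B_0(\Xi)$ with the corresponding ideal spaces; once that matching is carried out, conditions (i)--(iii) of the corollary become verbatim the corresponding conditions of the cited theorems under the identification of $\mathcal I = C_0(\Xi)$ with $\mathcal I_{0,op} = \mathcal K(\mathcal H)$, and no new estimates are needed.
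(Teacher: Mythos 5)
Your proposal is correct and is exactly the derivation the paper intends: the corollary is obtained by specializing Theorems~\ref{thm:mainthm4}, \ref{thm:Tauberian1}, \ref{thm:Tauberian2} and \ref{mainthm:2} to $\mathcal A = \operatorname{BUC}(\Xi)$, $\mathcal I = C_0(\Xi)$, $j=1$, with $\tau' = \tau_{s^\ast}'$ (resp.\ $\tau = \tau_{u.c.}$), using $\mathcal A_{0,op} = \mathcal C_1(\mathcal H)$, $\mathcal I_{0,op} = \mathcal K(\mathcal H)$ and the identifications $\operatorname{SO}(\mathcal H) = \mathcal A_{1,op}(\tau_{s^\ast}')$, $B_0(\mathcal H) = \mathcal I_{1,op}(\tau_{s^\ast}')$, $\operatorname{SO}(\Xi) = \mathcal A_1(\tau_{u.c.})$, $B_0(\Xi) = \mathcal I_1(\tau_{u.c.})$. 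The parameter bookkeeping is all in order, so nothing further is needed.
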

To bring the discussion regarding the operator version of Pitt's extension to a pleasant ending, we provide the following characterization of $\operatorname{SO}(\mathcal H)$, which is rather close to the initial definition of $\operatorname{SO}(\Xi)$. In contrast to the case of functions, the placement of the quantifiers is a little different:
\begin{prop}
    The space $\operatorname{SO}(\mathcal H)$ agrees with the set of all $A \in \mathcal L(\mathcal H)$ such that $B \in \{ A, A^\ast\}$ satisfy the following: For every $\varepsilon > 0$ there exists a neighborhood $O_\varepsilon \subset \Xi$ of $e$ such that for each $g \in \mathcal H$ with $\| g\| = 1$ there exists a compact set $K_{g, \varepsilon} \subset \Xi$ with the following property:
    \begin{align*}
        \forall y \in O_\varepsilon, z \not \in K_{g, \varepsilon}: ~\| \alpha_z(B - \alpha_y(B)) g\| < \varepsilon.
    \end{align*}
\end{prop}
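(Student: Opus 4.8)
The plan is to establish both inclusions using the already-proven decomposition $\operatorname{SO}(\mathcal H) = \mathcal C_1(\mathcal H) + B_0(\mathcal H)$ together with the unravelled definition $\operatorname{SO}(\mathcal H) = \operatorname{BUC}(\Xi)_{1,op}(\tau_{s^\ast}')$, i.e.\ the requirement that the limit operators $\alpha_x(A)$, $x\in\partial\Xi$, exist in SOT$^\ast$, lie in $\mathcal C_1(\mathcal H)$, and form a uniformly equicontinuous family. Throughout I would use the identities $\alpha_z\circ\alpha_y=\alpha_{z+y}$ and $\|\alpha_z(B-\alpha_y(B))g\|=\|(B-\alpha_y(B))U_z^\ast g\|$, and the elementary fact that a net in $\Xi$ converging to a point of $\partial\Xi$ eventually leaves every compact subset of $\Xi$. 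Write $(*)$ for the displayed condition on an operator $B$.

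For the inclusion $\operatorname{SO}(\mathcal H)\subseteq\{A: A,A^\ast \text{ satisfy }(*)\}$: one first observes that the operators satisfying $(*)$ form a linear subspace (intersect the neighbourhoods, take unions of the compact sets), and both $\mathcal C_1(\mathcal H)$ and $B_0(\mathcal H)$ are $\ast$-closed, so it suffices to treat these two classes separately. If $C\in\mathcal C_1(\mathcal H)$, norm-continuity of $z\mapsto\alpha_z(C)$ yields a relatively compact neighbourhood $O_\varepsilon$ of $e$ with $\|C-\alpha_y(C)\|_{op}<\varepsilon$ for $y\in O_\varepsilon$; since this bound is unchanged under any $\alpha_z$, $(*)$ holds with \emph{any} (even empty) choice of $K_{g,\varepsilon}$. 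If $D\in B_0(\mathcal H)$, then by definition all limit operators of $D$ vanish in SOT$^\ast$, and the operator analogue of Lemma \ref{lemma:b0} (a net/subnet argument) shows that for each unit vector $g$ there is a compact $K_{g,\varepsilon}$ with $\|\alpha_z(D)g\|<\varepsilon$ for $z\notin K_{g,\varepsilon}$; replacing $K_{g,\varepsilon}$ by $K_{g,\varepsilon}\cup(K_{g,\varepsilon}-\overline{O_\varepsilon})$ absorbs the shifted term $\|\alpha_{z+y}(D)g\|$ for all $y\in O_\varepsilon$, giving $(*)$.

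For the converse, let $A$ be such that $A$ and $A^\ast$ satisfy $(*)$; I would verify the three defining properties of $\operatorname{BUC}(\Xi)_{1,op}(\tau_{s^\ast}')$. Existence of the SOT$^\ast$-limit operators: fix $x\in\partial\Xi$, a net $x_\gamma\to x$, and $g\in\mathcal H$; with $(\psi_\theta)$ a standard approximate identity $\psi_\theta=|O_\theta|^{-1}\chi_{O_\theta}$, decompose $\alpha_{x_\gamma}(A)g-\alpha_{x_{\gamma'}}(A)g$ around $\psi_\theta\ast\alpha_{x_\gamma}(A)=\alpha_{x_\gamma}(\psi_\theta\ast A)$. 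The two outer terms are bounded by $\int\psi_\theta(y)\|\alpha_{x_\gamma}(A-\alpha_y(A))g\|\,dy$, which is $<\varepsilon$ once $O_\theta\subseteq O_\varepsilon$ (from $(*)$ for $A$) and $x_\gamma\notin K_{g,\varepsilon}$; the middle term tends to $0$ because $\psi_\theta\ast A\in\mathcal C_1(\mathcal H)$ (translation being $L^1$-continuous), so $\alpha_{x_\gamma}(\psi_\theta\ast A)\to\alpha_x(\psi_\theta\ast A)$ in SOT$^\ast$ by the Proposition on SOT$^\ast$-convergence of limit operators of operators in $\mathcal C_1(\mathcal H)$ (valid under the standing integrability assumption). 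Hence $(\alpha_{x_\gamma}(A)g)$ is Cauchy; running the same argument for $A^\ast$ yields SOT$^\ast$-convergence, and the limit coincides with the always-existing weak$^\ast$-limit operator $\alpha_x(A)$ (a bounded SOT$^\ast$-convergent net is weak$^\ast$-convergent to the same operator), hence is net-independent. Properties (b) and (c) then follow from the $x$-uniformity of $O_\varepsilon$ in $(*)$: for $y\in O_\varepsilon$, $x\in\partial\Xi$ and a unit vector $g$, SOT$^\ast$-continuity of $\alpha_y$ gives that $\alpha_x(A)-\alpha_y(\alpha_x(A))=\alpha_x(A-\alpha_y(A))$ is the SOT$^\ast$-limit of $\alpha_{x_\gamma}(A-\alpha_y(A))$, whence $\|(\alpha_x(A)-\alpha_y(\alpha_x(A)))g\|=\lim_\gamma\|\alpha_{x_\gamma}(A-\alpha_y(A))g\|\leq\varepsilon$; taking the supremum over unit $g$ gives $\|\alpha_x(A)-\alpha_y(\alpha_x(A))\|_{op}\leq\varepsilon$ uniformly in $x\in\partial\Xi$, which is uniform equicontinuity of $\{\alpha_x(A):x\in\partial\Xi\}$ and, via the group structure, also $\alpha_x(A)\in\mathcal C_1(\mathcal H)$ for every $x$. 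Thus $A\in\operatorname{BUC}(\Xi)_{1,op}(\tau_{s^\ast}')=\operatorname{SO}(\mathcal H)$.

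I expect the main obstacle to be the existence part of the converse: carrying out the approximate-identity ``$3\varepsilon$'' argument while keeping precise track of which quantities may depend on $g$ (the compact sets $K_{g,\varepsilon}$ do; the neighbourhood $O_\varepsilon$ does not), and then identifying the SOT$^\ast$-limit with the always-existing weak$^\ast$-limit to obtain net-independence. Once that and the $x$-independence of $O_\varepsilon$ are secured, properties (b) and (c) as well as the entire $\subseteq$-direction are essentially formal.
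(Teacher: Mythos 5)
Your proposal is correct and follows essentially the same route as the paper's proof: one inclusion via the decomposition $\operatorname{SO}(\mathcal H)=\mathcal C_1(\mathcal H)+B_0(\mathcal H)$ (treating the two summands separately, with a compactness/subnet argument for the $B_0$ part), and the converse via the approximate-identity Cauchy-net argument followed by uniform equicontinuity from the $x$-independence of $O_\varepsilon$. The only cosmetic differences are that you run the argument in SOT$^\ast$ directly where the paper does SOT for $A$ and then invokes the same for $A^\ast$, and that you phrase the $B_0$ step as a subnet argument where the paper passes through the one-point compactification.
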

\begin{proof}
    We first note that the above property holds for $A \in \mathcal C_1(\mathcal H)$. Further, for $A \in B_0(\mathcal H)$ we have
    \begin{align*}
        \| \alpha_{z_\gamma}(A)g\| \to 0,
    \end{align*}
    as $z_\gamma \to x \in \partial \Xi$. The same is true (by considering the embedding $C_0(\Xi) \oplus \mathbb C 1 \hookrightarrow \operatorname{BUC}(\Xi)$ and the surjective map $\sigma \Xi \to \alpha \Xi$, where $\alpha \Xi$ is the one-point compactification, as done earlier) in $\alpha \Xi$ (which is exactly the maximal ideal space of $C_0(\Xi) \oplus \mathbb C 1$), where the neighborhoods of the point at infinity are the complements of compact sets. Hence for a given $\varepsilon > 0$ there exists $K \subset \Xi$ compact such that we have $\| \alpha_{z}(A)g\| < \varepsilon$ once $z \not \in K$. Therefore, for a neighborhood $O$ of $e$ small enough, we have
    \begin{align*}
        \| \alpha_z(A - \alpha_y(A))g\| \leq \| \alpha_z(A)g\| + \| \alpha_{z+y}(A)g\| \leq 2\varepsilon,
    \end{align*}
    whenever $z$, $z + y \not \in K$. By the definition of $\operatorname{SO}(\mathcal H)$, the above two points show that $\operatorname{SO}(\mathcal H)$ is included in the space described above. 

    Now we assume that the operator $A$ satisfies the above property. We will show that then $A \in \BUC_{1, op}(\tau_S')$, where $\tau_S'$ is the strong operator topology. Since $A^\ast$ also satisfies the described properties, this then implies that $A \in \BUC_{1, op}(\tau_{s^\ast}')$.

    First, we show that $\alpha_{x_\gamma}(A) \to \alpha_x(A)$ in strong operator topology when $(x_\gamma) \subset \Xi$, $x_\gamma \to x \in \partial \Xi$. For this, let $\varepsilon > 0$, $O_\varepsilon \subset \Xi$, $g \in \mathcal H$ with $\| g\| = 1$ and $K \subset \Xi$ as in the condition on $A$. Further, as in the proof of Proposition \ref{decomp:SO}, let $\psi_\gamma$ be the same approximate identity of $L^1(\Xi)$ (i.e., $\psi_\gamma = \mathbf \chi_{O_\gamma}/|O_\gamma|$, where the $O_\gamma$ are a neighborhood basis of the identity element, ordered by inclusion). By a standard argument for nets, there is no loss of generality in assuming that both $\psi_\gamma$ and $x_\gamma$ are indexed by the same directed set $\Gamma$. Then, 
    \begin{align*}
        \| \alpha_{x_\gamma}(A)(g) - &\psi_{\gamma_0} \ast \alpha_{x_\gamma}(A)(g)\| \\
        &\leq \int_\Xi \| \alpha_{x_\gamma}(A)(g) - \alpha_{x_\gamma + z}(A)(g)\| \psi_{\gamma_0}(z)~dz\\
        &\leq \varepsilon,
    \end{align*}
    provided $\gamma > \gamma_0$ such that $O_{\gamma_0} \subset O_\varepsilon$ and $x_\gamma \not \in K$. Therefore, for $\gamma, \gamma' > \gamma_0$:
    \begin{align*}
        \| \alpha_{x_\gamma}(A)(g) - \alpha_{x_{\gamma'}}(A)(g)\| \leq 2\varepsilon + \| \psi_{\gamma_0} \ast \alpha_{x_\gamma}(A)(g) - \psi_{\gamma_0} \ast \alpha_{x_{\gamma'}}(A)(g)\|.
    \end{align*}
    Since $\psi_{\gamma_0} \ast \alpha_{x_\gamma}(A) = \alpha_{x_\gamma}(\psi_{\gamma_0} \ast A)$ and $\psi_{\gamma_0} \ast A \in \mathcal C_1(\mathcal H)$, we know that $\alpha_{x_\gamma}(\psi_{\gamma_0} \ast A) \to \alpha_x(\psi_{\gamma_0} \ast A)$ in strong operator topology. Hence, $\alpha_{x_\gamma}(A)(g)$ is a Cauchy net, therefore $\alpha_{x_\gamma}(A)$ converges in SOT. Of course, this SOT-limit has to agree with the WOT limit of the net, hence $\alpha_{x_\gamma}(A)(g) \to \alpha_x(A)(g)$ in SOT. 

    Next, we come to the uniform equicontinuity of the $\alpha_{x}(A)$. Note in the previous part of the proof that the choice of the neighborhood $O_\varepsilon$ did not depend on the limit point $x \in \partial \Xi$. Hence, as in the proof of Proposition \ref{decomp:SO}, one obtains
    \begin{align*}
        \| \alpha_x(A)(g) - \alpha_{x-y}(A)(g)\| \leq \varepsilon
    \end{align*}
    for each $x \in \partial \Xi$, $g\in \mathcal H$ with $\| g\| = 1$ and $y \in O_\varepsilon$. This clearly implies that the family of limit operators is uniformly equicontinuous. Hence, $A \in \BUC_{1, op}(\tau_S')$ which finishes the proof. 
\end{proof}

As mentioned above, the function space $B_0(\Xi)$ consists exactly of those functions $f \in L^\infty(\Xi)$ such that the modulations $\gamma_\xi(f) = m((\cdot), \xi)f$ satisfy 
\begin{align*}
\Xi \ni \xi \mapsto \gamma_\xi(f) \quad \text{ is } \| \cdot\|_\infty-\text{continuous}.
\end{align*}
If the phase space $(\Xi, m)$ is 2-regular (cf.\ \cite{Fulsche_Galke2023}, Example 2.7 and Lemma 6.15), then the natural modulation of an operator $B \in \mathcal L(\mathcal H)$ is given by:
\begin{align*}
\gamma_\xi(B) = U_{-\xi/2} B U_{-\xi/2}.
\end{align*}
Letting now $R$ be the parity operator on $\mathcal H$, one easily sees that $R\gamma_w(A) = \alpha_{w/2}(RA)$. In particular, the map $A \mapsto RA$ is a bijective linear map between $\mathcal C_1(\mathcal H)$ and
\begin{align*}
\mathcal E(\mathcal H) := \{ B \in \mathcal L(\mathcal H): ~\Xi \ni \xi \mapsto \gamma_\xi(B) \text{ is } \| \cdot\|_{op}\text{-cont.}\}.
\end{align*}
It is not difficult to verify that $\mathcal E(\mathcal H)$ is a closed subspace of $\mathcal L(\mathcal H)$, being invariant under the adjoint map. Further, it is a left- and right-$\mathcal C_1(\mathcal H)$ module. Unfortunately, we have $\mathcal E(\mathcal H) \neq B_0(\mathcal H)$:  To see this, note that $R \in \mathcal E(\mathcal H)$ (because $Id \in \mathcal C_1(\mathcal H)$). We clearly have $\| \alpha_w(R)f\| = \| W_{2w} Rf\| = \| f\|$ for every $f \in \mathcal H$, so $\alpha_w(R)$ cannot converge to $0$ in the strong operator topology. However, it is true that $\alpha_x(B) = 0$ for every $B \in \mathcal E(\mathcal H)$ and $x \in \partial \Xi$: This is because
\begin{align*}
\alpha_x(B) = \alpha_x(RA) = \alpha_x(R) \alpha_x(A)
\end{align*}
for some $A \in \mathcal C_1(\mathcal H)$ and $\alpha_x(R) = 0$ for every $x \in \partial \Xi$. To fix this problem, one could consider the following larger spaces:
\begin{align*}
\operatorname{SO}_w(\mathcal H) = \operatorname{BUC}(\Xi)_{1,op}(\tau_{w^\ast}')\\
B_{0,w}(\mathcal H) = C_0(\Xi)_{1,op}(\tau_{w^\ast}').
\end{align*}
Then, employing again Theorem \ref{thm:limitops}.(2) it is
\begin{align*}
\operatorname{SO}_w(\mathcal H) = \mathcal C_1(\mathcal H) + B_{0,w}(\mathcal H)
\end{align*}
and the analogous operator version for Pitt's extension, Corollary \ref{pitt}, still holds. In addition, we have $\mathcal E(\mathcal H) \subseteq  B_{0,w}(\mathcal H)$. 

So far, it might very well be that $B_0(\mathcal H)$ and $\mathcal K(\mathcal H)$ are indeed the same spaces. We will finish by giving examples,  which show that we indeed have $\mathcal K(\mathcal H) \subsetneq B_0(\mathcal H)$. We will first give an example for the phase space $\mathbb Z \times \mathbb T$, i.e.\ an operator on $\ell^2(\mathbb Z)$. Afterwards, we will provide a modification of this example for the phase space $\mathbb R^2$, i.e.\ on $L^2(\mathbb R)$.

Similarly, one might ask the same questions about $\mathcal E(\mathcal H)$ and $B_{0,w}(\mathcal H)$. Modifications of essentially the same examples will show that $\mathcal E(\mathcal H) \subsetneq B_{0,w}(\mathcal H)$.

\subsection{Example on $\Xi = \mathbb Z \times \mathbb T$}

We consider the phase space $\Xi = \mathbb Z \times \mathbb T$, endowed with the multiplier $m((k, \theta), (m, \zeta)) = \zeta^{-k}$. Note that, in contrast to all previous conventions, we will still write the group operation in $\mathbb T = \{ \theta \in \mathbb C: ~|\theta| = 1\}$ multiplicatively. 

The representation of the phase space is given on $\mathcal H = \ell^2(\mathbb Z)$ by $U_{(k, \theta)}a(n) = \theta^n a(n-k)$. With this data, we indeed have $B_0(\mathcal H) \supsetneq \mathcal K(\mathcal H)$. To see this, we first note the following, where we will make use of the fact that we can write each $A \in \mathcal L(\ell^2(\mathbb Z))$ in its infinite matrix form $A = (a_{j,k})_{j,k \in \mathbb Z}$ with respect to the standard basis.
\begin{prop}\label{prop:char_SOT_to_limit}
    Let $A \in \mathcal L(\ell^2(\mathbb Z))$. Then, $\alpha_{(k, \vartheta)}(A) \to 0$ in SOT as $(k, \vartheta) \to \partial \Xi$ if and only if the $\ell^2$-norms of the columns of $A$ converge to zero, that is:
    \begin{align*}
        \sum_{j \in \mathbb Z} |a_{j,k}|^2 \to 0 \quad \text{ as } k \to \pm \infty.
    \end{align*}
\end{prop}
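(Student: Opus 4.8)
The plan is to understand the condition ``$\alpha_{(k,\vartheta)}(A) \to 0$ in SOT as $(k,\vartheta) \to \partial\Xi$'' concretely on $\ell^2(\mathbb Z)$ and to translate it into a column-norm statement. First I would note that the boundary $\partial\Xi$ of $\Xi = \mathbb Z\times\mathbb T$ is reached exactly when the $\mathbb Z$-coordinate escapes to $\pm\infty$, since $\mathbb T$ is compact; more precisely, $\sigma\Xi = \sigma\mathbb Z \times \mathbb T$ and the boundary points are those whose first coordinate lies in $\partial\mathbb Z$. Hence convergence $(k_\gamma,\vartheta_\gamma) \to x \in \partial\Xi$ forces $|k_\gamma| \to \infty$. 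For $x$ on the boundary, SOT-convergence $\alpha_{(k_\gamma,\vartheta_\gamma)}(A) \to \alpha_x(A)$ holds along every such net (this is part of the content that limit operators are well-defined for $A \in \mathcal C_1(\mathcal H)$, but here we make no continuity assumption on $A$ — instead we directly test the hypothesis). So the hypothesis is equivalent to: for every unit vector $g \in \ell^2(\mathbb Z)$ and every $\varepsilon > 0$ there is $N$ with $\|\alpha_{(k,\vartheta)}(A) g\| < \varepsilon$ for all $|k| \geq N$ and all $\vartheta \in \mathbb T$.

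Next I would compute $\alpha_{(k,\vartheta)}(A) = U_{(k,\vartheta)} A U_{(k,\vartheta)}^\ast$ on matrix entries. With $U_{(k,\vartheta)} a(n) = \vartheta^n a(n-k)$, one gets $(U_{(k,\vartheta)} A U_{(k,\vartheta)}^\ast)_{i,j} = \vartheta^{i-j} a_{i-k,\,j-k}$. The key observation is then to test against the basis vector $g = e_0$: $\alpha_{(k,\vartheta)}(A) e_0$ has $i$-th entry $\vartheta^{i} a_{i-k,\,-k}$, so $\|\alpha_{(k,\vartheta)}(A) e_0\|^2 = \sum_{i} |a_{i-k,\,-k}|^2 = \sum_{j} |a_{j,\,-k}|^2$, the squared $\ell^2$-norm of column $-k$. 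Thus the hypothesis applied to $g = e_0$ already yields $\sum_j |a_{j,k}|^2 \to 0$ as $k \to \pm\infty$. This gives the ``only if'' direction immediately. (One may also note $\vartheta$ drops out in modulus, consistent with the fact that the $\mathbb T$-direction is harmless.)

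For the ``if'' direction, assume the column norms tend to zero and fix a unit vector $g \in \ell^2(\mathbb Z)$ and $\varepsilon > 0$. Write $c_k = \big(\sum_j |a_{j,k}|^2\big)^{1/2}$ and note $M := \sup_k c_k \leq \|A\|_{op} < \infty$ (each $c_k = \|Ae_k\| \leq \|A\|_{op}$). Compute $\|\alpha_{(k,\vartheta)}(A) g\|$: its $i$-th entry is $\vartheta^i \sum_j a_{i-k,j-k} g_j = \vartheta^i \sum_m a_{i-k,m} g_{m+k}$, and summing over $i$ and shifting, $\|\alpha_{(k,\vartheta)}(A) g\| = \|A (g(\cdot + k))\|$ where $g(\cdot+k)$ denotes the shifted sequence (note this shift is unitary). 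Write $g(\cdot+k) = h + r$ where $h$ is the truncation of $g(\cdot+k)$ to a large finite window $[-L,L]$ chosen so that $\|r\| < \varepsilon/(2\|A\|_{op})$ — possible uniformly once $g$ and $\varepsilon$ are fixed, since $g \in \ell^2$. Then $\|A h\| \leq \sum_{m=-L}^{L} |g_{m+k}| \, c_m \leq \big(\sum_{m=-L}^L |g_{m+k}|^2\big)^{1/2}\big(\sum_{m=-L}^L c_m^2\big)^{1/2} \leq \|g\| \cdot (2L+1)^{1/2} \max_{|m|\leq L} c_m$, which is wrong indexing — the decaying quantity should be $c$ evaluated at the columns actually being hit, so one should instead split $A(g(\cdot+k))$ according to a tail/head decomposition of the columns of $A$ themselves rather than of $g$. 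Concretely: choose $N_0$ so that $c_\ell < \varepsilon/2$ for $|\ell| \geq N_0$; split $g(\cdot+k) = g_{\mathrm{in}} + g_{\mathrm{out}}$ into the parts supported on $\{|\ell| < N_0\}$ and $\{|\ell| \geq N_0\}$ respectively. On $g_{\mathrm{out}}$ we use $\|A g_{\mathrm{out}}\| \leq \|A\|_{op}\|g_{\mathrm{out}}\|$ together with $\|A g_{\mathrm{out}}\|^2 = \|\sum_{|\ell|\geq N_0} g_\ell A e_\ell\|$... this still needs Cauchy–Schwarz in the right place. The cleanest route: $\|A g_{\mathrm{in}}\| \leq \sum_{|\ell| < N_0} |(g(\cdot+k))_\ell|\, c_\ell \leq \big(\sum_{|\ell|<N_0} c_\ell^2\big)^{1/2} \|g\|$, and this is small because once $|k|$ is large the entries $(g(\cdot+k))_\ell$ for $|\ell| < N_0$ are entries of $g$ with index near $k$, hence their $\ell^2$-mass $\to 0$ as $|k|\to\infty$; and $\|A g_{\mathrm{out}}\| \leq \|A\|_{op}\, \|g_{\mathrm{out}}\|$ is bounded but we need it small — instead bound $\|A g_{\mathrm{out}}\| \leq \sum_{|\ell|\geq N_0}|(g(\cdot+k))_\ell| c_\ell \leq (\varepsilon/2)\|g\| = \varepsilon/2$ using $c_\ell < \varepsilon/2$. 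Combining, for $|k|$ large enough $\|\alpha_{(k,\vartheta)}(A)g\| \leq \varepsilon$ uniformly in $\vartheta$. This establishes SOT-convergence to $0$ along every net escaping to the boundary.

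\textbf{Main obstacle.} The one genuinely delicate point is making the ``if'' direction work \emph{uniformly in $\vartheta$} and for \emph{arbitrary} unit $g$, not just $g = e_0$: one must decompose the shifted vector $g(\cdot + k)$ so that the contribution of the ``large-column'' part of $A$ is controlled by $\sup_{|\ell|\geq N_0} c_\ell$ while the ``small-column'' part is controlled by the $\ell^2$-tail of $g$ itself (which moves out to infinity as $|k|\to\infty$). Getting the two Cauchy–Schwarz splittings lined up correctly — which index carries the decay and which carries the $\ell^2$-summability — is the part that requires care; everything else (identifying $\partial\Xi$, computing matrix entries of $\alpha_{(k,\vartheta)}(A)$, extracting the column norms by testing on $e_0$) is routine.
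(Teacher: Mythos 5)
Your ``only if'' direction is fine and coincides with the paper's: testing on $e_0$ (equivalently, computing $\|A U_{(k,\vartheta)}^\ast e_0\| = \|Ae_{-k}\|$) extracts exactly the column norms, and the compactness of $\mathbb T$ reduces ``$(k,\vartheta)\to\partial\Xi$'' to $|k|\to\infty$. The problem is in your ``if'' direction, and it is a genuine gap rather than a presentational one. Your final estimate
$\|Ag_{\mathrm{out}}\| \leq \sum_{|\ell|\geq N_0} |(g(\cdot+k))_\ell|\, c_\ell \leq (\varepsilon/2)\|g\|$
silently replaces the $\ell^1$-norm of $g_{\mathrm{out}}$ by its $\ell^2$-norm: from $c_\ell<\varepsilon/2$ you only get $\sum_{|\ell|\ge N_0}|g_\ell|c_\ell \le (\varepsilon/2)\|g_{\mathrm{out}}\|_{\ell^1}$, and $\|\cdot\|_{\ell^1}\not\le\|\cdot\|_{\ell^2}$ on $\ell^2(\mathbb Z)$. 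More fundamentally, no inequality of the form $\|Ah\|\le (\sup_{\ell\in\operatorname{supp}h}\|Ae_\ell\|)\,\|h\|_{\ell^2}$ can hold, because the columns $Ae_\ell$ need not be orthogonal: for the rank-one operator $A=e_0\otimes v$ with $v_\ell=N^{-1/2}\mathbf 1_{1\le\ell\le N}$ one has $\sup_\ell\|Ae_\ell\|=N^{-1/2}$ but $\|Av\|=\|v\|=1$. So the ``small columns $\Rightarrow$ small action'' step that your decomposition relies on is false, and the earlier truncation you abandoned (``wrong indexing'') also fails as stated, since truncating $g(\cdot+k)$ to a fixed window $[-L,L]$ does not leave a uniformly small tail as $k\to\infty$.

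The repair is both simpler and is exactly what the paper does: the net $\alpha_{(k,\vartheta)}(A)$ is uniformly bounded in operator norm (each term has norm $\|A\|_{op}$), so SOT-convergence to $0$ only needs to be checked on a dense set of vectors. On a basis vector, $\|\alpha_{(k,\vartheta)}(A)e_m\|=\|Ae_{m-k}\|=c_{m-k}\to 0$ as $k\to\pm\infty$ by hypothesis; by linearity this gives convergence on all finitely supported $b$, and the standard $\varepsilon/3$ argument with $\|g-b\|<\varepsilon/(2\|A\|_{op})$ (truncating $g$ itself, not its shift) finishes the proof. Equivalently, in your notation: write $g=h'+r'$ with $h'$ supported in $[-L,L]$ and $\|r'\|$ small; then $U^\ast_{(k,\vartheta)}h'$ is supported in a window of fixed length $2L+1$ that escapes to infinity with $k$, so $\|A U^\ast_{(k,\vartheta)}h'\|\le \sqrt{2L+1}\,\|h'\|\max_{|m|\le L}c_{m-k}\to 0$, while the tail contributes at most $\|A\|_{op}\|r'\|$ uniformly in $(k,\vartheta)$.
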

\begin{proof}
    Assume that $\alpha_{(k, \vartheta)}(A)\to 0$ in SOT as $(k, \vartheta) \to \partial \Xi$. Then, we have:
    \begin{align*}
        \| A U_{(-k, \vartheta^{-1})} e_0\|^2 = \| Ae_{-k}\|^2 = \sum_{j \in \mathbb Z} |a_{j,-k}|^2.
    \end{align*}
    Since $\widehat{\mathbb Z} = \mathbb T$ is compact, for $(k_\gamma, \vartheta_\gamma) \to \partial (\mathbb Z \times \mathbb T)$ we necessarily have $|k_\gamma| \to \infty$. Since we are interested in the limit operators all being zero, we can consider the maximal ideal space of $C_0(\mathbb Z \times \mathbb T) \oplus \mathbb C 1$ instead of $\BUC$, i.e., we can work in the one-point compactification of $\Xi$ instead. Hence, instead of working with a general net $k_\gamma$ converging to $\pm \infty$, we can consider the limit $k \to \pm \infty$. Therefore, $\alpha_{(k, \vartheta)}(A) \to 0$ in SOT as $k \to \pm \infty$ implies
    \begin{align*}
        \sum_{j \in \mathbb Z} |a_{j,-k}|^2 \to  0, \quad k \to \pm \infty. 
    \end{align*}
    On the other hand, if we assume that the columns of $A$ converge to zero in the above sense, then this implies (by essentially the same reasoning as above) that 
    \begin{align*}
        \| AU_{(-k, \vartheta^{-1})} e_m\|^2 \to 0, \quad k \to \pm \infty
    \end{align*}
    for each $m \in \mathbb Z$. Therefore, we also obtain that for every compactly supported $b \in \ell^2(\mathbb Z)$ we have $\alpha_{(k, \vartheta)}(A)b \to 0$ in $\ell^2(\mathbb Z)$, and hence by density we see that $\alpha_{(k, \vartheta)}(A) \to 0$ in SOT when $k \to \pm \infty$. 
\end{proof}
\begin{cor}
    Let $A \in \mathcal L(\ell^2(\mathbb Z))$. Then, $A \in B_0(\ell^2(\mathbb Z))$ if and only if
    \begin{align*}
        \sum_{j \in \mathbb Z} |a_{j,k}|^2 \to 0 \quad \text{ as } k \to \pm \infty
    \end{align*}
    and
    \begin{align*}
        \sum_{k \in \mathbb Z} |a_{j,k}|^2 \to 0 \quad \text{ as } j \to \pm \infty.
    \end{align*}
\end{cor}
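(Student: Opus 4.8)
The plan is to reduce the statement to Proposition~\ref{prop:char_SOT_to_limit}, applied once to $A$ and once to its adjoint $A^\ast$, using that membership in $B_0(\ell^2(\mathbb Z))$ is precisely an SOT$^\ast$-statement about the shifts $\alpha_{(k,\vartheta)}(A)$ as $(k,\vartheta)\to\partial\Xi$. First I would record the operator analogue of Lemma~\ref{lemma:b0}: an operator $B\in\mathcal L(\mathcal H)$ lies in $B_0(\mathcal H)=C_0(\Xi)_{1,op}(\tau_{s^\ast}')$ if and only if $\alpha_{x_\gamma}(B)\to 0$ in SOT$^\ast$ whenever $\Xi\ni x_\gamma\to x\in\partial\Xi$. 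The proof is verbatim that of Lemma~\ref{lemma:b0}, with Theorem~\ref{thm:limitops}.(2) playing the role of Lemma~\ref{lemma:reconstr_boundary1}; the uniform equicontinuity and $\mathcal C_1(\mathcal H)$-membership conditions encoded in the subscript $1$ hold trivially here, since all the relevant limit operators are the zero operator. (By compactness of $\sigma\Xi$ and passage to subnets, one may equivalently demand this convergence for every net $(x_\gamma)\subset\Xi$ that eventually leaves every compact subset of $\Xi$.)

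Next, I would split the SOT$^\ast$ condition into two SOT conditions. Each $\alpha_x$ is a $\ast$-automorphism of $\mathcal L(\mathcal H)$, so $\alpha_{x_\gamma}(B)^\ast=\alpha_{x_\gamma}(B^\ast)$, and directly from the definition of the seminorms generating $\tau_{s^\ast}'$ the net $\alpha_{x_\gamma}(B)$ converges to $0$ in SOT$^\ast$ if and only if both $\alpha_{x_\gamma}(B)\to 0$ and $\alpha_{x_\gamma}(B^\ast)\to 0$ in SOT. Combining this with the characterization above, $A\in B_0(\ell^2(\mathbb Z))$ if and only if $\alpha_{(k,\vartheta)}(A)\to 0$ in SOT and $\alpha_{(k,\vartheta)}(A^\ast)\to 0$ in SOT as $(k,\vartheta)\to\partial\Xi$.

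Finally I would apply Proposition~\ref{prop:char_SOT_to_limit} to each of $A$ and $A^\ast$. For $A$ it says the first SOT-condition is equivalent to $\sum_{j\in\mathbb Z}|a_{j,k}|^2\to 0$ as $k\to\pm\infty$, i.e.\ the $\ell^2$-norms of the columns of $A$ tend to zero. For $A^\ast$, whose matrix has entries $(A^\ast)_{j,k}=\overline{a_{k,j}}$, the same proposition gives that $\alpha_{(k,\vartheta)}(A^\ast)\to 0$ in SOT is equivalent to $\sum_{j\in\mathbb Z}|(A^\ast)_{j,k}|^2=\sum_{j\in\mathbb Z}|a_{k,j}|^2\to 0$ as $k\to\pm\infty$; renaming the indices, this is exactly $\sum_{k\in\mathbb Z}|a_{j,k}|^2\to 0$ as $j\to\pm\infty$, i.e.\ the $\ell^2$-norms of the rows of $A$ tend to zero. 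Conjoining the two equivalences yields the corollary.

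There is no real obstacle in this argument: it is essentially bookkeeping, combining Proposition~\ref{prop:char_SOT_to_limit} with the self-adjoint symmetry of the space $B_0$. The one point that deserves a little care is the first step, namely extracting the SOT$^\ast$-description of $B_0(\mathcal H)$ from the formal definition $C_0(\Xi)_{1,op}(\tau_{s^\ast}')$; once that is settled, the remainder is just the $A\leftrightarrow A^\ast$ transposition.
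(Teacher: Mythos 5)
Your proposal is correct and follows essentially the same route as the paper: the paper's proof likewise observes that $A\in B_0(\ell^2(\mathbb Z))$ means $\alpha_{(k,\vartheta)}(B)\to 0$ in SOT for both $B=A$ and $B=A^\ast$ and then applies Proposition \ref{prop:char_SOT_to_limit} to each. Your write-up merely makes explicit two points the paper leaves implicit, namely that the SOT$^\ast$ description of $B_0(\mathcal H)$ is immediate from the definition of $C_0(\Xi)_{1,op}(\tau_{s^\ast}')$ (the $\mathcal C_1$ and equicontinuity clauses being vacuous for zero limit operators) and that transposition turns the column condition for $A^\ast$ into the row condition for $A$.
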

\begin{proof}
    For $A \in B_0(\ell^2(\mathbb Z))$ we have that $\alpha_{(k, \vartheta)}(B) \to 0$ in SOT as $(k, \vartheta) \to \partial \Xi$ for both $B = A$ and $B = A^\ast$. Applying the previous proposition shows the equivalence.
\end{proof}
\begin{prop}
    We have $\mathcal K(\ell^2(\mathbb Z)) \subsetneq B_0(\ell^2(\mathbb Z))$.
\end{prop}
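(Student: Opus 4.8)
The plan is to note first that the inclusion $\mathcal K(\ell^2(\mathbb Z)) \subseteq B_0(\ell^2(\mathbb Z))$ is already at hand: compact operators lie in $\mathcal C_1(\mathcal H)$, all of their limit operators vanish (by the lemma preceding Lemma \ref{lem:quotientnorm}), and for operators in $\mathcal C_1(\mathcal H)$ the convergence towards the limit operators takes place in $\mathrm{SOT}^\ast = \tau_{s^\ast}'$; hence a compact operator satisfies the defining conditions of $C_0(\Xi)_{1,op}(\tau_{s^\ast}')$. So the whole content of the Proposition is to exhibit one operator that lies in $B_0(\ell^2(\mathbb Z))$ but is not compact. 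Here the Corollary just established is decisive: membership in $B_0(\ell^2(\mathbb Z))$ only constrains the $\ell^2$-norms of the rows and of the columns of the matrix, which is a Hilbert--Schmidt-type condition along individual slices and is blind to the global rank. The idea is therefore to build an infinite-rank projection whose rows and columns nevertheless have vanishing $\ell^2$-norm at infinity.

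First I would fix the construction. Partition $\mathbb Z_{>0}$ into consecutive finite blocks $B_1, B_2, \dots$ with $|B_n| = n$, let $v_n \in \ell^2(\mathbb Z)$ be the vector equal to $1/\sqrt n$ on $B_n$ and $0$ elsewhere (so that $(v_n)_{n \geq 1}$ is orthonormal), and let $A$ be the orthogonal projection of $\ell^2(\mathbb Z)$ onto $\overline{\operatorname{span}}\{v_n : n \geq 1\}$, that is $A = \sum_{n \geq 1} v_n \otimes v_n$. Then $A \in \mathcal L(\ell^2(\mathbb Z))$ with $\|A\| = 1$, and in matrix form $a_{j,k} = 1/n$ when $j,k$ both lie in $B_n$ for the same $n$, and $a_{j,k} = 0$ otherwise.

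The two verifications are then routine. For non-compactness: the orthonormal sequence $(v_n)_{n\geq 1}$ converges weakly to $0$, while $A v_n = v_n$ gives $\|A v_n\| = 1 \not\to 0$, so $A$ is not compact (its range is infinite-dimensional). For membership in $B_0(\ell^2(\mathbb Z))$: if the column index $k$ lies in $B_n$ then $\sum_{j \in \mathbb Z} |a_{j,k}|^2 = n \cdot n^{-2} = 1/n$, while the column vanishes for $k \leq 0$, so $\sum_j |a_{j,k}|^2 \to 0$ as $k \to \pm\infty$; by the symmetry $a_{j,k} = a_{k,j}$ the same holds for the rows. By the preceding Corollary, $A \in B_0(\ell^2(\mathbb Z))$, and combining this with the inclusion recalled above yields $\mathcal K(\ell^2(\mathbb Z)) \subsetneq B_0(\ell^2(\mathbb Z))$.

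I do not expect a genuine obstacle here; the only point to get right is the observation that the $B_0$-condition controls rows and columns separately and therefore cannot detect rank, after which the block-constant rank-one projections separate the two classes essentially for free. Any block sizes $m_n \to \infty$ work in place of $|B_n| = n$, and one could let the blocks grow in both directions of $\mathbb Z$, although this is unnecessary since the matrix is simply $0$ off $\bigcup_n B_n \times B_n$.
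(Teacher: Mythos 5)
Your proof is correct and takes essentially the same route as the paper: the paper's example (borrowed from Halmos, Problem 177) is precisely the block-diagonal operator $\sum_n v_n\otimes v_n$ with blocks of size $n$ and constant entries $1/n$, verified to lie in $B_0(\ell^2(\mathbb Z))$ via the column/row criterion of the preceding Corollary and to be non-compact as an infinite-rank projection. Your additional remarks (the explicit weakly-null orthonormal sequence argument for non-compactness, and the justification of the inclusion $\mathcal K\subseteq B_0$) are correct but only make explicit what the paper leaves implicit.
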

\begin{proof}
    The example we will use is essentially the example taken from Problem 177 in Halmos' classical Hilbert space problem book \cite{Halmos1982}. We let $P$ be the orthogonal projection from $\ell^2(\mathbb Z)$ to $\ell^2(\mathbb N_0)$ and consider the operator $A = A_0 P$, where $A_0$ is the operator on $\ell^2(\mathbb N_0)$ given by the infinite matrix
    \begin{align*}
        A_0 = \begin{pmatrix}
           1 & 0 & 0 & 0 & 0 & 0 & \\
           0 & \frac 12 & \frac 12 & 0 & 0 & 0 & \\
           0 & \frac 12 & \frac 12 & 0 & 0 & 0 & \\
           0 & 0 & 0 & \frac 13 & \frac 13 & \frac 13 & \\
           0 & 0 & 0 & \frac 13 & \frac 13 & \frac 13 & \\
           0 & 0 & 0 & \frac 13 & \frac 13 & \frac 13 & \\
           & & & & & & \ddots
        \end{pmatrix}
    \end{align*}
    Then, the $\ell^2$-norms of the columns and of the rows of $A$ are clearly of the form $\sqrt{n\frac{1}{n^2}} = \frac{1}{\sqrt{n}}$, hence converge to 0 for $k \to -\infty$ and $j \to -\infty$, respectively (and are constantly zero for $k > 0$ and $j > 0$ anyway), hence $A \in B_0(\ell^2(\mathbb Z))$. Since $A_0$ consists of the direct sum of orthogonal projections, it is clearly bounded but not compact.
\end{proof}

\begin{prop}
    We have $\mathcal E(\ell^2(\mathbb Z)) \subsetneq B_{0,w}(\ell^2(\mathbb Z))$.
\end{prop}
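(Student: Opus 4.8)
The inclusion $\mathcal{E}(\ell^2(\mathbb{Z})) \subseteq B_{0,w}(\ell^2(\mathbb{Z}))$ has already been observed, so only strictness remains, and the plan is to exhibit one explicit witness. Recall that $\mathcal{E}(\mathcal{H}) = R\,\mathcal{C}_1(\mathcal{H})$, hence, since $R^2 = \mathrm{Id}$, that an operator $T$ lies in $\mathcal{E}(\ell^2(\mathbb{Z}))$ if and only if $RT \in \mathcal{C}_1(\ell^2(\mathbb{Z}))$. I would take $T := R A_0 P$, where $A_0 = \bigoplus_{n \geq 1} Q_n$ is the Halmos operator used in the previous proposition, $Q_n$ being the rank-one orthogonal projection on $\mathbb{C}^n$ whose matrix has all entries $\tfrac{1}{n}$, where $P$ is the orthogonal projection of $\ell^2(\mathbb{Z})$ onto $\ell^2(\mathbb{N}_0)$, and $R$ the parity operator $Ra(m) = a(-m)$.

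\emph{Step 1: $T \in B_{0,w}(\ell^2(\mathbb{Z}))$.} Since $\tau_{s^\ast}'$ refines $\tau_{w^\ast}'$, one has $B_0(\ell^2(\mathbb{Z})) = C_0(\Xi)_{1,op}(\tau_{s^\ast}') \subseteq C_0(\Xi)_{1,op}(\tau_{w^\ast}') = B_{0,w}(\ell^2(\mathbb{Z}))$, so it is enough to check $T \in B_0(\ell^2(\mathbb{Z}))$. Conjugation by $R$ merely reverses the order of the rows of a matrix, so the squared column norms of $T$ agree with those of $A_0 P$ and the squared row norms of $T$ are a reindexing of those of $A_0 P$. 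By the matrix-entry criterion for $B_0(\ell^2(\mathbb{Z}))$ (the corollary to Proposition~\ref{prop:char_SOT_to_limit}) it then suffices that the squared row and column norms of $A_0 P$ tend to $0$ at infinity, which is immediate: on the $n$-th block they equal $n \cdot \tfrac{1}{n^2} = \tfrac{1}{n}$, and they vanish off $\ell^2(\mathbb{N}_0)$.

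\emph{Step 2: $T \notin \mathcal{E}(\ell^2(\mathbb{Z}))$, i.e.\ $RT = A_0 P \notin \mathcal{C}_1(\ell^2(\mathbb{Z}))$.} As $\mathbb{Z}$ is discrete, $A \in \mathcal{C}_1(\ell^2(\mathbb{Z}))$ is equivalent to norm-continuity at $\theta = 1$ of $\theta \mapsto \alpha_{(0,\theta)}(A)$, conjugation by the modulation operator $U_{(0,\theta)} \colon a(m) \mapsto \theta^m a(m)$. Since $A_0 P = \bigoplus_n Q_n$ is block-diagonal and $U_{(0,\theta)}$ is diagonal, $\alpha_{(0,\theta)}(A_0P) - A_0 P$ is block-diagonal too; a short computation (in which the global phase $\theta^{s_n}$ from the location of the $n$-th block cancels in $U_{(0,\theta)} Q_n U_{(0,\theta)}^\ast$) identifies its $n$-th block as $\tfrac{1}{n}\bigl(w_n w_n^\ast - v v^\ast\bigr)$ on $\mathbb{C}^n$, with $v = (1,\dots,1)^\top$ and $w_n = (1,\theta,\dots,\theta^{n-1})^\top$ — that is, the difference of the rank-one projections onto $w_n/\sqrt{n}$ and onto $v/\sqrt{n}$. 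Using $\|pp^\ast - qq^\ast\| = \sqrt{1 - |\langle p,q\rangle|^2}$ for unit vectors $p,q$, the $n$-th block has norm $\sqrt{1 - |c_n(\theta)|^2}$ with $c_n(\theta) = \tfrac{1}{n}\sum_{l=0}^{n-1} \theta^l$. For any fixed $\theta \neq 1$, $|c_n(\theta)| = \tfrac{1}{n}\bigl|\tfrac{\theta^n-1}{\theta-1}\bigr| \leq \tfrac{2}{n\,|\theta-1|} \to 0$, so taking the supremum over $n$ gives $\|\alpha_{(0,\theta)}(A_0P) - A_0 P\| = 1$ for every $\theta \neq 1$. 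Hence $\theta \mapsto \alpha_{(0,\theta)}(A_0 P)$ is not norm-continuous at $\theta = 1$, so $A_0 P \notin \mathcal{C}_1(\ell^2(\mathbb{Z}))$ and $T = R A_0 P \notin \mathcal{E}(\ell^2(\mathbb{Z}))$. Combined with Step 1, $T$ witnesses $\mathcal{E}(\ell^2(\mathbb{Z})) \subsetneq B_{0,w}(\ell^2(\mathbb{Z}))$.

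The argument is essentially routine once the witness is chosen; the only points demanding a little care are the explicit block identification in Step 2 (notably the cancellation of the block-position phase), and confirming that the reduction $\mathcal{E}(\ell^2(\mathbb{Z})) = R\,\mathcal{C}_1(\ell^2(\mathbb{Z}))$ invoked at the start is available for the phase space $\mathbb{Z} \times \mathbb{T}$.
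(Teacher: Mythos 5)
Your proof is correct and uses the same witness $R A_0 P$ and the same overall structure as the paper's argument; the only real difference is that you establish $A_0 P \notin \mathcal C_1(\ell^2(\mathbb Z))$ by an explicit block-norm computation (which checks out, including the cancellation of the block-position phase and the formula $\|pp^\ast - qq^\ast\| = \sqrt{1-|\langle p,q\rangle|^2}$), whereas the paper infers it from $A_0P$ being non-compact while having vanishing limit operators, and you show the slightly stronger membership $RA_0P \in B_0 \subseteq B_{0,w}$ via the row/column criterion. Your closing concern about whether $\mathcal E(\ell^2(\mathbb Z)) = R\,\mathcal C_1(\ell^2(\mathbb Z))$ is available on the non-$2$-regular phase space $\mathbb Z \times \mathbb T$ is legitimate but applies equally to the paper, which uses exactly the same reduction without further comment.
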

\begin{proof}
    Let $A_0$ and $P$ be as in the proof of the previous proposition. Since $A_0 P \not \in \mathcal C_1(\ell^2(\mathbb Z))$, we also have $R A_0 P \not \in \mathcal E(\ell^2(\mathbb Z))$. An easy application of Proposition \ref{prop:char_SOT_to_limit} nevertheless shows that $\alpha_{(k, \theta)}(RA_0 P)$ converges even in SOT to the limit operators, which are all zero. Hence, $RA_0 P \in B_{0, w}(\ell^2(\mathbb Z))$.
\end{proof}

\subsection{Example on $\Xi = \mathbb R \times \mathbb R$}
We now deal with the phase space $\Xi = \mathbb R \times \mathbb R$. Here, the multiplier is $m((x, \xi), (y, \eta)) = e^{-ix\eta}$ and the representation acts on $\mathcal H = L^2(\mathbb R)$ and is given by $U_{(x, \xi)}f(t) = e^{-i\xi t}f(t-x)$.

The following example is motivated by the counterexample on $\Xi = \mathbb Z \times \mathbb T$. Indeed, there we considered a matrix, i.e., an integral operator, which gave the example. We could have written the operator there equally well as the composition of the integral operator and the operator of convolution by $\delta_0 \in \ell^1(\mathbb Z)$ (which is just the identity). The counterexample on $\Xi = \mathbb R \times \mathbb R$ will be exactly of this form: A product of an integral operator (which will again be an orthogonal projection) and a composition operator. In particular, the existence of such an example will prove:
\begin{prop}
    We have $\mathcal K(L^2(\mathbb R)) \subsetneq B_0(L^2(\mathbb R))$.
\end{prop}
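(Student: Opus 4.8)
The plan is to prove the strict inclusion by exhibiting a single bounded, non-compact operator that belongs to $B_0(L^2(\mathbb R))$; recall that the inclusion $\mathcal K(L^2(\mathbb R))\subseteq B_0(L^2(\mathbb R))$ is already available (a compact operator $K$ lies in $\mathcal C_1(\mathcal H)$, and $\alpha_{(x,\xi)}(K)=U_{(x,\xi)}KU_{(x,\xi)}^\ast\to 0$ in SOT$^\ast$ since $U_{(x,\xi)}^\ast\to 0$ weakly and $K$ carries weakly null nets to norm null nets), so only the existence of some $A\in B_0\setminus\mathcal K$ remains. Following the blueprint signposted above, I would take $A$ to be an averaging-type orthogonal projection realised as a composition operator precomposed with a fixed-scale averaging: fix a partition of $[0,\infty)$ into intervals $I_n$ with $\ell_n:=|I_n|\to\infty$, put $\psi_n:=\mathbf 1_{I_n}/\sqrt{\ell_n}$, and set $Af:=\sum_n\langle f,\psi_n\rangle\psi_n$, the orthogonal projection onto $\overline{\operatorname{span}}\{\psi_n:n\in\mathbb N\}$ (equivalently $A=V^\ast Q V$, where $Q$ averages over unit intervals on $[0,\infty)$ and $V$ is the composition operator implementing a change of variables carrying $I_n$ to unit intervals). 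Then $A=A^\ast$ is a contraction, and being an infinite-rank projection it is not compact; the entire substance of the proof is to verify $A\in B_0(L^2(\mathbb R))$.

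For that, note $B_0(L^2(\mathbb R))=C_0(\mathbb R^2)_{1,op}(\tau_{s^\ast}')$, and since $\partial_{\beta_-(C_0(\mathbb R^2))}(\mathbb R^2)$ is a single point, this membership reduces — exactly as in the $\mathbb Z\times\mathbb T$ discussion, passing to the one-point compactification — to showing $\alpha_{(x,\xi)}(A)\to 0$ in SOT$^\ast$ as $(x,\xi)\to\infty$. As $A=A^\ast$ it suffices to work in SOT, and by density together with $\|\alpha_{(x,\xi)}(A)\|\le 1$ it suffices to show, for each compactly supported $f\in L^2(\mathbb R)$ (say $\operatorname{supp}f\subseteq[-M,M]$) and each $\varepsilon>0$, that $\|\alpha_{(x,\xi)}(A)f\|<\varepsilon$ outside a compact subset of $\mathbb R^2$. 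A direct computation with $U_{(x,\xi)}f(t)=e^{-i\xi t}f(t-x)$ gives
\[
\|\alpha_{(x,\xi)}(A)f\|^2=\sum_n\frac1{\ell_n}\Big|\int_{I_n+x}e^{i\xi t}f(t)\,dt\Big|^2 .
\]

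I would control this in three regimes. For $x\to+\infty$: the crude estimate $|\int_{I_n+x}e^{i\xi t}f(t)\,dt|^2\le\ell_n\int_{I_n+x}|f|^2$ telescopes to $\|\alpha_{(x,\xi)}(A)f\|^2\le\|f\mathbf 1_{[x,\infty)}\|^2$, which tends to $0$ uniformly in $\xi$. For $x\to-\infty$: only boundedly many windows $I_n+x$ meet $\operatorname{supp}f$ and those have indices $n\to\infty$, hence $\ell_n\to\infty$; since $\frac1{\ell_n}|\int_{I_n+x}e^{i\xi t}f(t)\,dt|^2\le\frac1{\ell_n}\|f\|_{L^1}^2$, the sum tends to $0$, again uniformly in $\xi$. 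Finally, for $x$ ranging over a bounded set $[-L,L]$, only finitely many indices $n\le N_0(L,M)$ contribute at all, and the $n$-th term equals $\frac1{\ell_n}$ times the squared modulus of the Fourier transform of $f\mathbf 1_{I_n+x}\in L^1(\mathbb R)$ evaluated at $-\xi$. Choosing the three thresholds in turn and combining them produces the desired compact exceptional rectangle in $\mathbb R^2$.

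The main obstacle — and the reason the continuous case is genuinely harder than $\mathbb Z\times\mathbb T$, where $\widehat{\mathbb Z}=\mathbb T$ is compact so modulations never escape to infinity — is precisely the third regime: one must show $\alpha_{(x,\xi)}(A)f\to 0$ as $|\xi|\to\infty$ with $x$ bounded, and uniformly over that bounded range of $x$. I would obtain the uniformity by a compactness argument: for each fixed $n$ the map $x\mapsto f\mathbf 1_{I_n+x}$ is continuous from the compact interval $[-L,L]$ into $L^1(\mathbb R)$, so its image is compact in $L^1(\mathbb R)$; pushing forward through the continuous Fourier transform $L^1(\mathbb R)\to C_0(\mathbb R)$, the family $\{\widehat{f\mathbf 1_{I_n+x}}:x\in[-L,L]\}$ is relatively compact in $C_0(\mathbb R)$ and hence equi-small at infinity, so $\sup_{x\in[-L,L]}|\widehat{f\mathbf 1_{I_n+x}}(-\xi)|\to 0$ as $|\xi|\to\infty$ for each of the finitely many relevant $n$, whence the finite sum tends to $0$ uniformly. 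The remaining bookkeeping — that $A$ is a contractive infinite-rank projection, hence bounded and non-compact, and the passage through SOT$^\ast$ and the one-point compactification identifying $A\in B_0$ with the SOT-vanishing condition — is routine given the results of the previous subsections.
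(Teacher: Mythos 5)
Your argument is correct, and while it uses the same underlying construction as the paper — an infinite-rank orthogonal projection onto normalized indicators of disjoint intervals $I_n\subset[0,\infty)$ with $|I_n|\to\infty$ (the paper takes $I_n=[n^2-\tfrac n2,\,n^2+\tfrac n2]$) — it diverges from the paper's proof in a way that genuinely simplifies it. The paper does not show that the projection $A$ alone lies in $B_0(L^2(\mathbb R))$: it only establishes $\alpha_{(x,\xi)}(A)\to 0$ in SOT$^\ast$ for $|x|\to\infty$, and then sandwiches $A$ between two copies of the convolution operator $C$ by $\mathbf 1_{[-1,1]}$, whose conjugates tend to $0$ in SOT$^\ast$ as $|\xi|\to\infty$, to cover the modulation direction; the price is that non-compactness of $CAC$ is no longer obvious and must be extracted via a positivity argument ($k'\ge k\ge 0$, $CACf_n\ge f_n$ for the normalized indicators $f_n$). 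You instead verify directly that $A\in B_0$, handling the regime of bounded $x$ and $|\xi|\to\infty$ by Riemann--Lebesgue applied to the finitely many contributing terms $\ell_n^{-1}\bigl|\widehat{f\mathbf 1_{I_n+x}}(-\xi)\bigr|^2$, with uniformity in $x\in[-L,L]$ obtained from compactness of $\{f\mathbf 1_{I_n+x}:x\in[-L,L]\}$ in $L^1$ and hence of its Fourier image in $C_0(\mathbb R)$ (a compact subset of $C_0$ is equi-small at infinity). This correctly identifies the genuine difficulty of the continuous case over $\mathbb Z\times\mathbb T$, where $\widehat{\mathbb Z}=\mathbb T$ is compact, and what it buys is that non-compactness is immediate ($A$ is an infinite-rank projection), eliminating the paper's kernel computation for $CAC$ entirely. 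The remaining ingredients — the telescoping Cauchy--Schwarz bound for $x\to+\infty$, the $O(1/\ell_n)$ bound on the boundedly many surviving windows for $x\to-\infty$, the reduction to compactly supported $f$ via $\|\alpha_{(x,\xi)}(A)\|_{op}\le 1$, and the identification of membership in $B_0(\mathcal H)$ with SOT$^\ast$-vanishing of all limit operators at the one-point compactification — are all sound and consistent with how the paper itself uses these notions.
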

\begin{proof}
We consider the integral operator $A$, given by the integral kernel
\begin{align*}
    k(x,y) = \sum_{n=1}^\infty \frac{1}{n} \mathbf 1_{[n^2 - \frac{n}{2}, n^2 + \frac{n}{2}]}(x) \mathbf 1_{[n^2 - \frac{n}{2}, n^2 + \frac{n}{2}]}(y).
\end{align*}
Similarly to the example for $\Xi = \mathbb Z \times \mathbb T$, one verifies that $A$ is an orthogonal projection. Indeed, each operator with integral kernel 
\begin{align*}
k_n(x,y) = \frac{1}{n} \mathbf 1_{[n^2 - \frac{n}{2}, n^2 + \frac{n}{2}]}(x) \mathbf 1_{[n^2 - \frac{n}{2}, n^2 + \frac{n}{2}]}(y)
\end{align*}
is an orthogonal projection, and $A$ is the direct sum of these. In particular, $A$ is bounded. Further, when $(x_\gamma, \xi_\gamma)$ is a net in $\mathbb R^2$ such that $|x_\gamma| \to \infty$, then $\| AW_{(-x_\gamma, -\xi_\gamma)}\varphi\|^2 \to 0$ for every $\varphi \in C_c(\mathbb R)$, i.e., $\alpha_{(x_\gamma, \xi_\gamma)}(A) \to 0$ in SOT$^\ast$ for such nets. Additionally, we consider the operator $C$ of convolution by $\mathbf 1_{[-1, 1]}$. Since $\alpha_{(x, \xi)}(C) = \alpha_{(0, \xi)}(C)$ and $\mathcal F^{-1} \alpha_{(0, \xi)} (C) \mathcal F = M_{\alpha_{\xi}(\widehat{f})}$ with $\widehat{f} \in C_0(\mathbb R) \subset \operatorname{BUC}(\mathbb R)$, it is not hard to verify that $\alpha_{(x, \xi)}(C) \to 0$ in SOT$^\ast$ when $|\xi| \to \infty$. In particular, $\alpha_{(x, \xi)}(CAC) \to 0$ in SOT$^\ast$ when $(x, \xi) \to \partial \Xi$, i.e., $CAC \in B_0(\mathcal H)$. 

It remains to verify that $CAC \not \in \mathcal K(\mathcal H)$, which then shows that $CAC \in B_0(\mathcal H) \setminus \mathcal C_1(\mathcal H)$. Using the formula for the integral kernel of the composition of two integral operators,
\begin{align*}
    k_{AB}(x,y) = \int_{\mathbb R}k_A(x,z)k_B(z,y)~dz,
\end{align*}
one verifies that the integral kernel of $CAC$ is given by
\begin{align*}
    k'(x,y) = \sum_{n=1}^\infty \frac{1}{n} (\mathbf 1_{[n^2 - \frac{n}{2}, n^2 + \frac{n}{2}]} \ast \mathbf 1_{[-1, 1]})(x) \cdot (\mathbf 1_{[n^2 - \frac{n}{2}, n^2 + \frac{n}{2}]} \ast \mathbf 1_{[-1, 1]})(y)
\end{align*}
Elementary computations show that $g(x) = \mathbf 1_{[n^2 - \frac{n}{2}, n^2 + \frac{n}{2}]} \ast \mathbf 1_{[-1, 1]}(x)$ is given by:
\begin{align*}
    g(x) = \begin{cases}
        0, \quad &x \leq n^2 - \frac{n}{2} - 1\\
        x + (1 + \frac{n}{2} - n^2), \quad &n^2 - \frac{n}{2} - 1\leq x \leq n^2 - \frac{n}{2} + 1\\
        2, \quad &n^2 - \frac{n}{2} + 1 \leq x \leq n^2 + \frac n2 - 1\\
        -x + (n^2 + \frac n2 + 1), \quad &n^2 + \frac{n}{2} - 1 \leq x \leq n^2 + \frac{n}{2} + 1\\
        0, \quad &n^2 + \frac n2 + 1 \leq x
    \end{cases}
\end{align*}
In particular,
\begin{align*}
    \mathbf 1_{[n^2 - \frac{n}{2}, n^2 + \frac{n}{2}]} \ast \mathbf 1_{[-1, 1]}(x) \geq \mathbf 1_{[n^2 - \frac{n}{2}, n^2 + \frac{n}{2}]}(x)
\end{align*}
such that $k'(x,y) \geq k(x,y) \geq 0$ for every $x, y \in \mathbb R$.  Then, the integral operator $A_n$ with kernel $k_n$ is an orthogonal projection (with finite rank). The function $f_n = \frac{1}{\sqrt{n}} \mathbf 1_{[n^2 - \frac{n}{2}, n^2 + \frac n2]}$ is of norm one and in the range of $A_n$. Since the projections $A_n$ have orthogonal ranges, we have $f_n \to 0$ weakly as $n \to \infty$, but (as they are normalized) clearly not $f_n \to 0$ in norm. Now, we have\begin{align*}
    CACf_n(x) &= \int_{\mathbb R} k'(x,y)f_n(y)~dy \geq \int_{\mathbb R} k(x,y) f_n(y)~dy \\
    &= Af_n(x) = A_n f_n(x) = f_n(x),
\end{align*}
i.e., $CACf_n \geq f_n$, such that $\| CACf_n\| \geq 1$. Therefore, $CACf_n$ does not converge (in norm) to 0 as $n \to \infty$, hence $CAC$ is not compact.
\end{proof}
Since the operator $CAC$ from the previous proof is not contained in $\mathcal K(\mathcal H)$, but nevertheless all its limit operators are zero, it can also not be contained in $\mathcal C_1(\mathcal H)$. In particular, with $R$ being the parity operator, $RCAC \not \in \mathcal E(\mathcal H)$. Since $\alpha_{(x, \xi)}(CAC)$ converges in SOT to its limit operators (of which we have seen that they are all zero), one sees that the limit operators of $RCAC$ exist in WOT and are all zero. Hence, just as for the setting of $\Xi = \mathbb Z \times \mathbb T$, we see that:
\begin{prop}
    We have $\mathcal E(L^2(\mathbb R)) \subsetneq B_{0, w}(L^2(\mathbb R))$.
\end{prop}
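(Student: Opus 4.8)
The plan is to reuse the operator $CAC$ constructed in the proof of the previous proposition and twist it by the parity operator $R$, in complete analogy with the example on $\Xi = \mathbb Z \times \mathbb T$. First I would record that $CAC \in B_0(L^2(\mathbb R)) \setminus \mathcal K(L^2(\mathbb R))$, and that, since all its limit operators vanish while $CAC$ itself is not compact, the auxiliary lemma stating that an operator in $\mathcal C_1(\mathcal H)$ is compact if and only if all its limit operators vanish forces $CAC \notin \mathcal C_1(\mathcal H)$. Because $A \mapsto RA$ is a linear bijection of $\mathcal C_1(\mathcal H)$ onto $\mathcal E(\mathcal H)$, this gives $RCAC \notin \mathcal E(\mathcal H)$.

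Next I would verify that $RCAC \in B_{0,w}(\mathcal H) = C_0(\Xi)_{1,op}(\tau_{w^\ast}')$. For $x \in \Xi$ one has $\alpha_x(RCAC) = \alpha_x(R)\alpha_x(CAC)$, where $\alpha_x(R) = U_x R U_x^\ast$ is unitary, so that for every net $(x_\gamma) \subset \Xi$ with $x_\gamma \to x \in \partial\Xi$ and every $g \in \mathcal H$,
\[
\| \alpha_{x_\gamma}(RCAC)g\| = \| \alpha_{x_\gamma}(R)\alpha_{x_\gamma}(CAC)g\| = \| \alpha_{x_\gamma}(CAC)g\|.
\]
The previous proof shows $\alpha_{x_\gamma}(CAC) \to 0$ in SOT$^\ast$, hence the right-hand side tends to $0$; thus $\alpha_{x_\gamma}(RCAC) \to 0$ in SOT, a fortiori in $\tau_{w^\ast}'$. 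Consequently the limit operators of $RCAC$ exist with respect to $\tau_{w^\ast}'$ on all of $\partial\Xi$ and are identically $0$. The constant family $\{0\}$ is trivially uniformly equicontinuous, its members lie in $\mathcal C_1(\mathcal H)$, and since the ideal $\mathcal I = C_0(\Xi)$ corresponds to the whole boundary, this is precisely membership of $RCAC$ in $B_{0,w}(\mathcal H)$. Together with the inclusion $\mathcal E(\mathcal H) \subseteq B_{0,w}(\mathcal H)$ recorded above, this yields $\mathcal E(L^2(\mathbb R)) \subsetneq B_{0,w}(L^2(\mathbb R))$.

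The only point requiring a little care, exactly as in the preceding examples, is the transition from ``$\alpha_{x_\gamma}(CAC) \to 0$ along nets leaving every compact subset of $\Xi$'' to convergence along every net approaching a boundary point of the Samuel compactification. Since the assertion only involves the unital $C^\ast$-algebra $C_0(\Xi) \oplus \mathbb C 1$, one passes to the one-point compactification of $\Xi$, in which the neighbourhoods of the point at infinity are precisely the complements of compact sets, and there the statement reduces to the estimate already obtained in the previous proposition. So the ``hard part'' is really no harder than that bookkeeping; no genuinely new difficulty arises here.
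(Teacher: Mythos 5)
Your proposal is correct and follows essentially the same route as the paper: take $RCAC$, deduce $RCAC \notin \mathcal E(\mathcal H)$ from $CAC \notin \mathcal C_1(\mathcal H)$ via the bijection $A \mapsto RA$, and check that all limit operators of $RCAC$ vanish using $\alpha_x(RCAC) = \alpha_x(R)\alpha_x(CAC)$ with $\alpha_x(R)$ unitary. Your observation that the convergence is actually in SOT (not merely WOT, as the paper states) is a harmless strengthening, since for the bounded net in question SOT convergence implies convergence in $\tau_{w^\ast}'$, which is all that membership in $B_{0,w}(\mathcal H)$ requires.
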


\section{Uniform Wiener Tauberian theorems}\label{sec:uniform}

In this last part of the paper, we are going to discuss uniform versions of Wiener's Tauberian theorem. Even the uniform version of the classical Wiener's Tauberian theorem for functions seems to be not well-known; it seems that it was only discussed rather recently in \cite{Bhatta2004}. There, the statement is discussed and proved for $G = \mathbb R$, but the statement holds for arbitrary locally compact abelian groups. Indeed the proof can be significantly simplified under more general assumptions, simply by making use of compactness arguments in $L^1(G)$. Here is the result under more general assumptions: 
\begin{thm}\label{thm:uniform_wiener_theorem}
    Let $G$ be a locally compact abelian group, $S \subset L^1(G)$ a regular subset and $H \subset L^1(G)$ relatively compact. Then, if $X \subset L^\infty(G)$ is a bounded subset such that for each $g \in S$ we have
    \begin{align*}
        \lim_{x \to \infty} \sup_{f \in X} |g\ast f(x)| = 0,
    \end{align*}
    then it follows that
    \begin{align*}
        \lim_{x \to \infty} \sup_{h \in H} \sup_{f \in X} |h \ast f(x)| = 0.
    \end{align*}
\end{thm}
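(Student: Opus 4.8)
The plan is to reduce the uniform assertion over $H$ to the case of a single function $h \in L^1(G)$, exploiting that $H$ is relatively compact, and to settle that single-function case by combining Wiener's approximation theorem with the hypothesis. Two elementary facts will be used repeatedly: for $g \in L^1(G)$, $f \in L^\infty(G)$ and $y \in G$ one has $\alpha_y(g) \ast f = \alpha_y(g \ast f)$, hence $\alpha_y(g) \ast f(x) = (g \ast f)(x-y)$; and $\| h \ast f\|_\infty \leq \| h\|_{L^1} \| f\|_\infty$, where I abbreviate $M := \sup_{f \in X} \| f\|_\infty$, which is finite since $X$ is bounded. One should also note that $L^1(G) \ast L^\infty(G) \subseteq C_b(G)$, via $L^1$-continuity of translation, so that the pointwise values and the limits at infinity in the statement are meaningful.

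First I would handle a single $h \in L^1(G)$. Fix $\varepsilon > 0$. Since $S$ is a regular subset, the translates of functions from $S$ span a dense subspace of $L^1(G)$ by Wiener's approximation theorem, so there are finitely many $c_1, \dots, c_m \in \mathbb C$, $x_1, \dots, x_m \in G$ and $g_1, \dots, g_m \in S$ with $\| h - \sum_{j=1}^m c_j \alpha_{x_j}(g_j)\|_{L^1} < \varepsilon$. For $f \in X$ and $x \in G$ this yields
\[
|h \ast f(x)| \;\leq\; \sum_{j=1}^m |c_j|\, \big| (g_j \ast f)(x - x_j) \big| \;+\; \varepsilon M .
\]
Taking $\sup_{f \in X}$ and using that $x \to \infty$ forces $x - x_j \to \infty$ for each of the finitely many $j$, the hypothesis $\lim_{x \to \infty} \sup_{f \in X} |g_j \ast f(x)| = 0$ gives $\limsup_{x \to \infty} \sup_{f \in X} |h \ast f(x)| \leq \varepsilon M$; since $\varepsilon$ was arbitrary, $\lim_{x \to \infty} \sup_{f \in X} |h \ast f(x)| = 0$ for every $h \in L^1(G)$.

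Next I would upgrade this to uniformity over $H$. Fix $\varepsilon > 0$. As $H$ is relatively compact in $L^1(G)$, there exist $h_1, \dots, h_N \in L^1(G)$ such that every $h \in H$ satisfies $\| h - h_i\|_{L^1} < \varepsilon$ for some $i$. Then, for all $f \in X$ and $x \in G$,
\[
|h \ast f(x)| \;\leq\; |h_i \ast f(x)| + \varepsilon M \;\leq\; \max_{1 \leq i \leq N} |h_i \ast f(x)| + \varepsilon M ,
\]
whence $\sup_{h \in H} \sup_{f \in X} |h \ast f(x)| \leq \max_{1 \leq i \leq N} \sup_{f \in X} |h_i \ast f(x)| + \varepsilon M$. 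Applying the previous step to each of the finitely many $h_i$ and letting $x \to \infty$ gives $\limsup_{x \to \infty} \sup_{h \in H} \sup_{f \in X} |h \ast f(x)| \leq \varepsilon M$, and then $\varepsilon \downarrow 0$ finishes the proof.

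The argument is essentially a compactness argument once Wiener's approximation theorem is available, so there is no serious obstacle; the point worth emphasizing is that the Wiener approximation in the single-function step genuinely depends on $h$ (different $h$ require different coefficients, shifts and members of $S$), which is precisely why relative compactness of $H$ cannot be dropped: it reduces the desired uniformity to finitely many such approximations. If $G$ is compact there is nothing to prove, as no net tends to infinity, so one may tacitly assume $G$ non-compact as elsewhere in the paper.
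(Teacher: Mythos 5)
Your proof is correct and follows essentially the same route as the paper: Wiener's approximation theorem provides density of the span of translates of $S$, the estimate $\|h\ast f\|_\infty \le \|h\|_{L^1}\|f\|_\infty$ controls the approximation error uniformly over $X$, and relative compactness of $H$ reduces the uniform claim to finitely many functions. The only (cosmetic) difference is that you first establish the single-function case for all $h\in L^1(G)$ and then take an $\varepsilon$-net of $H$ with arbitrary centers, whereas the paper takes the net centers directly in the span of translates of $S$ and performs both approximations in one step.
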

\begin{proof}
    First of all, note that the assumption easily yields, for each $h \in V := \operatorname{span}\{ \alpha_x(g): ~x \in G, g \in S\}$, that:
    \begin{align*}
        \lim_{x \to \infty} \sup_{f \in X} |h \ast f(x)| = 0.
    \end{align*}
    Let $\varepsilon > 0$. Then, by relative compactness, $H$ can be covered by $n \in \mathbb N$ balls $B(h_j, \varepsilon)$, with $h_j \in V$. Therefore, for each $h \in H$, we can find $h_j \in V$ with $\| h - h_j\|_{L^1} < \varepsilon$. Hence, for each $h \in H$ and $f \in X$ we have (where we write $C = \sup_{f \in X} \| f\|_\infty < \infty$):
    \begin{align*}
        |h \ast f(x)| &\leq |h_j \ast f(x)| + |(h - h_j) \ast f(x)|\\
        &\leq |h_j \ast f(x)| + \| h - h_j\|_{L^1} \| f\|_\infty\\
        &\leq |h_j \ast f(x)| + \varepsilon C.
    \end{align*}
    In particular, 
    \begin{align*}
        \sup_{h \in H} \sup_{f \in X} |h \ast f(x)| \leq \sup_{j = 1, \dots, n} \sup_{f \in X} |h_j \ast f(x)| + \varepsilon C.
    \end{align*}
    Since for each $j = 1, \dots, n$, we know that $\sup_{f \in X} |h_j \ast f(x)| \to 0$, we see that:
    \begin{align*}
        \limsup_{x \to \infty} \sup_{h \in H} \sup_{f \in X} |h \ast f(x)| \leq \varepsilon C.
    \end{align*}
    Since $\varepsilon > 0$ was arbitrary, the statement follows.
\end{proof}
\begin{rem}
    We recall the theorem of Riesz and Kolmogorov, see \cite{Sudakov,Hanche}, characterizing relatively compact subsets in $L^1(G)$ as subsets $H \subset L^1(G)$ satisfying 
    \begin{enumerate}
        \item $H$ is uniformly equicontinuous, i.e., $\sup_{h \in H} \| \alpha_x(h) - h\|_{L^1} \to 0$ as $x \to 0$,
        \item For each $\varepsilon > 0$ there exists a compact $K \subset G$ such that
        \begin{align*}
            \sup_{h \in H} \int_{K^c} |h(x)|~dx < \varepsilon.
        \end{align*}
    \end{enumerate}
    Using this characterization, it is not hard to prove that the above Theorem \ref{thm:uniform_wiener_theorem} indeed implies the theorem in \cite{Bhatta2004}.
\end{rem}
The uniform Wiener Tauberian theorem above can be easily reformulated, with essentially the same proof, in terms of limit functions:
\begin{cor}
    Let $G$ be a locally compact abelian group, $S \subset L^1(G)$ a regular subset and $H \subset L^1(G)$ relatively compact. Then, if $X \subset L^\infty(G)$ is a bounded subset, $x \in \partial G$ and $(x_\gamma)_{\gamma \in \Gamma} \subset G$ a net converging to $x$ such that for each $g \in S$ we have
    \begin{align*}
        \sup_{f \in X} |g \ast [\alpha_{x_\gamma}(f) - \alpha_x(f)]| \overset{\gamma \in\Gamma}{\longrightarrow} 0
    \end{align*}
    uniformly on compact subsets of $G$, then
    \begin{align*}
        \sup_{h \in H} \sup_{f \in X} |h \ast [\alpha_{x_\gamma}(f) - \alpha_x(f)]| \overset{\gamma \in \Gamma}{\longrightarrow} 0
    \end{align*}
    uniformly on compact subsets of $G$.
\end{cor}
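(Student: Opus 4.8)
The plan is to run the argument of Theorem~\ref{thm:uniform_wiener_theorem} almost verbatim, replacing each convolution $g \ast f$ by the difference $g \ast [\alpha_{x_\gamma}(f) - \alpha_x(f)]$ and the limit $x \to \infty$ by the limit along the net $\gamma \in \Gamma$. First I would pass to the subspace $V := \operatorname{span}\{\alpha_y(g) : y \in G,\ g \in S\}$, which is dense in $L^1(G)$ by Wiener's approximation theorem (this is exactly where regularity of $S$ is used). The key point is that the hypothesis, although stated only for $g \in S$, already propagates to all of $V$: for $h = \alpha_y(g)$ one has $h \ast F = \alpha_y(g \ast F)$ for every $F \in L^\infty(G)$, and since $\alpha_y$ carries a compact set onto a compact set, uniform convergence to $0$ on compacta is preserved; the general case $h \in V$ then follows by linearity and the triangle inequality. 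Hence $\sup_{f \in X}|h \ast [\alpha_{x_\gamma}(f) - \alpha_x(f)]| \to 0$ uniformly on compact subsets of $G$ for every $h \in V$.

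Next I would exploit relative compactness of $H$: given $\varepsilon > 0$, cover $\overline{H}$ by finitely many $L^1$-balls $B(h_1,\varepsilon), \dots, B(h_n,\varepsilon)$, and, using density of $V$, I may take the centers $h_1, \dots, h_n \in V$. Writing $C := \sup_{f \in X}\|f\|_\infty < \infty$ and recalling from Lemma~\ref{lem:prop_limit_fcts} that $\|\alpha_x(f)\|_\infty \le \|f\|_\infty$, so that $\|\alpha_{x_\gamma}(f) - \alpha_x(f)\|_\infty \le 2C$, I obtain, for each $h \in H$ (choosing $j$ with $\|h - h_j\|_{L^1} < \varepsilon$), each $f \in X$ and each $z \in G$,
\[
|h \ast [\alpha_{x_\gamma}(f) - \alpha_x(f)](z)| \le |h_j \ast [\alpha_{x_\gamma}(f) - \alpha_x(f)](z)| + 2\varepsilon C .
\]
Taking the supremum over $f \in X$, over $h \in H$, and over $z$ in a fixed compact set $K$, and then the $\limsup$ over $\gamma$, the first step shows that each of the finitely many $h_j$-terms tends to $0$, so that $\limsup_\gamma \sup_{z \in K}\sup_{h \in H}\sup_{f \in X}|h \ast [\alpha_{x_\gamma}(f) - \alpha_x(f)](z)| \le 2\varepsilon C$; letting $\varepsilon \to 0$ concludes.

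I do not expect a genuine obstacle here — as the text itself remarks, this is ``essentially the same proof'' as Theorem~\ref{thm:uniform_wiener_theorem}. The only two points that deserve a line of care are: (i) checking that the translation $\alpha_y$ preserves uniform convergence on compact subsets (immediate, since $\alpha_y$ maps compacts to compacts), which is what allows the hypothesis to extend from $S$ to $V$; and (ii) ensuring that the finite cover of $\overline{H}$ can be chosen with centers inside $V$, which is just density of $V$ in $L^1(G)$, hence again regularity of $S$. Everything else is the triangle inequality together with the uniform bound $\sup_{f \in X}\|f\|_\infty < \infty$.
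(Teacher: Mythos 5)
Your proof is correct and follows exactly the route the paper intends: the paper gives no separate argument for this corollary, stating only that it follows ``with essentially the same proof'' as Theorem \ref{thm:uniform_wiener_theorem}, and your write-up is precisely that adaptation (propagation of the hypothesis from $S$ to the dense span $V$ via $\alpha_y(g)\ast F=\alpha_y(g\ast F)$, then a finite $\varepsilon$-net for $H$ with centers in $V$ and the triangle inequality using $\|\alpha_{x_\gamma}(f)-\alpha_x(f)\|_\infty\le 2C$). No gaps.
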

Having obtained the reformulation in terms of limit functions, it is now not hard to obtain (by essentially the same reasoning) the following uniform versions of Tauberian theorems within the setting of quantum harmonic analysis:
\begin{thm}
    Let $\Xi$ be a phase space, $S \subset L^1(\Xi)$ a regular subset and $H \subset L^1(\Xi)$ relatively compact. If $X \subset \mathcal L(\mathcal H)$ is a bounded subset, $x \in \partial \Xi$ and $(x_\gamma)_{\gamma \in \Gamma} \subset \Xi$ a net converging to $x$ such that for each $g \in S$ and $\varphi \in \mathcal H$ we have
    \begin{align*}
        \sup_{B \in X} \| g \ast [\alpha_{x_\gamma}(B) - \alpha_x(B)](\varphi) \| \overset{\gamma \in \Gamma}{\longrightarrow} 0,
    \end{align*}
    then also
    \begin{align*}
        \sup_{h \in H} \sup_{B \in X} \| h \ast [\alpha_{x_\gamma}(B) - \alpha_x(B)](\varphi) \| \overset{\gamma \in \Gamma}{\longrightarrow} 0 .
    \end{align*}
\end{thm}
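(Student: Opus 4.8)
The plan is to transcribe the proof of Theorem~\ref{thm:uniform_wiener_theorem} (in the limit-function reformulation of the Corollary preceding this statement), replacing the scalar quantity $g\ast f(x)$ by the vector $\bigl(g\ast[\alpha_{x_\gamma}(B)-\alpha_x(B)]\bigr)(\varphi)\in\mathcal H$. Fix $\varphi\in\mathcal H$ and put $M:=\sup_{B\in X}\|B\|_{op}<\infty$; since weak$^\ast$ limits do not increase the operator norm, $\|\alpha_{x_\gamma}(B)-\alpha_x(B)\|_{op}\le 2M$ for every $\gamma$ and every $B\in X$. Set
\begin{align*}
V:=\operatorname{span}\{\alpha_z(g):~z\in\Xi,~g\in S\},
\end{align*}
which is dense in $L^1(\Xi)$ by Wiener's approximation theorem, as $S$ is regular.

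The first step is to upgrade the hypothesis from $S$ to $V$. Here I use the covariance of the function--operator convolution: for $z\in\Xi$, $g\in L^1(\Xi)$ and $C\in\mathcal L(\mathcal H)$ one has $(\alpha_z g)\ast C=\alpha_z(g\ast C)=U_z(g\ast C)U_z^\ast$ (the projective cocycle scalars cancel in $U_z(\cdot)U_z^\ast$), hence for $\psi:=U_z^\ast\varphi$, which satisfies $\|\psi\|=\|\varphi\|$,
\begin{align*}
\bigl\|(\alpha_z g)\ast C\,(\varphi)\bigr\|=\bigl\|U_z\,(g\ast C)(U_z^\ast\varphi)\bigr\|=\bigl\|(g\ast C)(\psi)\bigr\|.
\end{align*}
Applying this with $C=\alpha_{x_\gamma}(B)-\alpha_x(B)$ and invoking the hypothesis for the window $\psi$ (the hypothesis being assumed for every window in $\mathcal H$), we obtain $\sup_{B\in X}\bigl\|(\alpha_z g)\ast[\alpha_{x_\gamma}(B)-\alpha_x(B)](\varphi)\bigr\|\to 0$ for each fixed $z\in\Xi$ and $g\in S$; by linearity the same holds with $\alpha_z(g)$ replaced by any $h\in V$.

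Now fix $\varepsilon>0$. By relative compactness of $H$ and density of $V$, cover $H$ by finitely many balls $B(h_1,\varepsilon),\dots,B(h_n,\varepsilon)$ with $h_j\in V$. Given $h\in H$, choose $j$ with $\|h-h_j\|_{L^1}<\varepsilon$; then, using $\|f\ast C\|_{op}\le\|f\|_{L^1}\|C\|_{op}$,
\begin{align*}
\bigl\|h\ast[\alpha_{x_\gamma}(B)-\alpha_x(B)](\varphi)\bigr\|
&\le\bigl\|h_j\ast[\alpha_{x_\gamma}(B)-\alpha_x(B)](\varphi)\bigr\|+\|h-h_j\|_{L^1}\,2M\,\|\varphi\|\\
&\le\bigl\|h_j\ast[\alpha_{x_\gamma}(B)-\alpha_x(B)](\varphi)\bigr\|+2M\|\varphi\|\varepsilon,
\end{align*}
so that
\begin{align*}
\sup_{h\in H}\sup_{B\in X}\bigl\|h\ast[\alpha_{x_\gamma}(B)-\alpha_x(B)](\varphi)\bigr\|
\le\max_{1\le j\le n}\ \sup_{B\in X}\bigl\|h_j\ast[\alpha_{x_\gamma}(B)-\alpha_x(B)](\varphi)\bigr\|+2M\|\varphi\|\varepsilon.
\end{align*}
Taking $\limsup_{\gamma}$ and using the first step (each $h_j\in V$) kills the maximum term, leaving $\limsup_\gamma(\cdots)\le 2M\|\varphi\|\varepsilon$; since $\varepsilon>0$ was arbitrary, the limit is $0$.

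There is no genuine obstacle here: the argument is a direct transcription of the proof of Theorem~\ref{thm:uniform_wiener_theorem}. The only point that requires a little care is the passage from $S$ to $V$, where one must exploit that convolution intertwines $\alpha_z$ with conjugation by the unitary $U_z$, so that the hypothesis---assumed for all windows---can be applied to the rotated window $U_z^\ast\varphi$; the projective nature of $(U_x)$ is harmless since the cocycle scalars cancel.
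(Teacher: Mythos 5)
Your proof is correct and follows exactly the reasoning the paper intends (the paper omits the argument, stating only that it is ``essentially the same'' as that of Theorem~\ref{thm:uniform_wiener_theorem}): the compactness/density argument is transcribed verbatim, and the only genuinely operator-theoretic point --- upgrading the hypothesis from $S$ to $V=\operatorname{span}\{\alpha_z(g)\}$ via the covariance $(\alpha_z g)\ast C=U_z(g\ast C)U_z^\ast$ applied to the rotated window $U_z^\ast\varphi$ --- is identified and handled correctly, including the cancellation of the projective cocycle.
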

\begin{thm}
    Let $\Xi$ be a phase space, $S \subset \mathcal T^1(\mathcal H)$ a regular subset and $H\subset \mathcal T^1(\mathcal H)$ relatively compact. If $X \subset L^\infty(\Xi)$ is a bounded subset, $x \in \partial \Xi$ and $(x_\gamma)_{\gamma \in \Gamma} \subset \Xi$ a net converging to $x$ such that for each $A_0 \in S$ and $\varphi \in \mathcal H$ we have
    \begin{align*}
        \sup_{f \in X} \| A_0 \ast [\alpha_{x_\gamma}(f) - \alpha_x(f)](\varphi) \| \overset{\gamma \in \Gamma}{\longrightarrow} 0,
    \end{align*}
    then also
    \begin{align*}
        \sup_{A \in H} \sup_{B \in X} \| A \ast [\alpha_{x_\gamma}(f) - \alpha_x(f)](\varphi) \| \overset{\gamma \in \Gamma}{\longrightarrow} 0 .
    \end{align*}
\end{thm}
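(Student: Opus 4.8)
The plan is to imitate the proof of Theorem~\ref{thm:uniform_wiener_theorem} essentially verbatim, transporting each ingredient from the classical to the operator side. Fix the vector $\varphi \in \mathcal H$ from the statement, set $C := \sup_{f \in X}\|f\|_\infty < \infty$, and recall that $\|\alpha_y(f)\|_\infty \leq \|f\|_\infty$ for every $y \in \sigma\Xi$ (the analogue for $\Xi$ of Lemma~\ref{lem:prop_limit_fcts}), so that $\sup_{f \in X}\|\alpha_{x_\gamma}(f) - \alpha_x(f)\|_\infty \leq 2C$ uniformly in $\gamma$. First I would upgrade the hypothesis from $S$ to the linear span $V := \operatorname{span}\{\alpha_z(A_0):\, z \in \Xi,\ A_0 \in S\}$. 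The key point is the covariance identity
\begin{align*}
\alpha_z(A_0) \ast h \;=\; \alpha_z(A_0 \ast h) \;=\; U_z\,(A_0 \ast h)\,U_z^\ast \qquad (h \in L^\infty(\Xi),\ z \in \Xi),
\end{align*}
which follows directly from the definition $A_0 \ast h = \int_\Xi h(y)\,\alpha_y(A_0)\,dy$ together with $\alpha_y\alpha_z = \alpha_{y+z}$ (the cocycle $m$ disappears under conjugation). Applying it with $h = \alpha_{x_\gamma}(f) - \alpha_x(f)$ and using unitarity of $U_z$ gives
\begin{align*}
\big\|\,\alpha_z(A_0) \ast [\alpha_{x_\gamma}(f) - \alpha_x(f)]\,(\varphi)\,\big\| \;=\; \big\|\,A_0 \ast [\alpha_{x_\gamma}(f) - \alpha_x(f)]\,(U_z^\ast\varphi)\,\big\|,
\end{align*}
and the right-hand side tends to $0$ uniformly in $f \in X$ by the hypothesis, applied with the vector $U_z^\ast\varphi \in \mathcal H$. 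By the triangle inequality over finite linear combinations, the hypothesis then holds for every element of $V$.

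Next I would invoke compactness. Since $S$ is a regular family, Wiener's approximation theorem for operators shows that $V$ is dense in $\mathcal T^1(\mathcal H)$. Given $\varepsilon > 0$, relative compactness (hence total boundedness) of $H$ lets us cover $H$ by finitely many balls $B(A_1,\varepsilon),\dots,B(A_n,\varepsilon)$ with $A_1,\dots,A_n \in V$: first cover $H$ by $\varepsilon/2$-balls centered in $H$, then move the centers into $V$. For $A \in H$ choose $A_j$ with $\|A - A_j\|_{\mathcal T^1} < \varepsilon$; then, using $\|A'\ast h\|_{op} \leq \|A'\|_{\mathcal T^1}\|h\|_\infty$,
\begin{align*}
\big\|A \ast [\alpha_{x_\gamma}(f) - \alpha_x(f)]\,(\varphi)\big\|
&\leq \big\|A_j \ast [\alpha_{x_\gamma}(f) - \alpha_x(f)]\,(\varphi)\big\| + \|A - A_j\|_{\mathcal T^1}\,\|\alpha_{x_\gamma}(f) - \alpha_x(f)\|_\infty\,\|\varphi\| \\
&\leq \big\|A_j \ast [\alpha_{x_\gamma}(f) - \alpha_x(f)]\,(\varphi)\big\| + 2C\varepsilon\|\varphi\|.
\end{align*}
Taking $\sup_{A \in H}\sup_{f \in X}$, then $\limsup_{\gamma}$, and using the first step for each of the finitely many $A_j$ gives
\begin{align*}
\limsup_{\gamma \in \Gamma}\;\sup_{A \in H}\;\sup_{f \in X}\,\big\|A \ast [\alpha_{x_\gamma}(f) - \alpha_x(f)]\,(\varphi)\big\| \;\leq\; 2C\varepsilon\|\varphi\|,
\end{align*}
and since $\varepsilon > 0$ was arbitrary the left-hand side vanishes, which is the assertion.

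I expect no genuine obstacle: the only point requiring any care is the covariance identity of the first step, namely that conjugation by $U_z$ can be shifted onto the test vector $\varphi$, which is precisely what allows the hypothesis — quantified over all of $\mathcal H$ — to absorb the translations occurring in the span $V$. Beyond that the argument is a line-by-line transcription of the function-case proof, with $L^1(G)$ replaced by $\mathcal T^1(\mathcal H)$, the classical Wiener approximation theorem by its operator analogue, the estimate $\|h - h_j\|_{L^1}\|f\|_\infty$ by $\|A - A_j\|_{\mathcal T^1}\,\|\alpha_{x_\gamma}(f) - \alpha_x(f)\|_\infty\,\|\varphi\|$, and the boundedness of $X$ in $L^\infty$ by the elementary bound $\|\alpha_{x_\gamma}(f) - \alpha_x(f)\|_\infty \leq 2C$.
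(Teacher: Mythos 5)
Your proof is correct and follows exactly the route the paper intends: the paper gives no explicit argument for this theorem, stating only that it follows ``by essentially the same reasoning'' as the classical uniform Tauberian theorem (Theorem \ref{thm:uniform_wiener_theorem}), which is precisely the span-plus-total-boundedness argument you carry out, with the covariance identity $\alpha_z(A_0)\ast h=\alpha_z(A_0\ast h)$ and unitarity of $U_z$ supplying the only genuinely new ingredient needed to absorb translates into the hypothesis. No gaps.
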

\begin{thm}\label{thm:unif_tauberian_4}
    Let $\Xi$ be a phase space, $S \subset \mathcal T^1(\mathcal H)$ a regular subset and $H\subset \mathcal T^1(\mathcal H)$ relatively compact. If $X \subset \mathcal L(\mathcal H)$ is a bounded subset, $x \in \partial \Xi$ and $(x_\gamma)_{\gamma \in \Gamma} \subset \Xi$ a net converging to $x$ such that for each $A_0 \in S$ we have
    \begin{align*}
        \sup_{B \in X} | A_0 \ast [\alpha_{x_\gamma}(B) - \alpha_x(B)]| \overset{\gamma \in \Gamma}{\longrightarrow} 0
    \end{align*}
    uniformly on compact subsets of $\Xi$, then also
    \begin{align*}
        \sup_{A \in H} \sup_{B \in X} | A \ast [\alpha_{x_\gamma}(B) - \alpha_x(B)]| \overset{\gamma \in \Gamma}{\longrightarrow} 0 
    \end{align*}
    uniformly on compact subsets of $\Xi$. 
\end{thm}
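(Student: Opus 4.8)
The plan is to transcribe the $\varepsilon$-covering argument from the proof of Theorem~\ref{thm:uniform_wiener_theorem}, replacing the classical convolution by the operator-function convolution $A\ast B$ and Wiener's classical approximation theorem by its operator version. Throughout write $D_\gamma(B):=\alpha_{x_\gamma}(B)-\alpha_x(B)\in\mathcal L(\mathcal H)$ and $C:=\sup_{B\in X}\|B\|_{op}<\infty$. Exactly as in Lemma~\ref{lem:prop_limit_fcts}(2), one has $\|\alpha_x(B)\|_{op}\leq\|B\|_{op}$ for $x\in\partial\Xi$, since $\alpha_x(B)$ is the weak$^\ast$ limit of the operators $\alpha_{x_\gamma}(B)$ and the operator-norm unit ball is weak$^\ast$ closed; hence $\|D_\gamma(B)\|_{op}\leq 2C$ for all $\gamma$ and all $B\in X$.

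First I would upgrade the hypothesis from $S$ to the linear span $V:=\operatorname{span}\{\alpha_z(A_0):z\in\Xi,\ A_0\in S\}$. The key identity is
\begin{align*}
\alpha_z(A_0)\ast B=\alpha_z(A_0\ast B),\qquad z\in\Xi,\ A_0\in\mathcal T^1(\mathcal H),\ B\in\mathcal L(\mathcal H),
\end{align*}
which follows from cyclicity of the trace together with the fact that $\alpha$ is a genuine (non-projective) action of $\Xi$ on $\mathcal L(\mathcal H)$: indeed $\tr(\alpha_z(A_0)\,\alpha_y(\beta_-(B)))=\tr(A_0\,\alpha_{-z}\alpha_y(\beta_-(B)))=\tr(A_0\,\alpha_{y-z}(\beta_-(B)))$. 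Consequently, for $A=\sum_i c_i\,\alpha_{z_i}(A_{0,i})\in V$ we get $A\ast D_\gamma(B)(y)=\sum_i c_i\,(A_{0,i}\ast D_\gamma(B))(y-z_i)$, so that for every compact $K\subset\Xi$,
\begin{align*}
\sup_{y\in K}\sup_{B\in X}\bigl|A\ast D_\gamma(B)(y)\bigr|\leq\sum_i|c_i|\sup_{w\in K-z_i}\sup_{B\in X}\bigl|A_{0,i}\ast D_\gamma(B)(w)\bigr|.
\end{align*}
Each of the finitely many summands tends to $0$ along the net, since $K-z_i$ is compact and the hypothesis applies to $A_{0,i}\in S$. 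Thus $\sup_{B\in X}|A\ast D_\gamma(B)|\to 0$ uniformly on compact subsets of $\Xi$ for every $A\in V$.

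Next, $S$ being a regular family, Wiener's approximation theorem for operators gives that $V$ is dense in $\mathcal T^1(\mathcal H)$. Since $H$ is relatively compact, for any $\varepsilon>0$ it is covered by finitely many balls $B(A_1,\varepsilon),\dots,B(A_n,\varepsilon)$ with centers $A_j\in V$. For $A\in H$ choose $A_j$ with $\|A-A_j\|_{\mathcal T^1}<\varepsilon$; then for all $B\in X$, $y\in\Xi$ and $\gamma$,
\begin{align*}
\bigl|A\ast D_\gamma(B)(y)\bigr|\leq\bigl|A_j\ast D_\gamma(B)(y)\bigr|+\|A-A_j\|_{\mathcal T^1}\,\|D_\gamma(B)\|_{op}\leq\bigl|A_j\ast D_\gamma(B)(y)\bigr|+2C\varepsilon .
\end{align*}
Taking $\sup$ over $A\in H$, $B\in X$, $y\in K$ and then $\limsup$ along the net, and invoking the first step for the finitely many centers $A_j\in V$, we obtain $\limsup_\gamma\sup_{y\in K}\sup_{A\in H}\sup_{B\in X}|A\ast D_\gamma(B)(y)|\leq 2C\varepsilon$; letting $\varepsilon\to 0$ finishes the proof.

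I do not expect a genuine obstacle: the structure is identical to the classical Theorem~\ref{thm:uniform_wiener_theorem}. The points that need a little care are verifying the covariance identity $\alpha_z(A_0)\ast B=\alpha_z(A_0\ast B)$ (so that uniform convergence on compact sets survives the shifts hidden inside $V$), recognising that the density statement required is precisely Wiener's approximation theorem for operators, and bookkeeping of the two nested suprema over $H$ and $X$ alongside the net limit — together with the elementary bound $\|D_\gamma(B)\|_{op}\leq 2C$.
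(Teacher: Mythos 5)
Your proof is correct and follows exactly the route the paper intends: the paper gives no explicit proof of this theorem, stating only that it follows ``by essentially the same reasoning'' as Theorem~\ref{thm:uniform_wiener_theorem}, and your argument is precisely that transcription, with the covariance identity $\alpha_z(A_0)\ast B=\alpha_z(A_0\ast B)$, Wiener's approximation theorem for operators supplying density of the span of translates, and the finite $\varepsilon$-net over $H$ all correctly in place.
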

We also mention the following version of the previous theorem, which is more in the spirit of Theorem \ref{thm:uniform_wiener_theorem} and proven analogously.
\begin{thm}\label{thm:unif_tauberian_5}
    Let $\Xi$ be a phase space, $S \subset \mathcal T^1(\mathcal H)$ a regular subset and $H\subset \mathcal T^1(\mathcal H)$ relatively compact. If $X \subset \mathcal L(\mathcal H)$ is a bounded subset such that for each $A_0 \in S$ we have
    \begin{align*}
        \sup_{B \in X} | A_0 \ast B(x)| \overset{x \to \infty}{\longrightarrow} 0
    \end{align*}
    uniformly on compact subsets of $\Xi$, then also
    \begin{align*}
        \sup_{A \in H} \sup_{B \in X} | A \ast B(x)| \overset{x \to \infty}{\longrightarrow} 0 
    \end{align*}
    uniformly on compact subsets of $\Xi$. 
\end{thm}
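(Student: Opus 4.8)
The plan is to run the proof of Theorem~\ref{thm:uniform_wiener_theorem} essentially verbatim, replacing $L^1(G)$ by $\mathcal T^1(\mathcal H)$, the classical Wiener approximation theorem by its operator version, and the estimate $\|h\ast f\|_\infty\le\|h\|_{L^1}\|f\|_\infty$ by $\|A\ast B\|_\infty\le\|A\|_{\mathcal T^1}\|B\|_{op}$.

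The first step I would carry out is to record the translation equivariance of the operator--function convolution: for $A_0\in\mathcal T^1(\mathcal H)$, $B\in\mathcal L(\mathcal H)$ and $z\in\Xi$ one has
\begin{align*}
    \alpha_z(A_0)\ast B(x)=\tr\big(\alpha_z(A_0)\,\alpha_x(\beta_-(B))\big)=\tr\big(A_0\,\alpha_{x-z}(\beta_-(B))\big)=(A_0\ast B)(x-z),
\end{align*}
i.e.\ $\alpha_z(A_0)\ast B=\alpha_z(A_0\ast B)$; here the middle equality uses cyclicity of the trace together with the fact that $z\mapsto\alpha_z$ is a genuine action of $\Xi$ by $\ast$-automorphisms (the projective cocycle of $(U_x)_{x\in\Xi}$ cancels under conjugation). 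Since $x-z\to\infty$ exactly when $x\to\infty$, the hypothesis $\sup_{B\in X}|A_0\ast B(x)|\to0$ for every $A_0\in S$ then yields, by linearity, $\sup_{B\in X}|T\ast B(x)|\overset{x\to\infty}{\longrightarrow}0$ for every $T$ in the linear span $V:=\operatorname{span}\{\alpha_z(A_0):\ z\in\Xi,\ A_0\in S\}$.

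The decisive input is that $V$ is dense in $\mathcal T^1(\mathcal H)$ --- this is precisely item~(1) of the operator version of Wiener's approximation theorem, applicable because $S$ is a regular family. Now fix $\varepsilon>0$ and set $C:=\sup_{B\in X}\|B\|_{op}<\infty$. Since $H$ is relatively compact in $\mathcal T^1(\mathcal H)$ it is totally bounded, so combining this with the density of $V$ one can cover $H$ by finitely many balls of radius $\varepsilon$ centered at points $T_1,\dots,T_n\in V$. Given $A\in H$ and $B\in X$, choosing $j$ with $\|A-T_j\|_{\mathcal T^1}<\varepsilon$ and using the norm estimate for the operator--function convolution,
\begin{align*}
    |A\ast B(x)|\le|T_j\ast B(x)|+\|A-T_j\|_{\mathcal T^1}\|B\|_{op}\le|T_j\ast B(x)|+\varepsilon C,
\end{align*}
whence $\sup_{A\in H}\sup_{B\in X}|A\ast B(x)|\le\max_{1\le j\le n}\sup_{B\in X}|T_j\ast B(x)|+\varepsilon C$. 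Letting $x\to\infty$ and invoking the previous step for each of the finitely many $T_j$ gives $\limsup_{x\to\infty}\sup_{A\in H}\sup_{B\in X}|A\ast B(x)|\le\varepsilon C$, and letting $\varepsilon\to0$ completes the argument.

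I do not expect any genuine obstacle: the only point that differs from the purely classical Theorem~\ref{thm:uniform_wiener_theorem} is the translation equivariance $\alpha_z(A_0)\ast B=\alpha_z(A_0\ast B)$, which should be verified with a little care since it is where the projective nature of $(U_x)_{x\in\Xi}$ enters (harmlessly); total boundedness of $H$, density of $V$ via the operator Wiener approximation theorem, and the bound $\|A\ast B\|_\infty\le\|A\|_{\mathcal T^1}\|B\|_{op}$ are all available off the shelf. The same scheme, applied to the family $\{\alpha_{x_\gamma}(B)-\alpha_x(B):\ B\in X\}$ in place of $X$ and read with convergence uniform on compact subsets of $\Xi$ throughout, reproduces Theorem~\ref{thm:unif_tauberian_4}.
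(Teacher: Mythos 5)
Your proposal is correct and follows exactly the route the paper intends: the paper gives no separate argument for this theorem but declares it "proven analogously" to Theorem \ref{thm:uniform_wiener_theorem}, and your adaptation (translation equivariance $\alpha_z(A_0)\ast B=\alpha_z(A_0\ast B)$, density of the span of translates via the operator Wiener approximation theorem, total boundedness of $H$, and the bound $\|A\ast B\|_\infty\le\|A\|_{\mathcal T^1}\|B\|_{op}$) is precisely that analogy carried out. No gaps.
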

We will now discuss some applications of the uniform Tauberian theorems, at least the first of which initially motivated us to consider these results.

We recall that a \emph{localization operator} (in the sense of time-frequency analysis) is, simply speaking, a convolution $\mathcal A_f^{\varphi, \psi} := f \ast (\varphi \otimes \psi)$. The following result is obtained as a combination of the results in \cite{Fernandez_Galbis2006} and \cite[Proposition 4.3]{Luef_Skrettingland2021}.
\begin{thm}
    Let $f \in L^\infty(\mathbb R^{2d})$. Then, the following are equivalent:
    \begin{enumerate}[(1)]
        \item $\mathcal A_f^{\varphi, \varphi}$ is compact for every $\varphi \in L^2(\mathbb R^d)$. 
        \item There exists a Schwartz function $0 \neq \Phi \in \mathcal S(\mathbb R^{2d})$ such that for every $R > 0$: \begin{align*}
            \lim_{|x| \to \infty} \sup_{|\omega| \leq R} |V_\Phi f(x, \omega)| = 0.
        \end{align*}
        \item $f \ast S$ is compact for every $S \in \mathcal T^1(\mathcal H)$.
        \item There exists some regular $a \in L^1(\mathbb R^{2d})$ such that
        \begin{align*}
            \lim_{|x| \to \infty} f \ast a(x) = 0.
        \end{align*}
    \end{enumerate}
\end{thm}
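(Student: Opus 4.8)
The plan is to establish the four equivalences by running the cycle $(1)\Rightarrow(3)\Rightarrow(4)\Rightarrow(2)\Rightarrow(1)$; of these, only the step $(4)\Rightarrow(2)$ uses the uniform Wiener Tauberian theorem, and it is this step that makes the result an application of Section~\ref{sec:uniform}. Throughout one works with $\Xi=\mathbb R^{2d}$, $\mathcal H=L^2(\mathbb R^d)$, regarding the localization operator as the operator convolution $\mathcal A_f^{\varphi,\psi}=f\ast(\varphi\otimes\psi)$ with $\varphi\otimes\psi\in\mathcal T^1(\mathcal H)$.

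First I would dispatch the easy equivalence $(1)\Leftrightarrow(3)$. The implication $(3)\Rightarrow(1)$ is immediate since $\varphi\otimes\varphi\in\mathcal T^1(\mathcal H)$. For $(1)\Rightarrow(3)$ one uses the polarization identity $\varphi\otimes\psi=\tfrac14\sum_{k=0}^{3}i^{k}(\varphi+i^{k}\psi)\otimes(\varphi+i^{k}\psi)$ together with linearity of $S\mapsto f\ast S$ to deduce compactness of $f\ast S$ for finite-rank $S$, and then the density of finite-rank operators in $\mathcal T^1(\mathcal H)$ together with the bound $\|f\ast S\|_{op}\le\|f\|_\infty\|S\|_{\mathcal T^1}$ to pass to all $S\in\mathcal T^1(\mathcal H)$. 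For $(3)\Rightarrow(4)$ I would fix a regular family $\mathcal R\subset\mathcal T^1(\mathcal H)$, pick $A\in\mathcal R$, and set $a:=A\ast A$, which is a regular element of $L^1(\mathbb R^{2d})$ by the operator version of Wiener's approximation theorem. Associativity of the convolutions gives $f\ast a=A\ast(A\ast f)$, and since $A\ast f\in\mathcal K(\mathcal H)$ by $(3)$ while $\mathcal T^1(\mathcal H)\ast\mathcal K(\mathcal H)\subset C_0(\mathbb R^{2d})$ (Theorem~\ref{thm:standardcorrespondences}), we get $f\ast a\in C_0(\mathbb R^{2d})$, which is $(4)$.

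The key step is $(4)\Rightarrow(2)$. Fix any $0\neq\Phi\in\mathcal S(\mathbb R^{2d})$ and set $\widetilde\Phi(z):=\overline{\Phi(-z)}\in\mathcal S(\mathbb R^{2d})\subset L^1(\mathbb R^{2d})$; a direct computation shows $|V_\Phi f(x,\omega)|=|(M_\omega\widetilde\Phi)\ast f\,(x)|$ for all $x,\omega\in\mathbb R^{2d}$, where $M_\omega$ denotes modulation on $\mathbb R^{2d}$. For each fixed $R>0$ the family $H_R:=\{M_\omega\widetilde\Phi:|\omega|\le R\}$ is relatively compact in $L^1(\mathbb R^{2d})$, being the image of the compact ball $\{|\omega|\le R\}$ under the $L^1$-norm-continuous map $\omega\mapsto M_\omega\widetilde\Phi$. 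Applying Theorem~\ref{thm:uniform_wiener_theorem} on $G=\mathbb R^{2d}$ with $X=\{f\}$, $S=\{a\}$ (the regular function from $(4)$) and $H=H_R$: the hypothesis $\lim_{|x|\to\infty}|a\ast f(x)|=0$ is exactly $(4)$, while the conclusion $\lim_{|x|\to\infty}\sup_{h\in H_R}|h\ast f(x)|=0$ is exactly $\lim_{|x|\to\infty}\sup_{|\omega|\le R}|V_\Phi f(x,\omega)|=0$. Since $R>0$ was arbitrary, $(2)$ holds with this $\Phi$.

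Finally, $(2)\Rightarrow(1)$ is the compactness criterion for time-frequency localization operators of Fern\'andez and Galbis \cite{Fernandez_Galbis2006}, which closes the cycle. (For a self-contained alternative avoiding \cite{Fernandez_Galbis2006}, a standard change-of-window estimate shows that the validity of $(2)$ does not depend on the choice of $0\neq\Phi\in\mathcal S(\mathbb R^{2d})$, so one may take $\Phi$ a Gaussian, which is regular; specialising to $\omega=0$ gives $\lim_{|x|\to\infty}(f\ast\Phi)(x)=0$, hence $(4)$ with $a=\Phi$, and then $(4)\Rightarrow(3)$ follows from the classical Wiener Tauberian theorem — which upgrades ``$f\ast a\in C_0$ for one regular $a$'' to ``$g\ast f\in C_0$ for all $g\in L^1$'' — combined with the identity $A\ast(f\ast A_0)=(A\ast A_0)\ast f$ and the correspondence $C_0(\mathbb R^{2d})\leftrightarrow\mathcal K(\mathcal H)$, with $(3)\Rightarrow(1)$ trivial.) I expect the only genuinely delicate point to be $(4)\Rightarrow(2)$: one must recognize the uniform-in-$\omega$ STFT decay as the conclusion of a uniform Wiener Tauberian theorem over the relatively compact window family $H_R$ — which is precisely the content supplied by Theorem~\ref{thm:uniform_wiener_theorem}, and is the reason the statement works on $\mathbb R^{2d}$ for every $d$ through classical harmonic analysis alone; all remaining steps are routine consequences of the correspondence theorem, polarization, and the Wiener Tauberian theorems recalled earlier.
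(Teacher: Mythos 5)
Your proof is correct, but note that the paper does not actually prove this theorem: it is quoted as a combination of \cite{Fernandez_Galbis2006} and \cite[Proposition 4.3]{Luef_Skrettingland2021}, and the paper's own contribution is the subsequent Theorem \ref{thm:appl1}, which isolates the $(2)\Leftrightarrow(4)$ equivalence and proves it on arbitrary lca groups. Your key step $(4)\Rightarrow(2)$ is essentially the paper's proof of the implication $(2)\Rightarrow(1)$ of Theorem \ref{thm:appl1}: rewrite $|V_\Phi f(x,\omega)|$ as $|(M_\omega\widetilde\Phi)\ast f(x)|$, note that $\omega\mapsto M_\omega\widetilde\Phi$ is $L^1$-norm continuous so that $H_R$ is relatively compact, and invoke Theorem \ref{thm:uniform_wiener_theorem} with $X=\{f\}$; so on this point you and the paper coincide. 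The implications $(1)\Leftrightarrow(3)\Rightarrow(4)$ via polarization, density of finite-rank operators, and the correspondence $C_0(\mathbb R^{2d})\leftrightarrow\mathcal K(\mathcal H)$ are routine and correct, and $(2)\Rightarrow(1)$ is exactly where the paper also falls back on \cite{Fernandez_Galbis2006}; your sketched self-contained alternative (window-independence of (2), specialization to $\omega=0$ with a Gaussian window, then the classical Tauberian theorem plus the correspondence theorem) is a legitimate way to avoid that citation, at the cost of the standard change-of-window estimate. One small imprecision in your $(3)\Rightarrow(4)$: an element $A$ of a regular family $\mathcal R$ need not itself be regular, so $A\ast A$ need not be a regular function --- the operator approximation theorem only asserts that $\{A\ast A:\,A\in\mathcal R\}$ is regular as a family. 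You should instead fix a single operator $A$ with $\{A\}$ regular (for instance the rank-one projection onto a normalized Gaussian, whose Fourier--Weyl transform vanishes nowhere on $\mathbb R^{2d}$); then $a=A\ast A$ is a regular element of $L^1(\mathbb R^{2d})$ and the rest of your argument, $f\ast a=A\ast(A\ast f)\in\mathcal T^1(\mathcal H)\ast\mathcal K(\mathcal H)\subset C_0(\mathbb R^{2d})$, goes through unchanged.
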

Here, we have used the notion $V_\Phi f(x, \omega) = \langle f, \pi(x, \omega) \Phi\rangle$ for the short time Fourier transform, where $\pi(x,\omega)\Phi(t) = e^{i\omega t} \Phi(x-t)$. 

Indeed, the equivalence of (2) and (4) in the above theorem could only be proven, in the above-given references, by passing through operators. This approach has the clear drawback that it only works on even-dimensional vector spaces (or, more generally, abelian phase spaces). The uniform version of Wiener's Tauberian theorem allows for a proof of this equivalence on arbitrary locally compact abelian groups, i.e., the following result. Here, we used a slight renormalization of the short time Fourier transform, which differs only by some constants from the convention in \cite{Luef_Skrettingland2021} for the case $G = \mathbb R^d$.
\begin{thm}\label{thm:appl1}
    Let $G$ be a locally compact abelian group and $f \in L^\infty(G)$. Then, the following are equivalent:
    \begin{enumerate}[(1)]
        \item There exists some $0 \neq \Phi \in L^1(G)$ such that, for the STFT $V_\Phi f(x, \omega)$ defined as
        \begin{align*}
            V_\Phi f(x, \xi) := \int_G \Phi(t) \xi(t) f(t-x)~dt, \quad x \in G, \xi \in \widehat{G},
        \end{align*}
        the following holds true: For each compact subset $K \subset \widehat{G}$  we have:
        \begin{align*}
            \lim_{x \to \infty} \sup_{\xi \in K} |V_\Phi f(x, \xi)| = 0.
        \end{align*}
        \item There exists a regular subset $S \subset L^1(G)$ such that for each $g \in S$:
        \begin{align*}
            \lim_{x \to\infty} g \ast f(x) = 0.
        \end{align*}
    \end{enumerate}
\end{thm}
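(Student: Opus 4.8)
The plan is to connect the two conditions through the \emph{uniform} Wiener Tauberian theorem (Theorem~\ref{thm:uniform_wiener_theorem}), with the short-time Fourier transform playing the role of a convolution. The key observation is that for fixed $\xi \in \widehat{G}$ one has, up to a harmless reflection,
\begin{align*}
V_\Phi f(x, \xi) = \int_G \Phi(t)\xi(t) f(t-x)~dt = (\widetilde{\Phi}_\xi \ast f)(x),
\end{align*}
where $\widetilde{\Phi}_\xi(t) := \Phi(-t)\xi(-t) \in L^1(G)$ (the precise constant and sign will be bookkeeping only). Thus ``$\sup_{\xi \in K}|V_\Phi f(x,\xi)| \to 0$ as $x \to \infty$'' is exactly the statement that $\lim_{x\to\infty}\sup_{g \in H_K}|g \ast f(x)| = 0$ for the family $H_K := \{\widetilde{\Phi}_\xi : \xi \in K\} \subset L^1(G)$, with $X = \{f\}$ a singleton bounded set.

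\textbf{Direction (2) $\Rightarrow$ (1).} Assume $S \subset L^1(G)$ is regular with $g \ast f \to 0$ at infinity for every $g \in S$. I want to produce a single $0 \neq \Phi \in L^1(G)$ verifying (1). First I would argue that the family $H_K = \{\widetilde{\Phi}_\xi : \xi \in K\}$ is relatively compact in $L^1(G)$ for any $\Phi \in L^1(G)$ and any compact $K \subset \widehat{G}$: the map $\widehat{G} \ni \xi \mapsto \widetilde{\Phi}_\xi \in L^1(G)$ is continuous (this is the standard fact that modulation acts strongly continuously on $L^1$), so $H_K$ is the continuous image of a compact set. Now choose $\Phi$ so that $\widetilde{\Phi}_{\widehat{e}}$ — i.e.\ essentially $\beta_-(\Phi)$ — is regular; for instance take $\Phi$ with nowhere-vanishing Fourier transform, which is possible on any lca group (e.g.\ a suitable Gaussian-type or approximate-identity element). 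Applying Theorem~\ref{thm:uniform_wiener_theorem} with this regular $S$, with $H = H_K$, and with $X = \{f\}$ gives $\lim_{x\to\infty}\sup_{h \in H_K}|h \ast f(x)| = 0$, which unwinds to condition (1).

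\textbf{Direction (1) $\Rightarrow$ (2).} Assume (1) holds for some $0 \neq \Phi \in L^1(G)$. Since $\Phi \neq 0$, $\widehat{\Phi}$ is not identically zero, so there is $\xi_0 \in \widehat{G}$ with $\widehat{\Phi}(\xi_0) \neq 0$; by continuity $\widehat{\Phi}$ is nonzero on a neighbourhood, hence on a compact neighbourhood $K_0 \ni \xi_0$. The family $S := \{\widetilde{\Phi}_\xi : \xi \in K_0\}$ is then a \emph{regular subset} of $L^1(G)$: indeed $\widehat{\widetilde{\Phi}_\xi}$ is (up to reflection/translation in the frequency variable) $\widehat{\Phi}$ shifted by $\xi$, so $\bigcap_{\xi \in K_0}\{\chi : \widehat{\widetilde{\Phi}_\xi}(\chi) = 0\}$ is empty precisely because $\widehat{\Phi}$ is nonzero on the translated neighbourhoods covering all of $\widehat{G}$ — here I would check carefully that as $\xi$ ranges over a set of positive measure the corresponding zero sets have empty intersection; in the worst case one enlarges $K_0$, or invokes that $\widehat{\Phi}$, being the Fourier transform of an $L^1$ function, cannot vanish on any nonempty open set unless $\Phi$ vanishes there in a way contradicting the argument — more robustly, one uses that $\bigcap_{\xi \in \widehat{G}}\{\chi: \widehat{\Phi}(\chi - \xi) = 0\} = \emptyset$ always, and picks $K_0$ compact enough to retain emptiness. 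Then condition (1) says exactly $\lim_{x\to\infty} g \ast f(x) = 0$ for every $g \in S$, giving (2).

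\textbf{Main obstacle.} The genuinely delicate point is the regularity bookkeeping in the (1) $\Rightarrow$ (2) direction: from a single window $\Phi$ one must extract a regular \emph{family} $S$, and this requires knowing that the joint zero set of $\{\widehat{\Phi}(\cdot - \xi) : \xi \in K_0\}$ is empty for a suitable compact $K_0$. On $\mathbb{R}^n$ or $\widehat{G} = \mathbb{R}^n$ this is transparent, but on a general lca group one must argue via the structure of open sets in $\widehat{G}$ and the fact that $\widehat{\Phi}$ being nonzero at one point forces it to be nonzero on a neighbourhood whose translates cover $\widehat{G}$. The reverse direction's only subtlety is verifying relative compactness of $H_K$ via the Riesz–Kolmogorov criterion, which is routine given continuity of $\xi \mapsto \widetilde{\Phi}_\xi$.
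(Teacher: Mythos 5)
Your overall strategy --- rewriting $V_\Phi f(\cdot,\xi)$ as a convolution with the modulated window (up to a harmless $\beta_-$) and invoking Theorem \ref{thm:uniform_wiener_theorem} with $H_K=\{\Phi\xi:\xi\in K\}$ relatively compact --- is exactly the paper's. However, the direction $(1)\Rightarrow(2)$ has a genuine gap, located precisely at the point you yourself flag as delicate. You try to build the regular set $S$ from the modulations $\Phi\xi$ with $\xi$ ranging over a \emph{compact} set $K_0$, and none of your proposed repairs works: if $\mathcal F\Phi$ has compact support $T$ (which happens for perfectly good nonzero $\Phi\in L^1(G)$, e.g.\ $\mathcal F\Phi = \psi\ast\psi^\ast$ with $\psi\in C_c(\widehat G)$) and $\widehat G$ is non-compact, then since $\mathcal F(\Phi\xi)(\chi)=\mathcal F\Phi(\chi\xi^{-1})$, the set $\bigcap_{\xi\in K_0}\{\chi:\mathcal F(\Phi\xi)(\chi)=0\}$ contains the complement of the compact set $\{\xi\eta:\xi\in K_0,\ \eta\in T\}$ and is therefore nonempty for \emph{every} compact $K_0$. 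So no compact $K_0$ ``retains emptiness'', and your fallback claim that $\mathcal F\Phi$ cannot vanish on a nonempty open set is also false. The fix is simply to drop the compactness requirement on the index set: statement (2) only asks for \emph{some} regular subset, and the full modulation orbit $S=\{\Phi\xi:\xi\in\widehat G\}$ is always regular (pick $\eta_0$ with $\mathcal F\Phi(\eta_0)\neq 0$; then $\mathcal F(\Phi\,\eta\eta_0^{-1})(\eta)=\mathcal F\Phi(\eta_0)\neq 0$ for each $\eta$), while hypothesis (1) applied to the singletons $K=\{\xi\}$ already gives $(\Phi\xi)\ast f\in C_0(G)$ for every $\xi\in\widehat G$. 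This is what the paper does.

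A smaller problem occurs in $(2)\Rightarrow(1)$: you insist on choosing $\Phi$ with nowhere-vanishing Fourier transform and assert that this is possible on any lca group. It is not: $\mathcal F\Phi\in C_0(\widehat G)$, and a nowhere-vanishing $C_0$ function forces $\widehat G$ to be $\sigma$-compact, since the sets $\{|\mathcal F\Phi|\geq 1/n\}$ are compact and exhaust $\widehat G$. Fortunately this choice is unnecessary: Theorem \ref{thm:uniform_wiener_theorem} should be applied with the regular set $S$ furnished by hypothesis (2) (not with $\{\beta_-(\Phi)\}$), with $H=H_K$ and $X=\{f\}$, and then an arbitrary nonzero $\Phi\in L^1(G)$ does the job --- your relative-compactness argument for $H_K$ via continuity of $\xi\mapsto\Phi\xi$ is correct and is the same one the paper uses.
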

\begin{proof}
    The proof hinges on the simple fact that $V_\Phi f(x, \xi) = [\Phi \xi] \ast f$. 

    (1) $\Rightarrow$ (2): Note that $\mathcal F(\Phi \xi)(\eta) = \mathcal F(\Phi)(\eta \xi^{-1})$. For $\Phi \neq 0$, there exists some $\eta_0 \in \widehat{G}$ such that $\mathcal F(\Phi)(\eta_0) \neq 0$. In particular, for each $\eta \in \widehat{G}$, we have $\mathcal F(\Phi \eta \eta_0^{-1})(\eta) = \mathcal F(\Phi)(\eta_0) \neq 0$. Hence, $\{ \Phi \eta: ~\eta \in \widehat{G}\}$ is a regular family in $L^1(G)$. 

    The assumption implies now, in particular, that for each $\xi \in \widehat{G}$: $[\Phi \xi] \ast f \in C_0(G)$. Hence, we arrive at the statement of (2).

    (2) $\Rightarrow$ (1). Let $0 \neq \Phi \in L^1(G)$ be arbitrary. By the classical Wiener Tauberian theorem, we see that for each $\xi \in \widehat{G}$: $\lim_{x \to \infty} V_\Phi f(x, \xi) = \lim_{x \to \infty} (\Phi \xi) \ast f(x) = 0$. Hence, we only have to prove that this convergence is uniformly for $\xi$ chosen from a compact subset $K$ of $\widehat{G}$. For doing so, we note that the map $\widehat{G} \ni \xi \mapsto \Phi \xi \in L^1(G)$ is continuous, as one readily verifies. Since continuous images of compact spaces are compact, we obtain that for every compact subset $K \subset \Xi$, the set $\{ \Phi \xi: ~\xi \in K\}$ is compact in $L^1(G)$. Hence, Theorem \ref{thm:uniform_wiener_theorem} proves (with $X = \{ f\}$):
    \begin{align*}
        \sup_{\xi \in K} |(\Phi \xi) \ast f(x)| = \sup_{\xi \in K} |V_\Phi f(x, \xi)| \overset{x \to \infty}{\longrightarrow} 0.
    \end{align*}
    This finishes the proof.
\end{proof}
Having the QHA-analogues of the uniform Tauberian theorems at hand, applications to operator theory are imminent. We end this discussion by giving a nice compactness criterion, which does not hinge on the necessity of finding a regular family $S \subset \mathcal T^1(\mathcal H)$.
\begin{thm}
    Let $B \in \mathcal L(\mathcal H)$. Then, the following are equivalent:
    \begin{enumerate}[(1)]
        \item $B \in \mathcal K(\mathcal H)$.
        \item $B \in \mathcal C_1(\mathcal H)$ and for some $0 \neq A \in \mathcal T^1(\mathcal H)$ we have for every compact $K \subset \Xi$:
        \begin{align*}
            \sup_{x \in K} |(U_x A) \ast B(y)| \overset{y \to \infty}{\longrightarrow} 0.
        \end{align*}
    \end{enumerate}
\end{thm}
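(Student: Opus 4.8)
The plan is to regard the family $\{U_x A : x \in \Xi\} \subset \mathcal{T}^1(\mathcal{H})$ as the operator counterpart of the family $\{\Phi\xi : \xi \in \widehat{G}\}$ used in the proof of Theorem~\ref{thm:appl1}, and then to feed this family into the $C_0(\Xi)$--$\mathcal{K}(\mathcal{H})$ correspondence of Theorem~\ref{thm:standardcorrespondences}.(2) together with the uniform operator Tauberian theorem~\ref{thm:unif_tauberian_5}. The one computation I need is how the Fourier--Weyl transform behaves under left multiplication: since $\mathcal{F}_U(U_x A)(\xi) = \tr(U_x A U_\xi) = \tr(A U_\xi U_x)$ and $U_\xi U_x = m(\xi,x)\,U_{\xi+x}$, one gets $|\mathcal{F}_U(U_x A)(\xi)| = |\mathcal{F}_U(A)(x+\xi)|$. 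As $A \neq 0$ and $\mathcal{F}_U$ is injective on $\mathcal{T}^1(\mathcal{H})$, there is some $\eta_0$ with $\mathcal{F}_U(A)(\eta_0) \neq 0$, so for every $\xi$ the choice $x = \eta_0 - \xi$ gives $\mathcal{F}_U(U_x A)(\xi) \neq 0$; hence $\bigcap_{x\in\Xi}\{\xi : \mathcal{F}_U(U_x A)(\xi) = 0\} = \emptyset$, and by Wiener's approximation theorem for operators the set $\{U_x A : x \in \Xi\}$ is a regular family.

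For the implication $(2) \Rightarrow (1)$ I would use only the singleton case $K = \{x\}$ of the hypothesis, which says $(U_x A) \ast B \in C_0(\Xi)$ for every $x \in \Xi$. Since $B \in \mathcal{C}_1(\mathcal{H})$ and $\{U_x A : x \in \Xi\}$ is a regular family by the previous paragraph, Theorem~\ref{thm:standardcorrespondences}.(2) yields $B \in \mathcal{K}(\mathcal{H})$ directly.

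For $(1) \Rightarrow (2)$, assume $B \in \mathcal{K}(\mathcal{H})$. Then $B \in \mathcal{C}_1(\mathcal{H})$, and because $C_0(\Xi)$ and $\mathcal{K}(\mathcal{H})$ are corresponding spaces we have $C \ast B \in C_0(\Xi)$ for all $C \in \mathcal{T}^1(\mathcal{H})$. Fix any $0 \neq A \in \mathcal{T}^1(\mathcal{H})$ and any compact $K \subset \Xi$. The map $\Xi \ni x \mapsto U_x A \in \mathcal{T}^1(\mathcal{H})$ is norm continuous (expand $A$ in a Schmidt decomposition and apply dominated convergence to the resulting series, using strong continuity of $x \mapsto U_x$), so $H := \{U_x A : x \in K\}$ is compact, in particular relatively compact, in $\mathcal{T}^1(\mathcal{H})$. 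Now apply Theorem~\ref{thm:unif_tauberian_5} with this $H$, with $X := \{B\}$, and with any regular $S \subset \mathcal{T}^1(\mathcal{H})$: its hypothesis $|A_0 \ast B(y)| \to 0$ as $y \to \infty$ for all $A_0 \in S$ holds since $B$ is compact, and its conclusion is exactly $\sup_{x\in K} |(U_x A) \ast B(y)| = \sup_{C \in H} |C \ast B(y)| \to 0$ as $y \to \infty$.

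I do not expect a genuine obstacle here: the proof is essentially bookkeeping once one has (i) the multiplication identity for $\mathcal{F}_U$ and the ensuing regularity of $\{U_x A : x \in \Xi\}$, (ii) trace-norm continuity of $x \mapsto U_x A$, and (iii) the correspondence and uniform Tauberian results already established. The only slightly delicate points are checking that Theorem~\ref{thm:unif_tauberian_5} applies with $X$ a singleton, and recognising that "regular family" is precisely the hypothesis demanded by Theorem~\ref{thm:standardcorrespondences}.(2) — both of which are immediate.
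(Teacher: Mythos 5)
Your proposal is correct and follows essentially the same route as the paper: establish regularity of $\{U_xA:\,x\in\Xi\}$ via the Fourier--Weyl transform (the paper only sketches this by analogy with Theorem \ref{thm:appl1}, whereas you carry out the computation), then use the $C_0(\Xi)$--$\mathcal K(\mathcal H)$ correspondence of Theorem \ref{thm:standardcorrespondences} for $(2)\Rightarrow(1)$ and the trace-norm continuity of $x\mapsto U_xA$ together with Theorem \ref{thm:unif_tauberian_5} (with $X=\{B\}$) for $(1)\Rightarrow(2)$. No gaps.
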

\begin{proof}
    By Theorem \ref{thm:standardcorrespondences}, we know that $B\in \mathcal K(\mathcal H)$ if and only if $B \in \mathcal C_1(\mathcal H)$ and for any regular subset $S \subset \mathcal T^1(\mathcal H)$ we have $A \ast B \in C_0(\Xi)$ for every $A\in S$. Now, if $0 \neq A \in \mathcal T^1(\mathcal H)$, it is not hard to show that $\{ U_y A: ~y \in \Xi\}$ is a regular subset (this is analogous to the argument in the proof of (1) $\Rightarrow$ (2) of Theorem \ref{thm:appl1}). Hence, (2) $\Rightarrow$ (1) follows. To derive the implication (1) $\Rightarrow$ (2), we need to verify that for each compact subset $K \subset \Xi$, $\{ U_y A: ~x \in K\}$ is relatively compact in $\mathcal T^1(\mathcal H)$. Since for each $A \in \mathcal T^1(\mathcal H)$, the map $\Xi \ni y \mapsto U_y A \in \mathcal T^1(\mathcal H)$ is continuous (this is readily verified for finite rank operators), and the continuous image of a compact set is compact, this fact is obtained. Hence, Theorem \ref{thm:unif_tauberian_5} (applied with $X = B$) shows:
    \begin{align*}
        \sup_{y \in K} |(U_y A) \ast B(x)| \overset{x \to \infty}{\longrightarrow} 0.
    \end{align*}
   This concludes the proof. 
\end{proof}

\subsection*{Acknowledgments:} We acknowledge the anonymous referee's detailed comments on the paper, which helped improving the presentation significantly.

\printnomenclature

\bibliographystyle{amsplain}
\bibliography{References}
\vspace{1cm}
\begin{multicols}{2}
\noindent
Robert Fulsche\\
\href{fulsche@math.uni-hannover.de}{\Letter ~fulsche@math.uni-hannover.de}
\\
\noindent
Institut f\"{u}r Analysis\\
Leibniz Universit\"at Hannover\\
Welfengarten 1\\
30167 Hannover\\
GERMANY\\ 

\noindent
Franz Luef\\
\href{franz.luef@ntnu.no}{\Letter ~franz.luef@ntnu.no}
\\
\noindent
Department of Mathematical Sciences\\
NTNU Trondheim\\
Alfred Getz vei 1\\
7034 Trondheim\\
NORWAY
\end{multicols}

\noindent Reinhard F.\ Werner\\
\href{reinhard.werner@itp.uni-hannover.de}{\Letter ~reinhard.werner@itp.uni-hannover.de}
\\
\noindent
Institut f\"ur Theoretische Physik\\
Leibniz Universit\"at Hannover\\
Schneiderberg 32\\
30167 Hannover\\
GERMANY\\

\end{document}